\theoremstyle{thmstyleone}%
\newtheorem{theorem}{Theorem}[section]% meant for sectionwise numbers
\newtheorem{proposition}[theorem]{Proposition}% 
\newtheorem{corollary}[theorem]{Corollary}
\theoremstyle{thmstyletwo}%
\theoremstyle{thmstylethree}%
\newtheorem{definition}[theorem]{Definition}%
\begin{document}
% \articletype{ARTICLE TEMPLATE}% Specify the article type or omit as appropriate

\title[On the Well-posedness of HJB Equations]{On the Well-posedness of Hamilton-Jacobi-Bellman Equations of the Equilibrium Type}

\author[1]{\fnm{Qian} \sur{Lei}}

\author*[1]{\fnm{Chi Seng} \sur{Pun}}\email{cspun@ntu.edu.sg}

\affil[1]{\orgdiv{School of Physical \& Mathematical Sciences}, \orgname{Nanyang Technological University}, \orgaddress{\city{Singapore}, \country{Singapore}}}

\abstract{This paper studies the well-posedness of a class of nonlocal parabolic partial differential equations (PDEs), including as a special case the equilibrium Hamilton-Jacobi-Bellman equations, which has a strong tie with the characterization of the equilibrium strategies and the associated value functions for time-inconsistent stochastic control problems. Specifically, we consider nonlocality in both time and space, which allows for modelling of the stochastic control problems with initial-time-and-state dependent objective functionals. We leverage the method of continuity to show the global well-posedness within our proposed Banach space with our established Schauder prior estimate for the linearized nonlocal PDE. Then, we adopt a linearization method and Banach's fixed point arguments to show the local well-posedness of the nonlocal fully nonlinear case, while the global well-posedness is attainable provided that a sharp a-priori estimate is available. The well-posedness results contribute to advancing the understanding of long-standing open problems in equilibrium Hamilton–Jacobi–Bellman equations and time-inconsistent controls. Finally, we present a globally solvable financial example of time-inconsistency to validate our theoretical findings.}

\keywords{Existence and Uniqueness, Time-inconsistent stochastic control problems, Equilibrium Hamilton-Jacobi-Bellman equation, Nonlocal partial differential equation, Method of Continuity, Linearization}

\maketitle

\section{Introduction}
% Stochastic control problems can be categorized into two types: time consistent or not, depending on whether Bellman's principle of optimality (BPO) holds for the problem. The classical stochastic control problems are time consistent and the mathematical tools of solving them are well documented in \cite{Yong1999}. However, the violation of BPO is common in many decision-making problems, especially in behavioral finance and economics, as long as the objective functionals depend on the initial time or initial state. For example, hyperbolic discounting problems involve initial-time dependent objective; see \cite{Laibson1997}. Endogenous habit formation problem and portfolio selection with state-dependent risk aversion involve initial-state dependent objectives; see \cite{Bjoerk2017,Bjoerk2014a}. 
% % This paper is devoted to address the open problems related to time-inconsistent (TIC) stochastic control problems.
% This paper is devoted to developing the partial differential equation (PDE) theory to further advance the understanding of long-standing problems in time-inconsistent (TIC) stochastic control.
Stochastic control problems can be categorized as time-consistent or time-inconsistent, depending on whether Bellman's principle of optimality (BPO) holds. Classical stochastic control problems are time-consistent, and their solution methods are well documented in \cite{Yong1999}. However, violations of BPO are common in many decision-making problems, especially in behavioral finance and economics, whenever objective functionals depend on the initial time or state. For example, hyperbolic discounting involves initial-time dependence; see \cite{Laibson1997}. Endogenous habit formation and portfolio selection with state-dependent risk aversion involve initial-state dependence; see \cite{Bjoerk2017,Bjoerk2014a}.
This paper develops PDE theory to advance the understanding of long-standing problems in time-inconsistent (TIC) stochastic control.

When Bellman’s Principle of Optimality (BPO) fails to hold, the globally optimal solution may lose its optimality as time evolves, raising fundamental questions about the definition of ``optimal control” and how to characterize such controls. A straightforward approach is the pre-commitment policy, which fixes the decision at the initial time and thus reduces the time-inconsistent (TIC) problem to a classical stochastic control problem. However, maintaining this policy dynamically over multiple periods can incur additional costs, as the controller tends to deviate from the predetermined strategy in favor of the currently optimal control. In contrast, this paper adopts the widely used multi-person differential game approach to formulate TIC stochastic control problems. Within this game-theoretic framework, the solutions to TIC problems are identified as pure strategy Nash equilibria, whose inherent subgame-perfect consistency naturally avoids the dynamic implementability issues faced by pre-commitment policies. For a comprehensive overview of treatments for time inconsistency, we refer readers to \cite{He2022}, with further technical details provided in Section \ref{Sec:Pre}.

% To characterize equilibrium solutions of TIC stochastic control problems, we follow the PDE approach, relying on the extended dynamic programming (extended DP) principle or extended Hamilton–Jacobi–Bellman (HJB) system, and the equilibrium HJB equation. These two analytical frameworks, originally developed in \cite{Basak2010, Bjoerk2014, Bjoerk2017} and \cite{Yong2012, Wei2017, Yan2019}, respectively, are largely consistent. 
Following the PDE approach leads to two closely related methodologies for characterizing equilibrium solutions of TIC stochastic control problems: the extended Hamilton–Jacobi–Bellman (HJB) system and the equilibrium HJB equation, developed respectively in \cite{Basak2010, Bjoerk2017, Bjoerk2014a} and \cite{Yong2012, Wei2017, Yan2019}. The first approach introduces auxiliary functions and adjustment terms to restore the BPO from the perspective of subgame perfect equilibrium. Although verification theorems are available, its derivation and the definition of equilibrium policies remain largely heuristic. The second method, as demonstrated in \cite{Yong2012, Yan2019}, addresses these limitations by employing a discretization scheme that partitions the decision horizon into arbitrary subproblems and proves the convergence of the resulting recursive equations to an equilibrium HJB equation for the value function.

Hundreds of works have adopted the frameworks of \cite{Bjoerk2017,Bjoerk2014} or \cite{Yong2012,Wei2017} to analyze equilibrium policies and to explore the relation between the equilibrium HJB equation and TIC stochastic control problems. Specifically, the literature has examined two complementary aspects: \textbf{Sufficiency:} if a regular solution to the equilibrium HJB equation exists, then an equilibrium value function and the associated equilibrium policy can be identified; \textbf{Necessity:} conversely, any equilibrium policy admits a value function that solves the equilibrium HJB equation. For studies on sufficiency and necessity, we refer to \cite{Bjoerk2017,Hernandez2020} and \cite{Lindensjoe2019,Hernandez2020,He2021,Hamaguchi2021,He2022}, respectively. However, these discussions basically rest on the solvability (well-posedness) of the equilibrium HJB equation, which itself constitutes a standalone mathematical problem. The equilibrium HJB equation, as reformulated in the next section, is a nonlocal fully nonlinear PDE whose well-posedness lies beyond the scope of classical PDE theory.

\subsection{Related Literature and Challenges}
This paper aims to address the well-posedness issues for a general class of nonlocal fully nonlinear PDEs, which include as special cases both the extended HJB system and the equilibrium HJB equation derived from time-inconsistent stochastic control problems, of the form 
% \begin{equation} \label{Intro:nonlocalfullynonlinearPDE}
% 	\left\{
% 	\begin{array}{rcl}
% 		u_s(t,s,x,y) & = & F\big(t,s,x,y,\left(\partial_I u\right)_{|I|\leq 2}(t,s,x,y), \left(\partial_I u\right)_{|I|\leq 2}(s,s,x,y)\big|_{x=y}\big), \\
% 		u(t,0,x,y) & = & g(t,x,y),\quad t,s\in[0,T],\quad x,y\in\mathbb{R}^d. 
% 	\end{array}
% 	\right. 
% \end{equation}
\begin{equation} \label{Intro:nonlocalfullynonlinearPDE}
	\left\{
	\begin{aligned}
		u_s(t,s,x,y) &= F\big(t,s,x,y,(\partial_I u)_{|I|\leq 2}(t,s,x,y), (\partial_I u)_{|I|\leq 2}(s,s,x,y)\big|_{x=y}\big), \\
		u(t,0,x,y) &= g(t,x,y),\quad t,s\in[0,T],\quad x,y\in\mathbb{R}^d, 
	\end{aligned}
	\right. 
\end{equation}
where the mapping (nonlinearity) $F$ could be nonlinear with respect to all its arguments, and both $s$ and $y$ are dynamical variables while $(t,x)$ should be considered as an external space-time parameter. Here, $I=(i_1,\ldots,i_j)$ is a multi-index with $j=|I|$, and $\partial_I u:=\frac{\partial^{|I|}u}{\partial y_{i_1}\cdots\partial y_{i_j}}$. The nonlocality comes from the dependence on the unknown function $u$ and its derivatives evaluated at not only the local point $(t,s,x,y)$ but also at the diagonal line of the space-time domain $(s,s,y,y)$. A more specific and relevant application of \eqref{Intro:nonlocalfullynonlinearPDE} is the equilibrium HJB equation \eqref{EquilibriumHJBequation} in Section \ref{Sec:Pre} that characterizes the equilibrium solution to a TIC stochastic control problem.

Most literature on \eqref{Intro:nonlocalfullynonlinearPDE} or \eqref{EquilibriumHJBequation} predominantly focuses on linear dependence on second-order derivatives and excludes the diagonal term $\left(\partial_I u\right)_{|I|= 2}(s,s,y,y)$. These cases correspond to TIC stochastic control with uncontrolled diffusion—effectively nonlocal quasilinear PDEs \eqref{QuasilinearPDE} solvable via fundamental solutions \cite{Yong2012,Wei2017,Yan2019}. However, leaving the diffusion uncontrolled severely limits modeling, especially for risk-sensitive tasks where direct control over uncertainty is essential to distinguish stochastic problems from deterministic ones. Furthermore, existing results often ignore the complex $x$-dependence of the objective functional \cite{Wei2017} or, despite addressing full nonlinearity \cite{Lei2023,Lei2021}, restrict nonlocality to the temporal dimension. This work thus extends the well-posedness framework to encompass nonlocality in both time and state.

To elaborate the challenges of extension from \cite{Yong2012,Wei2017,Yan2019} and from \cite{Lei2023,Lei2021}, we may revisit the classical contraction mapping approach to the well-posedness issue in \cite{Wei2017}. Specifically, we attempt to 
%Although there exists some related work of well-posedness of \eqref{Intro:nonlocalfullynonlinearPDE} in the previous literature, it is hard to modify them to adapt the general form. Specifically, all previous work \cite{Yong2012,Wei2017,Yan2019} related to an uncontrolled diffusion tries to
construct a nonlinear operator from $u$ to $U$ defined by the solution to the PDE
% \begin{equation} \label{PreviousPDEs}
%         U_s = \sum\limits_{|I|= 2}a^I(s,y)\partial_I U + 
%         F\big(t,s,x,y,\left(\partial_I U\right)_{|I|\leq 1}, \left(\partial_I u\right)_{|I|\leq 1}\big|_{\begin{subarray}{l}
%         t=s \\
%         x=y
%         \end{subarray}}\big), \quad U|_{s=0}=g.
% \end{equation}
\begin{equation} \label{PreviousPDEs}
        U_s = \sum\limits_{|I|= 2}a^I(s,y)\partial_I U + 
        F\big(t,s,x,y,\left(\partial_I U\right)_{|I|\leq 1}, \left(\partial_I u\right)_{|I|\leq 1}\big|_{\begin{subarray}{l}
        t=s \\
        x=y
        \end{subarray}}\big), \quad U|_{s=0}=g.
\end{equation}
% With the unmanageable diagonal terms at $(s,s,y,y)$ replaced with a known function $u$, the classical PDE theory promises that the mapping from $u$ to $U$ is a contraction such that it admits a unique fixed point solving the nonlocal PDE (with $u$ replaced by $U$ in \eqref{PreviousPDEs}). Note that \eqref{PreviousPDEs} still has local terms $\partial_{|I|= 2}U$ appearing in a linear manner. However, if the PDE involves the highest order nonlocal terms nonlinearly, i.e., \eqref{Intro:nonlocalfullynonlinearPDE}, the similar mapping is no longer contractive but just continuous. Even \cite{Lei2023,Lei2021} got rid of this challenge with an integral representation of the nonlocal terms, the methods used in the existing literature are still limited to handling nonlocality in time only. When we further advance the well-posedness to nonlocality in both time and state, we encounter the following two main challenges:
Replacing diagonal terms $(s,s,y,y)$ with a function $u$ allows classical PDE theory to yield a unique fixed point via contraction for \eqref{PreviousPDEs}. However, for fully nonlinear nonlocal terms as in \eqref{Intro:nonlocalfullynonlinearPDE}, the corresponding mapping is merely continuous, not contractive. While \cite{Lei2023, Lei2021} addressed time-only nonlocality using integral representations, advancing to nonlocality in both time and state introduces two primary challenges:
%It is natural to try to imitate the idea to handle with \eqref{Intro:nonlocalfullynonlinearPDE}. However, it is bound to encounter a failure since the mapping modified for \eqref{Intro:nonlocalfullynonlinearPDE} is just continuous instead of contractive. On the other hand, in the study of a $x$-independent version of \eqref{Intro:nonlocalfullynonlinearPDE}, \cite{Lei2023,Lei2021} show that there exists a wide difference between the external temporal parameter $t\in[0,T]$ and the spatial one $x\in\mathbb{R}^d$, which is less prominent in the case of uncontrolled diffusion problems. If we simply parrot a set of the techniques developed in \cite{Lei2023,Lei2021}, there will be two main difficulties: 
\begin{enumerate}
	% \item If we represent the diagonal (nonlocal) terms $\left(\partial_I u\right)_{|I|\leq 2}(s,s,x,y)\big|_{x=y}$ of \eqref{Intro:nonlocalfullynonlinearPDE} by the way of $\left(\partial_I u\right)_{|I|\leq 2}(t,s,x,y)-\int^t_s\cdot-\int^x_y\cdot$, it is clear that the first integral is a usual definite integral while the second one could be indefinite since its domain of integration is infinite. Hence, a natural question is that if the space of bounded functions is available to study the general nonlocal PDE \eqref{Intro:nonlocalfullynonlinearPDE}. We need to specify some suitable norms and induced spaces as well as topologies for \eqref{Intro:nonlocalfullynonlinearPDE}.
    \item Space and Norm Selection: Representing diagonal terms via $\int^t_s \cdot$ and $\int^x_y \cdot$ results in a definite temporal integral but a potentially indefinite spatial integral over an infinite domain. This necessitates identifying suitable function spaces, norms, and topologies for \eqref{Intro:nonlocalfullynonlinearPDE}.
	% \item A set of approaches in \cite{Lei2023,Lei2021} cannot provide Schauder's prior estimate for solutions of the linearized version \eqref{NonlocalLinearPDE} (given below) of \eqref{Intro:nonlocalfullynonlinearPDE}, which gives a certain compactness to the class of possible solutions. Such a compactness is necessary for Banach's fixed-point theorem in the study of more complicated PDEs. In the study of $x$-independent nonlocal PDEs in \cite{Lei2023,Lei2021}, the prior estimate can be established by studying a Cauchy initial value problem over time interval near the external temporal parameter $t$. However, it is difficult to consider such an associated PDE in a small neighbourhood of the spatial parameter $x$, since there are no boundary conditions imposed to verify the well-posedness of such an initial-boundary-value problem.
    % \item A set of approaches in \cite{Lei2023,Lei2021} fail to provide Schauder’s a priori estimates for solutions of the linearized PDE \eqref{NonlocalLinearPDE} associated with \eqref{Intro:nonlocalfullynonlinearPDE}, which are essential for establishing compactness of the solution set, a key requirement for applying Banach’s fixed-point theorem. While prior estimates are obtained for $x$-independent nonlocal PDEs by analyzing a Cauchy problem near the temporal parameter $t$, extending this to spatial neighborhoods in $x$ is challenging due to the lack of boundary conditions to ensure well-posedness.
    \item Lack of Schauder Estimates: The methods in \cite{Lei2023, Lei2021} do not provide Schauder a-priori estimates for the linearized PDE \eqref{NonlocalLinearPDE}, which are vital for the compactness required by fixed-point theorems. Unlike $x$-independent cases where Cauchy problems near $t$ suffice, the spatial dependency lacks boundary conditions to ensure well-posedness in local neighborhoods of $x$.
\end{enumerate}

% {\color{red}\noindent Even disregarding the possible fully nonlinear dependence of $F$ on the highest-order derivatives in \eqref{Intro:nonlocalfullynonlinearPDE}, which is already highly nontrivial, the nonlocal dependence on the highest-order (diagonal) term $\left(\partial_I u\right)_{|I|= 2}(s,s,y,y)$ introduces additional analytical difficulties. In particular, even for the degenerate linearized equation \eqref{GenerallinearizedPDE}, well-posedness is far from straightforward, reflecting the fundamentally new challenges brought by this nonlocal highest-order term.}
% {\color{red}\noindent Notably, even disregarding the possible fully nonlinear dependence of $F$ on the highest-order derivatives in \eqref{Intro:nonlocalfullynonlinearPDE}, the nonlocal dependence on the highest-order (diagonal) term $\left(\partial_I u\right)_{|I|=2}(s,s,y,y)$ already introduces substantial analytical difficulties. In particular, even for the degenerate linearized equation \eqref{GenerallinearizedPDE}, well-posedness remains highly nontrivial.}

Notably, even disregarding the fully nonlinear dependence of $F$ on the highest-order derivatives in \eqref{Intro:nonlocalfullynonlinearPDE}, the nonlocal dependence on the highest-order (diagonal) term $\left(\partial_I u\right)_{|I|=2}(s,s,y,y)$ introduces substantial analytical difficulties. In particular, even for the degenerate linearized equation \eqref{GenerallinearizedPDE}, well-posedness remains highly nontrivial.

\subsection{Our Approach}
The methods in the highly related literature \cite{Yong2012,Wei2017,Yan2019,Lei2023,Lei2021} are not feasible to address the well-posedness of the general 
%After the above-mentioned analysis, it is not hard to find that the set of methods neither in \cite{Yong2012,Wei2017,Yan2019} nor in \cite{Lei2023,Lei2021} nor their combinations can handle with the general 
nonlocal PDE \eqref{Intro:nonlocalfullynonlinearPDE}. In this paper, we provide a new approach for proving the well-posedness, which is compatible with all previous results. With the designs of norms and function spaces tailored for \eqref{Intro:nonlocalfullynonlinearPDE}, the main procedure of our analysis is outlined as follows:  

\textbf{Step 1a.} We first study a linearized version of \eqref{Intro:nonlocalfullynonlinearPDE} of the form 
% \begin{equation} \label{SpeciallinearizedPDE}
%     L_0u:=u_s-\sum_{|I|\leq 2} a^I(s,y)\partial_Iu+\sum_{|I|\leq 2} b^I(s,y)(\partial_Iu)\big|_{\begin{subarray}{l}
%         t=s \\
%         x=y
%         \end{subarray}}=f, \quad u|_{s=0}=g, 
% \end{equation}
\begin{equation} \label{SpeciallinearizedPDE}
    L_0u := u_s - \sum_{|I| \leq 2} a^I(s,y) \partial_I u + \sum_{|I| \leq 2} b^I(s,y) (\partial_I u) \Big|_{\substack{t=s \\ x=y}} = f, \quad u|_{s=0} = g.
\end{equation}
where both $a^I$ and $b^I$ are independent of $(t,x)$. It turns out that \eqref{SpeciallinearizedPDE} is mathematically equivalent to a decoupled system of PDEs (see \eqref{SystemforW} below) for a unknown vector-valued function. By proving that the system admits a regular enough solution and satisfies some important properties, we can show that there also exists a unique classical solution satisfying \eqref{SpeciallinearizedPDE} in $[0,T]^2\times\mathbb{R}^{d;d}$. Noteworthy is that due to the appearance of $\left(\partial_I u\right)_{|I|= 2}(s,s,x,y)\big|_{x=y}$, \eqref{SpeciallinearizedPDE} is not a special case of \eqref{PreviousPDEs};   

\textbf{Step 1b.} We then investigate a linearized PDE of \eqref{Intro:nonlocalfullynonlinearPDE} with general coefficients
\begin{equation} \label{GenerallinearizedPDE}
    Lu:=u_s-\sum_{|I|\leq 2} A^I(t,s,x,y)\partial_Iu+\sum_{|I|\leq 2} B^I(t,s,x,y)(\partial_Iu)\big|_{\begin{subarray}{l}
        t=s \\
        x=y
        \end{subarray}}=f, \quad u|_{s=0}=g,
\end{equation}
where $A^I$ and $B^I$ depend on both $(s,y)$ and $(t,x)$. The primary challenge in proving solvability lies in establishing a Schauder estimate. By utilizing specifically designed norms and function spaces, we provide quantitative regularity results and demonstrate that solutions to \eqref{GenerallinearizedPDE} are controlled by the non-homogeneous term $f$ and initial data $g$.

% $A^I$ and $B^I$ depend on not only $(s,y)$ but also $(t,x)$. The most difficult part of proving the solvability of \eqref{GenerallinearizedPDE} is to provide a Schauder's estimate of its solutions. Based on the well-designed norms and function spaces, we not only provide quantitative information on the regularity of solutions, but also show that the behavior of the solutions of \eqref{GenerallinearizedPDE} can be controlled by the non-homogeneous term $f$ and the initial data $g$;  

\textbf{Step 1c.} Let us consider a family of operators parameterized by $\tau\in[0,1]$: 
\begin{equation*}
    L_\tau u:=(1-\tau)L_0 u+\tau L_1 u
\end{equation*}
where $L_1u:=Lu$. Thanks to the Schauder estimate of solutions of \eqref{GenerallinearizedPDE}, we will take advantage of the method of continuity to prove the global well-posedness of \eqref{GenerallinearizedPDE} in $[0,T]^2\times\mathbb{R}^{d;d}$;  

\textbf{Step 2.} We analyze the operator $\Lambda(u)=U$, where $U$ is the solution of  \begin{equation} \label{utoU}
    U_s=LU+F\big(t,s,x,y,\left(\partial_I u\right)_{|I|\leq 2}, \left(\partial_I u\right)_{|I|\leq 2}\big|_{\begin{subarray}{l}
        t=s \\
        x=y
        \end{subarray}}\big)-Lu, \quad U|_{s=0}=g, 
\end{equation}
which is well-defined, provided that the nonlocal linear PDE \eqref{GenerallinearizedPDE} is well-posed. Moreover, it is obvious that each fixed point of \eqref{utoU} solves \eqref{Intro:nonlocalfullynonlinearPDE}. Thanks again to the Schauder estimate of solutions of \eqref{GenerallinearizedPDE}, we first prove that $\Lambda$ is a contraction and then make use of Banach's fixed point theorem to justify the local well-posedness of \eqref{Intro:nonlocalfullynonlinearPDE}. Subsequently, we show its global solvability, provided that a very sharp a prior estimate is available.

%Due to these well-posedness results, it is a direct corollary that a special type of \eqref{Intro:nonlocalfullynonlinearPDE} called as nonlocal quasilinear equations \eqref{QuasilinearPDE} is globally solvable.

%_____________________________________________________________________________________________

\subsection{Contributions and Organization of Our Paper}
Our contributions are mainly twofold. First, for such kind of nonlocal PDEs with initial-dynamic space-time structure arising from TIC stochastic control problems, we devise an analytical framework under which nonlocal linear/nonlinear PDEs are well-posed in the sense that we can establish the existence, uniqueness, and stability of their solutions. 
% i.e. (1) a solution exists in some sense; (2) the solution is unique in some space; (3) the map from some freely chosen functions (i.e. non-homogeneous term, initial data, and nonlinearity) to solutions is continuous in some topology. 
This paper has a detailed exploration about the underlying space of functions as well as mathematical properties of mappings between these spaces. Second, our framework allows the control variate entering the diffusion of state process, which breaks successfully through the existing bottleneck of TIC stochastic control problems. Together with the sufficiency and necessity analysis in the existing literature, our well-posedness results indicate directly the solvability of TIC control problems at least in a maximally-defined time interval. Thanks to our well-posedness and regularity results, some long-standing open problems in TIC stochastic control theory can be effectively analyzed and advanced within our analytical framework; see our Proposition \ref{SolvabilityofTIC} and its discussion following it. 
% (2) In addition to the solvability of HJB equations of equilibrium type, these well-posedness and regularities results of solutions do not only promise the convergence of the discrete-time setting to the continuous-time limit, but also provide specific conditions on primitives for TIC control problems; 
%Moreover, the difference between ``na\"{i}ve" and ``sophisticated" value functions is first examined; \textbf{(BSDE)} We provide a probabilistic expression for solutions of PDEs \eqref{Intro:nonlocalfullynonlinearPDE}. The proposed concept of flow of second-order BSDEs provides a unified and general treatment for the related studies. 
 
The rest of this paper is organized as follows. Section \ref{Sec:Pre} is devoted to the preliminaries for our study. We review the concepts of equilibrium controls and the associated equilibrium HJB equations for time-consistent stochastic optimal control problem. Section \ref{Sec:Linear} studies the linerized version of the nonlocal PDEs. We first establish the Schauder's prior estimate of solutions of the nonlocal linear PDEs, then take advantage of the method of continuity to prove its global well-posedness. In Section \ref{Sec:Nonlinear}, by the linearization method and Banach's fixed point theorem, we show that the nonlocal fully nonlinear PDE is locally solvable in a small time interval. Subsequently, we investigate extending the local well-posedness results to a larger time interval and a broader function space. Moreover, the fully nonlinear PDE is also globally solvable provided that a sufficiently sharp a priori estimate holds. As a corollary, we establish the global solvability of nonlocal quasilinear PDEs by leveraging these newly obtained well-posedness results.
% In addition to the contributions in PDE theory, we also propose a flow of BSDEs to provide a probabilistic interpretation for our nonlocal PDEs in Appendix \ref{app:probrep}. 
In Section \ref{Sec:TIC}, we apply our PDE results to equilibrium HJB equations for the analysis of TIC stochastic control problems. 
% We also estimate the difference between ``na\"{i}ve" and ``sophisticated" value functions in Appendix \ref{app:diffvalue}. 
Moreover, we provide financial TIC examples that are globally solvable. Finally, Section \ref{Sec:Conclusion} concludes. 

%_____________________________________________________________________________________________
%_____________________________________________________________________________________________

\section{Time-Inconsistent Stochastic Control Problems and Equilibrium Solutions} \label{Sec:Pre}
% In this section, we introduce the time-inconsistent (TIC) stochastic control problems, whose (equilibrium) solutions can be characterized by Hamilton-Jacobi-Bellman (HJB) equations (of the equilibrium type). Their rigorous derivation can be found in \cite{Yong2012,Wei2017,Yan2019}.
% % While our focus is on the well-posedness of the HJB equations of the equilibrium type, we simply introduce the differential equations under our considerations and their rigorous derivation can be found in \cite{Yong2012,Wei2017,Yan2019}. With the well-posedness of the HJB equations, we will convert the conclusions for the TIC stochastic control problems.

Let $(\Omega,\mathcal{F},\mathbb{F},\mathbb{P})$ be a complete filtered probability space that supports a $n$-dimensional standard Brownian motion, whose natural filtration augmented by all the $\mathbb{P}$-null sets is given by $\mathbb{F}=\{\mathcal{F}_s\}_{s\geq 0}$. Let $T>0$ be a finite horizon and $U\subseteq\mathbb{R}^m$ be a non-empty set that could be unbounded. The set of all admissible stochastic control processes over $[t,T]$ for $t\in [0,T)$ is defined as 
\begin{equation*}
    \begin{split}
        \mathcal{U}[t,T]:=\bigg\{\alpha:[t,T]\times\Omega\to U:\alpha(\cdot) \text{~is~} \mathbb{F}\text{-progressively measurable} 
        \text{~with~} \mathbb{E}\int^T_t|\alpha(\cdot)|^2ds<\infty\bigg\}. 
    \end{split}
\end{equation*}
%After that, we try to introduce the following problem to be minimized: 
% The controlled state process and the cost functional will be characterized with forward-backward stochastic differential equations (FBSDEs) below, while the admissibility of the stochastic controls also ensures the well-posedness of the FBSDEs. 
To define a TIC problem, we often fix the time $t\in[0,T]$ and consider a time variable $s\in[t,T]$. It is convenient to introduce a set notation for the time pair $(t,s)$: $\nabla[0,T]:=\{(t,s):0\leq t\leq s\leq T\}$; similarly, we also define $\Delta[0,T]:=\{(t,s):0\leq s\leq t\leq T\}$. To ease notational burden, we also introduce $\mathbb{R}^{d;d}:=\mathbb{R}^d\times \mathbb{R}^d$.

\subsection{Stochastic Controls with Time-and-State-Varying Objectives} For a given pair $(t,x)\in[0,T]\times\mathbb{R}^d$, we aim to find an $\overline{\alpha}(\cdot)\in\mathcal{U}[t,T]$ such that 
\begin{equation} \label{TICproblem}
    J(t,x;\overline{\alpha}(\cdot)):=\inf\limits_{\alpha(\cdot)\in\mathcal{U}[t,T]}J(t,x;\alpha(\cdot))
\end{equation}
where the cost functional $J(t,x;\alpha(\cdot)):=Y(t;t,x,\alpha(\cdot))$ with $(X(\cdot),Y(\cdot),Z(\cdot))$ (in greater detail, $(X(\cdot;t,x,\alpha(\cdot)),Y(\cdot;t,x,\alpha(\cdot)),Z(\cdot;t,x,\alpha(\cdot)))$) being the adapted solution to the following controlled forward-backward stochastic differential equations (FBSDEs): 
% \begin{equation} \label{ControlledFBSDE}
%     \left\{
%     \begin{array}{rcll}
%         dX(s) & = & b(s,X(s),\alpha(s))ds+\sigma(s,X(s),\alpha(s))dW(s), & s\in[t,T], \\
%         dY(s) & = & -h(t,s,X(t),X(s),\alpha(s),Y(s),Z(s))ds+Z(s)dW(s), & s\in[t,T], \\
%         X(t) & = & x, \quad Y(T)=g(t,X(t),X(T)), &
%     \end{array}
%     \right. 
% \end{equation}
\begin{equation} \label{ControlledFBSDE}
    \left\{
    \begin{aligned}
        dX(s) &= b(s,X(s),\alpha(s))ds + \sigma(s,X(s),\alpha(s))dW(s), && s \in [t,T], \\
        dY(s) &= -h(t,s,X(t),X(s),\alpha(s),Y(s),Z(s))ds + Z(s)dW(s), && s \in [t,T], \\
        X(t) &= x, \quad Y(T) = g(t,X(t),X(T)),
    \end{aligned}
    \right.
\end{equation}
where $b:[0,T]\times\mathbb{R}^d\times U\to\mathbb{R}^d$ and $\sigma:[0,T]\times\mathbb{R}^d\times U\to\mathbb{R}^{d\times n}$ are the drift and volatility of the state process $X(\cdot)$, respectively, $h:\nabla[0,T]\times\mathbb{R}^{d;d}\times  U\times\mathbb{R}\times\mathbb{R}^{1\times n}\to\mathbb{R}$ and $g:[0,T]\times\mathbb{R}^{d;d}$ are the generator and terminal condition of the utility process $(Y(\cdot),Z(\cdot))$, respectively, and they are all deterministic functions. Under some suitable conditions (see \cite[Proposition 3.3]{Ma1999}), for any $(t,x)\in[0,T]\times\mathbb{R}^d$ and $\alpha(\cdot)\in\mathcal{U}[t,T]$, the controlled FBSDEs \eqref{ControlledFBSDE} admit a unique adapted solution $(X(\cdot),Y(\cdot),Z(\cdot))$. Moreover, \cite{Karoui1997} reveals that it admits a probabilistic representation: 
% \begin{equation*}
%     J(t,x;\alpha(\cdot)) =\mathbb{E}_{t,x}\left[g(t,X(T),X(T))+\int^T_t h(t,s,X(t),X(s),\alpha(s),Y(s),Z(s))ds\right],
% \end{equation*}
\begin{equation*}
    J(t,x; \alpha(\cdot)) = \mathbb{E}_{t,x} \left[ g(t, X(T), X(T)) + \int_t^T h(t, s, X(t), X(s), \alpha(s), Y(s), Z(s)) \, ds \right]
\end{equation*}
where $\mathbb{E}_{t,x}[\cdot]$ is the conditional expectation $\mathbb{E}[\cdot|\mathcal{F}_t]$ under $X(t)=x$. One can easily observe that the BPO for \eqref{TICproblem} is not available as the $h$ and the $g$ in \eqref{ControlledFBSDE} depend on the current time $t$ and the current state $X(t)$. As a result, 
%Although the utility functional $J(t,x;\alpha(\cdot)):=Y(t;t,x,\alpha(\cdot))$ is well-defined, it is worth noting that the $(t,X(t))$-dependence of $h$ and $g$ in \eqref{ControlledFBSDE} directly violates the Bellman's optimality principle.  For such kind of initial-condition-independent optimal control problems, it is not suitable to find optimal control for any given initial pair. In fact, supposed that
even the agent can find an optimal control, denoted by $\overline{\alpha}(\cdot):=\overline{\alpha}(\cdot;t,x)$, for the problem over $[t,T]$ with any initial pair $(t,x)\in[0,T]\times\mathbb{R}^d$, we can anticipate that at a later time point $s\in(t,T]$,
%the agent find the updated optimal control $\overline{\alpha}(\cdot;s,\overline{X}(s))$ at $(s,\overline{X}(s))$. It is clear that
due to the time-and-state-dependence of objectives,
% \begin{equation} \label{DPPviolation}
%     J\left(s,\overline{X}(s);\overline{\alpha}(\cdot;t,x)|_{[s,T]}\right)>J\left(s,\overline{X}(s);\overline{\alpha}(\cdot;s,\overline{X}(s))\right) \quad \text{almost surely},
% \end{equation}
\begin{equation} \label{DPPviolation}
    J \bigl( s, \overline{X}(s); \overline{\alpha}(\cdot; t, x) \big|_{[s, T]} \bigr) > J \bigl( s, \overline{X}(s); \overline{\alpha}(\cdot; s, \overline{X}(s)) \bigr) \quad \text{almost surely}.
\end{equation}
where $\overline{X}(\cdot)$ is the adapted solution to \eqref{ControlledFBSDE} with $\overline{\alpha}(\cdot)$ and $(t,x)$ fixed. Hence, there is an incentive to deviate from the optimal control policy derived at $(t,x)$, $\overline{\alpha}(\cdot;t,x)$, as time evolves. Such problems are called TIC problems.

This paper adopts the widely used game-theoretical approach to tackle TIC problems.
We partition $[0,T]$ into $N$ subintervals $[t_{0},t_{1}),\ldots,[t_{N-1},t_{N})$ via $\mathcal{P}:0=t_0<t_1<\cdots<t_N=T$, and interpret the TIC stochastic control problem as an $N$-person stochastic differential game.
Player $k$ $(1\le k\le N)$ controls the system over $[t_{k-1},t_k)$ with her own admissible control $\alpha^k(\cdot)\in\mathcal{U}[t_{k-1},t_k)$.
The individual problems are linked through sophisticated cost functionals: the cost of Player $k$ is determined by the solution to \eqref{ControlledFBSDE} on $[t_k,t_{k+1}]$ ($t=t_k$) under the action $\alpha^{k+1}(\cdot)$ of Player $k+1$, namely $Y(t_{k+1})=Y(t_{k+1};t_{k+1},X(t_{k+1}),\alpha^{k+1}(\cdot))$.
Thus, cost functionals and actions are resolved backward, while each Player $k$ still solves a conventional, time-consistent problem over $[t_{k-1},t_k)$ based on her $(t_{k-1},X(t_{k-1}))$-dependent preference.
Agents with such sophisticated cost functionals are called sophisticated agents—thinking globally but acting locally.
We next detail the definition of equilibrium strategies.

\begin{definition}[\cite{Yong2012,Wei2017,Yan2019}] \label{Def:EquilibriumControl}
A continuous map $\mathbbm{e}:[0,T]\times\mathbb{R}^d\to U$ is called a closed-loop equilibrium strategy of the TIC stochastic control problem \eqref{TICproblem} if the following two conditions hold:

 1. For any $x\in\mathbb{R}^d$, the dynamics equation
%  \begin{equation*} 
%         \left\{
%     \begin{array}{lr}
%         d\overline{X}(s) = b(s,\overline{X}(s),\mathbbm{e}(s,\overline{X}(s)))ds+\sigma(s,\overline{X}(s),\mathbbm{e}(s,\overline{X}(s)))dW(s), \quad s\in[0,T], \\
%         \overline{X}(0) = x, 
%     \end{array}
% \right.
%     \end{equation*}
\begin{equation*} 
    \left\{
    \begin{aligned}
        d\overline{X}(s) &= b(s,\overline{X}(s),\mathbbm{e}(s,\overline{X}(s)))ds+\sigma(s,\overline{X}(s),\mathbbm{e}(s,\overline{X}(s)))dW(s), \quad s\in[0,T], \\
        \overline{X}(0) &= x, 
    \end{aligned}
    \right.
\end{equation*}
    admits a unique solution $\overline{X}(\cdot)$; 

 2. For each $(s,a)\in[0,T)\times U$, let $X^\epsilon(\cdot)$ satisfy 
    % \begin{equation*}
    %     \left\{
    % \begin{array}{lr}
    %     dX^\epsilon(s) =  b(s,X^\epsilon(s),a)ds+\sigma(s,X^\epsilon(s),a)dW(s), \quad s\in[t,t+\epsilon), \\
    %     dX^\epsilon(s)  =  b(s,X^\epsilon(s),\mathbbm{e}(s,X^\epsilon(s)))ds+\sigma(s,X^\epsilon(s),\mathbbm{e}(s,X^\epsilon(s)))dW(s), \\ \qquad\qquad\qquad\qquad\qquad\qquad\qquad\qquad\qquad\qquad\qquad 
    %     s\in[t+\epsilon,T], \\
    %     X^\epsilon(t)  =  \overline{X}(t),  
    % \end{array}
    % \right. 
    % \end{equation*}
    \begin{equation*}
    \left\{
    \begin{aligned}
        dX^\epsilon(s) &= b(s,X^\epsilon(s),a)ds+\sigma(s,X^\epsilon(s),a)dW(s), && s\in[t,t+\epsilon), \\
        dX^\epsilon(s) &= b(s,X^\epsilon(s),\mathbbm{e}(s,X^\epsilon(s)))ds+\sigma(s,X^\epsilon(s),\mathbbm{e}(s,X^\epsilon(s)))dW(s), && s\in[t+\epsilon,T], \\
        X^\epsilon(t) &= \overline{X}(t),
    \end{aligned}
    \right. 
\end{equation*}
then the following inequality holds: 
\begin{equation} \label{Localoptimality} 
    \underset{\epsilon\downarrow 0}{\underline{\lim}}\frac{J\left(t,\overline{X}(t);a\cdot\mathbf{1}_{[t,t+\epsilon)}\oplus\mathbbm{e}\right)-J\left(t,\overline{X}(t);\mathbbm{e}\right)}{\epsilon}\geq 0, 
\end{equation}
where  
% \begin{equation} \label{Def:epsilonpolicy}
% \left(a\cdot\mathbf{1}_{[s,s+\epsilon)}\oplus\mathbbm{e}\right)(s)=\left\{
%     \begin{array}{lr}
%         a, \hfill s\in[t,t+\epsilon), \\
%         \mathbbm{e}\left(s,X^\epsilon(s)\right), \quad s\in[t+\epsilon,T]. 
%     \end{array}
%     \right.
% \end{equation}
\begin{equation} \label{Def:epsilonpolicy}
\left(a \cdot \mathbf{1}_{[s, s+\epsilon)} \oplus \mathbbm{e}\right)(s) = 
\begin{cases}
    a, & s \in [t, t+\epsilon), \\
    \mathbbm{e}(s, X^\epsilon(s)), & s \in [t+\epsilon, T].
\end{cases}
\end{equation}
Furthermore, $\big\{\overline{X}(s)\big\}_{s\in[0,T]}$ and $V(t,\overline{X}(t)):= J\big(t,\overline{X}(t);\mathbbm{e}(s,\overline{X}(s))\}_{\tau\in[t,T]}\big)$ are called the equilibrium state process and the equilibrium value function, respectively. 
\end{definition}

Condition \eqref{Localoptimality} characterizes a subgame perfect equilibrium (SPE) solution to a game played by the incarnations of the agent at different time points. Hence, the closed-loop equilibrium strategy achieves local optimality in a proper sense. Considering the violation of the BPO and the deviation of optimal controls as time evolves in TIC problems, such a locally optimal control revives the recursive relationship between two sub-problems initiating at $(t,\overline{X}(t))$ and $(t+\epsilon,X^\epsilon(t+\epsilon))$, respectively. As a result, the closed-loop equilibrium strategy is time-consistent and free of \eqref{DPPviolation}. For more on the game-theoretic interpretation of TIC problems, see \cite{Bjoerk2017, Yong2012, Yan2019, Wei2017}.

% The game-theoretic interpretation of the closed-loop equilibrium strategy above can be referred to \cite{Bjoerk2017} or the highlights of the multi-person differential game approach of \cite{Yong2012,Yan2019,Wei2017}.

%Next, let us make some comments on the above definition. First of all, the equilibrium control $\mathbbm{e}(\cdot,\cdot)$ given by Definition \ref{Def:EquilibriumControl} is time-consistent since it is independent of the initial pair. In the study of control problems, it is interesting and important to find such a closed-loop strategy since it makes sense practically. Furthermore, Condition \eqref{Localoptimality} shows that $\mathbbm{e}(\cdot,\cdot)$ achieves local optimality in a proper sense. Considering the violation of Bellman's principle and the deviation of optimal controls with as time goes by in TIC problems, such kind of local optimal control recovers the recursive relationship between two sub-problems initiating respectively at $(t,\overline{X}(t))$ and $(t+\epsilon,X^\epsilon(t+\epsilon))$ in some sense. 

\subsection{Equilibrium HJB Equations} \label{Subsec:EqHJB}
The equilibrium solution (Definition \ref{Def:EquilibriumControl})—comprising strategy, state process, and value function—was pioneered in \cite{Bjoerk2017} using game-theoretic ideas. However, \cite{Yong2012,Wei2017,Yan2019} noted a lack of rigor regarding the state process solvability under the $\epsilon$-policy \eqref{Def:epsilonpolicy} and the heuristic nature of extended dynamic programming. We adopt the more rigorous framework of \cite{Yong2012,Wei2017,Yan2019}, noting that both approaches ultimately converge on the same equilibrium HJB equation. 

Briefly, the derivation involves partitioning the time interval to define piecewise approximate strategies and value functions, which are then stitched into time-consistent solutions. Under Assumption \eqref{Selection}, we construct an $N$-person equilibrium. Taking the mesh limit $|\mathcal{P}|\to 0$ yields the continuous-time strategy $\mathbbm{e}(s,y)=\Psi(s,y)$ and value function $V(s,y)=u(s,s,y,y)$, where $u(t,s,x,y)$ satisfies the parabolic PDE \eqref{EquilibriumHJBequation} with its initial-dynamic space–time structure:
% \begin{equation} \label{EquilibriumHJBequation}
% 	\left\{
% 	\begin{array}{lr}
% 		u_s(t,s,x,y) +  \mathcal{H}(t,s,x,y,\Psi(s,y),u(t,s,x,y),u_y(t,s,x,y),u_{yy}(t,s,x,y)) = 0, \\
%         \hfill (t,s,x,y)\in\nabla[0,T]\times\mathbb{R}^{d;d}, \\
% 		u(t,T,x,y)  =  g(t,x,y), \hfill (t,x,y)\in[0,T]\times\mathbb{R}^{d;d}
% 	\end{array}
% 	\right.
% \end{equation} 
\begin{equation} \label{EquilibriumHJBequation}
    \left\{
    \begin{aligned}
        &u_s(t,s,x,y) + \mathcal{H}\bigl(t,s,x,y,\Psi(s,y),u(t,s,x,y),u_y(t,s,x,y),u_{yy}(t,s,x,y)\bigr) = 0, \\
        % &\hfill (t,s,x,y) \in \nabla[0,T] \times \mathbb{R}^{d;d}, \\
        &u(t,T,x,y) = g(t,x,y), 
    \end{aligned}
    \right.
\end{equation}
% \begin{equation} \label{EquilibriumHJBequation}
% \left\{
% \begin{aligned}
% & u_s(t,s,x,y) + \mathcal{H}\bigl(t,s,x,y,\Psi(s,y),u(t,s,x,y),u_y(t,s,x,y),u_{yy}(t,s,x,y)\bigr) = 0,
%    && (t,s,x,y) \in \nabla[0,T] \times \mathbb{R}^{d;d},\\
% & u(t,T,x,y) = g(t,x,y),
%    && (t,x,y) \in [0,T] \times \mathbb{R}^{d;d}.
% \end{aligned}
% \right.
% \end{equation}
with the Hamiltonian given by
% \begin{equation} \label{Hamiltonian}
% 	\begin{split}
% 		\mathcal{H}(t,s,x,y,a,u,p,q)& =\frac{1}{2}\mathrm{tr}\left[q\cdot\left(\sigma\sigma^\top\right)(s,y,a)\right]+p^\top b(s,y,a) \\
% 		& \qquad    +h\left(t,s,x,y,a,u,p^\top\cdot\sigma(s,y,a)\right)
% 	\end{split}
% \end{equation} 
\begin{equation} \label{Hamiltonian}
	\begin{split}
		\mathcal{H}(t,s,x,y,a,u,p,q) &= \frac{1}{2} \mathrm{tr} \bigl[ q \cdot (\sigma\sigma^\top)(s,y,a) \bigr] + p^\top b(s,y,a) \\
		&\quad + h \bigl( t, s, x, y, a, u, p^\top \cdot \sigma(s,y,a) \bigr)
	\end{split}
\end{equation}
for $(t,s,x,y,a,u,p,q)\in\nabla[0,T]\times\mathbb{R}^{d;d}\times U\times\mathbb{R}\times\mathbb{R}^d\times\mathbb{S}^d$, in which the superscript $\top$ denotes the transpose of vectors or matrices and $\mathbb{S}^d\subseteq\mathbb{R}^{d\times d}$ denotes the set of all $d\times d$-symmetric matrices and
% $$
% \Psi(s,y)=\psi\big(s,s,y,y,u(s,s,y,y),u_y(s,s,x,y)|_{x=y},u_{yy}(s,s,x,y)|_{x=y}\big)
% $$
$$\Psi(s,y) = \psi \Bigl( s, s, y, y, u(s, s, y, y), u_y(s, s, x, y) \big|_{x=y}, u_{yy}(s, s, x, y) \big|_{x=y} \Bigr)$$
for $(s,y)\in[0,T]\times\mathbb{R}^d$, in which we assume that there exists a map $\psi:\nabla[0,T]\times\mathbb{R}^{d;d}\times\mathbb{R}\times\mathbb{R}^d\times\mathbb{S}^d\to U$ with all needed smoothness and boundedness of its derivatives such that 
% \begin{equation} \label{Selection}
% 	\psi(t,s,x,y,u,p,q)\in\Big\{\overline{a}\in U:\mathcal{H}(t,s,x,y,\overline{a},u,p,q)=\min\limits_{a\in U}\mathcal{H}(t,s,x,y,a,u,p,q)\Big\}
% \end{equation}
\begin{equation} \label{Selection}
    \psi(t,s,x,y,u,p,q) \in \left\{ \overline{a} \in U : \mathcal{H}(t,s,x,y,\overline{a},u,p,q) = \min_{a \in U} \mathcal{H}(t,s,x,y,a,u,p,q) \right\}
\end{equation}
holds for all $(t,s,x,y,u,p,q)\in\nabla[0,T]\times\mathbb{R}^{d;d}\times\mathbb{R}\times\mathbb{R}^d\times\mathbb{S}^d$.

Equation \eqref{EquilibriumHJBequation} is an \textit{equilibrium HJB equation} with an initial–dynamic structure, involving both $(t,x)$ and $(s,y)$. The variables $(t,x)$ are not mere parameters due to nonlocal terms such as $u(s,s,y,y)$, $u_y(s,s,x,y)\big|_{x=y}$, and $u_{yy}(s,s,x,y)\big|_{x=y}$, making \eqref{EquilibriumHJBequation} fully nonlinear and nonlocal. Heuristically, “global thinking” corresponds to terms at $(t,s,x,y)$, while “local acting” corresponds to those at $(s,s,y,y)$. When \eqref{EquilibriumHJBequation} is well-posed, the limit as $|\mathcal{P}|\to 0$ becomes rigorous. Moreover, as shown in \cite{Yong2012,Wei2017,Yan2019}, $(\mathbbm{e},V)(s,y):=(\Psi(s,y),u(s,s,y,y))$ gives the closed-loop equilibrium strategy and value function. Hence, well-posedness of \eqref{EquilibriumHJBequation} is essential for TIC control problems.

By a standard change of time variables as discussed in \cite{Lei2021,Lei2023}, \eqref{EquilibriumHJBequation} can be reformulated as an initial value problem in a forward form of \eqref{Intro:nonlocalfullynonlinearPDE}, which can ease the notational burden compared to the terminal value problem. Moreover, the order relation $t\leq s$ between the initial time point $t$ and the running time $s$ in \eqref{EquilibriumHJBequation} can be removed in the study of nonlocal PDEs, since it is natural to extend the solutions of \eqref{Intro:nonlocalfullynonlinearPDE} from the triangular time zone $\nabla[0,T]$ to a rectangular one $[0,T]^2$. In the next two sections, we will establish the well-posedness of \eqref{Intro:nonlocalfullynonlinearPDE}. 

%_____________________________________________________________________
%_____________________________________________________________________

\section{Nonlocal Linear PDEs} \label{Sec:Linear}
To establish the well-posedness of the fully nonlinear PDE \eqref{Intro:nonlocalfullynonlinearPDE}, we first study its linearized version \eqref{GenerallinearizedPDE}.
%\begin{equation*}
%	L_0u:=u_s-\sum_{|I|\leq 2} a^I(s,y)\partial_Iu+\sum_{|I|\leq 2} b^I(s,y)(\partial_Iu)\big|_{\begin{subarray}{l}
%			t=s \\
%			x=y
%	\end{subarray}}=f, \quad u|_{s=0}=g, 
%\end{equation*}
%where both $a^I$ and $b^I$ are independent of $(t,x)$.
While the nonlocal linear PDE is simpler by right, it plays a crucial role in the study of nonlocal fully nonlinear PDEs with a linearization method in Section \ref{Sec:Nonlinear}.  

\subsection{Function Spaces and Nonlocal Differential Operators} 
% We first introduce some suitable norms, spaces, and differential operators for the study of the nonlocal differential equations arising from time inconsistency.

% Given $a,b\in[0,T]$ and $a\le b$, we denote by $C([a,b]\times\mathbb{R}^d;\mathbb{R})$ the set of all the continuous and bounded real functions in $[a,b]\times\mathbb{R}^d$ endowed with the supremum norm $|\cdot|^{(0)}_{[a,b]\times\mathbb{R}^d}$ (for short $|\cdot|^{(0)}$). In the studies of local parabolic equations of second order \cite{Eidelman1969,Lei2023}, we usually introduce the spaces of H\"{o}lder continuous functions to ensure the solvability of PDEs in the classical solution sense. Let $C^{\frac{l}{2},l}({[a,b]\times\mathbb{R}^d};\mathbb{R})$ be the Banach space of the functions $\varphi(s,y)$ such that $\varphi(s,y)$ is continuous in $[a,b]\times\mathbb{R}^d$, its derivatives of the form $D^i_sD^j_y\varphi$ for $2i+j<l$ exist, and it has a finite norm defined by
% We first define suitable norms, spaces, and operators for nonlocal differential equations arising from time inconsistency. 
Let $C([a,b]\times\mathbb{R}^d;\mathbb{R})$ denote the set of continuous, bounded real functions on $[a,b]\times\mathbb{R}^d$ endowed with the supremum norm $|\cdot|^{(0)}$. To ensure the existence of classical solutions for second-order parabolic equations, we utilize H"{o}lder spaces \cite{Eidelman1969,Lei2023}. Specifically, $C^{\frac{l}{2},l}({[a,b]\times\mathbb{R}^d};\mathbb{R})$ is the Banach space of continuous functions $\varphi(s,y)$ with existing derivatives $D^i_sD^j_y\varphi$ for $2i+j<l$ and a finite norm defined by: 
% \begin{equation*}
% 	\begin{split}
%         |\varphi|^{(l)}_{[a,b]\times\mathbb{R}^d} & := \sum_{k\leq[l]}\sum_{2i+j=k}\left|D^i_sD^j_y\varphi\right|^{(0)}+\sum_{2i+j=[l]}\langle D^i_sD^j_y\varphi\rangle^{(l-[l])}_y \\
%         & \qquad +\sum_{0<l-2i-j<2}\langle D^i_sD^j_y\varphi\rangle^{(\frac{l-2i-j}{2})}_s,
%     \end{split}
% \end{equation*}
\begin{equation*}
	\begin{split}
        |\varphi|^{(l)}_{[a,b] \times \mathbb{R}^d} &:= \sum_{k \le [l]} \sum_{2i+j=k} |D^i_s D^j_y \varphi|^{(0)} + \sum_{2i+j=[l]} \langle D^i_s D^j_y \varphi \rangle^{(l-[l])}_y  + \sum_{0 < l-2i-j < 2} \langle D^i_s D^j_y \varphi \rangle^{(\frac{l-2i-j}{2})}_s,
    \end{split}
\end{equation*}
where $l$ is a non-integer positive number with $[l]$ being its integer part and for $\alpha\in(0,1)$, 
% \begin{equation*}
%     \langle\varphi\rangle^{(\alpha)}_y:=\sup\limits_{\begin{subarray}{c} a\leq s\leq b \\ 0<|y-y^\prime|\leq 1\end{subarray}}\frac{|\varphi(s,y)-\varphi(s,y^\prime)|}{|y-y^\prime|^\alpha},~\langle\varphi\rangle^{(\alpha)}_s:=\sup\limits_{\begin{subarray}{c} a\leq s< s^\prime\leq b \\ y\in\mathbb{R}^d\end{subarray}}\frac{|\varphi(s,y)-\varphi(s^\prime,y)|}{|s-s^\prime|^\alpha}. 
% \end{equation*}
\begin{equation*}
    \langle\varphi\rangle^{(\alpha)}_y := \sup_{\substack{a \leq s \leq b \\ 0 < |y-y^\prime| \leq 1}} \frac{|\varphi(s,y)-\varphi(s,y^\prime)|}{|y-y^\prime|^\alpha}, \quad \langle\varphi\rangle^{(\alpha)}_s := \sup_{\substack{a \leq s < s^\prime \leq b \\ y \in \mathbb{R}^d}} \frac{|\varphi(s,y)-\varphi(s^\prime,y)|}{|s-s^\prime|^\alpha}.
\end{equation*}
Wherever no confusion arises, we do not distinguish between $|\varphi|^{(l)}_{[a,b]\times\mathbb{R}^d}$ and $|\varphi|^{(l)}_{\mathbb{R}^d}$ for functions $\varphi(y)$ independent of $s$. 

Considering the pair of space-time arguments (i.e. $(t,x)$ and $(s,y)$) of solutions of nonlocal PDEs, we can similarly introduce the space $C([a,b]^2\times\mathbb{R}^{d;d};\mathbb{R})$. For a real-valued function $\psi(t,s,x,y)$ and a vector-valued $\Psi=(\psi^1,\psi^2,\cdots,\psi^m)(t,s,x,y)$, we introduce the following norms: 
% \begin{eqnarray*}
%     |\Psi|^{(l)}_{[a,b]} & := & \sum_m|\psi^m(t,\cdot,x,\cdot)|^{(l)}_{[a,b]}, ~\hbox{for any fixed} ~ (t,x)\in[a,b]\times\mathbb{R}^d, \\
%     {[\Psi]}^{(l)}_{[a,b]} & := & \sup\limits_{(t,x)\in[a,b]\times\mathbb{R}^d}\Big\{\sum_m|\psi^m(t,\cdot,x,\cdot)|^{(l)}_{[a,b]}\Big\},
% \end{eqnarray*}
\begin{equation*}
    \begin{aligned}
    |\Psi|^{(l)}_{[a,b]} &:= \sum_m |\psi^m(t, \cdot, x, \cdot)|^{(l)}_{[a,b]}, \quad
    [\Psi]^{(l)}_{[a,b]} &:= \sup_{(t,x) \in [a,b] \times \mathbb{R}^d} \Bigl\{ \sum_m |\psi^m(t, \cdot, x, \cdot)|^{(l)}_{[a,b]} \Bigr\}
\end{aligned}
\end{equation*}
% \begin{equation*}
%     \begin{split}
%         \lVert\psi\rVert^{(l)}_{[a,b]}& := \sup\limits_{(t,x)\in[a,b]\times\mathbb{R}^d}\Big\{|(\psi,\psi_t,\psi_x,\psi_{xx})(t,\cdot,x,\cdot)|^{(l)}_{[a,b]}\Big\} \\
%         & :=\sup\limits_{(t,x)\in[a,b]\times\mathbb{R}^d}\Big\{|\psi(t,\cdot,x,\cdot)|^{(l)}_{[a,b]}+|\psi_t(t,\cdot,x,\cdot)|^{(l)}_{[a,b]}+|\psi_x(t,\cdot,x,\cdot)|^{(l)}_{[a,b]}\\
%         & \qquad\qquad\qquad\qquad\qquad\qquad   
%         +|\psi_{xx}(t,\cdot,x,\cdot)|^{(l)}_{[a,b]}\Big\},
%     \end{split}
% \end{equation*}
\begin{equation*}
    \begin{aligned}
        \lVert\psi\rVert^{(l)}_{[a,b]} &:= \sup_{(t,x) \in [a,b] \times \mathbb{R}^d} \Bigl\{ |(\psi, \psi_t, \psi_x, \psi_{xx})(t, \cdot, x, \cdot)|^{(l)}_{[a,b]} \Bigr\} \\
        &:= \sup_{(t,x) \in [a,b] \times \mathbb{R}^d} \Bigl\{ |\psi(t, \cdot, x, \cdot)|^{(l)}_{[a,b]} + |\psi_t(t, \cdot, x, \cdot)|^{(l)}_{[a,b]} + |\psi_x(t, \cdot, x, \cdot)|^{(l)}_{[a,b]}  + |\psi_{xx}(t, \cdot, x, \cdot)|^{(l)}_{[a,b]} \Bigr\},
    \end{aligned}
\end{equation*}
which induces the following Banach space
\begin{equation*}
    \Omega^{(l)}_{[a,b]}:=\Big\{\psi\in C([a,b]^2\times\mathbb{R}^{d;d};\mathbb{R}):
    \Vert\psi\rVert^{(l)}_{[a,b]}<\infty\Big\}. 
\end{equation*}
%For convenience, we introduce some symbols of $\psi(t,s,x,y)$.
To ease the notational burden, we introduce the following vector functions:
% $$
% \overline{\psi}(t,s,x,y)=\begin{bmatrix}
% 	\psi \\ \psi_t \\ \psi_x \\ \psi_{xx}
% \end{bmatrix},\quad \overleftarrow{\psi}(t,s,x,y)=\begin{bmatrix}
% 	\psi \\ \psi_t \\ \psi_x
% \end{bmatrix}, \quad \overrightarrow{\psi}(t,s,x,y)=\begin{bmatrix}
% 	\psi_t \\ \psi_x \\ \psi_{xx}
% \end{bmatrix}.
% $$
% \begin{equation*}
%     \begin{aligned}
%         \overline{\psi}(t,s,x,y) = [\psi, \psi_t, \psi_x, \psi_{xx}]^\top, & \quad \overleftarrow{\psi}(t,s,x,y) = [\psi, \psi_t, \psi_x]^\top, \\
%         \overrightarrow{\psi}(t,s,x,y) &= [\psi_t, \psi_x, \psi_{xx}]^\top.
%     \end{aligned}
% \end{equation*}
% \begin{equation*}
%     \begin{aligned}
%         \overline{\psi}(t,s,x,y) &= [\psi, \psi_t, \psi_x, \psi_{xx}]^\top, & \quad \overleftarrow{\psi}(t,s,x,y) = [\psi, \psi_t, \psi_x]^\top, \\
%         \overrightarrow{\psi}(t,s,x,y) &= [\psi_t, \psi_x, \psi_{xx}]^\top.
%     \end{aligned}
% \end{equation*}
% \begin{equation*}
%     \begin{aligned}
%         \overline{\psi}(t,s,x,y) &= (\psi, \psi_t, \psi_x, \psi_{xx})^\top, \quad & \overleftarrow{\psi}(t,s,x,y) &= (\psi, \psi_t, \psi_x)^\top, \\[1.5ex]
%         \overrightarrow{\psi}(t,s,x,y) &= (\psi_t, \psi_x, \psi_{xx})^\top.
%     \end{aligned}
% \end{equation*}
\begin{gather*}
    \overline{\psi}(t,s,x,y) = (\psi, \psi_t, \psi_x, \psi_{xx})^\top, \quad \overleftarrow{\psi}(t,s,x,y) = (\psi, \psi_t, \psi_x)^\top, \quad
    \overrightarrow{\psi}(t,s,x,y) = (\psi_t, \psi_x, \psi_{xx})^\top
\end{gather*}
Hence, $\lVert\psi\rVert^{(l)}_{[a,b]}$ can be rewritten as $[\overline{\psi}]^{(l)}_{[a,b]}$. Now, we turn to regulate the nonlocal linear differential operator: 
% \begin{equation} \label{Nonlocallinearoperator}
%     \begin{split}
%         Lu:= u_s(t,s,x,y) & - \sum\limits_{|I|\leq 2}A^I(t,s,x,y)\partial_I u(t,s,x,y) \\
%         &
%         + \sum\limits_{|I|\leq 2}B^I(t,s,x,y)\partial_I u(s,s,x,y)|_{x=y}
%     \end{split}
% \end{equation}
% \begin{equation} \label{Nonlocallinearoperator}
%     \begin{split}
%         Lu := u_s(t,s,x,y) &- \sum_{|I| \leq 2} A^I(t,s,x,y) \partial_I u(t,s,x,y) \\[1.5ex]
%         &+ \sum_{|I| \leq 2} B^I(t,s,x,y) \partial_I u(s,s,x,y) \big|_{x=y}
%     \end{split}
% \end{equation}
\begin{equation} \label{Nonlocallinearoperator}
    \begin{split}
        Lu := u_s(t,s,x,y) &- \sum_{|I| \leq 2} A^I \partial_I u(t,s,x,y) + \sum_{|I| \leq 2} B^I \partial_I u(s,s,x,y) \big|_{x=y}
    \end{split}
\end{equation}
where $I=(I_1,I_2,\cdots,I_d)$ is a multi-index of non-negative integers, $|I|=I_1+I_2+\cdots+I_d$. The operator $\partial_I$ is interpreted as the partial derivative $\frac{\partial^{|I|}}{\partial y^{I_1}_1\partial y^{I_2}_2\cdots\partial y^{I_d}_d}$ of order $I_i$ in $y_i$ and $u_s$ as the derivative of $u$ in $s$. Moreover, for each $I$, the coefficients $A^I(t,s,x,y)$, $B^I(t,s,x,y)\in\Omega^{(\alpha)}_{[0,T]}$ satisfy the uniform ellipticity conditions, i.e. there exists some $\lambda>0$ such that 
% \begin{eqnarray} 
% 	\sum\limits_{|I|= 2}A^I(t,s,x,y)\xi^I & \geq & \lambda|\xi|^2, \label{Uniformellipticitycondition1} \\
% 	\sum\limits_{|I|= 2}\left(A^I(t,s,x,y)+B^I(t,s,x,y)\right)\xi^I & \geq & \lambda|\xi|^2, \label{Uniformellipticitycondition2}
% \end{eqnarray}
\begin{align} 
	\sum_{|I|= 2} A^I(t,s,x,y) \xi^I &\geq \lambda |\xi|^2, \label{Uniformellipticitycondition1} \\[1ex]
	\sum_{|I|= 2} \bigl( A^I(t,s,x,y) + B^I(t,s,x,y) \bigr) \xi^I &\geq \lambda |\xi|^2, \label{Uniformellipticitycondition2}
\end{align}
for any $(t,s,x,y)\in[0,T]^2\times\mathbb{R}^{d;d}$ and $\xi\in\mathbb{R}^d$, where $\xi^I=\xi^{I_1}_1\xi^{I_2}_2\cdots\xi^{I_d}_d$. It is noteworthy that the nonlocal operator \eqref{Nonlocallinearoperator} and the uniformly ellipticity conditions \eqref{Uniformellipticitycondition1}-\eqref{Uniformellipticitycondition2} reduce to the classical counterparts when $B^I=0$.   

With the introduction of \eqref{Nonlocallinearoperator} and its regularity conditions, we study the nonlocal linear PDE of the form 
% \begin{equation} \label{NonlocalLinearPDE}
%     \left\{
%     \begin{array}{rcl}
%         Lu(t,s,x,y) & = & f(t,s,x,y), \\
%         u(t,0,x,y) & = & g(t,x,y), \quad t,s\in[0,T],\quad x,y\in\mathbb{R}^d. 
%     \end{array}
%     \right. 
% \end{equation}
\begin{equation} \label{NonlocalLinearPDE}
    \left\{
    \begin{aligned}
        Lu(t,s,x,y) &= f(t,s,x,y), \\
        u(t,0,x,y) &= g(t,x,y), \quad t,s \in [0,T], \quad x,y \in \mathbb{R}^d.
    \end{aligned}
    \right.
\end{equation}
where the non-homogeneous term $f\in\Omega^{(\alpha)}_{[0,T]}$ and the initial condition $g\in\Omega^{(2+\alpha)}_{[0,T]}$. Roughly speaking, the existence, uniqueness, and stability of solutions of nonlocal linear PDE \eqref{NonlocalLinearPDE} corresponds to the surjection, injection, and continuity (boundedness) properties of nonlocal operator \eqref{Nonlocallinearoperator} within $\Omega^{(l)}_{[a,b]}$, respectively. Next, we will find that \eqref{NonlocalLinearPDE} is well-posed in $\Omega^{(l)}_{[0,T]}$ under some mild conditions. Consequently, the nonlocal linear operator $L:\big\{u\in\Omega^{(2+\alpha)}_{[0,T]}:u|_{s=0}=g\big\}\rightarrow\Omega^{(\alpha)}_{[0,T]}$ is bijective and continuous. 

%%%%%%%%%%%%%%%%%%%%%%%%%%%%%%%%%%%%%%%%%%%%%%%%%%%%%%%%%%%%%%%%

\subsection{Schauder's Estimate of Solutions to Nonlocal Linear PDEs}
% We leverage the method of continuity to prove the global existence of nonlocal linear PDE \eqref{NonlocalLinearPDE}. To this end, we embed our original problem \eqref{NonlocalLinearPDE} into a family of parameterized PDEs. Compared with studying directly the solvability of \eqref{NonlocalLinearPDE}, it is more practical and feasible to investigate a simplified problem in the family of PDEs. Subsequently, the method of continuity promises the solvability of \eqref{NonlocalLinearPDE} from the simplified PDE. Though, the key transition between the two problems requires a parameter-independent Schauder's estimate of solutions of the family of PDEs. 

% Loosely speaking, the prior estimate not only controls the behaviour of the solutions and provides quantitative information on their regularity but also guarantees certain compactness of the set of all possible solutions. Such a compactness is necessary for the method of continuity in the study of nonlocal linear PDEs with variable coefficients and for fixed-point arguments in the fully nonlinear setting. 

We use the method of continuity to establish global existence for the nonlocal linear PDE \eqref{NonlocalLinearPDE} by embedding it into a family of parameterized problems. Instead of tackling \eqref{NonlocalLinearPDE} directly, we study a simpler equation in this family and then transfer solvability to the original problem via continuation. The key step is a parameter-independent Schauder estimate for the whole family. This a priori estimate controls solution behavior, yields regularity, and ensures compactness of the solution set. Such compactness is crucial for both the continuity method in the linear case and fixed-point arguments in the fully nonlinear setting.

In what follows, we establish the prior estimate of solutions to nonlocal linear PDEs \eqref{NonlocalLinearPDE}. First of all, let us rewrite \eqref{NonlocalLinearPDE} as the following form 
% \begin{equation} \label{ExpandedlinearPDE}
%     \left\{
%     \begin{array}{rcl}
%         u_s(t,s,x,y) & = & \sum\limits_{|I|\leq 2}A^I(t,s,x,y)\partial_I u(t,s,x,y) \\
%         & & +\sum\limits_{|I|\leq 2}B^I(t,s,x,y)\partial_I u(s,s,x,y)|_{x=y}+f(t,s,x,y), \\
%         u(t,0,x,y) & = & g(t,x,y), \quad t,s\in[0,T],\quad x,y\in\mathbb{R}^d. 
%     \end{array}
%     \right. 
% \end{equation}
\begin{equation} \label{ExpandedlinearPDE}
    \left\{
    \begin{aligned}
        u_s(t,s,x,y) &= \sum_{|I|\leq 2} A^I(t,s,x,y) \partial_I u(t,s,x,y) \\
        &\quad + \sum_{|I|\leq 2} B^I(t,s,x,y) \partial_I u(s,s,x,y) \big|_{x=y} + f(t,s,x,y), \\
        u(t,0,x,y) &= g(t,x,y), \quad t,s \in [0,T], \quad x,y \in \mathbb{R}^d.
    \end{aligned}
    \right.
\end{equation}
Suppose that \eqref{ExpandedlinearPDE} admits a solution $u(t,s,x,y)\in\Omega^{(2+\alpha)}_{[0,T]}$. Then, for any $(t,x)\doteq(x_0,x_1,\cdots,x_d)^\top\in[0,T]\times\mathbb{R}^d$, we have 
% \begin{equation} \label{Firstderivatives}
% \left\{
%     \begin{array}{rcl}
%         \left(\frac{\partial u}{\partial x_i}\right)_s(t,s,x,y) & = & \sum\limits_{|I|\leq 2}A^I\partial_I \left(\frac{\partial u}{\partial x_i}\right)(t,s,x,y)+\sum\limits_{|I|\leq 2}A^I_{x_i}\partial_I u(t,s,x,y) \\
%         & & +\sum\limits_{|I|\leq 2}B^I_{x_i}\partial_I u(s,s,x,y)\big|_{x=y}+f_{x_i}, \quad i=0,1,\ldots,d, \\
%         \left(\frac{\partial u}{\partial x_i}\right)(t,0,x,y) & = & g_{x_i}(t,x,y),\quad t,s\in[0,T],\quad x,y\in\mathbb{R}^d, 
%     \end{array}
% \right. 
% \end{equation} 
\begin{equation} \label{Firstderivatives}
\left\{
    \begin{aligned}
        \left(\frac{\partial u}{\partial x_i}\right)_s(t,s,x,y) &= \sum_{|I|\leq 2} A^I \partial_I \left(\frac{\partial u}{\partial x_i}\right)(t,s,x,y) + \sum_{|I|\leq 2} A^I_{x_i} \partial_I u(t,s,x,y) \\
        &\quad + \sum_{|I|\leq 2} B^I_{x_i} \partial_I u(s,s,x,y) \big|_{x=y} + f_{x_i}, \quad i=1,\ldots,d, \\
        \left(\frac{\partial u}{\partial x_i}\right)(t,0,x,y) &= g_{x_i}(t,x,y), \quad t,s \in [0,T], \quad x,y \in \mathbb{R}^d.
    \end{aligned}
\right. 
\end{equation}
where the dependence of $A^I$, $B^I$, and $f$ on their arguments $(t,s,x,y)$ is suppressed here. Furthermore, we also have
% \begin{equation} \label{Secondderivatives} 
% \left\{
%     \begin{array}{rcl}
%         \left(\frac{\partial^2 u}{\partial x_i\partial x_j}\right)_s(t,s,x,y) & = & \sum\limits_{|I|\leq 2}A^I\partial_I \left(\frac{\partial^2 u}{\partial x_i \partial x_j}\right)(t,s,x,y)+\sum\limits_{|I|\leq 2}A^I_{x_j}\partial_I \left(\frac{\partial u}{\partial x_i}\right)(t,s,x,y) \\
%         & & +\sum\limits_{|I|\leq 2}A^I_{x_ix_j}\partial_I u(t,s,x,y)+\sum\limits_{|I|\leq 2}A^I_{x_i}\partial_I \left(\frac{\partial u}{\partial x_j}\right)(t,s,x,y) \\
%         & & +\sum\limits_{|I|\leq 2}B^I_{x_ix_j}\partial_I u(s,s,x,y)\big|_{x=y}+f_{x_ix_j}, \quad i,j=1,\ldots,d, \\
%        \left(\frac{\partial^2 u}{\partial x_i\partial x_j}\right)(t,0,x,y) & = & g_{x_ix_j}(t,x,y),\quad t,s\in[0,T],\quad x,y\in\mathbb{R}^d.  
%     \end{array}
% \right. 
% \end{equation} 
\begin{equation} \label{Secondderivatives} 
\left\{
    \begin{aligned}
        \left(\frac{\partial^2 u}{\partial x_i\partial x_j}\right)_s(t,s,x,y) &= \sum_{|I|\leq 2} A^I \partial_I \left(\frac{\partial^2 u}{\partial x_i \partial x_j}\right)(t,s,x,y) + \sum_{|I|\leq 2} A^I_{x_j} \partial_I \left(\frac{\partial u}{\partial x_i}\right)(t,s,x,y) \\
        &\quad + \sum_{|I|\leq 2} A^I_{x_i} \partial_I \left(\frac{\partial u}{\partial x_j}\right)(t,s,x,y) + \sum_{|I|\leq 2} A^I_{x_ix_j} \partial_I u(t,s,x,y) \\
        &\quad + \sum_{|I|\leq 2} B^I_{x_ix_j} \partial_I u(s,s,x,y) \big|_{x=y} + f_{x_ix_j}, \quad i,j=1,\ldots,d, \\
        \left(\frac{\partial^2 u}{\partial x_i\partial x_j}\right)(t,0,x,y) &= g_{x_ix_j}(t,x,y), \quad t,s \in [0,T], \quad x,y \in \mathbb{R}^d.
    \end{aligned}
\right. 
\end{equation}
To simplify, \eqref{ExpandedlinearPDE}-\eqref{Secondderivatives} can be reorganized in a compact way as a parabolic system for a vector-valued function $\overline{u}^\top=\left(u,\frac{\partial u}{\partial t},\frac{\partial u}{\partial x},\frac{\partial^2 u}{\partial x\partial x}\right)^\top(t,s,x,y)$: 
% \begin{equation} \label{Systemforoverlineu}
%     \left\{
%     \begin{array}{rcl}
%         \overline{u}^\top_s(t,s,x,y) & = & \sum\limits_{|I|\leq 2}P^I\partial_I\overline{u}^\top(t,s,x,y) + \sum\limits_{|I|\leq 2}\overline{B}^\top\partial_I u(s,s,x,y)|_{x=y} + \overline{f}^\top, \\
        
%         ~ \\
          
%         \overline{u}^\top(t,0,x,y) & = & \overline{g}^\top(t,x,y), \quad t,s\in[0,T],\quad x,y\in\mathbb{R}^d, 
%     \end{array}
%     \right. 
% \end{equation}
% \begin{equation} \label{Systemforoverlineu}
%     \left\{
%     \begin{array}{rcl}
%         \overline{u}^\top_s(t,s,x,y) & = & \sum\limits_{|I|\leq 2}P^I\partial_I\overline{u}^\top(t,s,x,y) + \sum\limits_{|I|\leq 2}(\overline{B}^I)^\top\partial_I u(s,s,x,y)|_{x=y} + \overline{f}^\top, \\
%         \overline{u}^\top(t,0,x,y) & = & \overline{g}^\top(t,x,y), \quad t,s\in[0,T],\quad x,y\in\mathbb{R}^d, 
%     \end{array}
%     \right. 
% \end{equation}
\begin{equation} \label{Systemforoverlineu}
    \left\{
    \begin{aligned}
        \overline{u}^\top_s(t,s,x,y) &= \sum_{|I|\leq 2} P^I \partial_I \overline{u}^\top(t,s,x,y) + \sum_{|I|\leq 2} (\overline{B}^I)^\top \partial_I u(s,s,x,y) \big|_{x=y} + \overline{f}^\top, \\
        \overline{u}^\top(t,0,x,y) &= \overline{g}^\top(t,x,y), \quad t,s \in [0,T], \quad x,y \in \mathbb{R}^d.
    \end{aligned}
    \right.
\end{equation}
where each of $P^I=P^I(t,s,x,y)$ for $|I|\leq 2$ is a lower-triangular matrix, whose diagonal elements are exactly $A^I$ while the off-diagonal elements do not matter the subsequent analyses. Moreover, $\overline{B}^I$, $\overline{f}$, and $\overline{g}$ are all vector-valued, consisting of themselves and their derivatives in $t$ and $x$. Thanks to the structure of such a matrix $P^I$, the existence and regularity of the fundamental solution of the parabolic operator $Du:=u_s-\sum P^I\partial_Iu$ is promised by \eqref{Uniformellipticitycondition1} of $A^I$; see \cite{Friedman1964,Ladyzhanskaya1968,Eidelman1969}.  
% where $P^I=P^I(t,s,x,y)$ is a block-lower-triangular matrix, all blocks on the diagonal of which are exactly $A^I$. Thanks to the structure of such matrix $P^I$, the existence of fundamental solution of the parabolic operator $Lu:=u_s-\sum P^I\partial_Iu$ can be promised by the uniformly ellipticity condition \eqref{Uniformellipticitycondition1} of $A^I$. 

Next, we take advantage of the integral representations below to replace all diagonal terms $\partial_I u(s,s,x,y)\big|_{x=y}$ in \eqref{ExpandedlinearPDE}-\eqref{Firstderivatives} with a relatively manageable $\partial_I u(t,s,x,y)$. 
\begin{equation} \label{Intergralrepresentations}
    \begin{aligned}
        & \partial_I u(t,s,x,y) - \partial_I u(s,s,x,y)\big|_{x=y} \\
        &\quad = \int^t_s \partial_{I}\left(\frac{\partial u}{\partial t}\right)(\theta_t,s,x,y) \, d\theta_t + \int^{x_1}_{y_1} \partial_{I}\left(\frac{\partial u}{\partial x_1}\right)(s,s,\theta_1,x_2,\cdots,x_d,y) \, d\theta_1 \\
        &\qquad + \int^{x_2}_{y_2} \partial_{I}\left(\frac{\partial u}{\partial x_2}\right)(s,s,x_1,\theta_2,x_3,\cdots,x_d,y) \Big|_{x_1=y_1} \, d\theta_2 \\
        &\qquad + \int^{x_3}_{y_3} \partial_{I}\left(\frac{\partial u}{\partial x_3}\right)(s,s,x_1,x_2,\theta_3,x_4,\cdots,x_d,y) \Big|_{\begin{subarray}{c} x_1=y_1 \\ x_2=y_2\end{subarray}} \, d\theta_3 + \cdots \\
        &\qquad + \int^{x_{d-1}}_{y_{d-1}} \partial_{I}\left(\frac{\partial u}{\partial x_{d-1}}\right)(s,s,x_1,x_2,\cdots,x_{d-2},\theta_{d-1},x_d,y) \Big|_{\begin{subarray}{c} x_i=y_i \\ i=1,2,\cdots,d-2\end{subarray}} \, d\theta_d \\  
        &\qquad + \int^{x_d}_{y_d} \partial_{I}\left(\frac{\partial u}{\partial x_d}\right)(s,s,x_1,x_2,\cdots,x_{d-1},\theta_d,y) \Big|_{\begin{subarray}{c} x_i=y_i \\ i=1,2,\cdots,d-1\end{subarray}} \, d\theta_d \\
        % &\quad \doteq -\mathcal{I}^I\left[\frac{\partial u}{\partial t}, \frac{\partial u}{\partial x}\right](t,s,x,y)
    \end{aligned}
\end{equation}
% \begin{equation*} \label{Intergralrepresentations}
%     \begin{aligned}
%         % % & \partial_I u(t,s,x,y) - \partial_I u(s,s,x,y)\big|_{x=y} \\
%         % &\quad = \int^t_s \partial_{I}\left(\frac{\partial u}{\partial t}\right)(\theta_t,s,x,y) \, d\theta_t + \int^{x_1}_{y_1} \partial_{I}\left(\frac{\partial u}{\partial x_1}\right)(s,s,\theta_1,x_2,\cdots,x_d,y) \, d\theta_1 \\
%         % &\qquad + \int^{x_2}_{y_2} \partial_{I}\left(\frac{\partial u}{\partial x_2}\right)(s,s,x_1,\theta_2,x_3,\cdots,x_d,y) \Big|_{x_1=y_1} \, d\theta_2 \\
%         % &\qquad + \int^{x_3}_{y_3} \partial_{I}\left(\frac{\partial u}{\partial x_3}\right)(s,s,x_1,x_2,\theta_3,x_4,\cdots,x_d,y) \Big|_{\begin{subarray}{c} x_1=y_1 \\ x_2=y_2\end{subarray}} \, d\theta_3 + \cdots \\
%         % &\qquad + \int^{x_{d-1}}_{y_{d-1}} \partial_{I}\left(\frac{\partial u}{\partial x_{d-1}}\right)(s,s,x_1,x_2,\cdots,x_{d-2},\theta_{d-1},x_d,y) \Big|_{\begin{subarray}{c} x_i=y_i \\ i=1,2,\cdots,d-2\end{subarray}} \, d\theta_d \\  
%         % &\qquad + \int^{x_d}_{y_d} \partial_{I}\left(\frac{\partial u}{\partial x_d}\right)(s,s,x_1,x_2,\cdots,x_{d-1},\theta_d,y) \Big|_{\begin{subarray}{c} x_i=y_i \\ i=1,2,\cdots,d-1\end{subarray}} \, d\theta_d \\
%         \quad \doteq -\mathcal{I}^I\left[\frac{\partial u}{\partial t}, \frac{\partial u}{\partial x}\right](t,s,x,y)
%     \end{aligned}
% \end{equation*}
\begin{flalign*} 
    & \doteq -\mathcal{I}^I\left[\frac{\partial u}{\partial t}, \frac{\partial u}{\partial x}\right](t,s,x,y) &
\end{flalign*}
Note that the $(d+1)$-dimensional $\left(\frac{\partial u}{\partial t},\frac{\partial u}{\partial x}\right)^\top$ constitutes a conservative vector field and the potential function of which is $u(t,s,x,y)$. Hence, $\partial_I u(s,s,x,y)\big|_{x=y}$ has various integral representations when we alter the integral paths from $(s,y)$ to $(t,x)$.   

Thanks to the integral representation \eqref{Intergralrepresentations}, the equations \eqref{ExpandedlinearPDE} and \eqref{Firstderivatives} can be rewritten as the following coupled system of PDEs: 
% \begin{equation} \label{IntegralRep}
%     \left\{
%     \begin{array}{rcl}
%         u_s(t,s,x,y) & = & \sum\limits_{|I|\leq 2}(A^I+B^I)\partial_I u(t,s,x,y) \\
%         && +\sum\limits_{|I|\leq 2}B^I\mathcal{I}^I\left[\frac{\partial u}{\partial t},\frac{\partial u}{\partial x}\right](t,s,x,y) + f, \\
%         \left(\frac{\partial u}{\partial x_i}\right)_s(t,s,x,y) & = & \sum\limits_{|I|\leq 2}A^I\partial_I \left(\frac{\partial u}{\partial x_i}\right)(t,s,x,y)+\sum\limits_{|I|\leq 2}(A^I_{x_i}+B^I_{x_i})\partial_I u(t,s,x,y) \\
%         & & +\sum\limits_{|I|\leq 2}B^I_{x_i}\mathcal{I}^I\left[\frac{\partial u}{\partial t},\frac{\partial u}{\partial x}\right](t,s,x,y)+f_{x_i}, \quad i=0,1,2,\cdots,d,  \\
%         \left(u,\frac{\partial u}{\partial x_i}\right)(t,0,x,y) & = & (g,g_{x_i})(t,x,y), \quad t,s\in[0,T],\quad x,y\in\mathbb{R}^d,  
%     \end{array}
%     \right. 
% \end{equation}
\begin{equation} \label{IntegralRep}
    \left\{
    \begin{aligned}
        u_s(t,s,x,y) &= \sum_{|I|\leq 2}(A^I+B^I)\partial_I u(t,s,x,y)  + \sum_{|I|\leq 2}B^I\mathcal{I}^I\left[\frac{\partial u}{\partial t},\frac{\partial u}{\partial x}\right](t,s,x,y) + f, \\
        \left(\frac{\partial u}{\partial x_i}\right)_s(t,s,x,y) &= \sum_{|I|\leq 2}A^I\partial_I \left(\frac{\partial u}{\partial x_i}\right)(t,s,x,y) + \sum_{|I|\leq 2}(A^I_{x_i}+B^I_{x_i})\partial_I u(t,s,x,y) \\
        &\quad + \sum_{|I|\leq 2}B^I_{x_i}\mathcal{I}^I\left[\frac{\partial u}{\partial t},\frac{\partial u}{\partial x}\right](t,s,x,y) + f_{x_i}, \quad i=0,1,\dots,d, \\
        \left(u,\frac{\partial u}{\partial x_i}\right)(t,0,x,y) &= (g,g_{x_i})(t,x,y), \quad t,s\in[0,T], \quad x,y\in\mathbb{R}^d.
    \end{aligned}
    \right.
\end{equation}
which is equivalent to a parabolic system for $\overleftarrow{u}^\top=\left(u,\frac{\partial u}{\partial t},\frac{\partial u}{\partial x}\right)^\top(t,s,x,y)$: 
% \begin{equation} \label{Systemforoverleftarrowu}
%     \left\{
%     \begin{array}{rcl}
%         \overleftarrow{u}^\top_s(t,s,x,y) & = & \sum\limits_{|I|\leq 2}Q^I\partial_I\overleftarrow{u}^\top(t,s,x,y)+\sum\limits_{|I|\leq 2}\overleftarrow{B}^\top\mathcal{I}^I\left[\frac{\partial u}{\partial t},\frac{\partial u}{\partial x}\right](t,s,x,y) + \overleftarrow{f}^\top, \\
%        \overleftarrow{u}^\top(t,0,x,y) & = & \overleftarrow{g}^\top(t,x,y), \quad t,s\in[0,T],\quad x,y\in\mathbb{R}^d, 
%     \end{array}
%     \right. 
% \end{equation}
\begin{equation} \label{Systemforoverleftarrowu}
    \left\{
    \begin{aligned}
        \overleftarrow{u}^\top_s(t,s,x,y) &= \sum_{|I|\leq 2} Q^I \partial_I \overleftarrow{u}^\top(t,s,x,y) + \sum_{|I|\leq 2} \overleftarrow{B}^\top \mathcal{I}^I\left[\frac{\partial u}{\partial t}, \frac{\partial u}{\partial x}\right](t,s,x,y) + \overleftarrow{f}^\top, \\
        \overleftarrow{u}^\top(t,0,x,y) &= \overleftarrow{g}^\top(t,x,y), \quad t,s \in [0,T], \quad x,y \in \mathbb{R}^d.
    \end{aligned}
    \right.
\end{equation}
where each $Q^I$ for $|I|\leq 2$ is a lower-triangular matrix, whose off-diagonal elements do not matter the subsequent analyses while the diagonal elements are either $A^I$ or $A^I+B^I$; specifically, the coefficients in front of $\partial_Iu(t,s,x,y)$ are $A^I+B^I$ while all other coefficients related to $\partial_Iu_{x_i}(t,s,x,y)$ are $A^I$. Consequently, by the classical theory of PDE systems \cite{Friedman1964,Ladyzhanskaya1968,Eidelman1969}, the differential operator $D^\prime u:=u_s-\sum Q^I\partial_Iu$ admits a fundamental solution $Z(s,\tau,y,\xi;t,x)$, which is ensured by the uniformly ellipticity conditions \eqref{Uniformellipticitycondition1}-\eqref{Uniformellipticitycondition2} of $A^I$ and $A^I+B^I$.

After showing a variety of equations/systems, we are ready to prove the Schauder prior estimate of solutions to nonlocal linear PDE \eqref{NonlocalLinearPDE}.
\begin{theorem} \label{Priorestimate}
  Suppose that $u$ is a solution of \eqref{NonlocalLinearPDE} (i.e. \eqref{ExpandedlinearPDE}) in $\Omega^{(2+\alpha)}_{[0,T]}$. Then we have 
  \begin{enumerate}
      \item $\overline{u}$ and $\overleftarrow{u}$ solve \eqref{Systemforoverlineu} and \eqref{Systemforoverleftarrowu} on $[0,T]^2\times\mathbb{R}^{d;d}$, respectively; 
      \item there exists a constant $C$ depending only on $\lambda$, $\alpha$, $d$, $T$, $\lVert A^I\rVert^{(\alpha)}_{[0,T]}$, and $\lVert B^I\rVert^{(\alpha)}_{[0,T]}$ such that 
      % \begin{equation} \label{Schauderestimate}
      % \lVert u\rVert^{(2+\alpha)}_{[0,T]}\leq C\left(\lVert f\rVert^{(\alpha)}_{[0,T]}+\lVert g\rVert^{(2+\alpha)}_{[0,T]}\right). 
      % \end{equation}
      \begin{equation} \label{Schauderestimate}
    \| u \|^{(2+\alpha)}_{[0,T]} \leq C \left( \| f \|^{(\alpha)}_{[0,T]} + \| g \|^{(2+\alpha)}_{[0,T]} \right). 
\end{equation}
  \end{enumerate}
\end{theorem}
\begin{proof}%[Proof of Theorem \ref{Priorestimate}]
	The first claim is straightforward as it follows by our introductions of the systems \eqref{Systemforoverlineu} and \eqref{Systemforoverleftarrowu} before. Next, we focus on the proof of the second claim. 
	
	We first show that the inequality \eqref{Schauderestimate} holds for a suitably small $\delta\in[0,T]$ and then the conclusion can be extended to the case of $\delta=T$. According to the classical theory of parabolic system \cite{Friedman1964,Ladyzhanskaya1968,Eidelman1969,Lunardi1995}, for any fixed $(t,x)\in[0,\delta]\times\mathbb{R}^d$ and system \eqref{Systemforoverlineu}, there exists a constant $C>0$ such that 
	% \begin{eqnarray}
	% 		\Big|\overline{u}(t,s,x,y)\Big|^{(2+\alpha)}_{(s,y)\in[0,\delta]\times\mathbb{R}^d} &\leq &C\bigg(\sum\limits_{|I|\leq 2}\Big|\partial_I u(s,s,x,y)|_{x=y}\Big|^{(\alpha)}_{(s,y)\in[0,\delta]\times\mathbb{R}^d} \label{Estimateofoverlineu} \\
	% 		&& \quad
 %            +\Big|\overline{f}(t,s,x,y)\Big|^{(\alpha)}_{(s,y)\in[0,\delta]\times\mathbb{R}^d}+\Big|\overline{g}(t,s,x,y)\Big|^{(2+\alpha)}_{(s,y)\in[0,\delta]\times\mathbb{R}^d}\bigg). \nonumber
	% \end{eqnarray}
    \begin{align}
    \Big|\overline{u}(t,s,x,y)\Big|^{(2+\alpha)}_{(s,y)\in[0,\delta]\times\mathbb{R}^d} &\leq C \bigg( \sum_{|I|\leq 2} \Big|\partial_I u(s,s,x,y)|_{x=y}\Big|^{(\alpha)}_{(s,y)\in[0,\delta]\times\mathbb{R}^d} \label{Estimateofoverlineu} \\
    &\quad + \Big|\overline{f}(t,s,x,y)\Big|^{(\alpha)}_{(s,y)\in[0,\delta]\times\mathbb{R}^d} + \Big|\overline{g}(t,s,x,y)\Big|^{(2+\alpha)}_{(s,y)\in[0,\delta]\times\mathbb{R}^d} \bigg). \nonumber
\end{align}
	
	Next, we estimate $\big|\partial_I u(s,s,x,y)|_{x=y}\big|^{(\alpha)}_{(s,y)\in[0,\delta]\times\mathbb{R}^d}$ for $|I|=0,1,2$. In addition to the estimates of $\big|\partial_I u(s,s,x,y)|_{x=y}\big|^{(0)}_{(s,y)\in[0,\delta]\times\mathbb{R}^d}$, we need to evaluate the difference between $\partial_I u(s,s,x,y)|_{x=y}$ and $\partial_I u(s^\prime,s^\prime,x,y^\prime)|_{x=y^\prime}$ for any $0\leq s<s^\prime\leq\delta$ and $y,y^\prime\in\mathbb{R}^d$ with $0<|y-y^\prime|\leq 1$. It is obvious that the evaluation requires not only $\partial_Iu$ but also the partial derivatives $\partial_Iu_t$ and $\partial_Iu_x$. All of them are characterized by $\eqref{Systemforoverleftarrowu}$ for $\overleftarrow{u}$. As usual (without loss of generality), we assume that $g=0$; otherwise, we consider $L'v:=f-Lg$ with $v|_{s=0}=0$ noting that the problems of \eqref{NonlocalLinearPDE}, \eqref{Systemforoverlineu}, and \eqref{Systemforoverleftarrowu} are all of linear-type. By the classical theory of parabolic systems, the vector-valued classical solution $\overleftarrow{u}$ of \eqref{Systemforoverleftarrowu} can be represented as 
	% \begin{equation} \label{Integralforoverleftarrowu}
	% 	\begin{split}
	% 		\overleftarrow{u}(t,s,x,y)
	% 		& =\int^s_0d\tau\int_{\mathbb{R}^d}Z(s,\tau,y,\xi;t,x)\sum\limits_{|I|\leq 2}\overleftarrow{B}^I(t,\tau,x,\xi)\mathcal{I}^I\left[\frac{\partial u}{\partial t},\frac{\partial u}{\partial x}\right](t,\tau,x,\xi)d\xi \\
	% 		& \qquad\qquad\qquad + \int^s_0d\tau\int_{\mathbb{R}^d}Z\overleftarrow{f}(t,\tau,x,\xi)d\xi
	% 	\end{split}
	% \end{equation}
    \begin{equation} \label{Integralforoverleftarrowu}
    \begin{split}
        \overleftarrow{u}(t,s,x,y) &= \int^s_0 d\tau \int_{\mathbb{R}^d} Z(s,\tau,y,\xi;t,x) \sum_{|I|\leq 2} \overleftarrow{B}^I(t,\tau,x,\xi) \mathcal{I}^I\left[\frac{\partial u}{\partial t}, \frac{\partial u}{\partial x}\right](t, \tau, x, \xi) \, d\xi \\
        &\quad + \int^s_0 d\tau \int_{\mathbb{R}^d} Z\overleftarrow{f}(t,\tau,x,\xi) \, d\xi
    \end{split}
\end{equation}
	% where $Z$ is the fundamental solution for the differential operator $D^\prime u:=u_s-\sum Q^I\partial_Iu$. Since $B$, $f\in\Omega^{(\alpha)}_{[0,T]}$ and $u\in\Omega^{(2+\alpha)}_{[0,T]}$, its first-/second derivatives with respect to $y$, i.e. $|I|=1,2$, satisfy 
    where $Z$ is the fundamental solution for $D^\prime u := u_s - \sum Q^I \partial_I u$. Given $B, f \in \Omega^{(\alpha)}_{[0,T]}$ and $u \in \Omega^{(2+\alpha)}_{[0,T]}$, the derivatives for $|I|=1,2$ satisfy:
	% \begin{equation} \label{Integralforpartialoverleftarrowu} 
	% 	\begin{split}
	% 		\partial_I\overleftarrow{u}(t,s,x,y)
	% 		= &\int^s_0d\tau\int_{\mathbb{R}^d}\partial_IZ\sum\limits_{|I|\leq 2}\left[\left(\overleftarrow{B}^I\mathcal{I}^I\right)(t,\tau,x,\xi)-\left(\overleftarrow{B}^I\mathcal{I}^I\right)(t,\tau,x,y)\right]d\xi \\
	% 		& + \int^s_0\left(\int_{\mathbb{R}^d}\partial_IZd\xi\right)\sum\limits_{|I|\leq 2}\left(\overleftarrow{B}^I\mathcal{I}^I\right)(t,\tau,x,y)d\tau\\
	% 		& + \int^s_0d\tau\int_{\mathbb{R}^d}\partial_I Z\left[\overleftarrow{f}(t,\tau,x,\xi)-\overleftarrow{f}(t,\tau,x,y)\right]d\xi \\
	% 		& +\int^s_0\left(\int_{\mathbb{R}^d}\partial_IZd\xi\right)\overleftarrow{f}(t,\tau,x,y)d\tau
	% 	\end{split}
	% \end{equation}
    \begin{equation} \label{Integralforpartialoverleftarrowu} 
    \begin{split}
        \partial_I\overleftarrow{u}(t,s,x,y) &= \int^s_0 d\tau \int_{\mathbb{R}^d} \partial_I Z \sum_{|I|\leq 2} \left[ \left(\overleftarrow{B}^I\mathcal{I}^I\right)(t,\tau,x,\xi) - \left(\overleftarrow{B}^I\mathcal{I}^I\right)(t,\tau,x,y) \right] \, d\xi \\
        &\quad + \int^s_0 \left( \int_{\mathbb{R}^d} \partial_I Z \, d\xi \right) \sum_{|I|\leq 2} \left(\overleftarrow{B}^I\mathcal{I}^I\right)(t,\tau,x,y) \, d\tau \\
        % &\quad + \int^s_0 d\tau \int_{\mathbb{R}^d} \partial_I Z \left[ \overleftarrow{f}(t,\tau,x,\xi) - \overleftarrow{f}(t,\tau,x,y) \right] \, d\xi \\
        % &\quad + \int^s_0 \left( \int_{\mathbb{R}^d} \partial_I Z \, d\xi \right) \overleftarrow{f}(t,\tau,x,y) \, d\tau
    \end{split}
\end{equation}
\begin{equation*} 
    \begin{split}
        % \partial_I\overleftarrow{u}(t,s,x,y) &= \int^s_0 d\tau \int_{\mathbb{R}^d} \partial_I Z \sum_{|I|\leq 2} \left[ \left(\overleftarrow{B}^I\mathcal{I}^I\right)(t,\tau,x,\xi) - \left(\overleftarrow{B}^I\mathcal{I}^I\right)(t,\tau,x,y) \right] \, d\xi \\
        % &\quad + \int^s_0 \left( \int_{\mathbb{R}^d} \partial_I Z \, d\xi \right) \sum_{|I|\leq 2} \left(\overleftarrow{B}^I\mathcal{I}^I\right)(t,\tau,x,y) \, d\tau \\
        &\quad + \int^s_0 d\tau \int_{\mathbb{R}^d} \partial_I Z \left[ \overleftarrow{f}(t,\tau,x,\xi) - \overleftarrow{f}(t,\tau,x,y) \right] \, d\xi \\
        &\quad + \int^s_0 \left( \int_{\mathbb{R}^d} \partial_I Z \, d\xi \right) \overleftarrow{f}(t,\tau,x,y) \, d\tau
    \end{split}
\end{equation*}
	Generally speaking, in order to obtain $\big|\partial_I u(s,s,x,y)|_{x=y}\big|^{(\alpha)}_{(s,y)\in[0,\delta]\times\mathbb{R}^d}$ for $|I|=0,1,2$, we need to evaluate not only the absolute value $\big|\partial_I u(s,s,x,y)|_{x=y}\big|^{(0)}_{(s,y)\in[0,\delta]\times\mathbb{R}^d}$ but also the difference
	% \begin{equation} \label{Analysisofdifference}
	% 	\begin{split}
	% 		& |\partial_I u(s^\prime,s^\prime,x,y^\prime)|_{x=y^\prime}-\partial_I u(s,s,x,y|_{x=y}| \\
	% 		\leq & |\partial_I u(s^\prime,s^\prime,x,y^\prime)|_{x=y^\prime}-\partial_I u(s,s^\prime,x,y^\prime|_{x=y}|+|\partial_I u(s,s^\prime,x,y^\prime)|_{x=y}-\partial_I u(s,s,x,y|_{x=y}| \\
	% 		\leq & \left|\partial_I\left(\frac{\partial u}{\partial t},\frac{\partial u}{\partial x}\right)(\eta_t,s^\prime,x,y^\prime)|_{x=\eta_x}\right|\big(|s^\prime-s|+|y^\prime-y|\big) \\
	% 		& \qquad
	% 		+\frac{|\partial_I u(s,s^\prime,x,y^\prime)|_{x=y}-\partial_I u(s,s,x,y|_{x=y}|}{|s^\prime-s|^\frac{\alpha}{2}+|y^\prime-y|^\alpha}\big(|s^\prime-s|^\frac{\alpha}{2}+|y^\prime-y|^\alpha\big)
	% 	\end{split}
	% \end{equation}
    \begin{equation} \label{Analysisofdifference}
    \begin{split}
        & |\partial_I u(s^\prime,s^\prime,x,y^\prime)|_{x=y^\prime} - \partial_I u(s,s,x,y)|_{x=y} | \\
        &\leq |\partial_I u(s^\prime,s^\prime,x,y^\prime)|_{x=y^\prime} - \partial_I u(s,s^\prime,x,y^\prime)|_{x=y} | + |\partial_I u(s,s^\prime,x,y^\prime)|_{x=y} - \partial_I u(s,s,x,y)|_{x=y} | \\
        &\leq \left| \partial_I \left( \frac{\partial u}{\partial t}, \frac{\partial u}{\partial x} \right) (\eta_t, s^\prime, x, y^\prime) \big|_{x=\eta_x} \right| \big( |s^\prime-s| + |y^\prime-y| \big) \\
        % &\quad + \frac{|\partial_I u(s,s^\prime,x,y^\prime)|_{x=y} - \partial_I u(s,s,x,y)|_{x=y} |}{|s^\prime-s|^\frac{\alpha}{2} + |y^\prime-y|^\alpha} \big( |s^\prime-s|^\frac{\alpha}{2} + |y^\prime-y|^\alpha \big)
    \end{split}
\end{equation}
\begin{equation*} 
    \begin{split}
        % & |\partial_I u(s^\prime,s^\prime,x,y^\prime)|_{x=y^\prime} - \partial_I u(s,s,x,y)|_{x=y} | \\
        % &\leq |\partial_I u(s^\prime,s^\prime,x,y^\prime)|_{x=y^\prime} - \partial_I u(s,s^\prime,x,y^\prime)|_{x=y} | + |\partial_I u(s,s^\prime,x,y^\prime)|_{x=y} - \partial_I u(s,s,x,y)|_{x=y} | \\
        % &\leq \left| \partial_I \left( \frac{\partial u}{\partial t}, \frac{\partial u}{\partial x} \right) (\eta_t, s^\prime, x, y^\prime) \big|_{x=\eta_x} \right| \big( |s^\prime-s| + |y^\prime-y| \big) \\
        &\quad + \frac{|\partial_I u(s,s^\prime,x,y^\prime)|_{x=y} - \partial_I u(s,s,x,y)|_{x=y} |}{|s^\prime-s|^\frac{\alpha}{2} + |y^\prime-y|^\alpha} \big( |s^\prime-s|^\frac{\alpha}{2} + |y^\prime-y|^\alpha \big)
    \end{split}
\end{equation*}
	for any $0\leq s<s^\prime\leq\delta$ and $y$, $y^\prime\in\mathbb{R}^d$ with $0<|y-y^\prime|\leq 1$, where $\eta=(\eta_t,\eta_x)=(1-c)(s,y)+c(s^\prime,y^\prime)$ for some $c\in(0,1)$ for the mean value theorem in several variables. We denote by $\rho$ the parabolic distance $\sqrt{(s^\prime-s)+|y-y^\prime|^2}$ between $(s,y)$ and $(s^\prime,y^\prime)$. Hence, we need to estimate the eight terms $E_i$ ($i=1,2,\cdots,8$) in Table \ref{TableforlinearPDE}. The estimation is standard but lengthy and is therefore given in Appendix \ref{App:A}. %place it in Appendix \ref{App:A}.
	
	\begin{table}[!ht] \label{TableforlinearPDE} 
		\centering
		\begin{tabular}{cccc}
			\toprule
			\multicolumn{2}{l}{\quad\quad~ Estimates of $\partial_Iu(s,s,x,y)|_{x=y}$} & \\
			\cmidrule{1-1}
			$0\leq s<s^\prime\leq \delta$, $0<|y-y^\prime|\leq 1$ & & $|I|=0$ & $|I|=1,2$ \\
			\midrule
			$\big|\partial_Iu(s,s,x,y)|_{x=y}\big|$ & & $E_1$ & $E_2$ \\
			$\big|\partial_I\left(u_t,u_x\right)(\eta_t,s^\prime,x,y^\prime)|_{x=\eta_x}\big|$ & & $E_3$ & $E_4$ \\
			\cmidrule{2-4}
			{\multirow{2}{*}{$\big|\partial_I u(s,s^\prime,x,y^\prime)|_{x=y}-\partial_I u(s,s,x,y)|_{x=y}\big|$}} & $s\leq \rho^2$ & $E_5$ & $E_6$ \\
			& $s> \rho^2$ & $E_7$ & $E_8$ \\
			\bottomrule
		\end{tabular}
		\caption{H\"{o}lder regularities of $\partial_Iu(s,s,x,y)|_{x=y}$ in $(s,y)$} 
		\label{tab:table1} 
	\end{table} 
	
	The estimates of $E_i$-terms and \eqref{Estimateofoverlineu} imply that for a suitably small $\delta\in[0,T]$ and any fixed $(t,x)\in[0,\delta]\times\mathbb{R}^d$,
	% \begin{equation} \label{Estimateofu} 
	% 	\begin{split}
	% 		\Big|\overline{u}(t,s,x,y)\Big|^{(2+\alpha)}_{(s,y)\in[0,\delta]\times\mathbb{R}^d} & \leq \frac{1}{2}[\overrightarrow{u}]^{(2+\alpha)}_{[0,\delta]}
	% 		+C\left(\lVert f\rVert^{(\alpha)}_{[0,T]}+\lVert g\rVert^{(2+\alpha)}_{[0,T]}\right) \\
	% 		& \leq \frac{1}{2}[\overline{u}]^{(2+\alpha)}_{[0,\delta]}
	% 		+C\left(\lVert f\rVert^{(\alpha)}_{[0,T]}+\lVert g\rVert^{(2+\alpha)}_{[0,T]}\right).
	% 	\end{split}
	% \end{equation}
    \begin{equation} \label{Estimateofu} 
    \begin{split}
        \Big|\overline{u}(t,s,x,y)\Big|^{(2+\alpha)}_{(s,y)\in[0,\delta]\times\mathbb{R}^d} & \leq \frac{1}{2}[\overrightarrow{u}]^{(2+\alpha)}_{[0, \delta]} + C \left( \| f \|^{(\alpha)}_{[0,T]} + \| g \|^{(2+\alpha)}_{[0,T]} \right) \\
        & \leq \frac{1}{2}[\overline{u}]^{(2+\alpha)}_{[0, \delta]} + C \left( \| f \|^{(\alpha)}_{[0,T]} + \| g \|^{(2+\alpha)}_{[0,T]} \right).
    \end{split}
\end{equation}
	Thanks to the integral representation of $\mathcal{I}^I[u_t,u_x]$ in \eqref{IntegralRep}, we can set the coefficient $\frac{1}{2}$ in front of $[\overrightarrow{u}]^{(2+\alpha)}_{[0,\delta]}$ in \eqref{Estimateofu}
	by choosing a small enough $\delta$. Consequently, we have
	\begin{equation} \label{delta-Estimateofu}
		\lVert u\rVert^{(2+\alpha)}_{[0,\delta]}\leq C\left(\lVert f\rVert^{(\alpha)}_{[0,\delta]}+\lVert g\rVert^{(2+\alpha)}_{[0,\delta]}\right). 
	\end{equation}
	
	To complete the proof, we ought to show that the small $\delta$ in \eqref{delta-Estimateofu} can be extended to an arbitrarily large $T<\infty$. It can simply follow the proof of Theorem 3.3 in \cite{Lei2021}. Essentially, since we can obtain prior estimates similar to \eqref{delta-Estimateofu} in any subinterval, we can extend the horizon by solving the same PDE with the initial condition updated by the upper bound of the current interval. It follows that \eqref{Schauderestimate} for any finite $T$ holds as well.
\end{proof}

%________________________________________________________________________________
%________________________________________________________________________________

\subsection{Global Well-Posedness of Nonlocal Linear PDEs}
By the Schauder prior estimate for the solutions to \eqref{NonlocalLinearPDE} in $\Omega^{(2+\alpha)}_{[0,T]}$ in Theorem \ref{Priorestimate}, we apply the method of continuity to prove the global well-posedness of \eqref{NonlocalLinearPDE}. To this end, we ought to show the global solvability of a simplied version of \eqref{NonlocalLinearPDE} with constant coefficients and $(t,x)$-independent variable coefficients:
% \begin{equation} \label{SimplfiednonlocallinearPDE}
% 	\left\{
% 	\begin{array}{rcl}
% 		L_0u(t,s,x,y) & = & f(t,s,x,y), \\
% 		u(t,0,x,y) & = & g(t,x,y), \quad t,s\in[0,T],\quad x,y\in\mathbb{R}^d
% 	\end{array}
% 	\right. 
% \end{equation}
\begin{equation} \label{SimplfiednonlocallinearPDE}
    \left\{
    \begin{aligned}
        L_0u(t,s,x,y) &= f(t,s,x,y), \\
        u(t,0,x,y) &= g(t,x,y), \quad t,s \in [0,T], \quad x,y \in \mathbb{R}^d.
    \end{aligned}
    \right.
\end{equation}
with a nonlocal parabolic differential operator of the form
%Let us consider a nonlocal parabolic differential operator with $(t,x)$-independent variable coefficients of the form 
% \begin{equation} \label{Nonlocallinearoperatorwithconstantcoefficients} 
%     L_0u:= u_s(t,s,x,y) - \sum\limits_{|I|\leq 2}a^I(s,y)\partial_I u(t,s,x,y) + \sum\limits_{|I|\leq 2}\partial_Ib^I(s,y) u(s,s,x,y)|_{x=y}
% \end{equation}
\begin{equation} \label{Nonlocallinearoperatorwithconstantcoefficients} 
    L_0u := u_s(t,s,x,y) - \sum_{|I|\leq 2} a^I(s,y) \partial_I u(t,s,x,y) + \sum_{|I|\leq 2} \partial_I b^I(s,y) u(s,s,x,y) \big|_{x=y}
\end{equation}
where both of $a^I$ and $b^I$ belong to $\Omega^{(\alpha)}_{[0,T]}$ and satisfy the uniformly ellipticity conditions \eqref{Uniformellipticitycondition1}-\eqref{Uniformellipticitycondition2}. We aim to establish the global existence, uniqueness, and stability of solutions to \eqref{SimplfiednonlocallinearPDE} and from which, we make use of the method of continuity and the Schauder prior estimate to transfer the well-posedness results to \eqref{NonlocalLinearPDE}.

\begin{theorem} \label{Well-posednessofL0} 
  If $f\in\Omega^{(\alpha)}_{[0,T]}$ and $g\in\Omega^{(2+\alpha)}_{[0,T]}$, then the simplified nonlocal linear PDE \eqref{SimplfiednonlocallinearPDE} admits a unique solution in $\Omega^{(2+\alpha)}_{[0,T]}$.  
\end{theorem}
\begin{proof}
  In order to show the global existence of solutions of \eqref{SimplfiednonlocallinearPDE} in $\Omega^{(2+\alpha)}_{[0,T]}$, we directly construct a regular enough solution $w$ for it by studying the following decoupled system \eqref{SystemforW} of PDEs for the unknown vector-valued function $W(t,s,x,y)=(w,w_0,w_1,\cdots,w_d,w_{11},w_{12},\cdots,w_{dd})(t,s,x,y)$,    
%   \begin{equation} \label{SystemforW}
%     \left\{
%     \begin{array}{rcl}
%         W^\top_s(t,s,x,y) & = & \sum\limits_{|I|\leq 2}\widetilde{ Q}^I(s,y)\partial_IW^\top(t,s,x,y) \\
%         & + & \sum\limits_{|I|\leq 2}\left(B^I,O\right)^\top(s,y)\mathcal{I}^I\left[w_0,w_1,\cdots,w_d\right](t,s,x,y) + \overline{f}^\top, \\
        
%         ~ \\
        
%         W^\top(t,0,x,y) & = & \overline{g}^\top(t,x,y),\quad t,s\in[0,T],\quad x,y\in\mathbb{R}^d.  
%     \end{array}
%     \right. 
% \end{equation}
\begin{equation} \label{SystemforW}
    \left\{
    \begin{aligned}
        W^\top_s(t,s,x,y) &= \sum_{|I|\leq 2} \widetilde{ Q}^I(s,y) \partial_I W^\top(t,s,x,y) \\
        &\quad + \sum_{|I|\leq 2} (B^I, O)^\top(s,y) \mathcal{I}^I[w_0, w_1, \dots, w_d](t,s,x,y) + \overline{f}^\top, \\[1ex]
        W^\top(t,0,x,y) &= \overline{g}^\top(t,x,y), \quad t,s \in [0,T], \quad x,y \in \mathbb{R}^d.
    \end{aligned}
    \right.
\end{equation}
where $O$ is $(d^2+d+1)$-dimensional zero vector and $\widetilde{Q}^I$ is a lower triangular matrix, whose main diagonal elements are $a^I(s,y)$ or $(a^I+b^I)(s,y)$. The coefficients in front of $\partial_Iw$ is $(a^I+b^I)(s,y)$ and other coefficients are all $a^I(s,y)$. The construction of the system \eqref{SystemforW} for $W$ is inspired by \eqref{Systemforoverlineu} and \eqref{Systemforoverleftarrowu}. 

Next, we will show that 
\begin{enumerate}
    \item[1.)] the system \eqref{SystemforW} admits a unique classical solution $W$; 
    \item[2.)] $(\partial_Iw_0,\partial_Iw_1,\cdots,\partial_Iw_d)$ of $W$ is a conservative vector field, the potential function of which is just $\partial_Iw$. Furthermore, $\partial_Iw_0=\partial_Iw_t$, $\partial_Iw_i=\partial_Iw_{x_i}$, and ${\partial_Iw_{ij}}=\partial_Iw_{x_ix_j}$ for $i,j=1,\ldots,d$ and $|I|=0,1,2$; 
    \item[3.)] the first component $w$ of $W$ solves the simplified nonlocal linear PDE \eqref{SimplfiednonlocallinearPDE}; 
    \item[4.)] the estimate $[W]^{(2+\alpha)}_{[0,T]}<\infty$ holds such that $w\in\Omega^{(2+\alpha)}_{[0,T]}$; 
    \item[5.)] the nonlocal PDE \eqref{SimplfiednonlocallinearPDE} is solvable in $\Omega^{(2+\alpha)}_{[0,T]}$. 
\end{enumerate}

% \ \ 

\textbf{1.)} We are to prove that the system \eqref{SystemforW} admits a unique solution $W$. Note that $\eqref{SystemforW}$ is a decoupled system and the PDEs of $(w_0,w_1,\cdots,w_d,w_{11},\cdots,w_{dd})$ are all classical equations. Hence, by the classical PDE theory \cite{Friedman1964,Ladyzhanskaya1968,Eidelman1969,Lunardi1995}, we can find a unique classical solution $(w_0,w_1,\cdots,w_{dd})$ satisfying 
% \begin{equation*}
%     \sup_{(t,x)\in[0,T]\times\mathbb{R}^d}\left|(w_0,w_1,\cdots,w_{dd})(t,s,x,y)\right|^{(2+\alpha)}_{(s,y)\in[0,T]\times\mathbb{R}^d}\leq C\left(\lVert f\rVert^{(\alpha)}_{[0,T]}+\lVert g\rVert^{(2+\alpha)}_{[0,T]}\right)<\infty.
% \end{equation*}
\begin{equation*}
    \sup_{(t,x)\in[0,T]\times\mathbb{R}^d} \left| (w_0,w_1,\dots,w_{dd})(t,s,x,y) \right|^{(2+\alpha)}_{(s,y)\in[0,T]\times\mathbb{R}^d} \leq C \left( \| f \|^{(\alpha)}_{[0,T]} + \| g \|^{(2+\alpha)}_{[0,T]} \right) < \infty.
\end{equation*}

Moreover, after solving for $(w_0,w_1,\cdots,w_{d})$, the nonlocal term $\mathcal{I}^I\left[w_0,w_1,\cdots,w_d\right]$ in \eqref{SystemforW} is known as well. Consequently, the nonlocal PDE of $w$ reduces to a classical equation. Considering the boundedness of $(w_0,w_1,\cdots,w_{d})$ and the integral structures of $\mathcal{I}^I\left[w_0,w_1,\cdots,w_d\right]$, there exists a unique classical solution $w$ although it possibly increases with $x$, $y\to\infty$. We will show that $w$ is bounded later. Now, we have shown that the decoupled system \eqref{SystemforW} exists a unique classical solution $W$.

% \ \ 

\textbf{2.)} We are to prove that $(\partial_Iw_0,\partial_Iw_1,\cdots,\partial_Iw_d)$ of $W$ is a conservative vector field, the potential function of which is just $\partial_Iw$. Here, we only consider the case $|I|=0$ while the other two cases for $|I|=1,2$ can be proved similarly. For \eqref{SystemforW}, it is clear that the solution $(w_0,w_1,\cdots,w_d)$ can be represented with a fundamental solution in an integral form 
% \begin{equation} \label{Conservativevectorfield} 
%     \begin{split}
%         (w_0,w_1,\cdots,w_d)(t,s,x,y)
%         & =\int^s_0d\tau\int_{\mathbb{R}^d}Z(s,\tau,y,\xi)(f_t,f_{x_1},\cdots,f_{x_d})(t,\tau,x,\xi)d\xi \\
%         & \qquad + \int_{\mathbb{R}^d}Z(s,0,y,\xi)(g_t,g_{x_1},\cdots,g_{x_d})(t,x,\xi)d\xi,
%     \end{split}
% \end{equation}
\begin{equation} \label{Conservativevectorfield} 
    \begin{split}
        (w_0,w_1,\dots,w_d)(t,s,x,y) &= \int^s_0 d\tau \int_{\mathbb{R}^d} Z(s,\tau,y,\xi) (f_t, f_{x_1}, \dots, f_{x_d})(t,\tau,x,\xi) \, d\xi \\
        &\quad + \int_{\mathbb{R}^d} Z(s,0,y,\xi) (g_t, g_{x_1}, \dots, g_{x_d})(t,x,\xi) \, d\xi
    \end{split}
\end{equation}
where the real-valued fundamental solution $Z$ is independent of $(t,x)$ since $a^I=a^I(s,y)$. Next, in order to show that it is a conservative vector field, we need to prove that a line integral of the vector field $(w_0,w_1,\cdots,w_d)$ is path-independent. Let us consider any two paths $r_a(\theta)=(t^a(\theta),x^a(\theta))$ and $r_b(\theta)=(t^b(\theta),x^b(\theta))$ connecting between two fixed endpoints $(t^\prime,x^\prime)$ and $(t^{\prime\prime},x^{\prime\prime})$, both of which are parameterized by $\theta\in[0,1]$ such that $r_1(0)=r_2(0)=(t^\prime,x^\prime)$ and $r_1(1)=r_2(1)=(t^{\prime\prime},x^{\prime\prime})$. Then, we have  
% \begin{equation*}
%     \begin{split}
%         & \int_{r_a}(w_0,w_1,\cdots,w_d)\cdot dr \\
%         = & \int^1_0(w_0,w_1,\cdots,w_d)(t^a(\theta),s,x^a(\theta),y)\cdot\left(\frac{d t^a(\theta)}{d\theta},\frac{d x^a(\theta)}{d\theta}\right)^\top d\theta \\
%         = & \int^s_0d\tau\int_{\mathbb{R}^d}Z(s,\tau,y,\xi) \int^1_0(f_t,f_x)(t^a(\theta),\tau,x^a(\theta),\xi)\cdot\left(\frac{d t^a(\theta)}{d\theta},\frac{d x^a(\theta)}{d\theta}\right)^\top d\theta d\xi \\
%         & \qquad\qquad\qquad + \int_{\mathbb{R}^d}Z(s,0,y,\xi)\int^1_0(g_t,g_x)(t^a(\theta),x^a(\theta),\xi)\cdot\left(\frac{d t^a(\theta)}{d\theta},\frac{d x^a(\theta)}{d\theta}\right)^\top d\theta d\xi \\
%         = & \int^s_0d\tau\int_{\mathbb{R}^d}Z(s,\tau,y,\xi) \Big(f(t^{\prime\prime},\tau,x^{\prime\prime},\xi)-f(t^{\prime},\tau,x^{\prime},\xi)\Big) d\xi \\
%         & \qquad\qquad\qquad + \int_{\mathbb{R}^d}Z(s,0,y,\xi)\Big((g_t,g_x)(t^{\prime\prime},x^{\prime\prime},\xi)-(g_t,g_x)(t^{\prime},x^{\prime},\xi)\Big) d\xi \\
%         = & \int_{r_b}(w_0,w_1,\cdots,w_d)\cdot dr,
%     \end{split}
% \end{equation*}
\begin{equation*}
    \begin{split}
        & \int_{r_a}(w_0,w_1,\dots,w_d)\cdot dr \\
        &= \int^1_0(w_0,w_1,\dots,w_d)(t^a(\theta),s,x^a(\theta),y)\cdot\left(\frac{d t^a(\theta)}{d\theta},\frac{d x^a(\theta)}{d\theta}\right)^\top d\theta \\
        &= \int^s_0d\tau\int_{\mathbb{R}^d}Z(s,\tau,y,\xi) \int^1_0(f_t,f_x)(t^a(\theta),\tau,x^a(\theta),\xi)\cdot\left(\frac{d t^a(\theta)}{d\theta},\frac{d x^a(\theta)}{d\theta}\right)^\top d\theta \, d\xi \\
        % &\quad + \int_{\mathbb{R}^d}Z(s,0,y,\xi)\int^1_0(g_t,g_x)(t^a(\theta),x^a(\theta),\xi)\cdot\left(\frac{d t^a(\theta)}{d\theta},\frac{d x^a(\theta)}{d\theta}\right)^\top d\theta \, d\xi \\
        % &= \int^s_0d\tau\int_{\mathbb{R}^d}Z(s,\tau,y,\xi) \Big(f(t^{\prime\prime},\tau,x^{\prime\prime},\xi)-f(t^{\prime},\tau,x^{\prime},\xi)\Big) \, d\xi \\
        % &\quad + \int_{\mathbb{R}^d}Z(s,0,y,\xi)\Big((g_t,g_x)(t^{\prime\prime},x^{\prime\prime},\xi)-(g_t,g_x)(t^{\prime},x^{\prime},\xi)\Big) \, d\xi \\
        % &= \int_{r_b}(w_0,w_1,\dots,w_d)\cdot dr.
    \end{split}
\end{equation*}
\begin{equation*}
    \begin{split}
        % & \int_{r_a}(w_0,w_1,\dots,w_d)\cdot dr \\
        % &= \int^1_0(w_0,w_1,\dots,w_d)(t^a(\theta),s,x^a(\theta),y)\cdot\left(\frac{d t^a(\theta)}{d\theta},\frac{d x^a(\theta)}{d\theta}\right)^\top d\theta \\
        % &= \int^s_0d\tau\int_{\mathbb{R}^d}Z(s,\tau,y,\xi) \int^1_0(f_t,f_x)(t^a(\theta),\tau,x^a(\theta),\xi)\cdot\left(\frac{d t^a(\theta)}{d\theta},\frac{d x^a(\theta)}{d\theta}\right)^\top d\theta \, d\xi \\
        &\quad + \int_{\mathbb{R}^d}Z(s,0,y,\xi)\int^1_0(g_t,g_x)(t^a(\theta),x^a(\theta),\xi)\cdot\left(\frac{d t^a(\theta)}{d\theta},\frac{d x^a(\theta)}{d\theta}\right)^\top d\theta \, d\xi \\
        &= \int^s_0d\tau\int_{\mathbb{R}^d}Z(s,\tau,y,\xi) \Big(f(t^{\prime\prime},\tau,x^{\prime\prime},\xi)-f(t^{\prime},\tau,x^{\prime},\xi)\Big) \, d\xi \\
        &\quad + \int_{\mathbb{R}^d}Z(s,0,y,\xi)\Big((g_t,g_x)(t^{\prime\prime},x^{\prime\prime},\xi)-(g_t,g_x)(t^{\prime},x^{\prime},\xi)\Big) \, d\xi \\
        &= \int_{r_b}(w_0,w_1,\dots,w_d)\cdot dr.
    \end{split}
\end{equation*}
% \begin{equation*}
%     \begin{split}
%         % & \int_{r_a}(w_0,w_1,\cdots,w_d)\cdot dr \\
%         % = & \int^1_0(w_0,w_1,\cdots,w_d)(t^a(\theta),s,x^a(\theta),y)\cdot\left(\frac{d t^a(\theta)}{d\theta},\frac{d x^a(\theta)}{d\theta}\right)^\top d\theta \\
%         = & \int^s_0d\tau\int_{\mathbb{R}^d}Z(s,\tau,y,\xi) \int^1_0(f_t,f_x)(t^a(\theta),\tau,x^a(\theta),\xi)\cdot\left(\frac{d t^a(\theta)}{d\theta},\frac{d x^a(\theta)}{d\theta}\right)^\top d\theta d\xi \\
%         & \qquad\qquad\qquad + \int_{\mathbb{R}^d}Z(s,0,y,\xi)\int^1_0(g_t,g_x)(t^a(\theta),x^a(\theta),\xi)\cdot\left(\frac{d t^a(\theta)}{d\theta},\frac{d x^a(\theta)}{d\theta}\right)^\top d\theta d\xi \\
%         = & \int^s_0d\tau\int_{\mathbb{R}^d}Z(s,\tau,y,\xi) \Big(f(t^{\prime\prime},\tau,x^{\prime\prime},\xi)-f(t^{\prime},\tau,x^{\prime},\xi)\Big) d\xi \\
%         & \qquad\qquad\qquad + \int_{\mathbb{R}^d}Z(s,0,y,\xi)\Big((g_t,g_x)(t^{\prime\prime},x^{\prime\prime},\xi)-(g_t,g_x)(t^{\prime},x^{\prime},\xi)\Big) d\xi \\
%         = & \int_{r_b}(w_0,w_1,\cdots,w_d)\cdot dr,
%     \end{split}
% \end{equation*}
which shows that the choice of paths between two points does not change the value of the line integral. Hence, we obtain that $(w_0,w_1,\cdots,w_d)$ is a conservative vector field. Similarly, $(\partial_Iw_0,\partial_Iw_1,\cdots,\partial_Iw_d)$ for $|I|=1,2$ is also a conservative vector field. 

%Up to now, we have known that $(\partial_Iw_0,\partial_Iw_1,\cdots,\partial_Iw_d)$ for $|I|=0,1,2$ is a conservative vector field. So
From the claims above, there exist some (continuously differentiable) scalar fields $\phi^I$ (i.e. real-valued functions) such that $\nabla_{t,x}\phi^I=(\partial_Iw_0,\partial_Iw_1,\cdots,\partial_Iw_d)$. Next, we will prove that $\partial_Iw$ ($|I|=0,1,2$) are simply the corresponding potential functions, i.e. $\phi^I=\partial_Iw$. Since $(\partial_Iw_0,\partial_Iw_1,\cdots,\partial_Iw_d)$ is a conservative vector field, we will show that the nonlocal term $\mathcal{I}^I\left[w_0,w_1,\cdots,w_d\right](t,s,x,y)$ satisfies the following properties: 
% \begin{equation} \label{Commutationproperty}
%     \begin{split}
%         \frac{\partial\mathcal{I}^I\left[w_0,w_1,\cdots,w_d\right](t,s,x,y)}{\partial t} & =-\partial_{I}w_0(t,s,x,y), \\
%         \frac{\partial\mathcal{I}^I\left[w_0,w_1,\cdots,w_d\right](t,s,x,y)}{\partial x_k} & =-\partial_{I}w_k(t,s,x,y)
%     \end{split}
% \end{equation}
\begin{equation} \label{Commutationproperty}
    \begin{split}
        \frac{\partial\mathcal{I}^I\left[w_0,w_1,\dots,w_d\right](t,s,x,y)}{\partial t} &= -\partial_{I}w_0(t,s,x,y), \\
        \frac{\partial\mathcal{I}^I\left[w_0,w_1,\dots,w_d\right](t,s,x,y)}{\partial x_k} &= -\partial_{I}w_k(t,s,x,y)
    \end{split}
\end{equation}
for any $k=1,2,\cdots,d$ and $|I|=0,1,2$. From the definition \eqref{Intergralrepresentations} of $\mathcal{I}^I$, the first equation of \eqref{Commutationproperty} is clear. As for the second equation, we can rearrange the order of $w_i$ in $\mathcal{I}^I\left[w_0,w_1,\cdots,w_d\right]$ such that the integral of $w_i$ appears in the first position. Thanks to the property of path-independence, we have 
\begin{equation*}  
    \begin{split}
        &-\mathcal{I}^I\left[w_0,w_1,\dots,w_d\right](t,s,x,y)  \\
        &= \int^t_s \partial_{I}w_0(\theta_t,s,x,y) \, d\theta_t + \int^{x_1}_{y_1} \partial_{I}w_1(s,s,\theta_1,x_2,\dots,x_d,y) \, d\theta_1 \\
        &\quad + \int^{x_2}_{y_2} \partial_{I}w_2(s,s,x_1,\theta_2,\dots,x_d,y) \big|_{x_1=y_1} \, d\theta_2 \\
        &\quad + \int^{x_3}_{y_3} \partial_I w_3(s,s,x_1,x_2,\theta_3,\dots,x_d,y) \big|_{\begin{subarray}{c} x_1=y_1 \\ x_2=y_2\end{subarray}} \, d\theta_3 \\
        &\quad \dots + \int^{x_d}_{y_d} \partial_{I}w_d(s,s,x_1,x_2,\dots,x_{d-1},\theta_d,y) \big|_{\begin{subarray}{c} x_i=y_i \\ i=1,2,\dots,d-1\end{subarray}} \, d\theta_d \\
        &= \phi^I(t,s,x,y) - \phi^I(s,s,y,y) \\
        &= \int^{x_k}_{y_k} \partial_Iw_k(t,s,x_1,x_2,\dots,x_{k-1},\theta_{k},x_{k+1},\dots,x_d,y) \, d\theta_{k} \\
        &\quad + \int^t_s \partial_Iw_0(\theta_t,s,x,y) |_{x_k=y_k} \, d\theta_t + \int^{x_1}_{y_1} \partial_I w_1(s,s,\theta_1,x_2,\dots,x_d,y) |_{x_k=y_k} \, d\theta_1 \\
        &\quad \dots + \int^{x_{k-1}}_{y_{k-1}} \partial_Iw_{k-1}(s,s,x_1,\dots,x_{k-2},\theta_{k-1},x_{k},\dots,x_d,y) |_{\begin{subarray}{c} x_i=y_i,x_k=y_k \\ i=1,2,\dots,k-2\end{subarray}} \, d\theta_{k-1} \\
        &\quad + \int^{x_{k+1}}_{y_{k+1}} \partial_Iw_{k-1}(s,s,x_1,\dots,x_{k},\theta_{k+1},x_{k+2},\dots,x_d,y) |_{\begin{subarray}{c} x_i=y_i \\ i=1,2,\dots,k\end{subarray}} \, d\theta_{k+1} \\
        &\quad \dots + \int^{x_d}_{y_d} \partial_{I}w_d(s,s,x_1,x_2,\dots,x_{d-1},\theta_d,y) \big|_{\begin{subarray}{c} x_i=y_i \\ i=1,2,\dots,d-1\end{subarray}} \, d\theta_d.
    \end{split}
\end{equation*}
which directly indicates that the second equation of \eqref{Commutationproperty} holds.

Next, we will show that the potential function of $(\partial_Iw_0,\partial_Iw_1,\cdots,\partial_Iw_d)$ is just $\partial_Iw$. Furthermore, $\partial_Iw_0=\partial_Iw_t$, $\partial_Iw_i=\partial_Iw_{x_i}$, and ${\partial_Iw_{ij}}=\partial_Iw_{x_ix_j}$ for $i,j=1,2,\cdots,d$ and $|I|=0,1,2$. Note that $\partial_Iw_i$ is the $(i+1)$-th component of $\partial_IW$ while $\partial_Iw_{x_i}$ is the partial derivative of the first component $\partial_I w$ of $\partial_I W$ with respect to $x_i$. Hence, it is not trivial to check if they are identical. 

With the differentiability of coefficients $a^I$ and $b^I$, the nonhomogeneous terms $\mathcal{I}^I\left[w_0,w_1,\cdots,w_d\right](t,s,x,y)$ and $f$, and the initial condition $g$ in $(t,x)$, the implicit function theorem guarantees that the solution $w$ of the first PDE of $\eqref{SystemforW}$ is also differentiable in $(t,x)$. Thanks to \eqref{Commutationproperty}, we first differentiate the first PDE of \eqref{SystemforW} for $w$ and then subtract the equation of \eqref{SystemforW} for $w_i$ from it. We find that the difference $w_{x_i}-w_i$ satisfies the following classical PDE 
% \begin{equation*} 
%     \left\{
%     \begin{array}{rcl}
%         (w_{x_i}-w_i)(t,s,x,y) & = & \sum\limits_{|I|\leq 2}(A^I+B^I)(s,y)\partial_I(w_{x_i}-w_i)(t,s,x,y) \\
%         (w_{x_i}-w_i)(t,0,x,y) & = & 0,\quad t,s\in[0,T],\quad x,y\in\mathbb{R}^d.  
%     \end{array}
%     \right. 
% \end{equation*} 
\begin{equation*} 
    \left\{
    \begin{aligned}
        (w_{x_i}-w_i)(t,s,x,y) &= \sum_{|I|\leq 2}(A^I+B^I)(s,y)\partial_I(w_{x_i}-w_i)(t,s,x,y) \\
        (w_{x_i}-w_i)(t,0,x,y) &= 0, \quad t,s\in[0,T], \quad x,y\in\mathbb{R}^d.  
    \end{aligned}
    \right. 
\end{equation*}
By the classical PDE theory \cite{Friedman1964,Ladyzhanskaya1968,Eidelman1969,Lunardi1995}, we have $\partial_Iw_{x_i}=\partial_Iw_i$ for $|I|=0,1,2$. Hence, the potential function of $(\partial_Iw_0,\partial_Iw_1,\cdots,\partial_Iw_d)$ is just $\partial_Iw$. 

% \ \ 

\textbf{3.)} We are to prove that $w$ solves the simplified nonlocal linear PDE \eqref{SimplfiednonlocallinearPDE}. Since $\partial_Iw_i=\partial_Iw_{x_i}$ for $i=0,1,2,\cdots,d$ and $|I|=0,1,2$, we replace all $\partial_Iw_i$ of the nonlocal terms $\mathcal{I}^I\left[w_0,w_1,\cdots,w_d\right]$ by $\partial_Iw_{x_i}$. Then the first PDE of \eqref{SystemforW} is exactly the simplified nonlocal linear PDE \eqref{SimplfiednonlocallinearPDE}. 

% \ \ 

\textbf{4.)} We are to show that the estimate $[W]^{(2+\alpha)}_{[0,T]}<\infty$, i.e., $w\in\Omega^{(2+\alpha)}_{[0,T]}$. First of all, it is obvious that $[\overrightarrow{w}]^{(2+\alpha)}_{[0,T]}<\infty$ with the regularities of $f$ and $g$. Similar to the proof of the Schauder prior estimate \eqref{Schauderestimate}, by the simplified nonlocal PDE \eqref{SimplfiednonlocallinearPDE} and the system \eqref{SystemforW}, we have
\begin{equation*} 
    \begin{split}
        \Big|w(t,s,x,y)\Big|^{(2+\alpha)}_{(s,y)\in[0,T]\times\mathbb{R}^d} & \leq C\left([\overrightarrow{w}]^{(2+\alpha)}_{[0,T]}+\lVert f\rVert^{(\alpha)}_{[0,T]}+\lVert g\rVert^{(2+\alpha)}_{[0,T]}\right) <\infty
    \end{split}
    \end{equation*}
for any $(t,x)\in[0,T]\times\mathbb{R}^d$, which implies $[w]^{(2+\alpha)}_{[0,T]}<\infty$. Hence, we have $[W]^{(2+\alpha)}_{[0,T]}<\infty$ and $w\in\Omega^{(2+\alpha)}_{[0,T]}$. Furthermore, from \textbf{3.)} and \textbf{4.)}, the nonlocal PDE \eqref{SimplfiednonlocallinearPDE} is solvable in $\Omega^{(2+\alpha)}_{[0,T]}$. 

% \ \ 

\textbf{5.)} Finally, for the uniqueness and stability of solutions of \eqref{SimplfiednonlocallinearPDE}, both of them come directly from the Schauder estimate \eqref{Schauderestimate}. Suppose that $u_1$, $u_2\in\Omega^{(2+\alpha)}_{[0,T]}$ are two solutions of \eqref{SimplfiednonlocallinearPDE}, then we have
% \begin{equation*} 
%     \left\{
%     \begin{array}{rcl}
%         L_0(u_1-u_2)(t,s,x,y) & = & 0, \\
%         (u_1-u_2)(t,0,x,y) & = & 0, \quad t,s\in[0,T],\quad x,y\in\mathbb{R}^d, 
%     \end{array}
%     \right. 
% \end{equation*}
\begin{equation*} 
    \left\{
    \begin{aligned}
        L_0(u_1-u_2)(t,s,x,y) &= 0, \\
        (u_1-u_2)(t,0,x,y) &= 0, \quad t,s\in[0,T], \quad x,y\in\mathbb{R}^d. 
    \end{aligned}
    \right. 
\end{equation*}
which shows that $u_1=u_2$.

Similarly, we can also show that the map from data $(f,g)$ to solutions of \eqref{SimplfiednonlocallinearPDE} is continuous in the $\Omega^{(l)}_{[0,T]}$-topology. Specifically, let $u$ and $\widehat{u}$ correspond to $(f,g)$ and $(\widehat{f},\widehat{g})$ satisfying the
assumptions of Theorem \ref{Well-posednessofL0}, respectively. Then, we have
\begin{equation} \label{StabilityofnonlocallinearPDE} 
      \lVert u-\widehat{u}\rVert^{(2+\alpha)}_{[0,T]}\leq C\left(\lVert f-\widehat{f}\rVert^{(\alpha)}_{[0,T]}+\lVert g-\widehat{g}\rVert^{(2+\alpha)}_{[0,T]}\right). 
\end{equation}

% \ \ 
 
With the claims \textbf{1.)}-\textbf{5.)}, the proof is completed. 
\end{proof}

With the Schauder estimate \eqref{Schauderestimate} and the well-posedness of the simplified version \eqref{SimplfiednonlocallinearPDE} of \eqref{NonlocalLinearPDE}, we are ready to prove the global solvability of \eqref{NonlocalLinearPDE} by the method of continuity. 

\begin{theorem} \label{Well-posednessofL} 
  If $f\in\Omega^{(\alpha)}_{[0,T]}$ and $g\in\Omega^{(2+\alpha)}_{[0,T]}$, then the nonlocal linear PDE \eqref{NonlocalLinearPDE} admits a unique solution in $\Omega^{(2+\alpha)}_{[0,T]}$.  
\end{theorem}
\begin{proof}
  Since the problem \eqref{NonlocalLinearPDE} is of linear type, we assume $g=0$ without loss of generality. Consider the family of equations: 
  $$L_\tau u:=(1-\tau)L_0 u+\tau L_1 u$$
  where $L_1u:=Lu$. It is clear that 
  \begin{equation}
      \lVert L_\tau u\rVert^{(\alpha)}_{[0,T]}\leq C\lVert u\rVert^{(2+\alpha)}_{[0,T]}
  \end{equation}
where $C$ is a positive constant depending only on $d$, $\alpha$, $T$, $\lVert A^I\rVert^{(\alpha)}_{[0,T]}$, and $\lVert B^I\rVert^{(\alpha)}_{[0,T]}$. Hence, for each $\tau\in[0,1]$, the nonlocal parabolic $L_\tau$ is a bounded linear operator from $B:=\left\{u\in\Omega^{(2+\alpha)}_{[0,T]}: u(t,0,x,y)=0\right\}$ to $V:=\Omega^{(\alpha)}_{[0,T]}$. We know that $L_0$ is solvable (i.e., $L_0$ is surjective) by Theorem \ref{Well-posednessofL0}. Moreover, there exists a constant $C$ such that the following a-priori estimate holds for all $u\in B$ and $\tau\in[0,1]$ 
\begin{equation}
      \lVert u\rVert^{(2+\alpha)}_{[0,T]}\leq C\lVert L_\tau u\rVert^{(\alpha)}_{[0,T]}, 
  \end{equation}
since $u$ solves the equation with $L_\tau u$ as the nonhomogeneous term. By the method of continuity, $L_1$ is also solvable (i.e., surjective). Furthermore, the uniqueness directly follows from the Schauder estimate for the homogeneous, linear, and strongly parabolic PDE with zero initial value, which is satisfied by the difference of any two solutions in $\Omega^{(2+\alpha)}_{[0,T]}$ to the equation. For the stability of solutions of \eqref{NonlocalLinearPDE}, one can refer to the counterpart of Theorem \ref{Well-posednessofL0}.    
\end{proof}

% \ \ 

Theorem \ref{Well-posednessofL} implies the following properties of $L$.
\begin{corollary}
   Given $g(t,x,y)\in\Omega^{(2+\alpha)}_{[0,T]}$, the nonlocal operator $L$ from $\big\{u\in\Omega^{(2+\alpha)}_{[0,T]}:u|_{s=0}=g\big\}$ to $\Omega^{(\alpha)}_{[0,T]}$, defined in \eqref{Nonlocallinearoperator}, is linear, bijective, continuous, and bounded. 
\end{corollary}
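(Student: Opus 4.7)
The plan is to obtain all four properties directly from the material already established, so the proof will be short. The linearity of $L$ is immediate from its definition \eqref{Nonlocal linear operator}: each summand is either $\partial_s$, multiplication by a coefficient $A^I$ followed by $\partial_I$, or multiplication by $B^I$ followed by $\partial_I$ and the (linear) diagonal evaluation $u \mapsto u(s,s,x,y)|_{x=y}$. The set $\{u\in\Omega^{(2+\alpha)}_{[0,T]}:u|_{s=0}=g\}$ is an affine subspace, but $L$ is linear as an operator on the ambient space $\Omega^{(2+\alpha)}_{[0,T]}$. Boundedness/continuity follows by the triangle inequality: given $A^I,B^I\in\Omega^{(\alpha)}_{[0,T]}$, a routine product-estimate in the H\"older norm shows
\[
 \lVert Lu\rVert^{(\alpha)}_{[0,T]} \leq C\bigl(d,T,\lVert A^I\rVert^{(\alpha)}_{[0,T]},\lVert B^I\rVert^{(\alpha)}_{[0,T]}\bigr)\,\lVert u\rVert^{(2+\alpha)}_{[0,T]},
\]
which is exactly the bound exploited in the proof of Theorem \ref{Well-posedness of L}.

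For the bijection, surjectivity is precisely the existence part of Theorem \ref{Well-posedness of L}: for every $f\in\Omega^{(\alpha)}_{[0,T]}$ there is some $u$ in the affine domain with $Lu=f$. Injectivity follows either from the uniqueness part of the same theorem or directly from the Schauder estimate \eqref{Schauder estimate}: if $u_1,u_2$ both lie in the domain and $Lu_1=Lu_2$, then $w:=u_1-u_2$ satisfies $Lw=0$ with $w|_{s=0}=0$, so \eqref{Schauder estimate} with $f=0$, $g=0$ forces $\lVert w\rVert^{(2+\alpha)}_{[0,T]}=0$. There is essentially no obstacle here; the corollary is a bookkeeping restatement of Theorem \ref{Well-posedness of L} together with the trivial mapping property of $L$.
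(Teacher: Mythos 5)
Your proposal is correct and matches the paper's intent exactly: the paper gives no separate proof, stating only that the corollary follows immediately from Theorem \ref{Well-posedness of L}, and your argument (linearity from the definition, boundedness from the coefficient bounds already used in that theorem's proof, surjectivity from existence, injectivity from uniqueness via the Schauder estimate) is precisely the bookkeeping the authors had in mind.
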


%_____________________________________________________________________
%_____________________________________________________________________

\section{Nonlocal Fully Nonlinear PDE} \label{Sec:Nonlinear}
In this section, we make use of the linearization method and Banach's fixed point theorem to prove the local existence, uniqueness, and stability of solutions to nonlocal fully nonlinear PDE: 
% \begin{equation} \label{NonlocalfullynonlinearPDE} 
%     \left\{
%     \begin{array}{rcl}
%         u_s(t,s,x,y) & = & F\big(t,s,x,y,\left(\partial_I u\right)_{|I|\leq 2}(t,s,x,y),  \left(\partial_I u\right)_{|I|\leq 2}(s,s,x,y)\big|_{x=y}\big), \\
%         u(t,0,x,y) & = & g(t,x,y),\quad t,s\in[0,T],\quad x,y\in\mathbb{R}^d. 
%     \end{array}
%     \right. 
% \end{equation}
\begin{equation} \label{NonlocalfullynonlinearPDE} 
    \left\{
    \begin{aligned}
        u_s(t,s,x,y) &= F\big(t,s,x,y,\left(\partial_I u\right)_{|I|\leq 2}(t,s,x,y),  \left(\partial_I u\right)_{|I|\leq 2}(s,s,x,y)\big|_{x=y}\big), \\
        u(t,0,x,y) &= g(t,x,y), \quad t,s\in[0,T], \quad x,y\in\mathbb{R}^d. 
    \end{aligned}
    \right. 
\end{equation}
where the mapping (nonlinearity) $F$ could be nonlinear with respect to all its arguments. With the local well-posedness, we then extend the results to the largest possible time horizon, resulting in the maximally defined solution. Finally, in regards of the global solvability, we will show that it holds if a very sharp a-priori estimate is available. Especially for a special case of \eqref{NonlocalfullynonlinearPDE}, called nonlocal quasilinear PDEs, the global solvability can be achieved. Furthermore, the results will be extended to a broader function space to enhance their applicability in practical settings.

\subsection{Local Well-posedness of Fully Nonlinear PDEs}
To take advantage of the results of nonlocal linear PDEs in Section \ref{Sec:Linear}, we impose some regularity assumptions on the nonlinearity $F$ and the initial data $g$. In addition to $g\in\Omega^{(2+\alpha)}_{[0,T]}$, it is required that the nonlinear mapping $(t,s,x,y,z)\to F(t,s,x,y,z)$ is defined in $\Pi=[0,T]^2\times\mathbb{R}^{d;d}\times B(\overline{z},R_0)$ for a positive constant $R_0$, where $\overline{z}\in\mathbb{R}\times\mathbb{R}^{d}\times\mathbb{R}^{d^{2}}\times\mathbb{R}\times\mathbb{R}^{d}\times\mathbb{R}^{d^{2}}$ and satisfies that 
\begin{enumerate} 
    \item \textbf{(Uniformly ellipticity condition)} for any $\xi=(\xi_1,\dots,\xi_d)^\top\in\mathbb{R}^d$, there exists a constant $\lambda>0$ such that     
    % \begin{eqnarray}
    %     \sum_{|I|=2}\partial_I F(t,s,x,y,z)\xi^I & \geq & \lambda|\xi|^2, \label{UniformellipticityconditionofF1} \\
    %    \sum_{|I|=2}\left(\partial_I F+\partial_I \overline{F}\right)(t,s,x,y,z)\xi^I & \geq & \lambda|\xi|^2 \label{UniformellipticityconditionofF2} 
    % \end{eqnarray}
    \begin{align}
    \sum_{|I|=2} \partial_I F(t,s,x,y,z)\xi^I &\geq \lambda |\xi|^2, \label{UniformellipticityconditionofF1} \\
    \sum_{|I|=2} (\partial_I F + \partial_I \overline{F})(t,s,x,y,z)\xi^I &\geq \lambda |\xi|^2 \label{UniformellipticityconditionofF2} 
\end{align}
    hold uniformly with respect to $(t,s,x,y,z)\in\Pi$; 
    \item \textbf{(Locally H\"{o}lder continuity)} for every $\delta\geq 0$ and $z\in B(\overline{z},R_0)$, there exists a constant $K>0$ such that
    \begin{equation} \label{HoldercontinuityofF}
        \sup_{(t,x,z)}\left\{\left| \mathcal{F}(t,\cdot,x,\cdot,z)\right|^{(\alpha)}_{[0,\delta]\times\mathbb{R}^d}\right\}=K;
    \end{equation}
    \item \textbf{(Locally Lipschitz continuity)} for any $(t,s,x,y,z_1),(t,s,x,y,z_2)\in\Pi$, there exists a constant $L>0$ such that
    \begin{equation} \label{LipschitzcontinuityofF} 
        |\mathcal{F}(t,s,x,y,z_1)-\mathcal{F}(t,s,x,y,z_2)|\leq L|z_1-z_2|, 
    \end{equation}
\end{enumerate}
where $\partial_I F$ denotes the partial derivative of $F$ with respect to $\partial_Iu (t,s,x,y)$ while $\partial_I \overline{F}$ denotes the derivative of $F$ with respect to $\partial_I u(s,s,x,y)|_{x=y}$. Moreover, the generic notation $\mathcal{F}$ in the conditions of \eqref{HoldercontinuityofF} and \eqref{LipschitzcontinuityofF} represents $F$ itself and some of its first-, second, third-order partial derivatives, which are indicated by ``$\surd$" or ``$\overline{\surd}$" in Tables \ref{tab:table2}, \ref{tab:table3}, and \ref{tab:table4}. For these second-order derivatives denoted by ``$\overline{\surd}$" in Table \ref{tab:table3}, we require further regularities listed in Table \ref{tab:table4}, where $\partial^3_{\overline{\mathcal{X}}\overline{\mathcal{Y}}Z}F$ represents the first partial derivative of $\partial^2_{\mathcal{X}\mathcal{Y}}F$ with ``$\overline{\surd}$" in Table \ref{tab:table3} with respect to the argument $\mathcal{Z}$. 
\begin{table}[!ht] 
    \centering
    \small
    \setlength{\tabcolsep}{3.5pt}
    
    \begin{minipage}[t]{0.48\textwidth}
        \centering
        \begin{tabular}{c| c c c c c c} 
            \hline
            $\mathcal{X}$ & $t$ & $s$ & $x$ & $y$ & $\partial_I u$ & $\partial_I u \big| \substack{\scalebox{0.75}{$t=s$} \\ \scalebox{0.75}{$x=y$}}$ \\ 
            \hline 
            $\partial_\mathcal{X} F$ & $\surd$ & & $\surd$ & & $\surd$ & $\surd$ \\ 
            \hline 
        \end{tabular}
        \caption{First-order derivatives of $F$} 
        \label{tab:table2}
    \end{minipage}
    \hfill
    \begin{minipage}[t]{0.48\textwidth}
        \centering
        \begin{tabular}{c| c c c c c c} 
            \hline
            $\mathcal{Z}$ & $t$ & $s$ & $x$ & $y$ & $\partial_I u$ & $\partial_I u \big| \substack{\scalebox{0.75}{$t=s$} \\ \scalebox{0.75}{$x=y$}}$ \\ 
            \hline 
            $\partial^3_{\overline{\mathcal{X}}\overline{\mathcal{Y}}\mathcal{Z}}F$ & & & $\surd$ & & $\surd$ & \\ 
            \hline 
        \end{tabular}
        \caption{Third-order derivatives of $F$} 
        \label{tab:table4} 
    \end{minipage}

    \vspace{0.4cm}

    \begin{minipage}{1\textwidth}
        \centering
        \begin{tabular}{c| c c c c c c}
            \hline
            \diagbox[width=6em]{$\mathcal{X}$}{$\partial^2 F$}{$\mathcal{Y}$} & $t$ & $s$ & $x$ & $y$ & $\partial_I u$ & $\partial_I u \big| \substack{\scalebox{0.75}{$t=s$} \\ \scalebox{0.75}{$x=y$}}$ \\ 
            \hline 
            $t$ &  &  &  &  & $\surd$ & $\surd$ \\ 
            $s$ &  &  &  &  &  & \\
            $x$ &  &  & $\surd$ &  &  $\overline{\surd}$ & $\overline{\surd}$ \\
            $y$ &  &  &  &  &  & \\
            $\partial_I u$ & $\surd$ &  & $\overline{\surd}$ &  & $\overline{\surd}$ & $\overline{\surd}$ \\
            $\partial_I u \big| \substack{\scalebox{0.75}{$t=s$} \\ \scalebox{0.75}{$x=y$}}$ & $\surd$ &  & $\overline{\surd}$ &  & $\overline{\surd}$ & \\
            \hline 
        \end{tabular}
        \caption{Second-order derivatives of $F$} 
        \label{tab:table3}
    \end{minipage}
\end{table}

After introducing the regularities for $F$ and $g$, we are now in position of stating a local existence and uniqueness result for the nonlocal fully nonlinear PDE \eqref{NonlocalfullynonlinearPDE}. 
\begin{theorem}\label{Localwell-posednessoffullynonlinearPDE} 
	Suppose that $F$ satisfies the conditions \eqref{UniformellipticityconditionofF1}-\eqref{LipschitzcontinuityofF}, $g\in\Omega^{{(2+\alpha)}}_{[0,T]}$, and that the range of $(\partial_I g(t,x,y),\partial_I g(s,x,y)|_{x=y})$ is contained in the ball centered at $\overline{z}$ with radius $R_0/2$ for a positive constant $R_0$. Then, there exist $\delta>0$ and a unique $u\in\Omega^{{(2+\alpha)}}_{[0,\delta]}$ satisfying \eqref{NonlocalfullynonlinearPDE} in $[0,\delta]^2\times\mathbb{R}^{d;d}$. 
\end{theorem} 
\begin{proof}
We adopt the linearization method and Banach's fixed point argument to prove the local well-posedness of nonlocal fully nonlinear PDE. Overall speaking, we search for the solution of \eqref{NonlocalfullynonlinearPDE} as a fixed point of the operator $\Lambda$, defined by $\Lambda(u)=U$ over the space
	% \begin{equation*}
	% 	\mathcal{U}=\left\{u\in\Omega^{(2+\alpha)}_{[0,\delta]}:u(t,0,x,y)=g(t,x,y),\lVert u-g\rVert^{(2+\alpha)}_{[0,\delta]}\leq R\right\}  
	% \end{equation*} 
    \begin{equation*}
    \mathcal{U} = \left\{ u \in \Omega^{(2+\alpha)}_{[0,\delta]} : u(t,0,x,y) = g(t,x,y), \ \| u - g \|^{(2+\alpha)}_{[0,\delta]} \leq R \right\}  
\end{equation*}
	for two constants $\delta$ and $R$ (determined later), where $U$ is the solution to
	% \begin{equation} \label{DefinitionofLamda} 
	% 	\left\{
	% 	\begin{array}{rcl}
	% 		U_s(t,s,x,y) & = & \mathcal{L}U+F\big(t,s,x,y,\left(\partial_I u\right)_{|I|\leq 2}(t,s,x,y), \\ 
	% 		& & \qquad\qquad\qquad\qquad   
	% 		\left(\partial_I u\right)_{|I|\leq 2}(s,s,x,y)|_{x=y}\big)-\mathcal{L}u, \\
	% 		U(t,0,x,y) & = & g(t,x,y),\quad t,s\in [0,\delta],\quad x,y\in\mathbb{R}^d, 
	% 	\end{array}
	% 	\right. 
	% \end{equation}
    \begin{equation} \label{DefinitionofLamda} 
    \left\{
    \begin{aligned}
        U_s(t,s,x,y) &= \mathcal{L}U + F\big(t,s,x,y,\left(\partial_I u\right)_{|I|\leq 2}(t,s,x,y), \left(\partial_I u\right)_{|I|\leq 2}(s,s,x,y)|_{x=y}\big) - \mathcal{L}u, \\
        U(t,0,x,y) &= g(t,x,y), \quad t,s\in [0,\delta], \quad x,y\in\mathbb{R}^d. 
    \end{aligned}
    \right. 
\end{equation}
	in which
	% \begin{equation} 
	% 	\begin{split}
	% 		\mathcal{L}u(t,s,x,y)
	% 		&=\sum\limits_{|I|\leq 2}\partial_I F\big(t,0,x,y,\theta_0(t,x,y)\big)\cdot\partial_I u(t,s,x,y) \\
	% 		&\qquad\qquad\qquad
	% 		+\sum\limits_{|I|\leq 2}\partial_I \overline{F}\big(t,0,x,y,\theta_0(t,x,y)\big)\cdot\partial_I u(s,s,x,y)|_{x=y}
	% 	\end{split}
	% \end{equation}
    \begin{equation} 
    \begin{split}
        \mathcal{L}u(t,s,x,y) &= \sum_{|I|\leq 2} \partial_I F \big(t,0,x,y,\theta_0(t,x,y)\big) \cdot \partial_I u(t,s,x,y) \\
        &\quad + \sum_{|I|\leq 2} \partial_I \overline{F} \big(t,0,x,y,\theta_0(t,x,y)\big) \cdot \partial_I u(s,s,x,y) \big|_{x=y}
    \end{split}
\end{equation}
	with $\theta_0(t,x,y):=\big(\left(\partial_I g\right)_{|I|\leq 2}(t,x,y),  \left(\partial_I g\right)_{|I|\leq 2}(0,x,y)|_{x=y}\big)$. Note that the partial derivative $\partial_I F\big(t,0,x,y,\theta_0(t,x,y)\big)$
	is meant to be evaluated at $(t,0,x,y,\theta_0(t,x,y))$, i.e. $\big(t,0,x,y,\left(\partial_I g\right)_{|I|\leq 2}(t,x,y),  \left(\partial_I g\right)_{|I|\leq 2}(0,x,y)\big)$. Similarly, the same convention applies to $\partial_I \overline{F}(t,0,x,y,\theta_0(t,x,y))$. Remarkably, the nonlinear operator $\Lambda$ defined by \eqref{DefinitionofLamda} is well-defined given the well-posedness of nonlocal linear PDE \eqref{NonlocalLinearPDE}.

In order to apply the Banach's fixed point theorem, we need to strike a balance between $\delta$ and $R$ such that they satisfy the following three conditions: 
	\begin{enumerate}
	    \item To validate $F\big(t,s,x,y,\left(\partial_I u\right)_{|I|\leq 2}(t,s,x,y),  \left(\partial_I u\right)_{|I|\leq 2}(s,s,x,y)|_{x=y}\big)$, we require that the range of various derivatives of $u$ in $\mathcal{U}$ is contained in $B(\overline{z},R_0)$. Noting that
		% \begin{equation} \label{Balance1}
		% 	\begin{split}
		%     & \sup\limits_{(t,s,x,y)\in[0,\delta]^2\times\mathbb{R}^{d;d}}\sum_{|I|\leq 2}\big(\left|\partial_I u(t,s,x,y)-\partial_I g(t,x,y)\right| \\
		% 	& \qquad\qquad\qquad 
		% 	+\left|\partial_I u(s,s,x,y)|_{x=y}-\partial_I g(s,x,y)|_{x=y}\right|\big)\leq C\delta^\frac{\alpha}{2}R,
		% 	\end{split}
		% \end{equation}
        \begin{equation} \label{Balance1}
    \begin{split}
        & \sup_{(t,s,x,y)\in[0,\delta]^2\times\mathbb{R}^{2d}} \sum_{|I|\leq 2} \big( \left|\partial_I u(t,s,x,y) - \partial_I g(t,x,y)\right| \\
        &\qquad\qquad\qquad\qquad + \left|\partial_I u(s,s,x,y)|_{x=y} - \partial_I g(s,x,y)|_{x=y}\right| \big) \leq C\delta^\frac{\alpha}{2}R,
    \end{split}
\end{equation}
		it should hold that $C\delta^\frac{\alpha}{2}R\leq \frac{R_0}{2}$;
		\item After a rather lenghty verification (see Appendix B), we can show the core inequality 
		\begin{equation} 
			\lVert\Lambda(u)-\Lambda(\widehat{u})\rVert^{(2+\alpha)}_{[0,\delta]}\leq C(R)\delta^\frac{\alpha}{2}\lVert u-\widehat{u}\rVert^{(2+\alpha)}_{[0,\delta]},   
		\end{equation} 
		and thus a small enough $\delta$ can be chosen to ensure $C(R)\delta^\frac{\alpha}{2r}\leq\frac{1}{2}$ such that $\Lambda$ is a $\frac{1}{2}$-contraction; 
		\item Before applying a fixed point argument, the last step is to prove that $\Lambda$ maps $\mathcal{U}$ into itself, i.e. $\lVert \Lambda(u)-g\rVert^{(2+\alpha)}_{[0,\delta]}\leq R$. Hence, $R$ should be suitably large such that $\lVert \Lambda(g)-g\rVert^{(2+\alpha)}_{[0,\delta]}\leq R/2$. 
        % \item Before the fixed-point argument, we must show $\Lambda(\mathcal{U}) \subseteq \mathcal{U}$, i.e., $\|\Lambda(u)-g\|^{(2+\alpha)}_{[0,\delta]} \leq R$. Thus, $R$ is chosen sufficiently large to satisfy $\|\Lambda(g)-g\|^{(2+\alpha)}_{[0,\delta]} \leq R/2$.
	\end{enumerate}
	 
	\noindent \textbf{(Contractility of $\Lambda$)} Let us consider the equation for $U-\widehat{U}:=\Lambda(u)-\Lambda(\widehat{u})$:
	% \begin{equation} 
	% 	\left\{
	% 	\begin{array}{rcl}
	% 		\big(U-\widehat{U}\big)_s(t,s,x,y) & = & \mathcal{L}\big(U-\widehat{U}\big)+F\big(t,s,x,y,\left(\partial_I u\right)_{|I|\leq 2}(t,s,x,y), \\
	% 		& & \qquad\qquad\qquad\qquad\qquad\qquad  
	% 		\left(\partial_I u\right)_{|I|\leq 2}(s,s,x,y)|_{x=y}\big) \\
	% 		&& \qquad   -F\big(t,s,x,y,\left(\partial_I\widehat{u}\right)_{|I|\leq 2}(t,s,x,y), \\
	% 		& & \qquad\qquad\qquad\qquad\qquad\qquad 
	% 		\left(\partial_I\widehat{u}\right)_{|I|\leq 2}(s,s,x,y)|_{x=y}\big) \\
	% 		& & \qquad
	% 		-\mathcal{L}\left(u-\widehat{u}\right), \\
	% 		\big(U-\widehat{U}\big)(t,0,x,y) & = & 0, \hfill t,s\in[0,\delta],\quad x,y\in\mathbb{R}^d.
	% 	\end{array}
	% 	\right. 
	% \end{equation} 
    \begin{equation} 
    \left\{
    \begin{aligned}
        \big(U-\widehat{U}\big)_s(t,s,x,y) &= \mathcal{L}\big(U-\widehat{U}\big)- \mathcal{L}(u-\widehat{u}) \\
        &\quad + F\big(t,s,x,y,(\partial_I u)_{|I|\leq 2}(t,s,x,y), (\partial_I u)_{|I|\leq 2}(s,s,x,y)|_{x=y}\big) \\
        &\quad - F\big(t,s,x,y,(\partial_I\widehat{u})_{|I|\leq 2}(t,s,x,y), (\partial_I\widehat{u})_{|I|\leq 2}(s,s,x,y)|_{x=y}\big), \\ 
        \big(U-\widehat{U}\big)(t,0,x,y) &= 0, \qquad t,s \in [0,\delta], \quad x,y \in \mathbb{R}^d.
    \end{aligned}
    \right. 
\end{equation}
    According to the prior estimates \eqref{Schauderestimate} and \eqref{StabilityofnonlocallinearPDE} of nonlocal linear PDEs \eqref{NonlocalLinearPDE}, we have 
	\begin{equation} \label{varphi}
		\lVert U-\widehat{U}\rVert^{(2+\alpha)}_{[0,\delta]}\leq C\lVert\varphi\rVert^{(\alpha)}_{[0,\delta]},
	\end{equation}
    where the constant $C$ is independent of $\delta$ and the inhomogeneous term $\varphi$ is given by
	% \begin{equation*}
	% 	\begin{split}
	% 		\varphi(t,s,x,y)=~&F\big(t,s,x,y,\left(\partial_I u\right)_{|I|\leq 2}(t,s,x,y),  \left(\partial_I u\right)_{|I|\leq 2}(s,s,x,y)|_{x=y}\big) \\
	% 		& -F\big(t,s,x,y,\left(\partial_I\widehat{u}\right)_{|I|\leq 2}(t,s,x,y),  \left(\partial_I\widehat{u}\right)_{|I|\leq 2}(s,s,x,y)|_{x=y}\big)-\mathcal{L}\left(u-\widehat{u}\right), 
	% 	\end{split}
	% \end{equation*}
    \begin{equation*}
    \begin{aligned}
        \varphi(t,s,x,y) &= F \big( t,s,x,y, (\partial_I u)_{|I|\leq 2}(t,s,x,y), (\partial_I u)_{|I|\leq 2}(s,s,x,y)|_{x=y} \big) \\
        &\quad - F \big( t,s,x,y, (\partial_I\widehat{u})_{|I|\leq 2}(t,s,x,y), (\partial_I\widehat{u})_{|I|\leq 2}(s,s,x,y)|_{x=y} \big)  - \mathcal{L} (u - \widehat{u}),
    \end{aligned}
\end{equation*}
for convenience, which can be rewritten as an integral representation: 
	% \begin{equation} \label{Integralrepresentationofvarphi}
	% 	\begin{split}
	% 		&\int^1_0\frac{d}{d\sigma}F\big(t,s,x,y,\theta_\sigma(t,s,x,y)\big)d\sigma-\mathcal{L}\left(u-\widehat{u}\right) \\
	% 		=&\int^1_0\sum\limits_{|I|\leq 2}\partial_I F\big(t,s,x,y,\theta_\sigma(t,s,x,y)\big)\times\left(\partial_I u(t,s,x,y)-\partial_I\widehat{u}(t,s,x,y)\right)d\sigma \\
	% 		&\qquad 
	% 		+\int^1_0\sum\limits_{|I|\leq 2}\partial_I \overline{F}\big(t,s,x,y,\theta_\sigma(t,s,x,y)\big) \\
	% 		& \qquad\qquad\qquad\qquad  
	% 		\times\left(\partial_I u(s,s,x,y)|_{x=y}-\partial_I\widehat{u}^b(s,s,x,y)|_{x=y}\right)d\sigma  -\mathcal{L}\left(u-\widehat{u}\right) \\
	% 		=&\int^1_0\sum\limits_{|I|\leq 2}\Big(\partial_I F\big(t,s,x,y,\theta_\sigma(t,s,x,y)\big)-\partial_I F\big(t,0,x,y,\theta_0(t,x,y)\big)\Big)\\
	% 		& \qquad\qquad\qquad\qquad\qquad\qquad\qquad\qquad\qquad\qquad\qquad
	% 		\times\partial_I\left(u-\widehat{u}\right)(t,s,x,y)d\sigma \\
	% 		&\qquad 
	% 		+\int^1_0\sum\limits_{|I|\leq 2}\Big(\partial_I \overline{F}\big(t,s,x,y,\theta_\sigma(t,s,x,y)\big)-\partial_I \overline{F}\big(t,0,x,y,\theta_0(t,x,y)\big)\Big) \\
	% 		& \qquad\qquad\qquad\qquad\qquad\qquad\qquad\qquad\qquad\qquad\qquad  
	% 		\times\partial_I\left(u-\widehat{u}\right)(s,s,x,y)|_{x=y}d\sigma,  
	% 	\end{split}
	% \end{equation} 
    \begin{equation} \label{Integralrepresentationofvarphi}
    \begin{aligned}
        &\int^1_0\frac{d}{d\sigma}F\big(t,s,x,y,\theta_\sigma(t,s,x,y)\big)d\sigma-\mathcal{L}\left(u-\widehat{u}\right) \\
        &= \int^1_0\sum_{|I|\leq 2}\partial_I F\big(t,s,x,y,\theta_\sigma(t,s,x,y)\big) \cdot \left(\partial_I u(t,s,x,y)-\partial_I\widehat{u}(t,s,x,y)\right)d\sigma \\
        &\quad + \int^1_0\sum_{|I|\leq 2}\partial_I \overline{F}\big(t,s,x,y,\theta_\sigma(t,s,x,y)\big)  \cdot\left(\partial_I u(s,s,x,y)|_{x=y}-\partial_I\widehat{u}^b(s,s,x,y)|_{x=y}\right)d\sigma - \mathcal{L}\left(u-\widehat{u}\right) \\
        &= \int^1_0\sum_{|I|\leq 2} \Big( \partial_I F\big(t,s,x,y,\theta_\sigma(t,s,x,y)\big) - \partial_I F\big(t,0,x,y,\theta_0(t,x,y)\big) \Big)  \cdot \partial_I\left(u-\widehat{u}\right)(t,s,x,y) \, d\sigma \\
        &\quad + \int^1_0\sum_{|I|\leq 2} \Big( \partial_I \overline{F}\big(t,s,x,y,\theta_\sigma(t,s,x,y)\big) - \partial_I \overline{F}\big(t,0,x,y,\theta_0(t,x,y)\big) \Big)  \cdot \partial_I\left(u-\widehat{u}\right)(s,s,x,y)|_{x=y} \, d\sigma
    \end{aligned}
\end{equation}
in which
	% \begin{equation*}
	% 	\begin{split}
	% 		\theta_\sigma(t,s,x,y)&:=\sigma\big(\left(\partial_I u\right)_{|I|\leq 2}(t,s,x,y),  \left(\partial_I u\right)_{|I|\leq 2}(s,s,x,y)|_{x=y}\big) \\
	% 		&\qquad\qquad   
	% 		+(1-\sigma)\big(\left(\partial_I\widehat{u}\right)_{|I|\leq 2}(t,s,x,y),  \left(\partial_I\widehat{u}\right)_{|I|\leq 2}(s,s,x,y)|_{x=y}\big).
	% 	\end{split}
	% \end{equation*}
    \begin{equation*}
    \begin{aligned}
        \theta_\sigma(t,s,x,y) &:= \sigma \big( (\partial_I u)_{|I|\leq 2}(t,s,x,y), \, (\partial_I u)_{|I|\leq 2}(s,s,x,y)|_{x=y} \big) \\
        &\quad + (1-\sigma) \big( (\partial_I\widehat{u})_{|I|\leq 2}(t,s,x,y), \, (\partial_I\widehat{u})_{|I|\leq 2}(s,s,x,y)|_{x=y} \big).
    \end{aligned}
\end{equation*}
To estimate $\lVert\varphi\rVert^{(\alpha)}_{[0,\delta]}$, we need to estimate the H\"{o}lder regularities of $|\varphi(t,s,x,y)|^{(\alpha)}_{(s,y)\in[0,\delta]\times\mathbb{R}^d}$, $|\varphi_t(t,s,x,y)|^{(\alpha)}_{(s,y)\in[0,\delta]\times\mathbb{R}^d}$, $|\varphi_x(t,s,x,y)|^{(\alpha)}_{(s,y)\in[0,\delta]\times\mathbb{R}^d}$, and $|\varphi_{xx}(t,s,x,y)|^{(\alpha)}_{(s,y)\in[0,\delta]\times\mathbb{R}^d}$ for any fixed $t\in[0,\delta]$ and $x\in\mathbb{R}^d$, all of which are listed in Table \ref{tab:table5}. After a lenghty but straightforward investigation of $K_1$-$K_{12}$ in Table \ref{tab:table5} (see Appendix \ref{App:B}), for a small enough $\delta$, we have %Appendix \ref{App:B}
\begin{equation} \label{Contraction} 
	\lVert U-\widehat{U}\rVert^{(2+\alpha)}_{[0,\delta]}\leq C\lVert\varphi\rVert^{(\alpha)}_{[0,\delta]}\leq C(R)\delta^{\frac{\alpha}{2}}\lVert u-\widehat{u}\rVert^{(2+\alpha)}_{[0,\delta]}\leq \frac{1}{2}\lVert u-\widehat{u}\rVert^{(2+\alpha)}_{[0,\delta]}. 
\end{equation}

\begin{table}[!ht]
\centering 
\begin{tabular}{cccc}
    \toprule
    \multicolumn{2}{l}{\quad\quad~ Estimates of $\lVert\varphi\rVert^{(\alpha)}_{[0,\delta]}$} & \\
    \cmidrule{1-1}
    $0\leq s<s^\prime\leq \delta$, $0<|y-y^\prime|\leq 1$ & ~\quad $ |~\cdot~|$ ~\quad & $\triangle_{s^\prime-s,s}$ & $\triangle_{y^\prime-y,y}$ \\
    \midrule
    $\varphi(t,s,x,y)$ & ~\quad $K_1$ ~\quad & $K_2$ & $K_3$ \\
    $\varphi_t(t,s,x,y)$ & ~\quad $K_4$ ~\quad & $K_5$ & $K_6$ \\
    $\varphi_x(t,s,x,y)$ & ~\quad $K_7$ ~\quad & $K_8$ & $K_9$ \\
    $\varphi_{xx}(t,s,x,y)$ & ~\quad $K_{10}$ ~\quad & $K_{11}$ & $K_{12}$ \\
    \bottomrule
\end{tabular}
\caption{Estimate of $\lVert\varphi\rVert^{(\alpha)}_{[0,\delta]}$} 
	\label{tab:table5} 
\end{table} 	

% \ \ 
	
\noindent \textbf{(Self-mapping of $\Lambda$)} Before applying the Banach's fixed point theorem, we need to choose a suitably large $R$ such that $\Lambda$ maps $\mathcal{U}$ into itself. Letting $\delta$ and $R$ satisfy 
	\begin{equation*}
		C(R)\delta^{\frac{\alpha}{2}}\leq \frac{1}{2},
	\end{equation*} 
	then $\Lambda$ is a $\frac{1}{2}$-contraction and for any $u\in\mathcal{U}$, we have
	\begin{equation*}
		\lVert\Lambda(u)-g\rVert^{(2+\alpha)}_{[0,\delta]}\leq \frac{R}{2}+\lVert \Lambda(g)-g\rVert^{(2+\alpha)}_{[0,\delta]}. 
	\end{equation*} 
	Define the function $G:=\Lambda(g)-g$ as the solution of the equation 
	% \begin{equation*}
	% 	\left\{
	% 	\begin{array}{lr}
	% 		G_s(t,s,x,y)=\mathcal{L}G+F\big(t,s,x,y,\left(\partial_I g\right)_{|I|\leq 2}(t,x,y),  \left(\partial_I g\right)_{|I|\leq 2}(s,x,y)|_{x=y}\big), \\
	% 		G(t,0,x,y)=0, \quad t,s\in [0,\delta],\quad x,y\in\mathbb{R}^d. 
	% 	\end{array}
	% 	\right. 
	% \end{equation*}
    \begin{equation*}
    \left\{
    \begin{aligned}
        G_s(t,s,x,y) &= \mathcal{L}G + F\big(t,s,x,y, (\partial_I g)_{|I|\leq 2}(t,x,y), \, (\partial_I g)_{|I|\leq 2}(s,x,y)|_{x=y} \big), \\
        G(t,0,x,y) &= 0, \qquad t,s \in [0,\delta], \quad x,y \in \mathbb{R}^d. 
    \end{aligned}
    \right. 
\end{equation*}
	By \eqref{Schauderestimate}, there is $C>0$ independent of $\delta$ such that 
	\begin{equation*}
		\lVert G\rVert^{(2+\alpha)}_{[0,\delta]}\leq C\lVert \psi\rVert^{(\alpha)}_{[0,\delta]}=:C^\prime, 
	\end{equation*}
	where $\psi(t,s,x,y)=F\big(t,s,x,y,\left(\partial_Ig\right)_{|I|\leq 2}(t,x,y),  \left(\partial_Ig\right)_{|I|\leq 2}(s,x,y)|_{x=y}\big)$. Hence, we have 
	\begin{equation*}
		\lVert\Lambda(u)-g\rVert^{(2+\alpha)}_{[0,\delta]}\leq \frac{R}{2}+C^\prime.
	\end{equation*}
	Therefore for a suitably large $R$, $\Lambda$ is a contraction mapping $\mathcal{U}$ into itself and it has a unique fixed point $u$ in $\mathcal{U}$ satisfying  
	% \begin{equation} \label{NonlocalfullynonlinearPDEfrom0todelta}
	% 	\left\{
	% 	\begin{array}{rcl}
	% 		u_s(t,s,x,y) & = & F\big(t,s,x,y,\left(\partial_I u\right)_{|I|\leq 2}(t,s,x,y),  \left(\partial_I u\right)_{|I|\leq 2}(s,s,x,y)|_{x=y}\big), \\
	% 		u(t,0,x,y) & = & g(t,x,y), \quad t,s\in [0,\delta],\quad x,y\in\mathbb{R}^d.  
	% 	\end{array}
	% 	\right. 
	% \end{equation} 
    \begin{equation} \label{NonlocalfullynonlinearPDEfrom0todelta}
    \left\{
    \begin{aligned}
        u_s(t,s,x,y) &= F\big(t,s,x,y, (\partial_I u)_{|I|\leq 2}(t,s,x,y), \, (\partial_I u)_{|I|\leq 2}(s,s,x,y)|_{x=y} \big), \\
        u(t,0,x,y) &= g(t,x,y), \qquad t,s \in [0,\delta], \quad x,y \in \mathbb{R}^d.  
    \end{aligned}
    \right. 
\end{equation}

% \ \ 

\noindent (\textbf{Uniqueness}) To complete the proof, we ought to show that $u$ is the unique solution of \eqref{NonlocalfullynonlinearPDEfrom0todelta} in the whole space $\Omega^{(2+\alpha)}_{[0,\delta]}$ rather than just in the subset $\mathcal{U}$. We can directly study a nonlocal PDE satisfied by the difference of any two solutions $u$ and $\overline{u}$ in $\Omega^{(2+\alpha)}_{[0,\delta]}$ to \eqref{NonlocalfullynonlinearPDEfrom0todelta}. By using the mean value theorem and the technique of \eqref{Integralrepresentationofvarphi}, it is clear that the difference $u-\overline{u}$ solves a nonlocal, homogeneous, linear, and strongly parabolic PDE with initial value zero. Consequently, similar to the proof of Theorem \ref{Well-posednessofL0}, we obtain the uniqueness of \eqref{NonlocalfullynonlinearPDEfrom0todelta}. 
% Alternatively, one can prove uniqueness using the standard arguments of \cite{Lunardi1989,Lunardi1995}; see also Section 2 of \cite{}.

Alternatively, the uniqueness of solutions of \eqref{NonlocalfullynonlinearPDEfrom0todelta} can also be proven by establishing a contradiction. Specifically, supposed that \eqref{NonlocalfullynonlinearPDEfrom0todelta} admits two solutions $u$ and $\overline{u}$ and let 
% \begin{equation*}
%     t_0=\sup\{t\in[0,\delta]:u(t,s,x,y)=\overline{u}(t,s,x,y),(t,s,x,y)\in[0,t]^2\times\mathbb{R}^{d;d}\}.
% \end{equation*}
\begin{equation*}
    t_0 = \sup \Big\{ t \in [0, \delta] : u(t,s,x,y) = \overline{u}(t,s,x,y), \ (t,s,x,y) \in [0,t]^2 \times \mathbb{R}^{d;d} \Big\}.
\end{equation*}
If $t_0=\delta$ the proof is completed. Otherwise, we can consider a new nonlocal fully nonlinear PDE with an updated initial condition $g^\prime(t,x,y)=u(t,t_0,x,y)=\overline{u}(t,t_0,x,y)$ in $(t,s,x,y)\in[t_0,T]^2\times\mathbb{R}^{d;d}$. As the proof of Theorem \ref{Localwell-posednessoffullynonlinearPDE} shows, there exist a enough small $\delta^\prime$ and a suitable large $R^\prime$ such that the updated \eqref{NonlocalfullynonlinearPDEfrom0todelta} admits a unique solution in a ball centered at $g^\prime$ with radius $R^\prime$. By choosing a sufficiently large $R^\prime$ such that both $u$ and $\overline{u}$ are contained in this ball, we have $u=\overline{u}$ in $[0,t_0+\delta^\prime]^2\times\mathbb{R}^{d;d}$. This contradicts the definition of $t_0$. Consequently, $t_0=\delta$ and $u=\overline{u}$.       
\end{proof}

% With some extra efforts, it is provable that the solution of \eqref{NonlocalfullynonlinearPDEfrom0todelta} depends continuously on the initial datum $g$. 
% \begin{corollary}
%   Under the assumptions of $F$ and $g$ in Theorem \ref{Localwell-posednessoffullynonlinearPDE}, there exist $r>0$ and $C>0$ such that for each $\widehat{g}\in\Omega^{(2+\alpha)}_{[0,T]}$ with $\lVert g-\widehat{g}\rVert^{(2+\alpha)}_{[0,T]}\leq r$, the solution $\widehat{u}$ of \eqref{NonlocalfullynonlinearPDEfrom0todelta} with initial datum $\widehat{g}$ is defined in $[0,\delta]^2\times\mathbb{R}^{d;d}$ and satisfies $$\lVert u-\widehat{u}\rVert^{(2+\alpha)}_{[0,\delta]}\leq C\lVert g-\widehat{g}\rVert^{(2+\alpha)}_{[0,\delta]},$$
%  where $\delta>0$ is given by Theorem \ref{Localwell-posednessoffullynonlinearPDE}.  
% \end{corollary} 
% This stability result comes directly from the prior estimate \eqref{Schauderestimate} and the proof of Theorem \ref{Localwell-posednessoffullynonlinearPDE}. Thus, its proof is omitted here. 

%_______________________________________________________________________
%_______________________________________________________________________

\subsection{Extensions to a Larger Time Horizon} \label{Subsec:Extension}
In this subsection, we extend the local well-posedness result in Theorem \ref{Localwell-posednessoffullynonlinearPDE} to the largest possible time horizon, leading to a maximally defined solution. In fact, we have proven that there exists a $\delta_1>0$ such that the nonlocal fully nonlinear PDE \eqref{NonlocalfullynonlinearPDE} is well-posed in $[0,\delta_1]^2\times\mathbb{R}^{d;d}$, i.e. the time region $R_1$ in Figure \ref{fig:extension}.
\begin{figure}[!ht]
	\centering
	\includegraphics[width=0.5\textwidth]{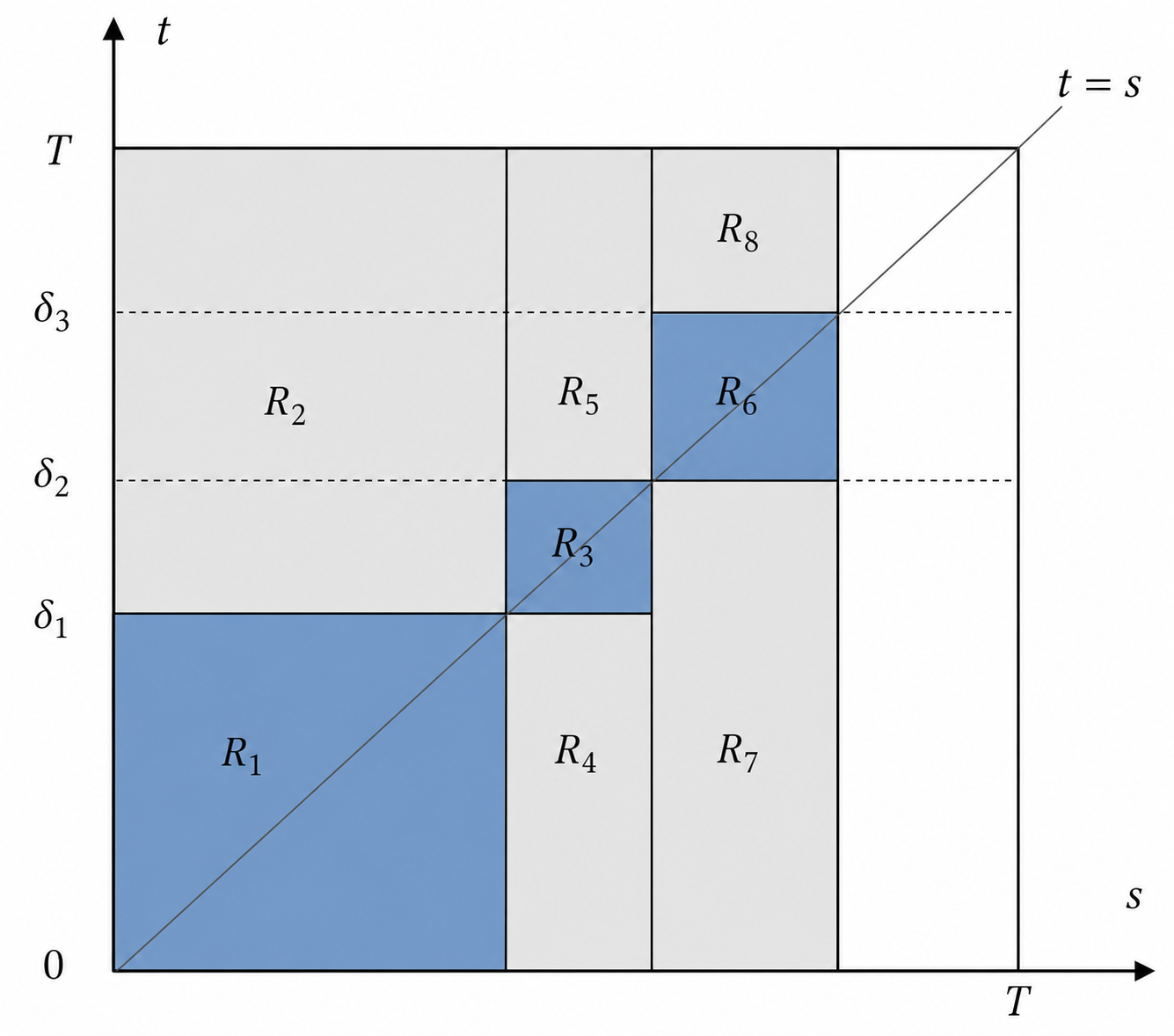}
	\caption{Extension from $[0,\delta_1]^2$ to a larger time region $[0,T]\times[0,\delta_3]$}
	\label{fig:extension}
\end{figure}

Hence, the diagonal condition can be determined for $s\in[0,\delta_1]$, which means all nonlocal terms $\left(\partial_I u\right)_{|I|\leq 2}(s,s,x,y)\big|_{x=y}$ of \eqref{NonlocalfullynonlinearPDE} are known for $s\in[0,\delta]$. Subsequently, \eqref{NonlocalfullynonlinearPDE} reduces to a family of classical fully nonlinear PDEs parameterized by $(t,x)\in[0,T]\times\mathbb{R}^d$. By the classical PDE theory, it is natural to extend the solution of \eqref{NonlocalfullynonlinearPDE} from $R_1$ to $R_1\bigcup R_2$. Next, we take $s=\delta_1$ as initial time and $g^\prime(t,x,y)=u(t,\delta_1,x,y)$ as initial datum, then consider the nonlocal PDE
% \begin{equation} \label{UpdatednonlocalfullynonlinearPDE} 
%     \left\{
%     \begin{array}{rcl}
%         u_s(t,s,x,y) & = & F\big(t,s,x,y,\left(\partial_I u\right)_{|I|\leq 2}(t,s,x,y),  \left(\partial_I u\right)_{|I|\leq 2}(s,s,x,y)\big|_{x=y}\big), \\
%         u(t,0,x,y) & = & g^\prime(t,x,y),\quad t,s\in[\delta_1,T],\quad x,y\in\mathbb{R}^d. 
%     \end{array}
%     \right. 
% \end{equation}
\begin{equation} \label{UpdatednonlocalfullynonlinearPDE} 
    \left\{
    \begin{aligned}
        u_s(t,s,x,y) &= F\big(t,s,x,y, (\partial_I u)_{|I|\leq 2}(t,s,x,y), \, (\partial_I u)_{|I|\leq 2}(s,s,x,y)\big|_{x=y}\big), \\
        u(t,0,x,y) &= g^\prime(t,x,y), \qquad t,s \in [\delta_1, T], \quad x,y \in \mathbb{R}^d. 
    \end{aligned}
    \right. 
\end{equation}
Since $g^\prime\in\Omega^{{(2+\alpha)}}_{[\delta_1,T]}$, Theorem \ref{Localwell-posednessoffullynonlinearPDE} promises that there exists $\delta_2>0$ such that \eqref{UpdatednonlocalfullynonlinearPDE} is solvable as well in $R_3\times\mathbb{R}^{d;d}$. Similarly, after solving \eqref{UpdatednonlocalfullynonlinearPDE} in the time region $R_3$, the nonlocal term of \eqref{UpdatednonlocalfullynonlinearPDE} are all given for $s\in[\delta_1,\delta_2]$ and \eqref{UpdatednonlocalfullynonlinearPDE} reduces to a family of conventional PDEs. The solution of \eqref{UpdatednonlocalfullynonlinearPDE} can then be extended from $R_3$ to $R_3\bigcup R_4\bigcup R_5$. Until now, we have obtained a solution of \eqref{NonlocalfullynonlinearPDE} in $[0,T]\times[0,\delta_2]\times\mathbb{R}^{d;d}$. One can continue extending the solution to a larger time interval by repeating the procedure above until a maximally defined solution $u(t,s,x,y)$ defined in $[0,T]\times[0,\tau)\times\mathbb{R}^{d;d}$ is reached. The time region $[0,T]\times[0,\tau)$ is maximal in the sense that if $\tau<\infty$, then there does not exist any solution of \eqref{NonlocalfullynonlinearPDE} belonging to $\Omega^{(2+\alpha)}_{[0,\tau]}$. 

It is noteworthy that the problem of existence at large for arbitrary initial data is a difficult task even in the local fully nonlinear case. The difficulty is caused by the fact that a-priori estimate in a very high norm $|\cdot|^{(2+\alpha)}_{[a,b]\times\mathbb{R}^d}$ (or $\lVert\cdot\rVert^{(2+\alpha)}_{[a,b]}$) is needed to establish the existence at large. To this end, there should be severe restrictions on the nonlinearities. More details are discussed in \cite{Krylov1987,Lieberman1996}. Next, let us denote $\tau(g)$ as the maximal time interval associated with $g$. To achieve the global existence, it is desired to prove that $\tau(g)=T$. Inspired by \cite{Lunardi1989,Prato1996,Lei2021}, we will show that the existence of solutions to \eqref{NonlocalfullynonlinearPDE} for an arbitrary large $T>0$, $\epsilon>0$, and initial data $g\in\Omega^{(2+\alpha+\epsilon)}_{[0,T]}$ if a very sharp priori estimate is available. 

\begin{theorem} \label{Globalsolvability}
  Let $F$ and $g$ satisfy the assumptions of Theorem \ref{Localwell-posednessoffullynonlinearPDE} with $\alpha$ replaced by $\alpha+\epsilon$. For a fixed $g\in\Omega^{(2+\alpha+\epsilon)}_{[0,T]}$, let $u$ be the maximally defined solution of problem \eqref{NonlocalfullynonlinearPDE} for $s\in[0,\tau(g))$. Further assume that there exists a constant $M>0$ such that 
  \begin{equation} \label{Upperboundedforglobalexistence}
      \lVert u\rVert^{(2+\alpha+\epsilon)}_{[0,\sigma]}\leq M, ~ \text{for all} ~ \sigma\in[0,\tau(g)),
  \end{equation}
  then we have either $\lim_{s\to\tau}u(t,s,x,y)\in\partial\mathcal{O}$ or $\tau(g)=T$, where $\partial\mathcal{O}$ is the boundary of the open set $\mathcal{O}$ of $\Omega^{(2+\alpha)}_{[0,T]}$ with
  $$
  \mathcal{O}=\left\{u~|~\big(\left(\partial_I u\right)_{|I|\leq 2}(t,s,x,y), \left(\partial_I u\right)_{|I|\leq 2}(s,s,x,y)|_{x=y}\big)\subseteq B(\overline{z},R_0)\right\}.
  $$
  %consisting of all the functions $u$ such that the range of $\big(\left(\partial_I u\right)_{|I|\leq 2}(t,s,x,y), \left(\partial_I u\right)_{|I|\leq 2}(s,s,x,y)|_{x=y}\big)$ is contained in the open ball $B(\overline{z},R_0)$. 
\end{theorem}

Remarkably, in order to obtain the $\frac{1}{2}$-contractility of $\Lambda$ defined by \eqref{DefinitionofLamda}, we need to strike a balance between $R$ and $\delta$ in $C(R)\delta^\frac{\alpha}{2}$ of \eqref{Contraction}. In the study of nonlocal fully nonlinear PDEs, the radius $R$ of solution $u$ of \eqref{NonlocalfullynonlinearPDEfrom0todelta} contains its $\lVert\cdot\rVert^{(2+\alpha)}_{[0,\delta]}$-norm information. Hence, a prior estimate $\lVert u\rVert^{(2+\alpha)}_{[0,\sigma]}\leq M$ for $\sigma\in[0,\tau(g))$ is not sufficient for existence in the large. The $\epsilon$ of \eqref{Upperboundedforglobalexistence} provides an additional regularity for the maximally defined solution to ensure the extension. Similar sufficient conditions to obtain a-priori estimates like \eqref{Upperboundedforglobalexistence} for the classical PDEs can be found in \cite{Krylov1987,Lieberman1996}. However, it is not straightforward to express such conditions in terms of coefficients and data of the local and nonlocal fully nonlinear PDEs. %It is beyond the scope of this paper, we will study the hard but interesting issue in our future works.

Next, we will make use of Theorems \ref{Localwell-posednessoffullynonlinearPDE} and \ref{Globalsolvability} to show the global solvability of a special class of \eqref{NonlocalfullynonlinearPDE}, which is called nonlocal quasilinear PDEs, of the form: 
% \begin{equation} \label{QuasilinearPDE} 
%     \left\{
%     \begin{array}{rcl}
%         u_s(t,s,x,y) & = & \sum\limits_{|I|= 2}A^I(s,y)\partial_I u(t,s,x,y) \\
%         & & +Q\big(t,s,x,y,\left(\partial_I u\right)_{|I|\leq 1}(t,s,x,y),  \left(\partial_I u\right)_{|I|\leq 1}(s,s,x,y)\big|_{x=y}\big), \\
%         u(t,0,x,y) & = & g(t,x,y),\quad t,s\in[0,T],\quad x,y\in\mathbb{R}^d,
%     \end{array}
%     \right. 
% \end{equation}
\begin{equation} \label{QuasilinearPDE} 
    \left\{
    \begin{aligned}
        u_s(t,s,x,y) &= \sum_{|I|= 2} A^I(s,y) \partial_I u(t,s,x,y) \\
        &\quad + Q\big(t,s,x,y, (\partial_I u)_{|I|\leq 1}(t,s,x,y), \, (\partial_I u)_{|I|\leq 1}(s,s,x,y)\big|_{x=y}\big), \\
        u(t,0,x,y) &= g(t,x,y), \qquad t,s \in [0,T], \quad x,y \in \mathbb{R}^d,
    \end{aligned}
    \right. 
\end{equation}
where the nonlinearity $Q(t,s,x,y,p^I,q^I)$ satisfies that there exists a constant $K<\infty$ such that $Q$ and its first-/second-derivatives satisfy
\begin{equation} \label{Growthcondition}
    |Q|\leq K\Big(1+\sum|p^I|\Big), 
\end{equation}
\begin{equation} \label{Boundednesscondition}
    \max\limits_{|I|\leq 1, 1\leq i,j\leq d}\left\{|Q_{p^I}|, |Q_t|, |Q_{x_i}|, |Q_{p^Ip^J}|, |Q_{tp^I}|, |Q_{x_ip^I}|, |Q_{x_ix_j}|\right\}\leq K. 
\end{equation}
% \begin{equation} 
%     \max\big\{|F|,|F_{p^I}|,|F_{x_i}|,|F_{p^Ip^J}|,|F_{x_ip^I}|,|F_{x_ix_j}|\big\}\leq K\Big(1+\sum|p^I|\Big). 
% \end{equation}
% \begin{enumerate}
%     \item a growth condition $|Q|\leq K\Big(1+\sum|p^I|\Big)$; 
%     \item its some first- and second-order derivatives $|Q_{p^I}|$, $|Q_{x_i}|$, $|Q_{p^Ip^J}|$, $|Q_{x_ip^I}|$, and $|Q_{x_ix_j}|$ are all bounded for $|I|\leq 1$ and $i,j=1,2,\cdots,d$,  
% \end{enumerate}
Such a simplified PDE \eqref{QuasilinearPDE} contains the equilibrium-type HJB equations originated from TIC stochastic control problems driven by state processes controlled solely through the drift. For such cases, we obtain the global well-posedness result. To the best of our knowledge, the result obtained as a degeneration of Theorem \ref{Localwell-posednessoffullynonlinearPDE} and Theorem \ref{Globalsolvability} constitutes the state-of-the-art in the existing TIC literature. 
\begin{theorem} \label{Well-posednessofquasilinearPDE}
  Suppose that $A^I$ satisfies the uniform ellipticity condition \eqref{Uniformellipticitycondition1} and the nonlinearity $Q$ meets the conditions \eqref{Growthcondition}-\eqref{Boundednesscondition}. The nonlocal quasilinear PDE \eqref{QuasilinearPDE} admits a unique solution in $\Omega^{(2+\alpha)}_{[0,T]}$ in $[0,T]^2\times\mathbb{R}^{d;d}$. 
\end{theorem}

% The proof of Theorem \ref{Globalsolvability}-Theorem \ref{Well-posednessofquasilinearPDE} refers readers to \cite{Lei2021,Lei2023a}. 
The proofs of Theorems \ref{Globalsolvability}–\ref{Well-posednessofquasilinearPDE} follow \cite{Lei2021,Lei2023a}; see also Appendix \ref{App:ProofThm4.2}-\ref{App:ProofThm4.3}.

\subsection{Extensions to a Larger Function Space} \label{Sec:WeightedNormsSpaces}
In this subsection, we extend the main results from the previous sections to a weighted space that allows both functions and their partial derivatives to grow exponentially with respect to the spatial variables \(x\) and \(y\), see also \cite{Lei2021,Lorenzi}. We focus on exponential weights defined by \(\varrho(x,y) = \exp\{1 + \langle Sx, x \rangle^{1/2} + \langle Sy, y \rangle^{1/2}\}\) for any $x$, \(y \in \mathbb{R}^d\), where \(S\) is a symmetric positive-definite matrix with eigenvalues in \([\underline{\lambda}, \overline{\lambda}]\) and \(\underline{\lambda} > 0\). Moreover, $\varrho_{\min}(x,y):=\min\left\{\varrho^{-1}(x,y),\varrho^{-1}(y,y)\right\}$ and $\varrho_{\max}(x,y):=\max\left\{\varrho(x,y),\varrho(y,y)\right\}$. First, we introduce the following weighted norms:
% \begin{itemize}
%     \item Weighted Spaces
%     % \item Equivalence
%     \item Definition
%     \item Well-posedness Theorem 
% \end{itemize}
% \begin{equation*}
% 	\begin{split}
% 		& |\psi(t,\cdot,x,\cdot)|^{(l)}_{\varrho,[a,b]\times\mathbb{R}^d} := \sum_{k\leq[l]}\sum_{2i+j=k}\left|\frac{D^i_sD^j_y\psi(t,\cdot,x,\cdot)}{\varrho_{\max}(x,\cdot)}\right|^{(0)} \\
% 		& \qquad\qquad\qquad\qquad\qquad\qquad +\sum_{0<l-2i-j<2}\left\langle \frac{D^i_sD^j_y\psi(t,\cdot,x,\cdot)}{\varrho_{\max}(x,\cdot)}\right\rangle^{(\frac{l-2i-j}{2})}_s \\
%         & \qquad +\sum_{2i+j=[l]} \Bigg\{\sup\limits_{\begin{subarray}{c} s\in[a,b],y,y^\prime\in\mathbb{R}^d \\ 0<|y-y^\prime|\leq 1 \end{subarray}}\frac{|D^i_sD^j_y\psi(t,s,x,y)-D^i_sD^j_y\psi(t,s,x,y^\prime)|}{|y-y^\prime|^{(l-\lfloor l\rfloor)}} \\
%         & \qquad\qquad\qquad\qquad\qquad \times \min\left\{\varrho_{\min}(x,y),\varrho_{\min}(x,y^\prime)\right\}\Bigg\}.
% 	\end{split}
% \end{equation*}
\begin{equation*}
    \begin{aligned}
        & |\psi(t,\cdot,x,\cdot)|^{(l)}_{\varrho,[a,b]\times\mathbb{R}^d} := \sum_{k\leq[l]}\sum_{2i+j=k} \left| \frac{D^i_s D^j_y \psi(t,\cdot,x,\cdot)}{\varrho_{\max}(x,\cdot)} \right|^{(0)}  + \sum_{0<l-2i-j<2} \left\langle \frac{D^i_s D^j_y \psi(t,\cdot,x,\cdot)}{\varrho_{\max}(x,\cdot)} \right\rangle^{(\frac{l-2i-j}{2})}_s \\
        &\quad + \sum_{2i+j=[l]} \left\{ \sup_{\begin{subarray}{c} s\in[a,b], \, y,y^\prime\in\mathbb{R}^d \\ 0<|y-y^\prime|\leq 1 \end{subarray}} \frac{|D^i_s D^j_y \psi(t,s,x,y) - D^i_s D^j_y \psi(t,s,x,y^\prime)|}{|y-y^\prime|^{(l-\lfloor l\rfloor)}} \cdot \vphantom{\sup_{\begin{subarray}{c} s\in[a,b] \end{subarray}}} \min\left\{\varrho_{\min}(x,y), \, \varrho_{\min}(x,y^\prime)\right\} \right\}
    \end{aligned}
\end{equation*}
and \([\Psi]^{(l)}_{\varrho,[0,\delta]}:=\sup_{(t,x)\in[a,b]\times\mathbb{R}^d}|\psi(t,\cdot,x,\cdot)|^{(l)}_{\varrho,[a,b]\times\mathbb{R}^d}\). Similarly, we can also define 
\begin{equation*}
	\begin{split}
		\lVert\psi\rVert^{(l)}_{\varrho,[a,b]}& := \sup\limits_{(t,x)\in[a,b]\times\mathbb{R}^d}\Big\{|(\psi,\psi_t,\psi_x,\psi_{xx})(t,\cdot,x,\cdot)|^{(l)}_{\varrho,[a,b]}\Big\} 
  %       \\
		% & :=\sup\limits_{(t,x)\in[a,b]\times\mathbb{R}^d}\Big\{|\psi(t,\cdot,x,\cdot)|^{(l)}_{[a,b]}+|\psi_t(t,\cdot,x,\cdot)|^{(l)}_{[a,b]}+|\psi_x(t,\cdot,x,\cdot)|^{(l)}_{[a,b]}+|\psi_{xx}(t,\cdot,x,\cdot)|^{(l)}_{[a,b]}\Big\},
	\end{split}
\end{equation*}
as well as the weighted space \(\Omega^{(l)}_{\varrho,[0,\delta]}:=\Big\{\psi\in C([a,b]^2\times\mathbb{R}^{d;d};\mathbb{R}):\Vert\psi\rVert^{(l)}_{\varrho,[a,b]}<\infty\Big\}\).

Next, we present a class of nonlinearities that generalizes the nonlinearity \(F\) discussed in Subsection \ref{Sec:Nonlinear}, in order to investigate well-posedness within the weighted spaces.
\begin{definition} \label{Def:AppropriatePair}
A pair of $(F,g)$ is appropriate if there exist $\delta$, $R>0$ such that for any $u\in\big\{u\in\Omega^{(2+\alpha)}_{\varrho,[0,\delta]}:u(t,0,x,y)=g(t,x,y),\| u-g\|^{(2+\alpha)}_{\varrho,[0,\delta]}\leq R\big\}$,
\begin{enumerate}[label=(\alph*)]
	\item $F\big(t,s,x,y,\left(\partial_I u\right)_{|I|\leq 2}(t,s,x,y),  \left(\partial_I u\right)_{|I|\leq 2}(s,s,x,y)|_{x=y}\big)\in\Omega^{(\alpha)}_{\varrho,[0,\delta]}$. 
	\item both $\partial_I F$ and $\partial_I\overline{F}$ at $u$ belong to $\Omega^{(\alpha)}_{[0,\delta]}$. 
	% \item $
	% \left\{
	% \begin{array}{lr}
		% (-1)^{r-1}\sum_{a,b,|I|=2r}\partial_I \bm{F}^a_b(t,s,y,\left(\partial_I\bm{u}\right)_{|I|\leq 2r}(t,s,y),  \left(\partial_I\bm{u}\right)_{|I|\leq 2r}(s,s,y))\xi_{i_1}\cdots\xi_{i_{2r}}v^av^b \geq  \lambda|\xi|^{2r}|v|^2, \\
		% (-1)^{r-1}\sum_{a,b,|I|=2r}\left(\partial_I \bm{F}^a_b+\partial_I \overline{\bm{F}}^a_b\right)(t,s,y,\left(\partial_I\bm{u}\right)_{|I|\leq 2r}(t,s,y),  \left(\partial_I\bm{u}\right)_{|I|\leq 2r}(s,s,y))\xi_{i_1}\cdots\xi_{i_{2r}}v^av^b \geq  \lambda|\xi|^{2r}|v|^2, 
		% \end{array}
	% \right. $
	\item the uniformly ellipticity condition \eqref{UniformellipticityconditionofF1}-\eqref{UniformellipticityconditionofF2} holds.  
        \item the inequality $\left|\Delta_{s,y}\mathcal{F}\big(t,s,x,y,\left(\partial_I u\right)_{|I|\leq 2}(t,s,x,y),  \left(\partial_I u\right)_{|I|\leq 2}(s,s,x,y)|_{x=y}\big)\cdot \mathcal{K}\right|\leq C(R)\left(|s-s^\prime|^\frac{\alpha}{2}+|y-y^\prime|^\alpha\right)$ holds, 
        \begin{itemize}
        	\item if $ \mathcal{F}\in\big\{\partial_I F,\partial_I\overline{ F},\partial^2_{It} F,\partial^2_{It}\overline{F},\partial^2_{Ix} F,\partial^2_{Ix}\overline{F},\partial^3_{Ixx}F,\partial^3_{Ixx}\overline{F}\big\}$, then $\mathcal{K}=1$;
        	\item if $\mathcal{F}\in\big\{\partial^2_{IJ}F,\partial^2_{IJ}\overline{F},\partial^3_{xIJ}F,\partial^3_{xIJ}\overline{F}\big\}$, then $\mathcal{K}=\partial_J\overline{u}(t,s,x,y)$;
        	\item if $\mathcal{F}\in\big\{\partial^3_{IJK}F,\partial^3_{IJK}\overline{F}\big\}$, then $\mathcal{K}=\left(\partial_J\overline{u}\cdot\partial_K\overline{u}\right)(t,s,x,y)$,
        \end{itemize}
\end{enumerate}
where $\Delta_{s,y}\varphi(s,y):=|\varphi(s^\prime,y^\prime)-\varphi(s,y)|$.
\end{definition}

Conditions (a)-(d) in Definition \ref{Def:AppropriatePair} allow us to employ the methodologies from Sections \ref{Sec:Linear} to \ref{Sec:Nonlinear}, including the linearization method and the fixed-point argument, to study fully nonlinear nonlocal systems in a weighted space. Though this refined framework allows for the nonhomogeneous term \( f \) and the initial data \( g \) that grow exponentially in the spatial variable (specifically, \( f \in \Omega^{{(\alpha)}}_{\varrho,[0,T]} \) and \( g \in \Omega^{{(2+\alpha)}}_{\varrho,[0,T]} \)), all coefficients of \( L \) defined in \eqref{Nonlocallinearoperator} are still required to belong to \( \Omega^{(\alpha)}_{[0,T]} \). This induces Condition (b), which stipulates that \( \partial_I F \) and \( \partial_I \overline{F} \) at \( u \) belong to ordinary normed spaces rather than weighted ones. Conditions (a)-(c) ensure that the mapping \(u \mapsto U\), defined by \(U_s = \mathcal{L}U + F(u) - \mathcal{L} u\), is well-defined, while Condition (d) enables us to demonstrate that this mapping is contractive. These conditions are satisfied by our financial example in Section \ref{Sec:FinEx}.

Under these conditions, all well-posedness results for nonlocal systems in Section \ref{Sec:Nonlinear} can be extended to weighted spaces in Theorem \ref{WellposednessWeighted}, whose proof in the same spirit is omitted.

\begin{theorem} \label{WellposednessWeighted}
%All well-posedness results for nonlocal systems in Section \ref{Sec:Nonlinear} can be extended to the setting with weighted spaces defined in this subsection. Specifically,
We obtain the well-posedness for nonlocal systems in weighted spaces:
\begin{enumerate}
	\item If all coefficients of $L$ defined in \eqref{Nonlocallinearoperator} belong to $\Omega^{(\alpha)}_{[0,T]}$, $f\in\Omega^{{(\alpha)}}_{\varrho,[0,T]}$, and $g\in\Omega^{{(2+\alpha)}}_{\varrho,[0,T]}$, then the linear nPDE \eqref{NonlocalLinearPDE} admits a unique solution $u\in\Omega^{{(2+\alpha)}}_{\varrho,[0,T]}$ in $\Delta[0,T]\times\mathbb{R}^d$. Moreover,   
	\begin{equation} \label{Weighted:Estimatesofsolutionsofnonlocalsystem} 
		\| u\|^{(2+\alpha)}_{\varrho,[0,T]}\leq C\left(\| f\|^{(\alpha)}_{\varrho,[0,T]}+\| g\|^{(2+\alpha)}_{\varrho,[0,T]}\right). 
	\end{equation}  
	\item Suppose that the pair of $(F,g)$ is appropriate in the sense of Definition \ref{Def:AppropriatePair}. Then, there exist $\tau>0$ and a unique maximally-defined solution $u\in\Omega^{{(2+\alpha)}}_{\varrho,[0,\tau]}$ satisfying \eqref{NonlocalfullynonlinearPDE} in $\Delta[0,\tau]\times\mathbb{R}^d$. Assume further that $\| u\|^{(2+\alpha^\prime)}_{\varrho,[0,\sigma]}\leq M$ for some finite constant $M>0$ across all $\sigma\in[0,\tau)$, then either the pair of $(F,\lim_{s\to\tau}u(\cdot,s,\cdot))$ is not appropriate or $\tau=T$. Specially, the nonlocal quasilinear system \eqref{QuasilinearPDE} is globally solvable.  
\end{enumerate}
\end{theorem}

%Despite these constraints, these conditions are crucial for our analyses, including the linearization method used in the subsequent analysis of the solvability of nonlocal nonlinear systems in weighted spaces.

%_______________________________________________________________________
%_______________________________________________________________________

\section{Well-Posedness of Equilibrium HJB Equations and Financial Examples} \label{Sec:TIC}
With the well-posedness results in Section \ref{Sec:Nonlinear}, we now echo back the TIC stochastic control problem of our interest \eqref{TICproblem} and analyze the well-posedness of the equilibrium HJB equation \eqref{EquilibriumHJBequation}. We summarize for the latter and then give a financial example.

\subsection{Analyses of Equilibrium HJB Equations} 
In this subsection, we apply Theorems \ref{Localwell-posednessoffullynonlinearPDE} and \ref{Well-posednessofquasilinearPDE} to equilibrium HJB equations. Moreover, we examine how close in value function between sophisticated and ``na\"{i}ve" controllers in Appendix \ref{app:diffvalue}. 

%After obtaining well-posedness of nonlocal PDEs, we try to apply Theorem \ref{Localwell-posednessoffullynonlinearPDE} and Theorem \ref{Well-posednessofquasilinearPDE} to the study of equilibrium-type HJB equations.
To this end, we first introduce some useful notations:
\begin{enumerate}
    \item For two fixed bounded subsets $P$, $Q\subseteq\mathbb{R}\times\mathbb{R}^{d}\times\mathbb{R}^{d^{2}}\times\mathbb{R}\times\mathbb{R}^{d}\times\mathbb{R}^{d^{2}}$, then 
    \begin{equation*}
        d(P,Q):=\inf\{|p-q|:p\in P,q\in Q\}.  
\end{equation*}
\item For any $u\in\Omega^{(2+\alpha)}_{[0,T]}$, the range of $u$ is defined by 
\begin{equation*}
    R(u):=\{(\partial_I u(t,s,x,y),\partial_Iu(s,s,x,y)|_{x=y}):t,s\in[0,T]^2,x,y\in\mathbb{R}^d\}. 
\end{equation*}
\end{enumerate}

We have the following conclusions. 
\begin{proposition} \label{SolvabilityofTIC}
    Let $\overline{\mathcal{H}}(t,s,x,y,z)=\mathcal{H}(T-t,T-s,x,y,z)$ defined in \eqref{Hamiltonian}, and $\overline{g}(t,x,y)=g(T-t,x,y)$. Suppose that $\overline{\mathcal{H}}$ satisfies the conditions \eqref{UniformellipticityconditionofF1}-\eqref{LipschitzcontinuityofF} in an open ball $B$ with $d(\partial B,R(\overline{g}))>0$. Then, for the TIC stochastic control problem \eqref{TICproblem}-\eqref{ControlledFBSDE}, we have that
    \begin{enumerate}
    	\item if both the drift and the diffusion of \eqref{ControlledFBSDE} are controlled, there exist $\tau\in(0,T]$ and a unique maximally defined solution $u\in\Omega^{{(2+\alpha)}}_{[T-\tau,T]}$ satisfying the equilibrium HJB equation \eqref{EquilibriumHJBequation} in $\nabla[T-\tau,T]\times\mathbb{R}^{d;d}$. Consequently, one has a feedback equilibrium control and a $C^{1,2}$ equilibrium value function of the form 
    	% \begin{equation}  \label{SolutionpairofTICprobems}
    	% 	\left\{
    	% 	\begin{array}{lr}
    	% 		\mathbbm{e}(s,y): = \psi(s,s,y,y,\left(\partial_I u\right)_{|I|\leq 2}(s,s,x,y)\big|_{x=y}), \\ 
    	% 		V(s,y): = J(s,y;\mathbbm{e}(s,y))=u(s,s,y,y), 
    	% 	\end{array}
    	% 	\right. 
    	% \end{equation}
        \begin{equation} \label{SolutionpairofTICprobems}
    \left\{
    \begin{aligned}
        \mathbbm{e}(s,y) &:= \psi\big(s,s,y,y, (\partial_I u)_{|I|\leq 2}(s,s,x,y)\big|_{x=y}\big), \\ 
        V(s,y) &:= J(s,y; \mathbbm{e}(s,y)) = u(s,s,y,y), 
    \end{aligned}
    \right. 
\end{equation}
    	at least in the maximally defined time interval. Further assume that the domain of $\overline{\mathcal{H}}$ is large enough and \eqref{Upperboundedforglobalexistence} holds, we have $\tau=T$; 
    	\item in the case where only the drift is controlled while the diffusion of \eqref{ControlledFBSDE} is uncontrolled, i.e. $\sigma(s,y,a)=\sigma(s,y)$, the equilibrium HJB equation \eqref{EquilibriumHJBequation} is solvable globally. 
    \end{enumerate}
\end{proposition}

Proposition \ref{SolvabilityofTIC} directly comes from Theorems \ref{Localwell-posednessoffullynonlinearPDE} and \ref{Well-posednessofquasilinearPDE}. The only important thing to note in the proposition is that the condition $d(\partial B,R(\overline{g}))>0$ is weaker than the counterpart in Theorem \ref{Localwell-posednessoffullynonlinearPDE}, where $d(\partial B,R(\overline{g}))>R_0/2$ was required. In fact, from the inequality \eqref{Balance1} in the proof of Theorem \ref{Localwell-posednessoffullynonlinearPDE}, we can find that the local solution always exists only if the range of initial data is contained in the interior of the domain of nonlinearity.  

Proposition \ref{SolvabilityofTIC} is significant as it partially addressed some open research problems listed in \cite{Bjoerk2017}, i.e.,
\begin{itemize}
	\item to provide conditions on primitives which guarantee that the functions $V$ and $u$ are regular enough to satisfy the extended HJB system; and
	\item to prove existence and/or uniqueness for solutions of the extended HJB system.
\end{itemize}
%In addition to the study of equilibrium-type HJB equations, it is claimed that our analytical framework of nonlocal PDEs in Section 3-Section 4 has high coherence and correspondence with the study of TIC stochastic control problems. In order to more clearly show the point, let us firstly review a range of open research problems listed by the pioneering work \cite{Bjoerk2014,Bjoerk2014a,Bjoerk2017}:  
%{\color{red}\begin{enumerate}
%    \item A theorem proving convergence of the discrete-time theory to the continuous-time limit; 
%    \item An open and difficult problem is to provide conditions on primitives which guarantee that the functions $V$ and $u$ are regular enough to satisfy the extended HJB system; 
%    \item A related (hard) open problem is to prove existence and/or uniqueness for solutions of the extended HJB system; 
%    \item Another related problem is to give conditions on primitives which guarantee that the assumptions of the verification theorem are satisfied. 
%\end{enumerate}} 
From our well-posedness and regularities results of nonlocal PDEs \eqref{NonlocalfullynonlinearPDE}, it is clear that these open problems can be solved at least within the maximally defined time interval. Note that the extended HJB equation in \cite{Bjoerk2017} and the equilibrium HJB equation \eqref{EquilibriumHJBequation} are equivalent; see \cite{Wei2017}. Moreover, our function space $\Omega^{(2+\alpha)}_{[a,b]}$ supports the $C^{1,1,2,2}$-regular condition for possible solutions of the extended HJB equation. In addition, our function space requires neither second-order partial derivative in $t$ nor mixed ones between $t$ and $x$. It suits well the formulations of TIC stochastic control problems.

\subsection{Financial Examples} \label{Sec:FinEx}
% In this section, we provide two global solvable examples of a TIC stochastic control problem \eqref{TICproblem}, the first one of which studies how an investor choose her own optimal investment strategies to increase her own exponential utility and the second one of which studies the optimal investment and consumption strategy pairs of the investor that optimize her own power utility. These two examples illustrate distinct applications of our well-posedness results presented in Section \ref{Sec:WeightedNormsSpaces}. The first example \eqref{ExpUFBSDE} meets all the criteria of our refined (weighted-norm) framework, ensuring that the corresponding TIC problem is globally solvable across the entire time horizon \([0,T]\). The second example is, however, not covered by our framework due to its degenerate nature. Nevertheless, the second example highlights the need to extend our framework from non-degenerate to degenerate cases. Due to the space limit, we put the second example into to \ref{App:PowerU}. 
We present two globally solvable examples of the TIC stochastic control problem \eqref{TICproblem}. The first studies optimal investment under exponential utility and fits our refined (weighted-norm) framework, ensuring global solvability on ([0,T]). The second examines investment–consumption under power utility but falls outside the framework due to degeneracy, motivating extensions; details are deferred to Appendix \ref{App:PowerU}.

%____________________________________________________________________

% \subsubsection{Exponential Utility} \label{sec:exp}
% Consider a market model in which there are one bond with a constant risk-less interest rate $r>0$ and one risky asset with the appreciation rate $\mu>r$ and the volatility $\sigma>0$. With $\alpha(\cdot)$ being the money amount invested in the risk asset (while the rest invested in the bond), the wealth process, characterized by $X(\cdot)$, together with the TIC recursive utility process $(Y(\cdot),Z(\cdot,\cdot))$ satisfies the following controlled FBSDE:
Consider a market with a risk-free bond (rate $r>0$) and a risky asset (return $\mu>r$, volatility $\sigma>0$). Let $\alpha(\cdot)$ be the amount invested in the risky asset. Then the wealth $X(\cdot)$ and TIC recursive utility $(Y(\cdot),Z(\cdot,\cdot))$ satisfy the controlled FBSDE:

% \begin{equation}\label{ExpUFBSDE}
% 	\begin{cases}
% 		dX(s)=[rX(s)+(\mu-r)\alpha(s)]ds+\sigma \alpha(s)dW(s), & s\in[t,T], \\
% 		dY(s)=-\big[v(t,s,X(t))-w(t,s,X(t))Y(s)\big]ds+Z(t,s)dW(s), & s\in[t,T], \\
% 		X(t)=x, \quad Y(T)=-g(t,X(t))\exp\{-\eta X(T)\}+h(t,X(t)),& t\in[0,T],
% 	\end{cases}
% \end{equation}
\begin{equation}\label{ExpUFBSDE}
    \left\{
    \begin{aligned}
        dX(s) &= [rX(s)+(\mu-r)\alpha(s)]ds + \sigma \alpha(s)dW(s), && s\in[t,T], \\
        dY(s) &= -\big[v(t,s,X(t)) - w(t,s,X(t))Y(s)\big]ds + Z(t,s)dW(s), && s\in[t,T], \\
        X(t) &= x, \quad Y(T) = -g(t,X(t))\exp\{-\eta X(T)\} + h(t,X(t)), && t\in[0,T].
    \end{aligned}
    \right.
\end{equation}
% where the BSDE can be regarded a special case of our 2BSVIE \eqref{2BSVIE} by letting $s=t$ and $Y(s):=Y(s,s)$; see \cite{Wang2020}. 
Then, we define the recursive utility functional for the investor as
\begin{equation*} \label{Exampleutility}
	J(t,x;\alpha(\cdot)):=Y(t;t,x,\alpha(\cdot)). 
\end{equation*}
Hence, the problem for the investor is to identify the optimal investment such that a sort of exponential utilities of the instantaneous and terminal wealth is maximized. Due to the general $(t,x)$-dependence of the generator and of the terminal condition of the BSDE, the aforementioned stochastic control problem is TIC. Note that we are dealing with a utility maximization problem. With some simple transformation, it can be reformulated as a minimization of a TIC functional, to align with our framework. 

By following the analysis in \cite{Lei2021}, it is necessary to consider the following variant of $\eqref{ExpUFBSDE}$:
% \begin{equation} \label{TransExpUFBSDE}
% 	\begin{cases}
% 		dX(s)=[(\mu-r)\exp\{r(T-s)\}\alpha(s)]ds+\sigma\exp\{r(T-s)\} \alpha(s)dW(s), \\
% 		dY(s)=-\big[\gamma w_1(t,s,X(t),X(s))\alpha(s)-w_2(t,s,X(t),X(s))\alpha^2(s) \\
% 		\qquad\qquad  
% 		-w_3(t,x,X(t)Y(s)+w_4(t,s,X(t))\big]ds+Z(t,s)dW(s), \\
% 		X(t)=x\exp\{r(T-s)\}, \\
% 		Y(T)=\gamma g_1(t,X(t))\exp\{\eta X(T)\}-g_2(t,X(t))\exp\{-\eta X(T)\}+g_3(t,X(t)), 
% 	\end{cases}
% \end{equation} 
\begin{equation} \label{TransExpUFBSDE}
    \left\{
    \begin{aligned}
        dX(s) &= \big[(\mu-r)\exp\{r(T-s)\}\alpha(s)\big] ds + \sigma\exp\{r(T-s)\} \alpha(s)dW(s), \\
        dY(s) &= -\Big[ \gamma w_1(t,s,X(t),X(s))\alpha(s) - w_2(t,s,X(t),X(s))\alpha^2(s) \\
        &\qquad - w_3(t,x,X(t))Y(s) + w_4(t,s,X(t)) \Big] ds + Z(t,s)dW(s), \\
        X(t) &= x\exp\{r(T-s)\}, \\
        Y(T) &= \gamma g_1(t,X(t))\exp\{\eta X(T)\} - g_2(t,X(t))\exp\{-\eta X(T)\} + g_3(t,X(t)).
    \end{aligned}
    \right.
\end{equation}
It is clear that under suitable assumptions, we can obtain the same conclusions for \eqref{ExpUFBSDE} via the well-posedness analysis of \eqref{TransExpUFBSDE} by letting the parameter \(\gamma\) approaches to zero, as with \cite{Lei2021}. Following the derivation in Section \ref{Sec:TIC}, we consider the following Hamiltonian
% \begin{equation*}
% 	\begin{split}
% 		H_\gamma(t,s,x,y,\alpha,u,p,q) & =\frac{1}{2}(\widehat{\sigma}(s)\alpha)^2q+\widehat{\mu}(s)\alpha p+\gamma w_1(t,s,x,y)\alpha 
% 		-w_2(t,s,x,y)\alpha^2 \\
% 		&  
% 		\quad
% 		-w_3(t,s,x) u+w_4(t,s,x),
% 	\end{split}
% \end{equation*}
\begin{equation*}
    \begin{aligned}
        H_\gamma(t,s,x,y,\alpha,u,p,q) &= \frac{1}{2}\big(\widehat{\sigma}(s)\alpha\big)^2 q + \widehat{\mu}(s)\alpha p + \gamma w_1(t,s,x,y)\alpha \\
        &\quad - w_2(t,s,x,y)\alpha^2 - w_3(t,s,x) u + w_4(t,s,x),
    \end{aligned}
\end{equation*}
where $\widehat{\mu}:=(\mu-r)\exp\{r(T-s)\}\alpha(s)$ and $\widehat{\sigma}:=\sigma\exp\{r(T-s)\} \alpha(s)$. Maximizing it with respect to $\alpha$ gives us the equilibrium strategy 
% \begin{equation} \label{Example1:Closedloopstrategy} 
% 	\overline{\alpha}(s,y)=\frac{\widehat{w}_1(s,s,y,y)+(\mu-r) U_y(s,s,x,y)|_{x=y}}{\widehat{w}_2(s,s,y,y)-\sigma^2 U_{yy}(s,s,x,y)|_{x=y}}\exp\{-r(T-s)\}
% \end{equation}
\begin{equation} \label{Example1:Closedloopstrategy} 
    \overline{\alpha}(s,y) = \frac{\widehat{w}_1(s,s,y,y) + (\mu-r) U_y(s,s,x,y)\big|_{x=y}}{\widehat{w}_2(s,s,y,y) - \sigma^2 U_{yy}(s,s,x,y)\big|_{x=y}} \exp\big\{-r(T-s)\big\}
\end{equation}
with $U(t,s,x,y)$ ($\gamma$ is suppressed) being the solution to an equilibrium HJB equation:
% \begin{equation} \label{BackwardHJBExponentialutility}
% 	\left\{
% 	\begin{array}{l}
% 		\displaystyle{U_s(t,s,x,y)+\frac{1}{2}\left(\frac{\sigma\widehat{w}_1(s,s,y,y)+\sigma(\mu-r) U_y(s,s,x,y)|_{x=y}}{\widehat{w}_2(s,s,y,y)-\sigma^2 U_{yy}(s,s,x,y)|_{x=y}}\right)^2 U_{yy}(t,s,x,y)} \\
% 		\displaystyle{\quad
% 			+\left(\frac{(\mu-r)\widehat{w}_1(s,s,y,y)+(\mu-r)^2 U_y(s,s,x,y)|_{x=y}}{\widehat{w}_2(s,s,y,y)-\sigma^2 U_{yy}(s,s,x,y)|_{x=y}}\right)U_y(t,s,x,y)} 
% 		\\ 
% 		\displaystyle{\quad     
% 			+\exp\{-r(T-s)\}\gamma w_1(t,s,x,y)\left(\frac{\widehat{w}_1(s,s,y,y)+(\mu-r) U_y(s,s,x,y)|_{x=y}}{\widehat{w}_2(s,s,y,y)-\sigma^2 U_{yy}(s,s,x,y)|_{x=y}}\right)} \\
% 		\displaystyle{\quad 
% 			-\exp\{-2r(T-s)\}w_2(t,s,x,y)\left(\frac{\widehat{w}_1(s,s,y,y)+(\mu-r) U_y(s,s,x,y)|_{x=y}}{\widehat{w}_2(s,s,y,y)-\sigma^2 U_{yy}(s,s,x,y)|_{x=y}}\right)^2} \\
% 		\displaystyle{\quad 
% 			-w_3(t,s,x) U(t,s,x,y) +w_4(t,s,x) = 0,} \\
		
% 		% ~ \\
		
% 		\displaystyle{U(t,T,x,y)=\gamma g_1(t,x)\exp\{\eta y\}-g_2(t,x)\exp\{-\eta y\}+g_3(t,x),\quad 0\leq t\leq s \leq T.} 
% 	\end{array}
% 	\right. 
% \end{equation}  
\begin{equation} \label{BackwardHJBExponentialutility}
    \left\{
    \begin{aligned}
        &U_s(t,s,x,y) + \frac{1}{2} \left( \frac{\sigma\widehat{w}_1(s,s,y,y) + \sigma(\mu-r) U_y(s,s,x,y)|_{x=y}}{\widehat{w}_2(s,s,y,y) - \sigma^2 U_{yy}(s,s,x,y)|_{x=y}} \right)^2 U_{yy}(t,s,x,y) \\
        &\quad + \left( \frac{(\mu-r)\widehat{w}_1(s,s,y,y) + (\mu-r)^2 U_y(s,s,x,y)|_{x=y}}{\widehat{w}_2(s,s,y,y) - \sigma^2 U_{yy}(s,s,x,y)|_{x=y}} \right) U_y(t,s,x,y) \\
        &\quad + \exp\{-r(T-s)\} \gamma w_1(t,s,x,y) \left( \frac{\widehat{w}_1(s,s,y,y) + (\mu-r) U_y(s,s,x,y)|_{x=y}}{\widehat{w}_2(s,s,y,y) - \sigma^2 U_{yy}(s,s,x,y)|_{x=y}} \right) \\
        &\quad - \exp\{-2r(T-s)\} w_2(t,s,x,y) \left( \frac{\widehat{w}_1(s,s,y,y) + (\mu-r) U_y(s,s,x,y)|_{x=y}}{\widehat{w}_2(s,s,y,y) - \sigma^2 U_{yy}(s,s,x,y)|_{x=y}} \right)^2 \\
        &\quad - w_3(t,s,x) U(t,s,x,y) + w_4(t,s,x) = 0, \\[2.5ex]
        &U(t,T,x,y) = \gamma g_1(t,x) \exp\{\eta y\} - g_2(t,x) \exp\{-\eta y\} + g_3(t,x), \quad 0 \leq t \leq s \leq T.
    \end{aligned}
    \right.
\end{equation}
With similar arguments in Appendix B of \cite{Lei2021}, Theorem \ref{WellposednessWeighted} ensures that there exist $\delta\in(0,T]$ and a unique solution satisfying \eqref{BackwardHJBExponentialutility} in $\nabla[T-\delta,T]$.

Next, we consider the following ansatz for $U$: for $(t,s)\in\nabla[T-\delta,T],~x,y\in\mathbb{R}$,
\begin{equation*} 
	U(t,s,x,y)=\varphi_1(t,s,x)\exp\{\eta y\}-\varphi_2(t,s,x)\exp\{-\eta y\}+\varphi_3(t,s,x)
\end{equation*}
for some suitable $\varphi_1(\cdot,\cdot,\cdot)$, $\varphi_2(\cdot,\cdot,\cdot)$, and $\varphi_3(\cdot,\cdot,\cdot)$ to be solved. Consider also that  
% \begin{equation*} 
% 	\begin{aligned}
% 		\widehat{w}_1(t,s,x,y) & =\gamma w_1(t,s,x,y)\exp\{-r(T-s)\} \\
% 		& =\gamma W_1(t,s,x)\exp\{\eta y\}\exp\{-r(T-s)\}, \\
% 		\widehat{w}_2(t,s,x,y) & =2w_2(t,s,x,y)\exp\{-2r(T-s)\} \\
% 		& =\frac{\sigma^2\eta}{\mu-r}\widehat{w}_1(t,s,x,y)+2\sigma^2\eta^2\varphi_1(t,s,x)\exp\{\eta y\}.
% 	\end{aligned}
% \end{equation*}
\begin{equation*} 
    \begin{aligned}
        \widehat{w}_1(t,s,x,y) &= \gamma w_1(t,s,x,y) \exp\{-r(T-s)\} = \gamma W_1(t,s,x) \exp\{\eta y\} \exp\{-r(T-s)\}, \\
        \widehat{w}_2(t,s,x,y) &= 2w_2(t,s,x,y) \exp\{-2r(T-s)\} = \frac{\sigma^2\eta}{\mu-r} \widehat{w}_1(t,s,x,y) + 2\sigma^2\eta^2 \varphi_1(t,s,x) \exp\{\eta y\}.
    \end{aligned}
\end{equation*}
Under the assumptions, \eqref{Example1:Closedloopstrategy} admits the form 
\begin{equation} \label{ExpPolicy}
	\overline{\alpha}(s,y)=\frac{1}{\eta}\frac{\mu-r}{\sigma^2}\exp\{-r(T-s)\}. 
\end{equation}
Moreover, we obtain an ordinary differential equation (ODE) system parameterized by $(t,x)$: 
% \begin{equation} \label{ExpODESys}
% 	\left\{
% 	\begin{array}{l}
% 		\varphi_s(t,s,x)+N(t,s,x)\varphi(t,s,x)+M(t,s,x)=0, \\
% 		\varphi(t,T,x)=g(t,x), \quad (t,s,x)\in\nabla[T-\delta,T]\times\mathbb{R}, 
% 	\end{array}
% 	\right. 
% \end{equation}
\begin{equation} \label{ExpODESys}
    \left\{
    \begin{aligned}
        &\varphi_s(t,s,x) + N(t,s,x)\varphi(t,s,x) + M(t,s,x) = 0, \\
        &\varphi(t,T,x) = g(t,x), \qquad (t,s,x) \in \nabla[T-\delta, T] \times \mathbb{R}, 
    \end{aligned}
    \right. 
\end{equation}
where $N=\mathrm{diag}\{N_1,N_2,N_3\}=\mathrm{diag}\left\{\frac{(\mu-r)^2}{2\sigma^2}-w_3,-\frac{(\mu-r)^2}{2\sigma^2}-w_3,-w_3\right\}$ and $M=\left(M_1,M_2,M_3\right)^\top=\left(\gamma\frac{(\mu-r)\exp\{-r(T-s)\}}{2\sigma^2\eta}W_1,0,w_4\right)^\top$. By the classical theory of ODEs, the system admits a unique solution represented by variation of constants formula. Consequently, the unique solution of the equilibrium HJB equation \eqref{BackwardHJBExponentialutility} has the following explicit representation: 
% \begin{equation*} 
% 	\begin{split}
% 		U(t,s,x,y) & =  \left[\gamma \digamma_1(t,s,x)\digamma^{-1}_1(t,T,x)g_1(t,x) \right. \\
% 		& \qquad \left. +\gamma\int^T_s \digamma_1(t,s,x)\digamma^{-1}_1(t,\tau,x)\overline{M}_1(t,\tau,x)d\tau\right]\exp\{\eta y\} \\
% 		& \qquad -\digamma_2(t,s,x)\digamma^{-1}_2(t,T,x)g_2(t,x)\exp\{-\eta y\} \\
% 		& \qquad + \left[\digamma_3(t,s,x)\digamma^{-1}_3(t,T,x)g_3(t,x) \right. \\
% 		& \qquad\qquad \left. +\int^T_s\digamma_3(t,s,x)\digamma^{-1}_3(t,\tau,x)w_4(t,\tau,x)d\tau\right]
% 	\end{split}
% \end{equation*}
\begin{equation*} 
    \begin{aligned}
        U(t,s,x,y) &= \left[ \gamma \digamma_1(t,s,x) \digamma^{-1}_1(t,T,x) g_1(t,x) + \gamma \int^T_s \digamma_1(t,s,x) \digamma^{-1}_1(t,\tau,x) \overline{M}_1(t,\tau,x) d\tau \right] \exp\{\eta y\} \\
        &\quad - \digamma_2(t,s,x) \digamma^{-1}_2(t,T,x) g_2(t,x) \exp\{-\eta y\} \\
        &\quad + \left[ \digamma_3(t,s,x) \digamma^{-1}_3(t,T,x) g_3(t,x) + \int^T_s \digamma_3(t,s,x) \digamma^{-1}_3(t,\tau,x) w_4(t,\tau,x) d\tau \right]
    \end{aligned}
\end{equation*}
where $\digamma_i$ ($i=1,2,3$) is the fundamental matrix of the $i$-th ODE of \eqref{ExpODESys} and $\digamma^{-1}_i$ the associated inverse matrix. Note that this solution does not explode at $s=T-\delta$ such that we can update a new terminal condition at $s=T-\delta$. Consequently, one can repeat indefinitely the solving procedure up to a global solution for \eqref{BackwardHJBExponentialutility} over $\nabla[0,T]$. Furthermore, by sending $\gamma\to 0$, $t=s$, $x=y$, and denoting $\widetilde{\varphi}(t,s,x)=\varphi(t,s,x\exp\{r(T-t)\})$, one has  
% \begin{equation} \label{ExpValueF}
% 	\begin{split}
% 		V(s,y)  & =   -\widetilde{\digamma}_2(s,s,y)\widetilde{g}(s,y)\exp\{-\eta y\exp\{r(T-s)\}\} \\
% 		& \qquad\qquad  +\left[\widetilde{\digamma}_3(s,s,y)\widetilde{h}(s,y)+\int^T_s\widetilde{\digamma}_3(s,s,y)\widetilde{\digamma}^{-1}_3(s,\tau,y)\widetilde{v}(s,\tau,y)d\tau\right]. 
% 	\end{split}
% \end{equation}
\begin{equation} \label{ExpValueF}
    \begin{aligned}
        V(s,y) &= -\widetilde{\digamma}_2(s,s,y) \widetilde{g}(s,y) \exp\big\{ -\eta y \exp\{r(T-s)\} \big\} \\
        &\quad + \left[ \widetilde{\digamma}_3(s,s,y) \widetilde{h}(s,y) + \int^T_s \widetilde{\digamma}_3(s,s,y) \widetilde{\digamma}^{-1}_3(s,\tau,y) \widetilde{v}(s,\tau,y) d\tau \right]. 
    \end{aligned}
\end{equation}

\begin{proposition} \label{Exp1Propostion}
	Suppose that $v$, $w$, $g$, and $h$ are smooth enough, then the TIC stochastic control problem \eqref{ExpUFBSDE} admits a unique solution in $\nabla[0,T]$, and the closed-loop equilibrium strategy and the associated value function are given in \eqref{ExpPolicy} and \eqref{ExpValueF}.   
\end{proposition}

%____________________________________________________________________
%____________________________________________________________________

\section{Conclusion} 
\label{Sec:Conclusion}
For the TIC stochastic control problems with initial-time and -state dependent objectives, their well-posedness issues are shown to be equivalent to that of a class of nonlocal fully nonlinear PDEs, provided that the optimum of the Hamiltonian is attainable. We sequentially establish the global well-posedness of the linear and the fully nonlinear nonlocal PDEs. While the fully nonlinear case would require a sharp prior estimate for the global well-posedness, we show that its special case of nonlocal quasilinear PDEs, which correspond to the state processes with only drift being controlled, possess global well-posedness with mild conditions. On top of the well-posedness results, we also provide the probabilistic representation of the solution to the nonlocal fully nonlinear PDEs and an estimate on the difference between the value functions of the sophisticated and na\"{i}ve controllers (in the supplementary materials).

This work advances our understanding of the open problems raised in \cite{Bjoerk2017} and also provides new progress in the study of equilibrium-type HJB equations. Along this research direction, the following future research is promising: 1) extending our results from second-order PDE to a higher-order system (inspired by \cite{Lei2021}); 2) extending our results from Markovian to non-Markovian setting (referring to \cite{Hernandez2020} and Appendix \ref{app:probrep}); 3) expressing conditions \eqref{Upperboundedforglobalexistence} in terms of coefficients and data of the local and nonlocal fully nonlinear PDEs \eqref{NonlocalfullynonlinearPDE}.

\backmatter

\ \ 

\bmhead{Acknowledgments}
The second author, Chi Seng Pun, was supported in part by the Ministry of Education, Singapore under its AcRF Tier 2 grant (Reference No: MOE-T2EP20220-0013).

\ \

\appendix
\allowdisplaybreaks
% \section{H\"{o}lder regularities of $\boldsymbol{\partial_I u(s,s,x,y)|_{x=y}}$}
% \label{App:A}
\section{H\"{o}lder regularities of \texorpdfstring{$\boldsymbol{\partial_I u(s,s,x,y)|_{x=y}}$}{∂I u(s,s,x,y)|x=y}}
\label{App:A}
We investigate the H\"{o}lder continuities of $\big|\partial_I u(s,s,x,y)|_{x=y}\big|^{(\alpha)}_{(s,y)\in[0,\delta]\times\mathbb{R}^d}$ in $s$ and $y$ for $|I|=0,1,2$, which are required by Theorem \ref{Priorestimate} to obtain the Schauder estimate \eqref{Schauderestimate} of solutions of nonlocal linear PDE \eqref{NonlocalLinearPDE}. Upon the analysis of \eqref{Analysisofdifference}, we need to estimate all terms ($E_1$-$E_8$) in Table \ref{TableforlinearPDE}. Let us begin with $\big|\partial_I u(s,s,x,y)|_{x=y}\big|^{(0)}_{(s,y)\in[0,\delta]\times\mathbb{R}^d}$ for $|I|=0,1,2$. 

\ \ 

\noindent\textbf{(Estimate for $\boldsymbol{E_1}$)} By making use of integral representations \eqref{Integralforoverleftarrowu}-\eqref{Integralforpartialoverleftarrowu}, for any fixed $(t,x)\in[0,\delta]\times\mathbb{R}^d$ and $|I|=0$, we have 
% \begin{equation*}
% 	\begin{split}
% 		& |\overleftarrow{u}(t,s,x,y)| \\
% 		\leq&  C\int^s_0d\tau\int_{\mathbb{R}^d}(s-\tau)^{-\frac{d}{2}}\exp\big\{-c\varpi(s,\tau,y,\xi)\big\}\sum\limits_{|I|\leq 2}\left|\mathcal{I}^I\left[\frac{\partial u}{\partial t},\frac{\partial u}{\partial x}\right](t,\tau,x,\xi)\right|d\xi \\
% 		& \qquad\qquad\qquad + Cs|\overleftarrow{f}(t,s,x,y)|^{(\alpha)}_{(s,y)\in[0,\delta]\times\mathbb{R}^d},
% 	\end{split}
% \end{equation*} 
\begin{equation*}
        \begin{aligned}\bigl| \overleftarrow{u}(t,s,x,y) \bigr| 
        \leq \;& C \int_0^s \int_{\mathbb{R}^d} (s-\tau)^{-\frac{d}{2}} \exp \Bigl\{ -c \varpi(s,\tau,y,\xi) \Bigr\} \sum_{|I| \leq 2} \left| \mathcal{I}^I \left[ \frac{\partial u}{\partial t}, \frac{\partial u}{\partial x} \right] (t,\tau,x,\xi) \right| d\xi d\tau \\
        & + Cs \left\| \overleftarrow{f}(t,s,x,y) \right\|^{(\alpha)}_{(s,y) \in [0,\delta] \times \mathbb{R}^d}
    \end{aligned}
\end{equation*}
where the constants $C$ and $c$ are independent of $(t,s,x,y)$ while they only depend on $\lVert A^I\rVert^{(\alpha)}_{[0,T]}$ and $\lVert B^I\rVert^{(\alpha)}_{[0,T]}$, and $\varpi(s,\tau,y,\xi)=\sum^d_{i=1}|y_i-\xi_i|^2(s-\tau)^{-1}$. Consequently, when $(t,x)=(s,y)\in[0,\delta]\times\mathbb{R}^d$, it is clear that
% \begin{equation} \label{E1}
% 	\begin{split}
% 		& |\overleftarrow{u}(s,s,y,y)| \\
% 		\leq & C\int^s_0d\tau\int_{\mathbb{R}^d}(s-\tau)^{-\frac{d}{2}}\exp\big\{-c\varpi\big\}\big(|s-\tau|+|y-\xi|\big)[\overrightarrow{u}]^{(2+\alpha)}_{[0,\delta]}d\xi+ Cs\lVert f\rVert^{(\alpha)}_{[0,\delta]} \\
% 		\leq & C(\delta^2+\delta^\frac{3}{2})[\overrightarrow{u}]^{(2+\alpha)}_{[0,\delta]}+C\delta\lVert f\rVert^{(\alpha)}_{[0,\delta]}.  
% 	\end{split}
% \end{equation} 
\begin{equation} \label{E1}
        \begin{aligned}& \bigl| \overleftarrow{u}(s,s,y,y) \bigr| \\
        \leq \; & C \int_0^s \int_{\mathbb{R}^d} (s-\tau)^{-\frac{d}{2}} \exp \{ -c \varpi \} \bigl( |s-\tau| + |y-\xi| \bigr) [ \overrightarrow{u} ]^{(2+\alpha)}_{[0, \delta]} \, d\xi d\tau + Cs \lVert f \rVert^{(\alpha)}_{[0, \delta]} \\
        \leq \; & C (\delta^2 + \delta^{\frac{3}{2}}) [ \overrightarrow{u} ]^{(2+\alpha)}_{[0, \delta]} + C \delta \lVert f \rVert^{(\alpha)}_{[0, \delta]}
    \end{aligned}
\end{equation}

\ \ 

\noindent\textbf{(Estimate for $\boldsymbol{E_2}$)} For any fixed $(t,x)\in[0,\delta]\times\mathbb{R}^d$ and $|I|=1,2$, we have 
% \begin{equation*}
% 	\begin{split}
% 		& |\partial_I\overleftarrow{u}(t,s,x,y)| \\
% 		\leq & C\int^s_0d\tau\int_{\mathbb{R}^d}(s-\tau)^{-\frac{d+|I|}{2}}\exp\big\{-c\varpi(s,\tau,y,\xi)\big\} \\
% 		&\qquad\qquad\qquad\qquad\qquad\qquad 
% 		\times\sum\limits_{|I|\leq 2}\left|\left(\overleftarrow{B}^I\mathcal{I}^I\right)(t,\tau,x,\xi)-\left(\overleftarrow{B}^I\mathcal{I}^I\right)(t,\tau,x,y)\right|d\xi \\
% 		& \qquad + C\int^s_0(s-\tau)^{-\frac{|I|-\alpha}{2}}\sum\limits_{|I|\leq 2}\left|\mathcal{I}^I(t,\tau,x,y)\right|d\tau + Cs^\frac{2-|I|+\alpha}{2}|\overleftarrow{f}(t,s,x,y)|^{(\alpha)}_{(s,y)\in[0,\delta]\times\mathbb{R}^d} \\
% 		& \leq  C\int^s_0d\tau\int_{\mathbb{R}^d}(s-\tau)^{-\frac{d+|I|}{2}}\exp\big\{-c\varpi(s,\tau,y,\xi)\big\} \\
% 		&\qquad\qquad\qquad\qquad\qquad\qquad 
% 		\times\sum\limits_{|I|\leq 2}\left|\overleftarrow{B}^I(t,\tau,x,\xi)-\overleftarrow{B}^I(t,\tau,x,y)\right|\left|\mathcal{I}^I(t,\tau,x,\xi)\right|d\xi \\
% 		& +C\int^s_0d\tau\int_{\mathbb{R}^d}(s-\tau)^{-\frac{d+|I|}{2}}\exp\big\{-c\varpi(s,\tau,y,\xi)\big\} \\
% 		&~~\qquad\qquad\qquad\qquad\qquad\qquad 
% 		\times\sum\limits_{|I|\leq 2}\left|\mathcal{I}^I(t,\tau,x,\xi)-\mathcal{I}^I(t,\tau,x,y)\right|\left|\overleftarrow{B}^I(t,\tau,x,y)\right|d\xi \\
% 		& \quad + C\int^s_0(s-\tau)^{-\frac{|I|-\alpha}{2}}\sum\limits_{|I|\leq 2}\left|\mathcal{I}^I(t,\tau,x,y)\right|d\tau + Cs^{\frac{2-|I|+\alpha}{2}}|\overleftarrow{f}(t,s,x,y)|^{(\alpha)}_{(s,y)\in[0,\delta]\times\mathbb{R}^d}. 
% 	\end{split}
% \end{equation*}
\begin{equation*}
        \begin{aligned}& \bigl| \partial_I \overleftarrow{u}(t,s,x,y) \bigr| \\
        \leq \;& C \int_0^s \int_{\mathbb{R}^d} (s-\tau)^{-\frac{d+|I|}{2}} \exp \bigl\{ -c \varpi(s,\tau,y,\xi) \bigr\} \\
        & \qquad \times \sum_{|I|\leq 2} \left| (\overleftarrow{B}^I\mathcal{I}^I)(t,\tau,x,\xi) - (\overleftarrow{B}^I\mathcal{I}^I)(t,\tau,x,y) \right| d\xi d\tau \\
        & + C \int_0^s (s-\tau)^{-\frac{|I|-\alpha}{2}} \sum_{|I|\leq 2} \bigl| \mathcal{I}^I(t,\tau,x,y) \bigr| d\tau + Cs^{\frac{2-|I|+\alpha}{2}} \bigl| \overleftarrow{f}(t,s,x,y) \bigr|^{(\alpha)}_{(s,y) \in [0,\delta] \times \mathbb{R}^d} \\
        \leq \;& C \int_0^s \int_{\mathbb{R}^d} (s-\tau)^{-\frac{d+|I|}{2}} \exp \bigl\{ -c \varpi(s,\tau,y,\xi) \bigr\} \\
        & \qquad \times \sum_{|I|\leq 2} \left| \overleftarrow{B}^I(t,\tau,x,\xi) - \overleftarrow{B}^I(t,\tau,x,y) \right| \bigl| \mathcal{I}^I(t,\tau,x,\xi) \bigr| d\xi d\tau \\
        & + C \int_0^s \int_{\mathbb{R}^d} (s-\tau)^{-\frac{d+|I|}{2}} \exp \bigl\{ -c \varpi(s,\tau,y,\xi) \bigr\} \\
        & \qquad \times \sum_{|I|\leq 2} \left| \mathcal{I}^I(t,\tau,x,\xi) - \mathcal{I}^I(t,\tau,x,y) \right| \bigl| \overleftarrow{B}^I(t,\tau,x,y) \bigr| d\xi d\tau \\
        & + C \int_0^s (s-\tau)^{-\frac{|I|-\alpha}{2}} \sum_{|I|\leq 2} \bigl| \mathcal{I}^I(t,\tau,x,y) \bigr| d\tau + Cs^{\frac{2-|I|+\alpha}{2}} \bigl| \overleftarrow{f}(t,s,x,y) \bigr|^{(\alpha)}_{(s,y) \in [0,\delta] \times \mathbb{R}^d}
    \end{aligned}
\end{equation*}
Hence, when $t=s$ and $x=y$, it holds that  
% \begin{equation*}
% 	\begin{split}
% 		& |\partial_I\overleftarrow{u}(s,s,x,y)|_{x=y}| \\
% 		\leq & C\int^s_0d\tau\int_{\mathbb{R}^d}(s-\tau)^{-\frac{d+|I|}{2}}\exp\big\{-c\varpi(s,\tau,y,\xi)\big\} \\
% 		& \qquad\qquad\qquad\qquad\qquad\qquad\times |y-\xi|^\alpha\big(|s-\tau|+|y-\xi|\big)[\overrightarrow{u}]^{(2+\alpha)}_{[0,\delta]}d\xi \\
% 		& \quad +
% 		C\int^s_0d\tau\int_{\mathbb{R}^d}(s-\tau)^{-\frac{d+|I|}{2}}\exp\big\{-c\varpi(s,\tau,y,\xi)\big\} \\
% 		&\qquad   
% 		\times\sum\limits_{|I|\leq 2}\bigg\{\left|\int^s_\tau\partial_{I}\left(\frac{\partial u}{\partial t}\right)(\theta_t,\tau,x,\xi)|_{x=y}d\theta_t-\int^s_\tau\partial_{I}\left(\frac{\partial u}{\partial t}\right)(\theta_t,\tau,x,y)|_{x=y}d\theta_t\right| \\
% 		& \quad\quad 
% 		+\sum_{1\leq i\leq d}\left|\int^{y_i}_{\xi_i}\partial_{I}\left(\frac{\partial u}{\partial x_i}\right)(\tau,\tau,x_1,\cdots,x_{i-1},\theta_i,x_{i+1},\cdots,x_d,\xi)\big|_{\begin{subarray}{c} x_j=\xi_j,j<i, \\ x_j=y_j,j>i\end{subarray}}d\theta_i\right. \\
% 		& \qquad\qquad\qquad  
% 		\left.-\int^{y_i}_{y_i}\partial_{I}\left(\frac{\partial u}{\partial x_i}\right)(\tau,\tau,x_1,\cdots,x_{i-1},\theta_i,x_{i+1},\cdots,x_d,y)\big|_{\begin{subarray}{c} x_j=y_j,j<i \\ x_j=y_j,j>i
% 		\end{subarray}}d\theta_i \right|\bigg\} d\xi \\
% 		& \quad + C\int^s_0(s-\tau)^{-\frac{|I|-\alpha}{2}}\sum\limits_{|I|\leq 2}\left|\mathcal{I}^I(t,\tau,x,y)\right|d\tau + Cs^{\frac{2-|I|+\alpha}{2}}\lVert f\rVert^{(\alpha)}_{[0,\delta]}. 
% 	\end{split}
% \end{equation*} 
\begin{equation*}
		\begin{aligned}& \bigl| \partial_I \overleftarrow{u}(s,s,x,y) \bigr|_{x=y} \bigr| \\
		\leq \;& C \int_0^s \int_{\mathbb{R}^d} (s-\tau)^{-\frac{d+|I|}{2}} \exp \bigl\{ -c \varpi(s, \tau, y, \xi) \bigr\} |y-\xi|^\alpha \bigl( |s-\tau| + |y-\xi| \bigr) [ \overrightarrow{u} ]^{(2+\alpha)}_{[0, \delta]} \, d\xi d\tau \\
		& + C \int_0^s \int_{\mathbb{R}^d} (s-\tau)^{-\frac{d+|I|}{2}} \exp \bigl\{ -c \varpi(s, \tau, y, \xi) \bigr\} \\
		& \quad \times \sum_{|I| \leq 2} \Biggl\{ \left| \int_\tau^s \partial_I \Bigl( \frac{\partial u}{\partial t} \Bigr) (\theta_t, \tau, x, \xi) \bigr|_{x=y} \, d\theta_t - \int_\tau^s \partial_I \Bigl( \frac{\partial u}{\partial t} \Bigr) (\theta_t, \tau, x, y) \bigr|_{x=y} \, d\theta_t \right| \\
		% & \quad + \sum_{1 \leq i \leq d} \left| \int_{\xi_i}^{y_i} \partial_I \Bigl( \frac{\partial u}{\partial x_i} \Bigr) (\tau, \tau, x_1, \dots, \theta_i, \dots, x_d, \xi) \Big|_{ \begin{subarray}{c} x_j=\xi_j, j<i \\ x_j=y_j, j>i \end{subarray} } d\theta_i - 0 \right| \Biggr\} d\xi d\tau \\
		% & + C \int_0^s (s-\tau)^{-\frac{|I|-\alpha}{2}} \sum_{|I| \leq 2} \bigl| \mathcal{I}^I(t, \tau, x, y) \bigr| \, d\tau + Cs^{\frac{2-|I|+\alpha}{2}} \lVert f \rVert^{(\alpha)}_{[0, \delta]}
	\end{aligned}
\end{equation*}
\begin{equation*}
		\begin{aligned}
  %       & \bigl| \partial_I \overleftarrow{u}(s,s,x,y) \bigr|_{x=y} \bigr| \\
		% \leq \;& C \int_0^s \int_{\mathbb{R}^d} (s-\tau)^{-\frac{d+|I|}{2}} \exp \bigl\{ -c \varpi(s, \tau, y, \xi) \bigr\} |y-\xi|^\alpha \bigl( |s-\tau| + |y-\xi| \bigr) [ \overrightarrow{u} ]^{(2+\alpha)}_{[0, \delta]} \, d\xi d\tau \\
		% & + C \int_0^s \int_{\mathbb{R}^d} (s-\tau)^{-\frac{d+|I|}{2}} \exp \bigl\{ -c \varpi(s, \tau, y, \xi) \bigr\} \\
		% & \quad \times \sum_{|I| \leq 2} \Biggl\{ \left| \int_\tau^s \partial_I \Bigl( \frac{\partial u}{\partial t} \Bigr) (\theta_t, \tau, x, \xi) \bigr|_{x=y} \, d\theta_t - \int_\tau^s \partial_I \Bigl( \frac{\partial u}{\partial t} \Bigr) (\theta_t, \tau, x, y) \bigr|_{x=y} \, d\theta_t \right| \\
		& \quad + \sum_{1 \leq i \leq d} \left| \int_{\xi_i}^{y_i} \partial_I \Bigl( \frac{\partial u}{\partial x_i} \Bigr) (\tau, \tau, x_1, \dots, \theta_i, \dots, x_d, \xi) \Big|_{ \begin{subarray}{c} x_j=\xi_j, j<i \\ x_j=y_j, j>i \end{subarray} } d\theta_i - 0 \right| \Biggr\} d\xi d\tau \\
		& + C \int_0^s (s-\tau)^{-\frac{|I|-\alpha}{2}} \sum_{|I| \leq 2} \bigl| \mathcal{I}^I(t, \tau, x, y) \bigr| \, d\tau + Cs^{\frac{2-|I|+\alpha}{2}} \lVert f \rVert^{(\alpha)}_{[0, \delta]}
	\end{aligned}
\end{equation*}
Subsequently, it follows that   
% \begin{equation} \label{E2}
% 	\begin{split}
% 		& |\partial_I\overleftarrow{u}(s,s,x,y)|_{x=y}| \\ 
%         & \leq  C\int^s_0\left((s-\tau)^{-\frac{|I|-\alpha-2}{2}}+(s-\tau)^{-\frac{|I|-\alpha-1}{2}}\right)[\overrightarrow{u}]^{(2+\alpha)}_{[0,\delta]}d\tau \\
% 		& \qquad\qquad +
% 		C\int^s_0d\tau\int_{\mathbb{R}^d}(s-\tau)^{-\frac{d+|I|}{2}}\exp\big\{-c\varpi(s,\tau,y,\xi)\big\} \\
% 		&\qquad\qquad\qquad\qquad\qquad\qquad   
% 		\times\bigg\{(s-\tau)|y-\xi|^\alpha+|y-\xi|\bigg\}[\overrightarrow{u}]^{(2+\alpha)}_{[0,\delta]} d\xi \\
% 		& \qquad\qquad + C\int^s_0(s-\tau)^{-\frac{|I|-\alpha}{2}}(s-\tau)[\overrightarrow{u}]^{(2+\alpha)}_{[0,\delta]} d\tau + Cs^{\frac{2-|I|+\alpha}{2}}\lVert f\rVert^{(\alpha)}_{[0,\delta]} \\
% 		& \leq C\int^s_0\left((s-\tau)^{-\frac{|I|-\alpha-2}{2}}+(s-\tau)^{-\frac{|I|-\alpha-1}{2}}\right)[\overrightarrow{u}]^{(2+\alpha)}_{[0,\delta]}d\tau \\
% 		& + C\int^s_0\left((s-\tau)^{-\frac{|I|-\alpha-2}{2}}+(s-\tau)^{-\frac{|I|-1}{2}}\right)[\overrightarrow{u}]^{(2+\alpha)}_{[0,\delta]} d\tau \\
% 		& + C\int^s_0(s-\tau)^{-\frac{|I|-\alpha}{2}}(s-\tau)[\overrightarrow{u}]^{(2+\alpha)}_{[0,\delta]} d\tau + Cs^{\frac{2-|I|+\alpha}{2}}\lVert f\rVert^{(\alpha)}_{[0,\delta]} \\
% 		& \leq C\left(\delta^\frac{4-|I|+\alpha}{2}+\delta^\frac{3-|I|+\alpha}{2}+\delta^\frac{3-|I|}{2}\right)[\overrightarrow{u}]^{(2+\alpha)}_{[0,\delta]}+C\delta^{\frac{2-|I|+\alpha}{2}}\lVert f\rVert^{(\alpha)}_{[0,\delta]}
% 	\end{split}
% \end{equation}
\begin{equation} \label{E2}
		\begin{aligned}& \bigl| \partial_I \overleftarrow{u}(s,s,x,y) \big|_{x=y} \bigr| \\
		\leq \;& C \int_0^s \Bigl( (s-\tau)^{-\frac{|I|-\alpha-2}{2}} + (s-\tau)^{-\frac{|I|-\alpha-1}{2}} \Bigr) [ \overrightarrow{u} ]^{(2+\alpha)}_{[0,\delta]} \, d\tau \\
		& + C \int_0^s \int_{\mathbb{R}^d} (s-\tau)^{-\frac{d+|I|}{2}} \exp \bigl\{ -c\varpi(s,\tau,y,\xi) \bigr\} \Bigl\{ (s-\tau)|y-\xi|^\alpha + |y-\xi| \Bigr\} [ \overrightarrow{u} ]^{(2+\alpha)}_{[0,\delta]} \, d\xi d\tau \\
		& + C \int_0^s (s-\tau)^{-\frac{|I|-\alpha}{2}} (s-\tau) [ \overrightarrow{u} ]^{(2+\alpha)}_{[0,\delta]} \, d\tau + Cs^{\frac{2-|I|+\alpha}{2}} \lVert f \rVert^{(\alpha)}_{[0,\delta]} \\
		\leq \;& C \int_0^s \Bigl( (s-\tau)^{-\frac{|I|-\alpha-2}{2}} + (s-\tau)^{-\frac{|I|-\alpha-1}{2}} \Bigr) [ \overrightarrow{u} ]^{(2+\alpha)}_{[0,\delta]} \, d\tau \\
		& + C \int_0^s \Bigl( (s-\tau)^{-\frac{|I|-\alpha-2}{2}} + (s-\tau)^{-\frac{|I|-1}{2}} \Bigr) [ \overrightarrow{u} ]^{(2+\alpha)}_{[0,\delta]} \, d\tau \\
		& + C \int_0^s (s-\tau)^{-\frac{|I|-\alpha}{2}} (s-\tau) [ \overrightarrow{u} ]^{(2+\alpha)}_{[0,\delta]} \, d\tau + Cs^{\frac{2-|I|+\alpha}{2}} \lVert f \rVert^{(\alpha)}_{[0,\delta]} \\
		\leq \;& C \Bigl( \delta^{\frac{4-|I|+\alpha}{2}} + \delta^{\frac{3-|I|+\alpha}{2}} + \delta^{\frac{3-|I|}{2}} \Bigr) [ \overrightarrow{u} ]^{(2+\alpha)}_{[0,\delta]} + C \delta^{\frac{2-|I|+\alpha}{2}} \lVert f \rVert^{(\alpha)}_{[0,\delta]}
	\end{aligned}
\end{equation}

With the estimates \eqref{E1} and \eqref{E2} of $|\partial_I\overleftarrow{u}(s,s,x,y)|_{x=y}|$ for $|I|=0,1,2$, we find that $|\partial_I u(s,s,x,y)|_{x=y}|$ is bounded by $[\overrightarrow{u}]^{(2+\alpha)}_{[0,\delta]}$ and $\lVert f\rVert^{(\alpha)}_{[0,\delta]}$. Moreover, the coefficient $\delta$ in front of $[\overrightarrow{u}]^{(2+\alpha)}_{[0,\delta]}$ in \eqref{E1} and \eqref{E2} could be significant while suitably small $\delta\in(0,T]$ to establish $\frac{1}{2}$ of \eqref{Estimateofu}.

\ \ 

Next, we will also show that $E_3-E_8$ possess similar properties.  

% In next, for $0\leq s<s^\prime\leq\delta$ and $y,y^\prime\in\mathbb{R}^d$ with $0<|y-y^\prime|\leq 1$, we need to consider the difference 
% \begin{equation*}
	%     \begin{split}
		%         & |\partial_I u(s^\prime,s^\prime,x,y^\prime)|_{x=y^\prime}-\partial_I u(s,s,x,y|_{x=y}| \\
		%         \leq & ~|\partial_I u(s^\prime,s^\prime,x,y^\prime)|_{x=y^\prime}-\partial_I u(s,s^\prime,x,y^\prime|_{x=y}| \\
		%         & \qquad\qquad\qquad 
		%         +|\partial_I u(s,s^\prime,x,y^\prime)|_{x=y}-\partial_I u(s,s,x,y|_{x=y}| \\
		%         \leq & \left|\partial_I\left(\frac{\partial u}{\partial t},\frac{\partial u}{\partial x}\right)(\eta_t,s^\prime,x,y^\prime)|_{x=\eta_x}\right|\big(|s^\prime-s|+|y^\prime-y|\big) \\
		%         & \qquad\qquad
		%         +\frac{|\partial_I u(s,s^\prime,x,y^\prime)|_{x=y}-\partial_I u(s,s,x,y|_{x=y}|}{|s^\prime-s|^\frac{\alpha}{2}+|y^\prime-y|^\alpha}\big(|s^\prime-s|^\frac{\alpha}{2}+|y^\prime-y|^\alpha\big)
		%     \end{split}
	% \end{equation*}
% where $\eta=(\eta_t,\eta_x)=(1-c)(s,y)+c(s^\prime,y^\prime)$ for some $c\in(0,1)$ due to the mean value theorem in several variables. In next, we will firstly estimate the absolute value of $(\partial_Iu_t,\partial_Iu_x)(\eta_t,s^\prime,x,y^\prime)|_{x=\eta_x}$ for $|I|=0,1,2$. 

\ \ 

\noindent\textbf{(Estimate for $\boldsymbol{E_3}$)} For $|I|=0$, by \eqref{Integralforoverleftarrowu}, we have 
\begin{equation} \label{E3}
		\begin{aligned}& \bigl| \overleftarrow{u}(\eta_t, s', \eta_x, y') \bigr| \\
		\leq \;& C \int_0^{s'} \int_{\mathbb{R}^d} (s'-\tau)^{-\frac{d}{2}} \exp \bigl\{ -c\varpi(s', \tau, y', \xi) \bigr\} \\
		& \quad \times \sum_{|I| \leq 2} \Biggl\{ \left| \int_\tau^{\eta_t} \partial_{I} \Bigl( \frac{\partial u}{\partial t} \Bigr) (\theta_t, \tau, x, \xi) \Big|_{x=\eta_x} d\theta_t \right| \\
		& \quad + \sum_{1 \leq i \leq d} \left| \int_{\xi_i}^{(\eta_x)_i} \partial_{I} \Bigl( \frac{\partial u}{\partial x_i} \Bigr) (\tau, \tau, \dots, \theta_i, \dots, \xi) \Big|_{ \begin{subarray}{c} x_j=\xi_j, j<i, \\ x_j=(\eta_x)_j, j>i \end{subarray} } d\theta_i \right| \Biggr\} \, d\xi d\tau + Cs' \lVert f \rVert^{(\alpha)}_{[0, \delta]} \\
		% \leq \;& C \int_0^{s'} \int_{\mathbb{R}^d} (s'-\tau)^{-\frac{d}{2}} \exp \bigl\{ -c\varpi(s', \tau, y', \xi) \bigr\} \bigl( |\eta_t-\tau| + |\eta_x-\xi| \bigr) [ \overrightarrow{u} ]^{(2+\alpha)}_{[0, \delta]} \, d\xi d\tau + Cs' \lVert f \rVert^{(\alpha)}_{[0, \delta]} \\
		% \leq \;& C \int_0^{s'} \int_{\mathbb{R}^d} (s'-\tau)^{-\frac{d}{2}} \exp \bigl\{ -c\varpi(s', \tau, y', \xi) \bigr\} \\
		% & \quad \times \bigl( |\eta_t-s'| + |s'-\tau| + |\eta_x-y'| + |y'-\xi| \bigr) [ \overrightarrow{u} ]^{(2+\alpha)}_{[0, \delta]} \, d\xi d\tau + Cs' \lVert f \rVert^{(\alpha)}_{[0, \delta]} \\
		% \leq \;& C \bigl( |\eta_t-s'||s'| + |s'|^2 + |\eta_x-y'||s'| + |s'|^{\frac{3}{2}} \bigr) [ \overrightarrow{u} ]^{(2+\alpha)}_{[0, \delta]} + Cs' \lVert f \rVert^{(\alpha)}_{[0, \delta]} \\
		% \leq \;& C \bigl( \delta^2 + \delta + \delta^{\frac{3}{2}} \bigr) [ \overrightarrow{u} ]^{(2+\alpha)}_{[0, \delta]} + C\delta \lVert f \rVert^{(\alpha)}_{[0, \delta]}
	\end{aligned}
\end{equation}
\begin{equation*}
		\begin{aligned}
  %       & \bigl| \overleftarrow{u}(\eta_t, s', \eta_x, y') \bigr| \\
		% \leq \;& C \int_0^{s'} \int_{\mathbb{R}^d} (s'-\tau)^{-\frac{d}{2}} \exp \bigl\{ -c\varpi(s', \tau, y', \xi) \bigr\} \\
		% & \quad \times \sum_{|I| \leq 2} \Biggl\{ \left| \int_\tau^{\eta_t} \partial_{I} \Bigl( \frac{\partial u}{\partial t} \Bigr) (\theta_t, \tau, x, \xi) \Big|_{x=\eta_x} d\theta_t \right| \\
		% & \quad + \sum_{1 \leq i \leq d} \left| \int_{\xi_i}^{(\eta_x)_i} \partial_{I} \Bigl( \frac{\partial u}{\partial x_i} \Bigr) (\tau, \tau, \dots, \theta_i, \dots, \xi) \Big|_{ \begin{subarray}{c} x_j=\xi_j, j<i, \\ x_j=(\eta_x)_j, j>i \end{subarray} } d\theta_i \right| \Biggr\} \, d\xi d\tau + Cs' \lVert f \rVert^{(\alpha)}_{[0, \delta]} \\
		\leq \;& C \int_0^{s'} \int_{\mathbb{R}^d} (s'-\tau)^{-\frac{d}{2}} \exp \bigl\{ -c\varpi(s', \tau, y', \xi) \bigr\} \bigl( |\eta_t-\tau| + |\eta_x-\xi| \bigr) [ \overrightarrow{u} ]^{(2+\alpha)}_{[0, \delta]} \, d\xi d\tau + Cs' \lVert f \rVert^{(\alpha)}_{[0, \delta]} \\
		\leq \;& C \int_0^{s'} \int_{\mathbb{R}^d} (s'-\tau)^{-\frac{d}{2}} \exp \bigl\{ -c\varpi(s', \tau, y', \xi) \bigr\} \\
		& \quad \times \bigl( |\eta_t-s'| + |s'-\tau| + |\eta_x-y'| + |y'-\xi| \bigr) [ \overrightarrow{u} ]^{(2+\alpha)}_{[0, \delta]} \, d\xi d\tau + Cs' \lVert f \rVert^{(\alpha)}_{[0, \delta]} \\
		\leq \;& C \bigl( |\eta_t-s'||s'| + |s'|^2 + |\eta_x-y'||s'| + |s'|^{\frac{3}{2}} \bigr) [ \overrightarrow{u} ]^{(2+\alpha)}_{[0, \delta]} + Cs' \lVert f \rVert^{(\alpha)}_{[0, \delta]} \\
		\leq \;& C \bigl( \delta^2 + \delta + \delta^{\frac{3}{2}} \bigr) [ \overrightarrow{u} ]^{(2+\alpha)}_{[0, \delta]} + C\delta \lVert f \rVert^{(\alpha)}_{[0, \delta]}
	\end{aligned}
\end{equation*}

\ \ 

\noindent\textbf{(Estimate for $\boldsymbol{E_4}$)} For $|I|=1,2$, the representation \eqref{Integralforpartialoverleftarrowu} implies that  
\begin{equation*}
		\begin{aligned}& \bigl| \partial_I \overleftarrow{u}(\eta_t, s', x, y') \bigr|_{x=\eta_x} \bigr| \\
		\leq \;& C \int_0^{s'} \int_{\mathbb{R}^d} (s'-\tau)^{-\frac{d+|I|}{2}} \exp \bigl\{ -c \varpi(s', \tau, y', \xi) \bigr\} \\
		& \quad \times \sum_{|I| \leq 2} \left| \bigl(\overleftarrow{B}^I \mathcal{I}^I \bigr)(\eta_t, \tau, \eta_x, \xi) - \bigl(\overleftarrow{B}^I \mathcal{I}^I \bigr)(\eta_t, \tau, \eta_x, y') \right| d\xi d\tau \\
		& + C \int_0^{s'} (s'-\tau)^{-\frac{|I|-\alpha}{2}} \sum_{|I| \leq 2} \bigl| \mathcal{I}^I(\eta_t, \tau, \eta_x, y') \bigr| d\tau + C |s'|^{\frac{2-|I|+\alpha}{2}} \lVert f \rVert^{(\alpha)}_{[0, \delta]} \\
		\leq \;& C \int_0^{s'} \int_{\mathbb{R}^d} (s'-\tau)^{-\frac{d+|I|}{2}} \exp \bigl\{ -c \varpi(s', \tau, y', \xi) \bigr\} \\
		& \quad \times \sum_{|I| \leq 2} \left| \overleftarrow{B}^I(\eta_t, \tau, \eta_x, \xi) - \overleftarrow{B}^I(\eta_t, \tau, \eta_x, y') \right| \bigl| \mathcal{I}^I(\eta_t, \tau, \eta_x, \xi) \bigr| d\xi d\tau \\
		& + C \int_0^{s'} \int_{\mathbb{R}^d} (s'-\tau)^{-\frac{d+|I|}{2}} \exp \bigl\{ -c \varpi(s', \tau, y', \xi) \bigr\} \\
		& \quad \times \sum_{|I| \leq 2} \left| \mathcal{I}^I(\eta_t, \tau, \eta_x, \xi) - \mathcal{I}^I(\eta_t, \tau, \eta_x, y') \right| \bigl| \overleftarrow{B}^I(\eta_t, \tau, \eta_x, y') \bigr| d\xi d\tau \\
		& + C \int_0^{s'} (s'-\tau)^{-\frac{|I|-\alpha}{2}} \sum_{|I| \leq 2} \bigl| \mathcal{I}^I(\eta_t, \tau, \eta_x, y') \bigr| d\tau + C |s'|^{\frac{2-|I|+\alpha}{2}} \lVert f \rVert^{(\alpha)}_{[0, \delta]}
	\end{aligned}
\end{equation*}
By making use of the representation of \eqref{Intergralrepresentations}, we have 
\begingroup\small
\begin{equation*}
        \begin{aligned}& \bigl| \partial_I \overleftarrow{u}(\eta_t, s', x, y') \bigr|_{x=\eta_x} \bigr| \\
        \leq \;& C \int_0^{s'} \int_{\mathbb{R}^d} (s'-\tau)^{-\frac{d+|I|}{2}} \exp \bigl\{ -c \varpi(s', \tau, y', \xi) \bigr\} |y'-\xi|^\alpha \bigl( |\eta_t-\tau| + |\eta_x-\xi| \bigr) [ \overrightarrow{u} ]^{(2+\alpha)}_{[0, \delta]} \, d\xi d\tau \\
        & + C \int_0^{s'} \int_{\mathbb{R}^d} (s'-\tau)^{-\frac{d+|I|}{2}} \exp \bigl\{ -c \varpi(s', \tau, y', \xi) \bigr\} \\
        & \quad \times \sum_{|I| \leq 2} \Biggl\{ \left| \int_\tau^{\eta_t} \partial_I \Bigl( \frac{\partial u}{\partial t} \Bigr) (\theta_t, \tau, x, \xi) \Big|_{x=\eta_x} d\theta_t - \int_\tau^{\eta_t} \partial_I \Bigl( \frac{\partial u}{\partial t} \Bigr) (\theta_t, \tau, x, y') \Big|_{x=\eta_x} d\theta_t \right| \\
        % & \quad + \sum_{1 \leq i \leq d} \left| \int_{\xi_i}^{(\eta_x)_i} \partial_{I} \Bigl( \frac{\partial u}{\partial x_i} \Bigr) (\dots) \Big|_{ \begin{subarray}{c} x_j=\xi_j, j<i \\ x_j=(\eta_x)_j, j>i \end{subarray} } d\theta_i - \int_{y'_i}^{(\eta_x)_i} \partial_{I} \Bigl( \frac{\partial u}{\partial x_i} \Bigr) (\dots) \Big|_{ \begin{subarray}{c} x_j=y'_j, j<i \\ x_j=(\eta_x)_j, j>i \end{subarray} } d\theta_i \right| \Biggr\} \, d\xi d\tau \\
        % & + C \int_0^{s'} (s'-\tau)^{-\frac{|I|-\alpha}{2}} \bigl( |\eta_t-\tau| + |\eta_x-y'| \bigr) [ \overrightarrow{u} ]^{(2+\alpha)}_{[0, \delta]} \, d\tau + C|s'|^{\frac{2-|I|+\alpha}{2}} \lVert f \rVert^{(\alpha)}_{[0, \delta]}
    \end{aligned}
\end{equation*}
\begin{equation*}
        \begin{aligned}
        % & \bigl| \partial_I \overleftarrow{u}(\eta_t, s', x, y') \bigr|_{x=\eta_x} \bigr| \\
        % \leq \;& C \int_0^{s'} \int_{\mathbb{R}^d} (s'-\tau)^{-\frac{d+|I|}{2}} \exp \bigl\{ -c \varpi(s', \tau, y', \xi) \bigr\} |y'-\xi|^\alpha \bigl( |\eta_t-\tau| + |\eta_x-\xi| \bigr) [ \overrightarrow{u} ]^{(2+\alpha)}_{[0, \delta]} \, d\xi d\tau \\
        % & + C \int_0^{s'} \int_{\mathbb{R}^d} (s'-\tau)^{-\frac{d+|I|}{2}} \exp \bigl\{ -c \varpi(s', \tau, y', \xi) \bigr\} \\
        % & \quad \times \sum_{|I| \leq 2} \Biggl\{ \left| \int_\tau^{\eta_t} \partial_I \Bigl( \frac{\partial u}{\partial t} \Bigr) (\theta_t, \tau, x, \xi) \Big|_{x=\eta_x} d\theta_t - \int_\tau^{\eta_t} \partial_I \Bigl( \frac{\partial u}{\partial t} \Bigr) (\theta_t, \tau, x, y') \Big|_{x=\eta_x} d\theta_t \right| \\
        & \quad + \sum_{1 \leq i \leq d} \left| \int_{\xi_i}^{(\eta_x)_i} \partial_{I} \Bigl( \frac{\partial u}{\partial x_i} \Bigr) (\dots) \Big|_{ \begin{subarray}{c} x_j=\xi_j, j<i \\ x_j=(\eta_x)_j, j>i \end{subarray} } d\theta_i - \int_{y'_i}^{(\eta_x)_i} \partial_{I} \Bigl( \frac{\partial u}{\partial x_i} \Bigr) (\dots) \Big|_{ \begin{subarray}{c} x_j=y'_j, j<i \\ x_j=(\eta_x)_j, j>i \end{subarray} } d\theta_i \right| \Biggr\} \, d\xi d\tau \\
        & + C \int_0^{s'} (s'-\tau)^{-\frac{|I|-\alpha}{2}} \bigl( |\eta_t-\tau| + |\eta_x-y'| \bigr) [ \overrightarrow{u} ]^{(2+\alpha)}_{[0, \delta]} \, d\tau + C|s'|^{\frac{2-|I|+\alpha}{2}} \lVert f \rVert^{(\alpha)}_{[0, \delta]}
    \end{aligned}
\end{equation*}
\endgroup
Moreover, we have  
\begingroup\small
\begin{equation*}
		\begin{aligned}& \left| \int_{\xi_i}^{(\eta_x)_i} \partial_I \Bigl( \frac{\partial u}{\partial x_i} \Bigr) (\tau, \tau, \dots, \theta_i, \dots, \xi) \Big|_{ \begin{subarray}{c} x_j=\xi_j, j<i \\ x_j=(\eta_x)_j, j>i \end{subarray} } d\theta_i \right. \\
		& \qquad \left. - \int_{y'_i}^{(\eta_x)_i} \partial_I \Bigl( \frac{\partial u}{\partial x_i} \Bigr) (\tau, \tau, \dots, \theta_i, \dots, y') \Big|_{ \begin{subarray}{c} x_j=y'_j, j<i \\ x_j=(\eta_x)_j, j>i \end{subarray} } d\theta_i \right| \\
		\leq \; & \left| \int_{\xi_i}^{(\eta_x)_i} \partial_I \Bigl( \frac{\partial u}{\partial x_i} \Bigr) (\tau, \tau, \dots, \xi) \Big|_{ \begin{subarray}{c} x_j=\xi_j, j<i \\ x_j=(\eta_x)_j, j>i \end{subarray} } d\theta_i \right. \\
		& \qquad \left. - \int_{y'_i}^{(\eta_x)_i} \partial_I \Bigl( \frac{\partial u}{\partial x_i} \Bigr) (\tau, \tau, \dots, \xi) \Big|_{ \begin{subarray}{c} x_j=\xi_j, j<i \\ x_j=(\eta_x)_j, j>i \end{subarray} } d\theta_i \right| \\
		& + \left| \int_{y'_i}^{(\eta_x)_i} \partial_I \Bigl( \frac{\partial u}{\partial x_i} \Bigr) (\tau, \tau, \dots, \xi) \Big|_{ \begin{subarray}{c} x_j=\xi_j, j<i \\ x_j=(\eta_x)_j, j>i \end{subarray} } d\theta_i \right. \\
		& \qquad \left. - \int_{y'_i}^{(\eta_x)_i} \partial_I \Bigl( \frac{\partial u}{\partial x_i} \Bigr) (\tau, \tau, \dots, \xi) \Big|_{ \begin{subarray}{c} x_j=y'_j, j<i \\ x_j=(\eta_x)_j, j>i \end{subarray} } d\theta_i \right| \\
		& + \left| \int_{y'_i}^{(\eta_x)_i} \partial_I \Bigl( \frac{\partial u}{\partial x_i} \Bigr) (\tau, \tau, \dots, \xi) \Big|_{ \begin{subarray}{c} x_j=y'_j, j<i \\ x_j=(\eta_x)_j, j>i \end{subarray} } d\theta_i \right. \\
		& \qquad \left. - \int_{y'_i}^{(\eta_x)_i} \partial_I \Bigl( \frac{\partial u}{\partial x_i} \Bigr) (\tau, \tau, \dots, y') \Big|_{ \begin{subarray}{c} x_j=y'_j, j<i \\ x_j=(\eta_x)_j, j>i \end{subarray} } d\theta_i \right| \\
		\leq \; & \Bigl( |y' - \xi| + |\eta_x - y'||y' - \xi| + |\eta_x - y'||y' - \xi|^\alpha \Bigr) [ \overrightarrow{u} ]^{(2+\alpha)}_{[0, \delta]}
	\end{aligned}
\end{equation*}
\endgroup
Consequently, we obtain
\begin{equation*}
		\begin{aligned}& \bigl| \partial_I \overleftarrow{u}(\eta_t, s', x, y') \bigr|_{x=\eta_x} \bigr| \\
		\leq \;& C \int_0^{s'} \int_{\mathbb{R}^d} (s'-\tau)^{-\frac{d+|I|}{2}} \exp \bigl\{ -c \varpi(s', \tau, y', \xi) \bigr\} (y'-\xi)^\alpha \bigl( |\eta_t-\tau| + |\eta_x-\xi| \bigr) [ \overrightarrow{u} ]^{(2+\alpha)}_{[0, \delta]} \, d\xi d\tau \\
		& + C \int_0^{s'} \int_{\mathbb{R}^d} (s'-\tau)^{-\frac{d+|I|}{2}} \exp \bigl\{ -c \varpi(s', \tau, y', \xi) \bigr\} \\
		& \quad \times \Bigl\{ |\eta_t-\tau| |y'-\xi|^\alpha + |y'-\xi| + |\eta_x-y'||y'-\xi| + |\eta_x-y'||y'-\xi|^\alpha \Bigr\} [ \overrightarrow{u} ]^{(2+\alpha)}_{[0, \delta]} \, d\xi d\tau \\
		& + C \int_0^{s'} (s'-\tau)^{-\frac{|I|-\alpha}{2}} \bigl( |\eta_t-\tau| + |\eta_x-y'| \bigr) [ \overrightarrow{u} ]^{(2+\alpha)}_{[0, \delta]} \, d\tau + C |s'|^{\frac{2-|I|+\alpha}{2}} \lVert f \rVert^{(\alpha)}_{[0, \delta]} \\
		% \leq \;& C \int_0^{s'} \int_{\mathbb{R}^d} (s'-\tau)^{-\frac{d+|I|}{2}} \exp \bigl\{ -c \varpi(s', \tau, y', \xi) \bigr\} \\
		% & \quad \times (y'-\xi)^\alpha \bigl( |\eta_t-s'| + |s'-\tau| + |\eta_x-y'| + |y'-\xi| \bigr) [ \overrightarrow{u} ]^{(2+\alpha)}_{[0, \delta]} \, d\xi d\tau \\
		% & + C \int_0^{s'} \int_{\mathbb{R}^d} (s'-\tau)^{-\frac{d+|I|}{2}} \exp \bigl\{ -c \varpi(s', \tau, y', \xi) \bigr\} \\
		% & \quad \times \Bigl\{ \bigl( |\eta_t-s'| + |s'-\tau| \bigr) |y'-\xi|^\alpha + |y'-\xi| + |\eta_x-y'||y'-\xi| + |\eta_x-y'||y'-\xi|^\alpha \Bigr\} [ \overrightarrow{u} ]^{(2+\alpha)}_{[0, \delta]} \, d\xi d\tau \\
		% & + C \int_0^{s'} (s'-\tau)^{-\frac{|I|-\alpha}{2}} \bigl( |\eta_t-s'| + |s'-\tau| + |\eta_x-y'| \bigr) [ \overrightarrow{u} ]^{(2+\alpha)}_{[0, \delta]} \, d\tau + C |s'|^{\frac{2-|I|+\alpha}{2}} \lVert f \rVert^{(\alpha)}_{[0, \delta]}
	\end{aligned}
\end{equation*}
\begin{equation*}
		\begin{aligned}
  %       & \bigl| \partial_I \overleftarrow{u}(\eta_t, s', x, y') \bigr|_{x=\eta_x} \bigr| \\
		% \leq \;& C \int_0^{s'} \int_{\mathbb{R}^d} (s'-\tau)^{-\frac{d+|I|}{2}} \exp \bigl\{ -c \varpi(s', \tau, y', \xi) \bigr\} (y'-\xi)^\alpha \bigl( |\eta_t-\tau| + |\eta_x-\xi| \bigr) [ \overrightarrow{u} ]^{(2+\alpha)}_{[0, \delta]} \, d\xi d\tau \\
		% & + C \int_0^{s'} \int_{\mathbb{R}^d} (s'-\tau)^{-\frac{d+|I|}{2}} \exp \bigl\{ -c \varpi(s', \tau, y', \xi) \bigr\} \\
		% & \quad \times \Bigl\{ |\eta_t-\tau| |y'-\xi|^\alpha + |y'-\xi| + |\eta_x-y'||y'-\xi| + |\eta_x-y'||y'-\xi|^\alpha \Bigr\} [ \overrightarrow{u} ]^{(2+\alpha)}_{[0, \delta]} \, d\xi d\tau \\
		% & + C \int_0^{s'} (s'-\tau)^{-\frac{|I|-\alpha}{2}} \bigl( |\eta_t-\tau| + |\eta_x-y'| \bigr) [ \overrightarrow{u} ]^{(2+\alpha)}_{[0, \delta]} \, d\tau + C |s'|^{\frac{2-|I|+\alpha}{2}} \lVert f \rVert^{(\alpha)}_{[0, \delta]} \\
		\leq \;& C \int_0^{s'} \int_{\mathbb{R}^d} (s'-\tau)^{-\frac{d+|I|}{2}} \exp \bigl\{ -c \varpi(s', \tau, y', \xi) \bigr\} \\
		& \quad \times (y'-\xi)^\alpha \bigl( |\eta_t-s'| + |s'-\tau| + |\eta_x-y'| + |y'-\xi| \bigr) [ \overrightarrow{u} ]^{(2+\alpha)}_{[0, \delta]} \, d\xi d\tau \\
		& + C \int_0^{s'} \int_{\mathbb{R}^d} (s'-\tau)^{-\frac{d+|I|}{2}} \exp \bigl\{ -c \varpi(s', \tau, y', \xi) \bigr\} \\
		& \quad \times \Bigl\{ \bigl( |\eta_t-s'| + |s'-\tau| \bigr) |y'-\xi|^\alpha + |y'-\xi| + |\eta_x-y'||y'-\xi| + |\eta_x-y'||y'-\xi|^\alpha \Bigr\} [ \overrightarrow{u} ]^{(2+\alpha)}_{[0, \delta]} \, d\xi d\tau \\
		& + C \int_0^{s'} (s'-\tau)^{-\frac{|I|-\alpha}{2}} \bigl( |\eta_t-s'| + |s'-\tau| + |\eta_x-y'| \bigr) [ \overrightarrow{u} ]^{(2+\alpha)}_{[0, \delta]} \, d\tau + C |s'|^{\frac{2-|I|+\alpha}{2}} \lVert f \rVert^{(\alpha)}_{[0, \delta]}
	\end{aligned}
\end{equation*}
Simple calculation yields that 
% \begin{equation} \label{E4}
% 	\begin{split}
% 		& |\partial_I\overleftarrow{u}(\eta_t,s^\prime,x,y^\prime)_{x=\eta_x}| \\
% 		\leq & C\Big(|\eta_t-s^\prime||s^\prime|^\frac{4-|I|+\alpha}{2} +|s^\prime|^\frac{4-|I|+\alpha}{2}+|\eta_x-y^\prime||s^\prime|^\frac{2-|I|+\alpha}{2}+|s^\prime|^\frac{3-|I|+\alpha}{2}\Big)[\overrightarrow{u}]^{(2+\alpha)}_{[0,\delta]} \\
% 		& \qquad +
% 		C\Big(|\eta_t-s^\prime||s^\prime|^\frac{2-|I|+\alpha}{2}+|s^\prime|^\frac{4-|I|+\alpha}{2} \\
% 		& \qquad\qquad\qquad 
% 		+|s^\prime|^\frac{3-|I|}{2}+|\eta_x-y^\prime||s^\prime|^\frac{3-|I|}{2}+|\eta_x-y^\prime||s^\prime|^\frac{2-|I|+\alpha}{2}\Big)[\overrightarrow{u}]^{(2+\alpha)}_{[0,\delta]} \\
% 		& \qquad +C\Big(|\eta_t-s^\prime||s^\prime|^\frac{2-|I|+\alpha}{2}+|s^\prime|^\frac{4-|I|+\alpha}{2}\\
% 		&\qquad\qquad\qquad 
% 		+|\eta_x-y^\prime||s^\prime|^\frac{2-|I|+\alpha}{2}\Big)[\overrightarrow{u}]^{(2+\alpha)}_{[0,\delta]}+ C|s^\prime|^\frac{2-|I|+\alpha}{2}\lVert f\rVert^{(\alpha)}_{[0,\delta]} \\
% 		\leq & C\Big(\delta^\frac{6-|I|+\alpha}{2}+\delta^\frac{4-|I|+\alpha}{2}+\delta^\frac{3-|I|+\alpha}{2} \\
% 		&\qquad\qquad\qquad 
% 		+\delta^\frac{2-|I|+\alpha}{2}+\delta^\frac{3-|I|}{2}\Big)[\overrightarrow{u}]^{(2+\alpha)}_{[0,\delta]}+ C\delta^\frac{2-|I|+\alpha}{2}\lVert f\rVert^{(\alpha)}_{[0,\delta]}. 
% 	\end{split}
% \end{equation} 
\begin{equation} \label{E4}
		\begin{aligned}& \bigl| \partial_I \overleftarrow{u}(\eta_t, s', x, y') \bigr|_{x=\eta_x} \bigr| \\
		\leq \;& C \Bigl( |\eta_t-s'| |s'|^{\frac{4-|I|+\alpha}{2}} + |s'|^{\frac{4-|I|+\alpha}{2}} + |\eta_x-y'| |s'|^{\frac{2-|I|+\alpha}{2}} + |s'|^{\frac{3-|I|+\alpha}{2}} \Bigr) [ \overrightarrow{u} ]^{(2+\alpha)}_{[0, \delta]} \\
		& + C \Bigl( |\eta_t-s'| |s'|^{\frac{2-|I|+\alpha}{2}} + |s'|^{\frac{4-|I|+\alpha}{2}} + |s'|^{\frac{3-|I|}{2}} + |\eta_x-y'| |s'|^{\frac{3-|I|}{2}} + |\eta_x-y'| |s'|^{\frac{2-|I|+\alpha}{2}} \Bigr) [ \overrightarrow{u} ]^{(2+\alpha)}_{[0, \delta]} \\
		& + C \Bigl( |\eta_t-s'| |s'|^{\frac{2-|I|+\alpha}{2}} + |s'|^{\frac{4-|I|+\alpha}{2}} + |\eta_x-y'| |s'|^{\frac{2-|I|+\alpha}{2}} \Bigr) [ \overrightarrow{u} ]^{(2+\alpha)}_{[0, \delta]} + C |s'|^{\frac{2-|I|+\alpha}{2}} \lVert f \rVert^{(\alpha)}_{[0, \delta]} \\
		\leq \;& C \Bigl( \delta^{\frac{6-|I|+\alpha}{2}} + \delta^{\frac{4-|I|+\alpha}{2}} + \delta^{\frac{3-|I|+\alpha}{2}} + \delta^{\frac{2-|I|+\alpha}{2}} + \delta^{\frac{3-|I|}{2}} \Bigr) [ \overrightarrow{u} ]^{(2+\alpha)}_{[0, \delta]} + C \delta^{\frac{2-|I|+\alpha}{2}} \lVert f \rVert^{(\alpha)}_{[0, \delta]}
	\end{aligned}
\end{equation}

\ \ 

After the analyses of $E_3$-$E_4$, namely \eqref{E3} and \eqref{E4}, we turn to investigate the difference quotient of \eqref{Analysisofdifference}, i.e. $\frac{|\partial_I u(s,s^\prime,x,y^\prime)|_{x=y}-\partial_I u(s,s,x,y)|_{x=y}|}{|s^\prime-s|^\frac{\alpha}{2}+|y^\prime-y|^\alpha}$. In what follows, we need to study the difference between $\partial_I u(s,s^\prime,x,y^\prime)|_{x=y}$ and $\partial_I u(s,s,x,y)|_{x=y}$ for $|I|=0,1,2$. 

\ \ 

\noindent\textbf{(Estimate for $\boldsymbol{E_5}$)} First of all, we consider the difference in the case that $s\leq \rho^2$. Then, for $|I|=0$, the estimate of $E_3$ tells us that  
% \begin{equation*}
% 	|\overleftarrow{u}(s,s^\prime,y,y^\prime)| \leq C\left(|s-s^\prime||s^\prime|+|s^\prime|^2+|y-y^\prime||s^\prime|+|s^\prime|^\frac{3}{2}\right)[\overrightarrow{u}]^{(2+\alpha)}_{[0,\delta]}+ C|s^\prime|\lVert f\rVert^{(\alpha)}_{[0,\delta]}.
% 	%    \begin{split}
% 		%        & |\overleftarrow{u}(s,s^\prime,y,y^\prime)| \\
% 		%        & \leq C\left(|s-s^\prime||s^\prime|+|s^\prime|^2+|y-y^\prime||s^\prime|+|s^\prime|^\frac{3}{2}\right)[\overrightarrow{u}]^{(2+\alpha)}_{[0,\delta]}+ C|s^\prime|\lVert f\rVert^{(\alpha)}_{[0,\delta]} 
% 		%    \end{split}
% \end{equation*} 
\begin{equation*}
    \bigl| \overleftarrow{u}(s,s',y,y') \bigr| \leq C \Bigl( |s-s'| |s'| + |s'|^2 + |y-y'| |s'| + |s'|^{\frac{3}{2}} \Bigr) [ \overrightarrow{u} ]^{(2+\alpha)}_{[0, \delta]} + C |s'| \lVert f \rVert^{(\alpha)}_{[0, \delta]}
\end{equation*}
In addition, from $E_1$, we also have 
% \begin{equation*}
% 	\begin{split}
% 		& |\overleftarrow{u}(s,s,y,y)| \\
% 		\leq &  C\int^s_0d\tau\int_{\mathbb{R}^d}(s-\tau)^{-\frac{d}{2}}\exp\big\{-c\varpi\big\}\big(|s-\tau|+|y-\xi|\big)[\overrightarrow{u}]^{(2+\alpha)}_{[0,\delta]}d\xi+ Cs\lVert f\rVert^{(\alpha)}_{[0,\delta]} \\
% 		\leq & C(s^2+s^\frac{3}{2})[\overrightarrow{u}]^{(2+\alpha)}_{[0,\delta]}+Cs\lVert f\rVert^{(\alpha)}_{[0,\delta]}.  
% 	\end{split}
% \end{equation*} 
\begin{equation*}
		\begin{aligned}& \bigl| \overleftarrow{u}(s,s,y,y) \bigr| \\
		\leq \;& C \int_0^s \int_{\mathbb{R}^d} (s-\tau)^{-\frac{d}{2}} \exp \bigl\{ -c\varpi \bigr\} \bigl( |s-\tau| + |y-\xi| \bigr) [ \overrightarrow{u} ]^{(2+\alpha)}_{[0,\delta]} \, d\xi d\tau + Cs \lVert f \rVert^{(\alpha)}_{[0,\delta]} \\
		\leq \;& C (s^2 + s^{\frac{3}{2}}) [ \overrightarrow{u} ]^{(2+\alpha)}_{[0,\delta]} + Cs \lVert f \rVert^{(\alpha)}_{[0,\delta]}
	\end{aligned}
\end{equation*}
Consequently, it follows that 
% \begin{equation} \label{E5}
% 	\begin{split}
% 		& |\overleftarrow{u}(s,s^\prime,y,y^\prime)-\overleftarrow{u}(s,s,y,y)| \\
% 		\leq &  C\left(|s-s^\prime||s^\prime|+|s^\prime|^2+|y-y^\prime||s^\prime|+|s^\prime|^\frac{3}{2}\right)[\overrightarrow{u}]^{(2+\alpha)}_{[0,\delta]}+ C|s^\prime|\lVert f\rVert^{(\alpha)}_{[0,\delta]} \\
% 		& \qquad\qquad\qquad 
% 		+ C(s^2+s^\frac{3}{2})[\overrightarrow{u}]^{(2+\alpha)}_{[0,\delta]}+Cs\lVert f\rVert^{(\alpha)}_{[0,\delta]} \\
% 		\leq &  C\left(|s-s^\prime||s^\prime|+|s^\prime||s^\prime-s+s|+|y-y^\prime||s^\prime|+|s^\prime|^\frac{1}{2}|s^\prime-s+s|\right)[\overrightarrow{u}]^{(2+\alpha)}_{[0,\delta]} \\
% 		& \qquad\qquad\qquad 
% 		+ C(s^2+s^\frac{3}{2})[\overrightarrow{u}]^{(2+\alpha)}_{[0,\delta]}+Cs\lVert f\rVert^{(\alpha)}_{[0,\delta]}+ C|s^\prime-s+s|\lVert f\rVert^{(\alpha)}_{[0,\delta]} \\
% 		\leq & 
% 		C\delta^\frac{1}{2}\rho^\alpha[\overrightarrow{u}]^{(2+\alpha)}_{[0,\delta]}+C\rho^\alpha\lVert f\rVert^{(\alpha)}_{[0,\delta]}.  
% 	\end{split}
% \end{equation}
\begin{equation} \label{E5}
		\begin{aligned}& \bigl| \overleftarrow{u}(s,s',y,y') - \overleftarrow{u}(s,s,y,y) \bigr| \\
		\leq \;& C \Bigl( |s-s'| |s'| + |s'|^2 + |y-y'| |s'| + |s'|^{\frac{3}{2}} \Bigr) [ \overrightarrow{u} ]^{(2+\alpha)}_{[0, \delta]} + C |s'| \lVert f \rVert^{(\alpha)}_{[0, \delta]} \\
		& + C \bigl( s^2 + s^{\frac{3}{2}} \bigr) [ \overrightarrow{u} ]^{(2+\alpha)}_{[0, \delta]} + Cs \lVert f \rVert^{(\alpha)}_{[0, \delta]} \\
		\leq \;& C \Bigl( |s-s'| |s'| + |s'| |s'-s+s| + |y-y'| |s'| + |s'|^{\frac{1}{2}} |s'-s+s| \Bigr) [ \overrightarrow{u} ]^{(2+\alpha)}_{[0, \delta]} \\
		& + C \bigl( s^2 + s^{\frac{3}{2}} \bigr) [ \overrightarrow{u} ]^{(2+\alpha)}_{[0, \delta]} + Cs \lVert f \rVert^{(\alpha)}_{[0, \delta]} + C |s'-s+s| \lVert f \rVert^{(\alpha)}_{[0, \delta]} \\
		\leq \;& C \delta^{\frac{1}{2}} \rho^\alpha [ \overrightarrow{u} ]^{(2+\alpha)}_{[0, \delta]} + C \rho^\alpha \lVert f \rVert^{(\alpha)}_{[0, \delta]}
	\end{aligned}
\end{equation}

\ \

\noindent\textbf{(Estimate for $\boldsymbol{E_6}$)} Similarly, for $s\leq \rho^2$ and $|I|=1,2$, the term $E_4$ yields that 
% \begin{equation*}
% 	\begin{split}
% 		& |\partial_I\overleftarrow{u}(s,s^\prime,x,y^\prime)_{x=y}| \\
% 		\leq & C\Big(|s-s^\prime||s^\prime|^\frac{4-|I|+\alpha}{2} +|s^\prime|^\frac{4-|I|+\alpha}{2}+|y-y^\prime||s^\prime|^\frac{2-|I|+\alpha}{2}+|s^\prime|^\frac{3-|I|+\alpha}{2}\Big)[\overrightarrow{u}]^{(2+\alpha)}_{[0,\delta]} \\
% 		& \qquad +
% 		C\Big(|s-s^\prime||s^\prime|^\frac{2-|I|+\alpha}{2}+|s^\prime|^\frac{4-|I|+\alpha}{2} \\
% 		& \qquad\qquad\qquad 
% 		+|s^\prime|^\frac{3-|I|}{2}+|y-y^\prime||s^\prime|^\frac{3-|I|}{2}+|y-y^\prime||s^\prime|^\frac{2-|I|+\alpha}{2}\Big)[\overrightarrow{u}]^{(2+\alpha)}_{[0,\delta]} \\
% 		& \qquad +
% 		C\Big(|s-s^\prime||s^\prime|^\frac{2-|I|+\alpha}{2}+|s^\prime|^\frac{4-|I|+\alpha}{2}\\
% 		&\qquad\qquad\qquad 
% 		+|y-y^\prime||s^\prime|^\frac{2-|I|+\alpha}{2}\Big)[\overrightarrow{u}]^{(2+\alpha)}_{[0,\delta]}+ C|s^\prime|^\frac{2-|I|+\alpha}{2}\lVert f\rVert^{(\alpha)}_{[0,\delta]} 
% 	\end{split}
% \end{equation*}
\begin{equation*}
		\begin{aligned}& \bigl| \partial_I \overleftarrow{u}(s, s', x, y') \big|_{x=y} \bigr| \\
		\leq \;& C \Bigl( |s-s'| |s'|^{\frac{4-|I|+\alpha}{2}} + |s'|^{\frac{4-|I|+\alpha}{2}} + |y-y'| |s'|^{\frac{2-|I|+\alpha}{2}} + |s'|^{\frac{3-|I|+\alpha}{2}} \Bigr) [ \overrightarrow{u} ]^{(2+\alpha)}_{[0, \delta]} \\
		& + C \Bigl( |s-s'| |s'|^{\frac{2-|I|+\alpha}{2}} + |s'|^{\frac{4-|I|+\alpha}{2}} + |s'|^{\frac{3-|I|}{2}} + |y-y'| |s'|^{\frac{3-|I|}{2}} + |y-y'| |s'|^{\frac{2-|I|+\alpha}{2}} \Bigr) [ \overrightarrow{u} ]^{(2+\alpha)}_{[0, \delta]} \\
		& + C \Bigl( |s-s'| |s'|^{\frac{2-|I|+\alpha}{2}} + |s'|^{\frac{4-|I|+\alpha}{2}} + |y-y'| |s'|^{\frac{2-|I|+\alpha}{2}} \Bigr) [ \overrightarrow{u} ]^{(2+\alpha)}_{[0, \delta]} + C |s'|^{\frac{2-|I|+\alpha}{2}} \lVert f \rVert^{(\alpha)}_{[0, \delta]}
	\end{aligned}
\end{equation*}
Moreover, from $E_2$, it is clear that  
% \begin{equation*}
% 	\begin{split}
% 		& |\partial_I\overleftarrow{u}(s,s,x,y)|_{x=y}| \\
% 		\leq &  C\int^s_0\left((s-\tau)^{-\frac{|I|-\alpha-2}{2}}+(s-\tau)^{-\frac{|I|-\alpha-1}{2}}\right)[\overrightarrow{u}]^{(2+\alpha)}_{[0,\delta]}d\tau \\
% 		& \qquad\qquad +
% 		C\int^s_0\left((s-\tau)^{-\frac{|I|-\alpha-2}{2}}+(s-\tau)^{-\frac{|I|-1}{2}}\right)[\overrightarrow{u}]^{(2+\alpha)}_{[0,\delta]} d\tau \\
% 		& \qquad\qquad + C\int^s_0(s-\tau)^{-\frac{|I|-\alpha}{2}}(s-\tau)[\overrightarrow{u}]^{(2+\alpha)}_{[0,\delta]} d\tau + Cs^{\frac{2-|I|+\alpha}{2}}\lVert f\rVert^{(\alpha)}_{[0,\delta]} \\
% 		\leq &  C\left(s^\frac{4-|I|+\alpha}{2}+s^\frac{3-|I|+\alpha}{2}+s^\frac{3-|I|}{2}\right)[\overrightarrow{u}]^{(2+\alpha)}_{[0,\delta]}+Cs^{\frac{2-|I|+\alpha}{2}}\lVert f\rVert^{(\alpha)}_{[0,\delta]}
% 	\end{split}
% \end{equation*}
\begin{equation*}
		\begin{aligned}& \bigl| \partial_I \overleftarrow{u}(s, s, x, y) \big|_{x=y} \bigr| \\
		\leq \;& C \int_0^s \Bigl( (s-\tau)^{-\frac{|I|-\alpha-2}{2}} + (s-\tau)^{-\frac{|I|-\alpha-1}{2}} \Bigr) [ \overrightarrow{u} ]^{(2+\alpha)}_{[0, \delta]} \, d\tau \\
		& + C \int_0^s \Bigl( (s-\tau)^{-\frac{|I|-\alpha-2}{2}} + (s-\tau)^{-\frac{|I|-1}{2}} \Bigr) [ \overrightarrow{u} ]^{(2+\alpha)}_{[0, \delta]} \, d\tau \\
		& + C \int_0^s (s-\tau)^{-\frac{|I|-\alpha}{2}} (s-\tau) [ \overrightarrow{u} ]^{(2+\alpha)}_{[0, \delta]} \, d\tau + C s^{\frac{2-|I|+\alpha}{2}} \lVert f \rVert^{(\alpha)}_{[0, \delta]} \\
		\leq \;& C \Bigl( s^{\frac{4-|I|+\alpha}{2}} + s^{\frac{3-|I|+\alpha}{2}} + s^{\frac{3-|I|}{2}} \Bigr) [ \overrightarrow{u} ]^{(2+\alpha)}_{[0, \delta]} + C s^{\frac{2-|I|+\alpha}{2}} \lVert f \rVert^{(\alpha)}_{[0, \delta]}
	\end{aligned}
\end{equation*}
Hence, it holds that 
\begingroup\small
\begin{equation*} \label{E6}
		\begin{aligned}& \bigl| \partial_I \overleftarrow{u}(s,s',x,y') \big|_{x=y} - \partial_I \overleftarrow{u}(s,s,x,y) \big|_{x=y} \bigr| \\
		\leq \;& \bigl| \partial_I \overleftarrow{u}(s,s',x,y') \big|_{x=y} \bigr| + \bigl| \partial_I \overleftarrow{u}(s,s,x,y) \big|_{x=y} \bigr| \\
		\leq \;& C \Bigl( |s-s'| |s'|^{\frac{4-|I|+\alpha}{2}} + |s'|^{\frac{4-|I|}{2}} |s'-s+s|^{\frac{\alpha}{2}} + |y-y'| |s'|^{\frac{2-|I|+\alpha}{2}} + |s'|^{\frac{3-|I|}{2}} |s'-s+s|^{\frac{\alpha}{2}} \Bigr) [ \overrightarrow{u} ]^{(2+\alpha)}_{[0, \delta]} \\
		& + C \Bigl( |s-s'| |s'|^{\frac{2-|I|+\alpha}{2}} + |s'|^{\frac{4-|I|}{2}} |s'-s+s|^{\frac{\alpha}{2}} + |s'|^{\frac{3-|I|-\alpha}{2}} |s'-s+s|^{\frac{\alpha}{2}} \\
		& \quad \qquad + |y-y'| |s'|^{\frac{3-|I|}{2}} + |y-y'| |s'|^{\frac{2-|I|+\alpha}{2}} \Bigr) [ \overrightarrow{u} ]^{(2+\alpha)}_{[0, \delta]} \\
		% & + C \Bigl( |s-s'| |s'|^{\frac{2-|I|+\alpha}{2}} + |s'|^{\frac{4-|I|}{2}} |s'-s+s|^{\frac{\alpha}{2}} + |y-y'| |s'|^{\frac{2-|I|+\alpha}{2}} \Bigr) [ \overrightarrow{u} ]^{(2+\alpha)}_{[0, \delta]} \\
		% & + C |s'-s+s|^{\frac{2-|I|+\alpha}{2}} \lVert f \rVert^{(\alpha)}_{[0, \delta]} + C \Bigl( s^{\frac{4-|I|}{2}} s^{\frac{\alpha}{2}} + s^{\frac{3-|I|}{2}} s^{\frac{\alpha}{2}} + s^{\frac{3-|I|-\alpha}{2}} s^{\frac{\alpha}{2}} \Bigr) [ \overrightarrow{u} ]^{(2+\alpha)}_{[0, \delta]} + Cs^{\frac{2-|I|+\alpha}{2}} \lVert f \rVert^{(\alpha)}_{[0, \delta]} \\
		% \leq \;& C \Bigl( \delta^{\frac{3-|I|-\alpha}{2}} + \delta^{\frac{2-|I|+\alpha}{2}} \Bigr) \rho^\alpha [ \overrightarrow{u} ]^{(2+\alpha)}_{[0, \delta]} + C \delta^{\frac{2-|I|}{2}} \rho^\alpha \lVert f \rVert^{(\alpha)}_{[0, \delta]}
	\end{aligned}
\end{equation*}
\begin{equation*} \label{E6}
		\begin{aligned}
  %       & \bigl| \partial_I \overleftarrow{u}(s,s',x,y') \big|_{x=y} - \partial_I \overleftarrow{u}(s,s,x,y) \big|_{x=y} \bigr| \\
		% \leq \;& \bigl| \partial_I \overleftarrow{u}(s,s',x,y') \big|_{x=y} \bigr| + \bigl| \partial_I \overleftarrow{u}(s,s,x,y) \big|_{x=y} \bigr| \\
		% \leq \;& C \Bigl( |s-s'| |s'|^{\frac{4-|I|+\alpha}{2}} + |s'|^{\frac{4-|I|}{2}} |s'-s+s|^{\frac{\alpha}{2}} + |y-y'| |s'|^{\frac{2-|I|+\alpha}{2}} + |s'|^{\frac{3-|I|}{2}} |s'-s+s|^{\frac{\alpha}{2}} \Bigr) [ \overrightarrow{u} ]^{(2+\alpha)}_{[0, \delta]} \\
		% & + C \Bigl( |s-s'| |s'|^{\frac{2-|I|+\alpha}{2}} + |s'|^{\frac{4-|I|}{2}} |s'-s+s|^{\frac{\alpha}{2}} + |s'|^{\frac{3-|I|-\alpha}{2}} |s'-s+s|^{\frac{\alpha}{2}} \\
		% & \quad \qquad + |y-y'| |s'|^{\frac{3-|I|}{2}} + |y-y'| |s'|^{\frac{2-|I|+\alpha}{2}} \Bigr) [ \overrightarrow{u} ]^{(2+\alpha)}_{[0, \delta]} \\
		& + C \Bigl( |s-s'| |s'|^{\frac{2-|I|+\alpha}{2}} + |s'|^{\frac{4-|I|}{2}} |s'-s+s|^{\frac{\alpha}{2}} + |y-y'| |s'|^{\frac{2-|I|+\alpha}{2}} \Bigr) [ \overrightarrow{u} ]^{(2+\alpha)}_{[0, \delta]} \\
		& + C |s'-s+s|^{\frac{2-|I|+\alpha}{2}} \lVert f \rVert^{(\alpha)}_{[0, \delta]} + C \Bigl( s^{\frac{4-|I|}{2}} s^{\frac{\alpha}{2}} + s^{\frac{3-|I|}{2}} s^{\frac{\alpha}{2}} + s^{\frac{3-|I|-\alpha}{2}} s^{\frac{\alpha}{2}} \Bigr) [ \overrightarrow{u} ]^{(2+\alpha)}_{[0, \delta]} + Cs^{\frac{2-|I|+\alpha}{2}} \lVert f \rVert^{(\alpha)}_{[0, \delta]} \\
		\leq \;& C \Bigl( \delta^{\frac{3-|I|-\alpha}{2}} + \delta^{\frac{2-|I|+\alpha}{2}} \Bigr) \rho^\alpha [ \overrightarrow{u} ]^{(2+\alpha)}_{[0, \delta]} + C \delta^{\frac{2-|I|}{2}} \rho^\alpha \lVert f \rVert^{(\alpha)}_{[0, \delta]}
	\end{aligned}
\end{equation*}

\ \ 

\noindent\textbf{(Estimate for $\boldsymbol{E_7}$)} Next, we consider $\rho^2<s$. We examine the difference between $\partial_I u(s,s^\prime,x,y^\prime)|_{x=y}$ and $\partial_I u(s,s,x,y)|_{x=y}$ for $|I|=0$. By \eqref{Integralforoverleftarrowu}, we have   
% \begin{equation*} 
% 	\begin{split}
% 		& |\overleftarrow{u}(t,s^\prime,x,y^\prime)-\overleftarrow{u}(t,s,x,y)| \\
% 		\leq & \left|\int^{s^\prime}_0d\tau\int_{\mathbb{R}^d}Z(s^\prime,\tau,y^\prime,\xi;t,x)\sum\limits_{|I|\leq 2}\overleftarrow{B}^I(t,\tau,x,\xi)\mathcal{I}^I(t,\tau,x,\xi)d\xi\right. \\
% 		& \qquad\qquad\qquad\qquad\qquad 
% 		\left.-\int^{s}_0d\tau\int_{\mathbb{R}^d}Z(s,\tau,y,\xi;t,x)\sum\limits_{|I|\leq 2}\overleftarrow{B}^I(t,\tau,x,\xi)\mathcal{I}^I(t,\tau,x,\xi)d\xi\right| \\
% 		& + \left|\int^{s^\prime}_0d\tau\int_{\mathbb{R}^d}Z(s^\prime,y^\prime)\overleftarrow{f}(t,\tau,x,\xi)d\xi-\int^s_0d\tau\int_{\mathbb{R}^d}Z(s,y)\overleftarrow{f}(t,\tau,x,\xi)d\xi\right|=:T_1+T_2,
% 	\end{split}
% \end{equation*} 
\begin{equation*}
		\begin{aligned}& \bigl| \overleftarrow{u}(t,s',x,y') - \overleftarrow{u}(t,s,x,y) \bigr| \\
		\leq \;& \Biggl| \int_0^{s'} \int_{\mathbb{R}^d} Z(s',\tau,y',\xi;t,x) \sum_{|I|\leq 2} \overleftarrow{B}^I(t,\tau,x,\xi) \mathcal{I}^I(t,\tau,x,\xi) \, d\xi d\tau \\
		& \quad - \int_0^s \int_{\mathbb{R}^d} Z(s,\tau,y,\xi;t,x) \sum_{|I|\leq 2} \overleftarrow{B}^I(t,\tau,x,\xi) \mathcal{I}^I(t,\tau,x,\xi) \, d\xi d\tau \Biggr| \\
		& + \Biggl| \int_0^{s'} \int_{\mathbb{R}^d} Z(s',y') \overleftarrow{f}(t,\tau,x,\xi) \, d\xi d\tau - \int_0^s \int_{\mathbb{R}^d} Z(s,y) \overleftarrow{f}(t,\tau,x,\xi) \, d\xi d\tau \Biggr| \\
		=: \;& T_1 + T_2
	\end{aligned}
\end{equation*}
the first term $T_1$ of which is analyzed as follows:
% \begin{equation*}
% 	\begin{split}
% 		|T_1| & \leq \left|\int^{s^\prime}_0d\tau\int_{\mathbb{R}^d}Z(s^\prime,\tau,y^\prime,\xi;t,x)\sum\limits_{|I|\leq 2}\overleftarrow{B}^I(t,\tau,x,\xi)\mathcal{I}^I(t,\tau,x,\xi)d\xi\right. \\
% 		& \qquad\qquad\qquad\qquad 
% 		\left.-\int^{s}_0d\tau\int_{\mathbb{R}^d}Z(s^\prime,\tau,y^\prime,\xi;t,x)\sum\limits_{|I|\leq 2}\overleftarrow{B}^I(t,\tau,x,\xi)\mathcal{I}^I(t,\tau,x,\xi)d\xi\right| \\
% 		& \qquad   +\left|\int^{s}_0d\tau\int_{\mathbb{R}^d}Z(s^\prime,\tau,y^\prime,\xi;t,x)\sum\limits_{|I|\leq 2}\overleftarrow{B}^I(t,\tau,x,\xi)\mathcal{I}^I(t,\tau,x,\xi)d\xi\right. \\
% 		& \qquad\qquad\qquad\qquad\qquad
% 		\left.-\int^{s}_0d\tau\int_{\mathbb{R}^d}Z(s,\tau,y,\xi;t,x)\sum\limits_{|I|\leq 2}\overleftarrow{B}^I(t,\tau,x,\xi)\mathcal{I}^I(t,\tau,x,\xi)d\xi\right|.
% 	\end{split}
% \end{equation*} 
\begin{equation*}
		\begin{aligned}|T_1| \leq \;& \Biggl| \int_0^{s'} \int_{\mathbb{R}^d} Z(s',\tau,y',\xi;t,x) \sum_{|I|\leq 2} \overleftarrow{B}^I(t,\tau,x,\xi) \mathcal{I}^I(t,\tau,x,\xi) \, d\xi d\tau \\
		& \quad - \int_0^{s} \int_{\mathbb{R}^d} Z(s',\tau,y',\xi;t,x) \sum_{|I|\leq 2} \overleftarrow{B}^I(t,\tau,x,\xi) \mathcal{I}^I(t,\tau,x,\xi) \, d\xi d\tau \Biggr| \\
		& + \Biggl| \int_0^{s} \int_{\mathbb{R}^d} Z(s',\tau,y',\xi;t,x) \sum_{|I|\leq 2} \overleftarrow{B}^I(t,\tau,x,\xi) \mathcal{I}^I(t,\tau,x,\xi) \, d\xi d\tau \\
		& \quad - \int_0^{s} \int_{\mathbb{R}^d} Z(s,\tau,y,\xi;t,x) \sum_{|I|\leq 2} \overleftarrow{B}^I(t,\tau,x,\xi) \mathcal{I}^I(t,\tau,x,\xi) \, d\xi d\tau \Biggr|
	\end{aligned}
\end{equation*}
Subsequently, we have
% \begin{equation*}
% 	\begin{split}
% 		|T_1| \leq &  C\int^{s^\prime}_sd\tau\int_{\mathbb{R}^d}(s^\prime-\tau)^{-\frac{d}{2}}\exp\big\{-c\varpi(s^\prime,\tau,y^\prime,\xi)\big\}\sum\limits_{|I|\leq 2}\left|\mathcal{I}^I(t,\tau,x,\xi)\right|d\xi \\
% 		& \qquad +\int^{s}_0d\tau\int_{\mathbb{R}^d}\left|Z(s,\tau,y,\xi;t,x)-Z(s,\tau,y^\prime,\xi;t,x)\right|\sum\limits_{|I|\leq 2}\left|\mathcal{I}^I(t,\tau,x,\xi)\right|d\xi \\
% 		& \qquad
% 		+\int^{s}_0d\tau\int_{\mathbb{R}^d}\left|Z(s,\tau,y^\prime,\xi;t,x)-Z(s^\prime,\tau,y^\prime,\xi;t,x)\right|\sum\limits_{|I|\leq 2}\left|\mathcal{I}^I(t,\tau,x,\xi)\right|d\xi \\
% 		\leq & 
% 		C\int^{s^\prime}_sd\tau\int_{\mathbb{R}^d}(s^\prime-\tau)^{-\frac{d}{2}}\exp\big\{-c\varpi(s^\prime,\tau,y^\prime,\xi)\big\}\sum\limits_{|I|\leq 2}\left|\mathcal{I}^I(t,\tau,x,\xi)\right|d\xi \\
% 		& \qquad +C\int^{s}_0d\tau~|y^\prime-y|^\alpha\int_{\mathbb{R}^d}|s-\tau|^{-\frac{d+\alpha}{2}}\exp\big\{-c\varpi(s,\tau,y,\xi)\big\}\sum\limits_{|I|\leq 2}\left|\mathcal{I}^I(t,\tau,x,\xi)\right|d\xi \\
% 		& \qquad
% 		+C\int^{s}_0d\tau~|s^\prime-s|^\frac{\alpha}{2}\int_{\mathbb{R}^d}|s-\tau|^{-\frac{d+\alpha}{2}}\exp\big\{-c\varpi(s,\tau,y^\prime,\xi)\big\}\sum\limits_{|I|\leq 2}\left|\mathcal{I}^I(t,\tau,x,\xi)\right|d\xi.
% 	\end{split}
% \end{equation*}
\begin{equation*}
		\begin{aligned}|T_1| \leq \;& C \int_s^{s'} \int_{\mathbb{R}^d} (s'-\tau)^{-\frac{d}{2}} \exp \bigl\{ -c\varpi(s',\tau,y',\xi) \bigr\} \sum_{|I|\leq 2} \bigl| \mathcal{I}^I(t,\tau,x,\xi) \bigr| \, d\xi d\tau \\
		& + \int_0^{s} \int_{\mathbb{R}^d} \bigl| Z(s,\tau,y,\xi;t,x) - Z(s,\tau,y',\xi;t,x) \bigr| \sum_{|I|\leq 2} \bigl| \mathcal{I}^I(t,\tau,x,\xi) \bigr| \, d\xi d\tau \\
		& + \int_0^{s} \int_{\mathbb{R}^d} \bigl| Z(s,\tau,y',\xi;t,x) - Z(s',\tau,y',\xi;t,x) \bigr| \sum_{|I|\leq 2} \bigl| \mathcal{I}^I(t,\tau,x,\xi) \bigr| \, d\xi d\tau \\
		\leq \;& C \int_s^{s'} \int_{\mathbb{R}^d} (s'-\tau)^{-\frac{d}{2}} \exp \bigl\{ -c\varpi(s',\tau,y',\xi) \bigr\} \sum_{|I|\leq 2} \bigl| \mathcal{I}^I(t,\tau,x,\xi) \bigr| \, d\xi d\tau \\
		& + C |y'-y|^\alpha \int_0^{s} \int_{\mathbb{R}^d} (s-\tau)^{-\frac{d+\alpha}{2}} \exp \bigl\{ -c\varpi(s,\tau,y,\xi) \bigr\} \sum_{|I|\leq 2} \bigl| \mathcal{I}^I(t,\tau,x,\xi) \bigr| \, d\xi d\tau \\
		% & + C |s'-s|^{\frac{\alpha}{2}} \int_0^{s} \int_{\mathbb{R}^d} (s-\tau)^{-\frac{d+\alpha}{2}} \exp \bigl\{ -c\varpi(s,\tau,y',\xi) \bigr\} \sum_{|I|\leq 2} \bigl| \mathcal{I}^I(t,\tau,x,\xi) \bigr| \, d\xi d\tau
	\end{aligned}
\end{equation*}
\begin{equation*}
		\begin{aligned}
  %       |T_1| \leq \;& C \int_s^{s'} \int_{\mathbb{R}^d} (s'-\tau)^{-\frac{d}{2}} \exp \bigl\{ -c\varpi(s',\tau,y',\xi) \bigr\} \sum_{|I|\leq 2} \bigl| \mathcal{I}^I(t,\tau,x,\xi) \bigr| \, d\xi d\tau \\
		% & + \int_0^{s} \int_{\mathbb{R}^d} \bigl| Z(s,\tau,y,\xi;t,x) - Z(s,\tau,y',\xi;t,x) \bigr| \sum_{|I|\leq 2} \bigl| \mathcal{I}^I(t,\tau,x,\xi) \bigr| \, d\xi d\tau \\
		% & + \int_0^{s} \int_{\mathbb{R}^d} \bigl| Z(s,\tau,y',\xi;t,x) - Z(s',\tau,y',\xi;t,x) \bigr| \sum_{|I|\leq 2} \bigl| \mathcal{I}^I(t,\tau,x,\xi) \bigr| \, d\xi d\tau \\
		% \leq \;& C \int_s^{s'} \int_{\mathbb{R}^d} (s'-\tau)^{-\frac{d}{2}} \exp \bigl\{ -c\varpi(s',\tau,y',\xi) \bigr\} \sum_{|I|\leq 2} \bigl| \mathcal{I}^I(t,\tau,x,\xi) \bigr| \, d\xi d\tau \\
		% & + C |y'-y|^\alpha \int_0^{s} \int_{\mathbb{R}^d} (s-\tau)^{-\frac{d+\alpha}{2}} \exp \bigl\{ -c\varpi(s,\tau,y,\xi) \bigr\} \sum_{|I|\leq 2} \bigl| \mathcal{I}^I(t,\tau,x,\xi) \bigr| \, d\xi d\tau \\
		& + C |s'-s|^{\frac{\alpha}{2}} \int_0^{s} \int_{\mathbb{R}^d} (s-\tau)^{-\frac{d+\alpha}{2}} \exp \bigl\{ -c\varpi(s,\tau,y',\xi) \bigr\} \sum_{|I|\leq 2} \bigl| \mathcal{I}^I(t,\tau,x,\xi) \bigr| \, d\xi d\tau
	\end{aligned}
\end{equation*}
Similarly, we can obtain the estimate for $T_2$. Then, when $(t,x)=(s,y)$, it follows that  
\begin{equation*}
		\begin{aligned}& \bigl| \overleftarrow{u}(s,s',y,y') - \overleftarrow{u}(s,s,y,y) \bigr| \\
		\leq \;& C \int_s^{s'} \int_{\mathbb{R}^d} (s'-\tau)^{-\frac{d}{2}} e^{-c\varpi(s',\tau,y',\xi)} \bigl( |s-\tau| + |y-\xi| \bigr) [ \overrightarrow{u} ]^{(2+\alpha)}_{[0, \delta]} \, d\xi d\tau \\
		& + C |y'-y|^\alpha \int_0^s \int_{\mathbb{R}^d} (s-\tau)^{-\frac{d+\alpha}{2}} e^{-c\varpi(s,\tau,y,\xi)} \bigl( |s-\tau| + |y-\xi| \bigr) [ \overrightarrow{u} ]^{(2+\alpha)}_{[0, \delta]} \, d\xi d\tau \\
		& + C |s'-s|^{\frac{\alpha}{2}} \int_0^s \int_{\mathbb{R}^d} (s-\tau)^{-\frac{d+\alpha}{2}} e^{-c\varpi(s,\tau,y',\xi)} \bigl( |s-\tau| + |y-\xi| \bigr) [ \overrightarrow{u} ]^{(2+\alpha)}_{[0, \delta]} \, d\xi d\tau \\
		& + C \int_s^{s'} \int_{\mathbb{R}^d} (s'-\tau)^{-\frac{d}{2}} e^{-c\varpi(s',\tau,y',\xi)} \lVert f \rVert^{(\alpha)}_{[0, \delta]} \, d\xi d\tau \\
		& + C |y'-y|^\alpha \int_0^s \int_{\mathbb{R}^d} (s-\tau)^{-\frac{d+\alpha}{2}} e^{-c\varpi(s,\tau,y,\xi)} \lVert f \rVert^{(\alpha)}_{[0, \delta]} \, d\xi d\tau \\
		& + C |s'-s|^{\frac{\alpha}{2}} \int_0^s \int_{\mathbb{R}^d} (s-\tau)^{-\frac{d+\alpha}{2}} e^{-c\varpi(s,\tau,y',\xi)} \lVert f \rVert^{(\alpha)}_{[0, \delta]} \, d\xi d\tau
	\end{aligned}
\end{equation*}
Consequently, we have
% \begin{equation*}
% 	\begin{split}
% 		& |\overleftarrow{u}(s,s^\prime,y,y^\prime)-\overleftarrow{u}(s,s,y,y)| \\
% 		\leq & 
% 		C\left(|s^\prime-s|^2+|y^\prime-y||s^\prime-s|+|s^\prime-s|^\frac{3}{2}\right)[\overrightarrow{u}]^{(2+\alpha)}_{[0,\delta]} \\
% 		& \qquad
% 		+\left(|y^\prime-y|s^\frac{4-\alpha}{2}+|y^\prime-y|s^\frac{3-\alpha}{2}\right)[\overrightarrow{u}]^{(2+\alpha)}_{[0,\delta]} \\
% 		& \qquad
% 		+\left(|s^\prime-s|^\frac{\alpha}{2}s^\frac{4-\alpha}{2}+|s^\prime-s|^\frac{\alpha}{2}|y^\prime-y|s^\frac{2-\alpha}{2}+|s^\prime-s|^\frac{\alpha}{2}s^\frac{3-\alpha}{2}\right)[\overrightarrow{u}]^{(2+\alpha)}_{[0,\delta]} \\
% 		& \qquad +C\rho^\alpha s^\frac{2-\alpha}{2}\lVert f\rVert^{(\alpha)}_{[0,\delta]} \\
% 		\leq & C\left(\rho^\alpha \delta^\frac{4-\alpha}{2}+\rho^\alpha \delta+\rho^\alpha \delta^\frac{3-\alpha}{2}\right)[\overrightarrow{u}]^{(2+\alpha)}_{[0,\delta]}+C\rho^\alpha \delta^\frac{2-\alpha}{2}\lVert f\rVert^{(\alpha)}_{[0,\delta]}.
% 	\end{split}
% \end{equation*}  
\begin{equation*}
		\begin{aligned}& \bigl| \overleftarrow{u}(s,s',y,y') - \overleftarrow{u}(s,s,y,y) \bigr| \\
		\leq \;& C \Bigl( |s'-s|^2 + |y'-y| |s'-s| + |s'-s|^{\frac{3}{2}} \Bigr) [ \overrightarrow{u} ]^{(2+\alpha)}_{[0, \delta]} \\
		& + C \Bigl( |y'-y| s^{\frac{4-\alpha}{2}} + |y'-y| s^{\frac{3-\alpha}{2}} \Bigr) [ \overrightarrow{u} ]^{(2+\alpha)}_{[0, \delta]} \\
		& + C \Bigl( |s'-s|^{\frac{\alpha}{2}} s^{\frac{4-\alpha}{2}} + |s'-s|^{\frac{\alpha}{2}} |y'-y| s^{\frac{2-\alpha}{2}} + |s'-s|^{\frac{\alpha}{2}} s^{\frac{3-\alpha}{2}} \Bigr) [ \overrightarrow{u} ]^{(2+\alpha)}_{[0, \delta]} \\
		& + C \rho^\alpha s^{\frac{2-\alpha}{2}} \lVert f \rVert^{(\alpha)}_{[0, \delta]} \leq \; C \rho^\alpha \Bigl( \delta^{\frac{4-\alpha}{2}} + \delta + \delta^{\frac{3-\alpha}{2}} \Bigr) [ \overrightarrow{u} ]^{(2+\alpha)}_{[0, \delta]} + C \rho^\alpha \delta^{\frac{2-\alpha}{2}} \lVert f \rVert^{(\alpha)}_{[0, \delta]}
	\end{aligned}
\end{equation*}
% \begin{equation*}
	%     \begin{split}
		%         & |\overleftarrow{u}(s,s^\prime,y,y^\prime)-\overleftarrow{u}(s,s,y,y)| \\
		%         & \leq 
		%         C\left(|s^\prime-s|^2+|y^\prime-y||s^\prime-s|+|s^\prime-s|^\frac{3}{2}\right)[\overrightarrow{u}]^{(2+\alpha)}_{[0,\delta]} \\
		%         & \qquad\qquad 
		%         +\left(|y^\prime-y|s^\frac{4-\alpha}{2}+|y^\prime-y|s^\frac{3-\alpha}{2}\right)[\overrightarrow{u}]^{(2+\alpha)}_{[0,\delta]} \\
		%         & \qquad\qquad 
		%         +\left(|s^\prime-s|^\frac{\alpha}{2}s^\frac{4-\alpha}{2}+|s^\prime-s|^\frac{\alpha}{2}|y^\prime-y|s^\frac{2-\alpha}{2}+|s^\prime-s|^\frac{\alpha}{2}s^\frac{3-\alpha}{2}\right)[\overrightarrow{u}]^{(2+\alpha)}_{[0,\delta]} \\
		%         & \qquad\qquad 
		%         +Cd^\alpha s^\frac{2-\alpha}{2}\lVert f\rVert^{(\alpha)}_{[0,\delta]} \\
		%         & 
		%         \leq C\left(d^\alpha s^\frac{4-\alpha}{2}+d^\alpha s+d^\alpha s^\frac{3-\alpha}{2}\right)[\overrightarrow{u}]^{(2+\alpha)}_{[0,\delta]}+Cd^\alpha s^\frac{2-\alpha}{2}\lVert f\rVert^{(\alpha)}_{[0,\delta]} \\
		%         & \leq C\left(d^\alpha \delta^\frac{4-\alpha}{2}+d^\alpha \delta+d^\alpha \delta^\frac{3-\alpha}{2}\right)[\overrightarrow{u}]^{(2+\alpha)}_{[0,\delta]}+Cd^\alpha \delta^\frac{2-\alpha}{2}\lVert f\rVert^{(\alpha)}_{[0,\delta]}.
		%     \end{split}
	% \end{equation*} 

\ \ 

\noindent\textbf{(Estimate for $\boldsymbol{E_8}$)} Next, we consider the cases where $\rho^2<s$ and $|I|=1,2$. According to the representation \eqref{Integralforpartialoverleftarrowu} of $\partial_I\overleftarrow{u}(t,s,x,y)$, we have
\begin{equation*}
		\begin{aligned}& \triangle_{s,y}\partial_I\overleftarrow{u}(t,s,x,y) := \partial_I\overleftarrow{u}(t,s,x,y) - \partial_I\overleftarrow{u}(t,s^\prime,x,y^\prime) \\
		= \;& \int_0^{s-\lambda} \int_{\mathbb{R}^d} \triangle_{s,y}\partial_IZ(s,\tau,y,\xi;t,x) \sum_{|I|\leq 2} \bigl[ (\overleftarrow{B}^I\mathcal{I}^I)(t,\tau,x,\xi) - (\overleftarrow{B}^I\mathcal{I}^I)(t,\tau,x,y) \bigr] \, d\xi d\tau \\
		& + \int_{s-\lambda}^{s} \int_{\mathbb{R}^d} \partial_IZ \sum_{|I|\leq 2} \bigl[ (\overleftarrow{B}^I\mathcal{I}^I)(t,\tau,x,\xi) - (\overleftarrow{B}^I\mathcal{I}^I)(t,\tau,x,y) \bigr] \, d\xi d\tau \\
		& + \int_0^{s-\lambda} \left( \triangle_{s,y}\partial_I \int_{\mathbb{R}^d} Z \, d\xi \right) \sum_{|I|\leq 2} (\overleftarrow{B}^I\mathcal{I}^I)(t,\tau,x,y) \, d\tau \\
		& + \int_{s-\lambda}^s \partial_I \int_{\mathbb{R}^d} Z \, d\xi \sum_{|I|\leq 2} (\overleftarrow{B}^I\mathcal{I}^I)(t,\tau,x,y) \, d\tau \\
		& - \int_{s-\lambda}^{s^\prime} \int_{\mathbb{R}^d} \partial_{I,y^\prime}Z(s^\prime,\tau,y^\prime,\xi;t,x) \sum_{|I|\leq 2} \bigl[ (\overleftarrow{B}^I\mathcal{I}^I)(t,\tau,x,\xi) - (\overleftarrow{B}^I\mathcal{I}^I)(t,\tau,x,y^\prime) \bigr] \, d\xi d\tau \\
		& - \int_{s-\lambda}^{s^\prime} \partial_{I,y^\prime} \int_{\mathbb{R}^d} Z(s^\prime,\tau,y^\prime,\xi;t,x) \, d\xi \sum_{|I|\leq 2} (\overleftarrow{B}^I\mathcal{I}^I)(t,\tau,x,y^\prime) \, d\tau + J_7 =: \sum_{i=1}^{7} J_i
	\end{aligned}
\end{equation*}
where $\lambda=\frac{1}{2}\rho^2$ and the last term $J_7$ is equal to the sum of the first six terms ($J_1$-$J_6$) with all $\sum_{|I|\leq 2}\big(\overleftarrow{B}^I\mathcal{I}^I\big)$ by $\overleftarrow{f}$.

\ \ 

\noindent\textbf{($\boldsymbol{J_1}$-term)} We have 
% \begin{equation*}
% 	\begin{split}
% 		|J_1|\leq & \int^{s-\lambda}_0d\tau\int_{\mathbb{R}^d}\left(\left|\triangle_{y-y^\prime,y}\partial_IZ(s,\tau,y,\xi;t,x)\right|+\left|\triangle_{s-s^\prime,s}\partial_{I,y^\prime}Z(s,\tau,y^\prime,\xi;t,x)\right|\right) \\
% 		& \qquad\qquad\qquad\qquad
% 		\times\sum\limits_{|I|\leq 2}\left|\left(\overleftarrow{B}^I\mathcal{I}^I\right)(t,\tau,x,\xi)-\left(\overleftarrow{B}^I\mathcal{I}^I\right)(t,\tau,x,y)\right|d\xi \\
% 		\leq & 
% 		\int^{s-\lambda}_0d\tau\int_{\mathbb{R}^d}\left|\triangle_{y-y^\prime,y}\partial_IZ(s,\tau,y,\xi;t,x)\right| \\
% 		& \qquad\qquad\qquad
% 		\times\sum\limits_{|I|\leq 2}\Big\{\left|\overleftarrow{B}^I(t,\tau,x,\xi)-\overleftarrow{B}^I(t,\tau,x,y)\right|\left|\mathcal{I}^I(t,\tau,x,\xi)\right|  \\
% 		& \qquad\qquad\qquad\qquad\qquad\qquad
% 		+ \left|\mathcal{I}^I(t,\tau,x,\xi)-\mathcal{I}^I(t,\tau,x,y)\right|\left|\overleftarrow{B}^I(t,\tau,x,y)\right|\Big\} d\xi     \\
% 		& \qquad
% 		+\int^{s-\lambda}_0d\tau\int_{\mathbb{R}^d}\left|\triangle_{s-s^\prime,s}\partial_{I,y^\prime}Z(s,\tau,y^\prime,\xi;t,x)\right| \\
% 		& \qquad\qquad\qquad
% 		\times\sum\limits_{|I|\leq 2}\Big\{\left|\left(\overleftarrow{B}^I\mathcal{I}^I\right)(t,\tau,x,\xi)-\left(\overleftarrow{B}^I\mathcal{I}^I\right)(t,\tau,x,y^\prime)\right| \\
% 		& \qquad\qquad\qquad\qquad\qquad\qquad
% 		+\left|\left(\overleftarrow{B}^I\mathcal{I}^I\right)(t,\tau,x,y^\prime)-\left(\overleftarrow{B}^I\mathcal{I}^I\right)(t,\tau,x,y)\right| \Big\} d\xi.  
% 	\end{split}
% \end{equation*}
\begin{equation*}
		\begin{aligned}|J_1| \leq \;& \int_0^{s-\lambda} \int_{\mathbb{R}^d} \Bigl( \bigl| \triangle_{y-y',y} \partial_I Z(s,\tau,y,\xi;t,x) \bigr| + \bigl| \triangle_{s-s',s} \partial_{I,y'} Z(s,\tau,y', \xi;t,x) \bigr| \Bigr) \\
		& \quad \times \sum_{|I|\leq 2} \bigl| (\overleftarrow{B}^I\mathcal{I}^I)(t,\tau,x,\xi) - (\overleftarrow{B}^I\mathcal{I}^I)(t,\tau,x,y) \bigr| \, d\xi d\tau \\
		\leq \;& \int_0^{s-\lambda} \int_{\mathbb{R}^d} \bigl| \triangle_{y-y',y} \partial_I Z(s,\tau,y,\xi;t,x) \bigr| \\
		& \quad \times \sum_{|I|\leq 2} \Bigl\{ \bigl| \overleftarrow{B}^I(t,\tau,x,\xi) - \overleftarrow{B}^I(t,\tau,x,y) \bigr| \bigl| \mathcal{I}^I(t,\tau,x,\xi) \bigr|  \\
		& \qquad\qquad + \bigl| \mathcal{I}^I(t,\tau,x,\xi) - \mathcal{I}^I(t,\tau,x,y) \bigr| \bigl| \overleftarrow{B}^I(t,\tau,x,y) \bigr| \Bigr\} \, d\xi d\tau \\
		& + \int_0^{s-\lambda} \int_{\mathbb{R}^d} \bigl| \triangle_{s-s',s} \partial_{I,y'} Z(s,\tau,y', \xi;t,x) \bigr| \\
		& \quad \times \sum_{|I|\leq 2} \Bigl\{ \bigl| (\overleftarrow{B}^I\mathcal{I}^I)(t,\tau,x,\xi) - (\overleftarrow{B}^I\mathcal{I}^I)(t,\tau,x,y') \bigr| \\
		& \qquad\qquad + \bigl| (\overleftarrow{B}^I\mathcal{I}^I)(t,\tau,x,y') - (\overleftarrow{B}^I\mathcal{I}^I)(t,\tau,x,y) \bigr| \Bigr\} \, d\xi d\tau
	\end{aligned}
\end{equation*}
Moreover, it holds that  
% \begin{equation*}
% 	\begin{split}
% 		|J_1|\leq & \int^{s-\lambda}_0d\tau\int_{\mathbb{R}^d}\left|\triangle_{y-y^\prime,y}\partial_IZ(s,\tau,y,\xi;t,x)\right| \\
% 		& \qquad\qquad\qquad\qquad
% 		\times\sum\limits_{|I|\leq 2}\Big\{\left|\overleftarrow{B}^I(t,\tau,x,\xi)-\overleftarrow{B}^I(t,\tau,x,y)\right|\left|\mathcal{I}^I(t,\tau,x,\xi)\right|  \\
% 		& \qquad\qquad\qquad\qquad\qquad\qquad
% 		+ \left|\mathcal{I}^I(t,\tau,x,\xi)-\mathcal{I}^I(t,\tau,x,y)\right|\left|\overleftarrow{B}^I(t,\tau,x,y)\right|\Big\} d\xi     \\
% 		& \qquad 
% 		+\int^{s-\lambda}_0d\tau\int_{\mathbb{R}^d}\left|\triangle_{s-s^\prime,s}\partial_{I,y^\prime}Z(s,\tau,y^\prime,\xi;t,x)\right| \\
% 		& \qquad\qquad\qquad   
% 		\times\sum\limits_{|I|\leq 2}\Big\{\left|\overleftarrow{B}^I(t,\tau,x,\xi)-\overleftarrow{B}^I(t,\tau,x,y^\prime)\right|\left|\mathcal{I}^I(t,\tau,x,\xi)\right|  \\
% 		& \qquad\qquad\qquad\qquad\qquad\qquad  
% 		+ \left|\mathcal{I}^I(t,\tau,x,\xi)-\mathcal{I}^I(t,\tau,x,y^\prime)\right|\left|\overleftarrow{B}^I(t,\tau,x,y^\prime)\right|\Big\} \\
% 		& \qquad\qquad\qquad
% 		+\Big\{\left|\overleftarrow{B}^I(t,\tau,x,y)-\overleftarrow{B}^I(t,\tau,x,y^\prime)\right|\left|\mathcal{I}^I(t,\tau,x,y)\right|  \\
% 		& \qquad\qquad\qquad\qquad\qquad\qquad
% 		+ \left|\mathcal{I}^I(t,\tau,x,y)-\mathcal{I}^I(t,\tau,x,y^\prime)\right|\left|\overleftarrow{B}^I(t,\tau,x,y^\prime)\right|\Big\}d\xi. 
% 	\end{split}
% \end{equation*}
\begin{equation*}
		\begin{aligned}|J_1| \leq \;& \int_0^{s-\lambda} \int_{\mathbb{R}^d} \bigl| \triangle_{y-y',y} \partial_I Z(s,\tau,y,\xi;t,x) \bigr| \\
		& \quad \times \sum_{|I|\leq 2} \Bigl\{ \bigl| \overleftarrow{B}^I(t,\tau,x,\xi) - \overleftarrow{B}^I(t,\tau,x,y) \bigr| \bigl| \mathcal{I}^I(t,\tau,x,\xi) \bigr|  \\
		& \qquad\qquad + \bigl| \mathcal{I}^I(t,\tau,x,\xi) - \mathcal{I}^I(t,\tau,x,y) \bigr| \bigl| \overleftarrow{B}^I(t,\tau,x,y) \bigr| \Bigr\} \, d\xi d\tau \\
		& + \int_0^{s-\lambda} \int_{\mathbb{R}^d} \bigl| \triangle_{s-s',s} \partial_{I,y'} Z(s,\tau,y', \xi;t,x) \bigr| \\
		& \quad \times \sum_{|I|\leq 2} \Biggl[ \Bigl\{ \bigl| \overleftarrow{B}^I(t,\tau,x,\xi) - \overleftarrow{B}^I(t,\tau,x,y') \bigr| \bigl| \mathcal{I}^I(t,\tau,x,\xi) \bigr|  \\
		& \qquad\qquad\quad + \bigl| \mathcal{I}^I(t,\tau,x,\xi) - \mathcal{I}^I(t,\tau,x,y') \bigr| \bigl| \overleftarrow{B}^I(t,\tau,x,y') \bigr| \Bigr\} \\
		& \qquad\qquad + \Bigl\{ \bigl| \overleftarrow{B}^I(t,\tau,x,y) - \overleftarrow{B}^I(t,\tau,x,y') \bigr| \bigl| \mathcal{I}^I(t,\tau,x,y) \bigr| \\
		& \qquad\qquad\quad + \bigl| \mathcal{I}^I(t,\tau,x,y) - \mathcal{I}^I(t,\tau,x,y') \bigr| \bigl| \overleftarrow{B}^I(t,\tau,x,y') \bigr| \Bigr\} \Biggr] \, d\xi d\tau
	\end{aligned}
\end{equation*}
From the regularities of the fundamental solution $Z$ and $B^I$, we have  
% \begin{equation*}
% 	\begin{split}
% 		|J_1| \leq &  C|y^\prime-y|^\alpha\int^{s-\lambda}_0d\tau\int_{\mathbb{R}^d}(s-\tau)^{-\frac{d+|I|+\alpha}{2}}\exp\big\{-c\varpi(s,\tau,y,\xi)\big\} \\
% 		& \qquad\qquad\quad  
% 		\times\sum\limits_{|I|\leq 2}\Big\{\left|y-\xi\right|^\alpha\left|\mathcal{I}^I(t,\tau,x,\xi)\right|+ \left|\mathcal{I}^I(t,\tau,x,\xi)-\mathcal{I}^I(t,\tau,x,y)\right|\Big\} d\xi \\
% 		& +C|s^\prime-s|^\frac{\alpha}{2}\int^{s-\lambda}_0d\tau\int_{\mathbb{R}^d}(s-\tau)^{-\frac{d+|I|+\alpha}{2}}\exp\big\{-c\varpi(s,\tau,y^\prime,\xi)\big\} \\
% 		& \qquad\qquad\qquad  
% 		\times\sum\limits_{|I|\leq 2}\Big\{\left|y^\prime-\xi\right|^\alpha\left|\mathcal{I}^I(t,\tau,x,\xi)\right|+ \left|\mathcal{I}^I(t,\tau,x,\xi)-\mathcal{I}^I(t,\tau,x,y^\prime)\right| \\
% 		& \qquad\qquad\qquad\quad 
% 		+|y-y^\prime|^\alpha\left|\mathcal{I}^I(t,\tau,x,y)\right|+\left|\mathcal{I}^I(t,\tau,x,y)-\mathcal{I}^I(t,\tau,x,y^\prime)\right|\Big\} d\xi.
% 	\end{split}
% \end{equation*} 
\begin{equation*}
		\begin{aligned}|J_1| \leq \;& C |y'-y|^\alpha \int_0^{s-\lambda} \int_{\mathbb{R}^d} (s-\tau)^{-\frac{d+|I|+\alpha}{2}} e^{-c\varpi(s,\tau,y,\xi)} \\
		& \quad \times \sum_{|I|\leq 2} \Bigl\{ |y-\xi|^\alpha \bigl| \mathcal{I}^I(t,\tau,x,\xi) \bigr| + \bigl| \mathcal{I}^I(t,\tau,x,\xi) - \mathcal{I}^I(t,\tau,x,y) \bigr| \Bigr\} \, d\xi d\tau \\
		& + C |s'-s|^{\frac{\alpha}{2}} \int_0^{s-\lambda} \int_{\mathbb{R}^d} (s-\tau)^{-\frac{d+|I|+\alpha}{2}} e^{-c\varpi(s,\tau,y',\xi)} \\
		& \quad \times \sum_{|I|\leq 2} \Bigl\{ |y'-\xi|^\alpha \bigl| \mathcal{I}^I(t,\tau,x,\xi) \bigr| + \bigl| \mathcal{I}^I(t,\tau,x,\xi) - \mathcal{I}^I(t,\tau,x,y') \bigr| \\
		& \qquad\qquad + |y-y'|^\alpha \bigl| \mathcal{I}^I(t,\tau,x,y) \bigr| + \bigl| \mathcal{I}^I(t,\tau,x,y) - \mathcal{I}^I(t,\tau,x,y') \bigr| \Bigr\} \, d\xi d\tau
	\end{aligned}
\end{equation*}
Consequently, when $t=s$ and $x=y$, we have  
% \begin{equation*}
% 	\begin{split}
% 		|J_1| \leq &  C|y^\prime-y|^\alpha\int^{s-\lambda}_0d\tau\int_{\mathbb{R}^d}(s-\tau)^{-\frac{d+|I|+\alpha}{2}}\exp\big\{-c\varpi(s,\tau,y,\xi)\big\} \\
% 		& \qquad\qquad\qquad\qquad\qquad
% 		\times\Big\{\left|y-\xi\right|^\alpha\big(|s-\tau|+|y-\xi|\big)[\overrightarrow{u}]^{(2+\alpha)}_{[0,\delta]} \\
% 		& \qquad\qquad\qquad\qquad\qquad\qquad\qquad\qquad
% 		+ \big((s-\tau)|y-\xi|^\alpha+|y-\xi|\big)[\overrightarrow{u}]^{(2+\alpha)}_{[0,\delta]}\Big\} d\xi \\
% 		& \qquad 
% 		+ C|s^\prime-s|^\frac{\alpha}{2}\int^{s-\lambda}_0d\tau\int_{\mathbb{R}^d}(s-\tau)^{-\frac{d+|I|+\alpha}{2}}\exp\big\{-c\varpi(s,\tau,y^\prime,\xi)\big\} \\
% 		& \qquad\qquad
% 		\times\Big\{\left|y^\prime-\xi\right|^\alpha\big(|s-\tau|+|y-y^\prime|+|y^\prime-\xi|\big)[\overrightarrow{u}]^{(2+\alpha)}_{[0,\delta]} \\
% 		& \qquad
% 		+ \big((s-\tau)|y^\prime-\xi|^\alpha+|y^\prime-\xi|+|y^\prime-y||y^\prime-\xi|+|y^\prime-y||y^\prime-\xi|^\alpha\big)[\overrightarrow{u}]^{(2+\alpha)}_{[0,\delta]} \\
% 		& \qquad
% 		+|y-y^\prime|^\alpha(s-\tau)[\overrightarrow{u}]^{(2+\alpha)}_{[0,\delta]}+\big((s-\tau)|y^\prime-y|^\alpha+|y^\prime-y|\big)[\overrightarrow{u}]^{(2+\alpha)}_{[0,\delta]}\Big\} d\xi 
% 	\end{split}
% \end{equation*}
\begin{equation*}
		\begin{aligned}|J_1| \leq \;& C |y'-y|^\alpha \int_0^{s-\lambda} \int_{\mathbb{R}^d} (s-\tau)^{-\frac{d+|I|+\alpha}{2}} e^{-c\varpi(s,\tau,y,\xi)} \\
		& \quad \times \Bigl\{ |y-\xi|^\alpha \bigl( |s-\tau| + |y-\xi| \bigr) + (s-\tau) |y-\xi|^\alpha + |y-\xi| \Bigr\} [ \overrightarrow{u} ]^{(2+\alpha)}_{[0, \delta]} \, d\xi d\tau \\
		& + C |s'-s|^{\frac{\alpha}{2}} \int_0^{s-\lambda} \int_{\mathbb{R}^d} (s-\tau)^{-\frac{d+|I|+\alpha}{2}} e^{-c\varpi(s,\tau,y',\xi)} \\
		& \quad \times \Biggl\{ |y'-\xi|^\alpha \bigl( |s-\tau| + |y-y'| + |y'-\xi| \bigr) \\
		& \qquad\quad + \bigl( (s-\tau) |y'-\xi|^\alpha + |y'-\xi| + |y'-y| |y'-\xi| + |y'-y| |y'-\xi|^\alpha \bigr) \\
		& \qquad\quad + |y-y'|^\alpha (s-\tau) + (s-\tau) |y'-y|^\alpha + |y'-y| \Biggr\} [ \overrightarrow{u} ]^{(2+\alpha)}_{[0, \delta]} \, d\xi d\tau
	\end{aligned}
\end{equation*}
Then 
% \begin{equation*}
% 	\begin{split}
% 		|J_1| \leq &  C|y^\prime-y|^\alpha\int^{s-\lambda}_0\left((s-\tau)^{-\frac{|I|-2}{2}}+(s-\tau)^{-\frac{|I|-1}{2}}\right)[\overrightarrow{u}]^{(2+\alpha)}_{[0,\delta]}d\tau \\
% 		& +C|y^\prime-y|^\alpha\int^{s-\lambda}_0\left(|s-\tau|^{-\frac{|I|-2}{2}}+(s-\tau)^{-\frac{|I|+\alpha-1}{2}}\right)[\overrightarrow{u}]^{(2+\alpha)}_{[0,\delta]}d\tau \\
% 		& +C|s^\prime-s|^\frac{\alpha}{2}\int^{s-\lambda}_0\left((s-\tau)^{-\frac{|I|-2}{2}}+(s-\tau)^{-\frac{|I|-1}{2}}+(s-\tau)^{-\frac{|I|}{2}}|y^\prime-y|\right)[\overrightarrow{u}]^{(2+\alpha)}_{[0,\delta]}d\tau \\
% 		& +C|s^\prime-s|^\frac{\alpha}{2}\int^{s-\lambda}_0\big((s-\tau)^{-\frac{|I|-2}{2}}+(s-\tau)^{-\frac{|I|+\alpha-1}{2}} \\
% 		& \qquad\qquad\qquad\qquad
% 		+(s-\tau)^{-\frac{|I|+\alpha-1}{2}}|y^\prime-y|+(s-\tau)^{-\frac{|I|}{2}}|y^\prime-y|\big)[\overrightarrow{u}]^{(2+\alpha)}_{[0,\delta]}d\tau \\
% 		& +C|s^\prime-s|^\frac{\alpha}{2}\int^{s-\lambda}_0(s-\tau)^{-\frac{|I|+\alpha-2}{2}}|y^\prime-y|^\alpha[\overrightarrow{u}]^{(2+\alpha)}_{[0,\delta]}d\tau \\
% 		& +C|s^\prime-s|^\frac{\alpha}{2}\int^{s-\lambda}_0\big((s-\tau)^{-\frac{|I|+\alpha-2}{2}}|y^\prime-y|^\alpha+(s-\tau)^{-\frac{|I|+\alpha}{2}}|y^\prime-y|\big)[\overrightarrow{u}]^{(2+\alpha)}_{[0,\delta]}d\tau.
% 	\end{split}
% \end{equation*} 
\begingroup\small
\begin{equation*}
		\begin{aligned}|J_1| \leq \;& C |y'-y|^\alpha \int_0^{s-\lambda} \Bigl( (s-\tau)^{-\frac{|I|-2}{2}} + (s-\tau)^{-\frac{|I|-1}{2}} \Bigr) [\overrightarrow{u}]^{(2+\alpha)}_{[0, \delta]} \, d\tau \\
		& + C |y'-y|^\alpha \int_0^{s-\lambda} \Bigl( (s-\tau)^{-\frac{|I|-2}{2}} + (s-\tau)^{-\frac{|I|+\alpha-1}{2}} \Bigr) [\overrightarrow{u}]^{(2+\alpha)}_{[0, \delta]} \, d\tau \\
		& + C |s'-s|^{\frac{\alpha}{2}} \int_0^{s-\lambda} \Bigl( (s-\tau)^{-\frac{|I|-2}{2}} + (s-\tau)^{-\frac{|I|-1}{2}} + (s-\tau)^{-\frac{|I|}{2}} |y'-y| \Bigr) [\overrightarrow{u}]^{(2+\alpha)}_{[0, \delta]} \, d\tau \\
		& + C |s'-s|^{\frac{\alpha}{2}} \int_0^{s-\lambda} \Bigl( (s-\tau)^{-\frac{|I|-2}{2}} + (s-\tau)^{-\frac{|I|+\alpha-1}{2}} (1 + |y'-y|) \\
		& \qquad\qquad\qquad + (s-\tau)^{-\frac{|I|}{2}} |y'-y| \Bigr) [\overrightarrow{u}]^{(2+\alpha)}_{[0, \delta]} \, d\tau \\
		& + C |s'-s|^{\frac{\alpha}{2}} \int_0^{s-\lambda} (s-\tau)^{-\frac{|I|+\alpha-2}{2}} |y'-y|^\alpha [\overrightarrow{u}]^{(2+\alpha)}_{[0, \delta]} \, d\tau \\
		& + C |s'-s|^{\frac{\alpha}{2}} \int_0^{s-\lambda} \Bigl( (s-\tau)^{-\frac{|I|+\alpha-2}{2}} |y'-y|^\alpha + (s-\tau)^{-\frac{|I|+\alpha}{2}} |y'-y| \Bigr) [\overrightarrow{u}]^{(2+\alpha)}_{[0, \delta]} \, d\tau
	\end{aligned}
\end{equation*}
\endgroup
It is noteworthy that $|y^\prime-y|\leq d=(2\lambda)^\frac{1}{2}\leq 2^\frac{1}{2}(s-\tau)^\frac{1}{2}$ if $\tau\in[0,s-\lambda]$. To simplify the results, we also extend the upper bound of integrals from $s-\lambda$ to $s$ since all integrands are positive in the interval. Consequently, we have 
% \begin{equation*}
% 	\begin{split}
% 		|J_1| \leq & C\rho^\alpha\left(s^\frac{4-|I|}{2}+s^\frac{3-|I|}{2}+s^\frac{3-|I|-\alpha}{2}+s^\frac{4-|I|-\alpha}{2}\right)[\overrightarrow{u}]^{(2+\alpha)}_{[0,\delta]} \\
% 		\leq &  C\rho^\alpha\left(\delta^\frac{4-|I|}{2}+\delta^\frac{3-|I|}{2}+\delta^\frac{3-|I|-\alpha}{2}+\delta^\frac{4-|I|-\alpha}{2}\right)[\overrightarrow{u}]^{(2+\alpha)}_{[0,\delta]}.  
% 	\end{split}
% \end{equation*} 
\begin{equation*}
		\begin{aligned}|J_1| \leq \;& C \rho^\alpha \Bigl( s^{\frac{4-|I|}{2}} + s^{\frac{3-|I|}{2}} + s^{\frac{3-|I|-\alpha}{2}} + s^{\frac{4-|I|-\alpha}{2}} \Bigr) [ \overrightarrow{u} ]^{(2+\alpha)}_{[0, \delta]} \\
		\leq \;& C \rho^\alpha \Bigl( \delta^{\frac{4-|I|}{2}} + \delta^{\frac{3-|I|}{2}} + \delta^{\frac{3-|I|-\alpha}{2}} + \delta^{\frac{4-|I|-\alpha}{2}} \Bigr) [ \overrightarrow{u} ]^{(2+\alpha)}_{[0, \delta]}
	\end{aligned}
\end{equation*}

\ \ 

\noindent\textbf{($\boldsymbol{J_2}$-term)} Next, we investigate the second term $J_2$. 
% \begin{equation*}
% 	\begin{split}
% 		|J_2|\leq & \int^{s}_{s-\lambda}d\tau\int_{\mathbb{R}^d}\left|\partial_IZ\right|\sum\limits_{|I|\leq 2}\left|\left(\overleftarrow{B}^I\mathcal{I}^I\right)(t,\tau,x,\xi)-\left(\overleftarrow{B}^I\mathcal{I}^I\right)(t,\tau,x,y)\right|d\xi \\
% 		\leq & \int^{s}_{s-\lambda}d\tau\int_{\mathbb{R}^d}\left|\partial_IZ\right|\sum\limits_{|I|\leq 2}\Big\{\left|\overleftarrow{B}^I(t,\tau,x,\xi)-\overleftarrow{B}^I(t,\tau,x,y)\right|\left|\mathcal{I}^I(t,\tau,x,\xi)\right| \\
% 		& \qquad\quad\qquad\qquad\qquad\qquad 
% 		+\left|\mathcal{I}^I(t,\tau,x,\xi)-\mathcal{I}^I(t,\tau,x,y)\right|\left|\overleftarrow{B}^I(t,\tau,x,y)\right|\Big\} d\xi \\
% 		\leq & C\int^{s}_{s-\lambda}d\tau\int_{\mathbb{R}^d}(s-\tau)^{-\frac{d+|I|}{2}}\exp\big\{-c\varpi(s,\tau,y,\xi)\big\} \\
% 		& \qquad\qquad\qquad\qquad
% 		\times\Big\{|y-\xi|^\alpha\left|\mathcal{I}^I(t,\tau,x,\xi)\right|+\left|\mathcal{I}^I(t,\tau,x,\xi)-\mathcal{I}^I(t,\tau,x,y)\right|\Big\} d\xi 
% 	\end{split}
% \end{equation*} 
\begin{equation*}
		\begin{aligned}|J_2| \leq \;& \int_{s-\lambda}^{s} \int_{\mathbb{R}^d} |\partial_I Z| \sum_{|I|\leq 2} \bigl| (\overleftarrow{B}^I\mathcal{I}^I)(t,\tau,x,\xi) - (\overleftarrow{B}^I\mathcal{I}^I)(t,\tau,x,y) \bigr| \, d\xi d\tau \\
		\leq \;& \int_{s-\lambda}^{s} \int_{\mathbb{R}^d} |\partial_I Z| \sum_{|I|\leq 2} \Bigl\{ \bigl| \overleftarrow{B}^I(t,\tau,x,\xi) - \overleftarrow{B}^I(t,\tau,x,y) \bigr| \bigl| \mathcal{I}^I(t,\tau,x,\xi) \bigr| \\
		& \qquad\qquad\qquad + \bigl| \mathcal{I}^I(t,\tau,x,\xi) - \mathcal{I}^I(t,\tau,x,y) \bigr| \bigl| \overleftarrow{B}^I(t,\tau,x,y) \bigr| \Bigr\} \, d\xi d\tau \\
		\leq \;& C \int_{s-\lambda}^{s} \int_{\mathbb{R}^d} (s-\tau)^{-\frac{d+|I|}{2}} e^{-c\varpi(s,\tau,y,\xi)} \\
		& \quad \times \sum_{|I|\leq 2} \Bigl\{ |y-\xi|^\alpha \bigl| \mathcal{I}^I(t,\tau,x,\xi) \bigr| + \bigl| \mathcal{I}^I(t,\tau,x,\xi) - \mathcal{I}^I(t,\tau,x,y) \bigr| \Bigr\} \, d\xi d\tau
	\end{aligned}
\end{equation*}
Moreover, in the case that $t=s$ and $x=y$, it follows that  
% \begin{equation*}
% 	\begin{split}
% 		|J_2| \leq &  C\int^{s}_{s-\lambda}d\tau\int_{\mathbb{R}^d}(s-\tau)^{-\frac{d+|I|}{2}}\exp\big\{-c\varpi(s,\tau,y,\xi)\big\} \\
% 		& \qquad\qquad\qquad\qquad  
% 		\times\Big\{|y-\xi|^\alpha\big((s-\tau)+|y-\xi|\big)[\overrightarrow{u}]^{(2+\alpha)}_{[0,\delta]} \\
% 		& \qquad\qquad\qquad\qquad\qquad\qquad
% 		+\big((s-\tau)|y-\xi|^\alpha+|y-\xi|\big)[\overrightarrow{u}]^{(2+\alpha)}_{[0,\delta]}\Big\} d\xi \\
% 		\leq &  C\left(\lambda^\frac{4-|I|+\alpha}{2}+\lambda^\frac{3-|I|+\alpha}{2}+\lambda^\frac{3-|I|}{2}\right)[\overrightarrow{u}]^{(2+\alpha)}_{[0,\delta]} \\
% 		\leq &  C\rho^\alpha\left(\delta^\frac{4-|I|}{2}+\delta^\frac{3-|I|}{2}+\delta^\frac{3-|I|-\alpha}{2}\right)[\overrightarrow{u}]^{(2+\alpha)}_{[0,\delta]}. 
% 	\end{split}
% \end{equation*} 
\begin{equation*}
		\begin{aligned}|J_2| \leq \;& C \int_{s-\lambda}^{s} \int_{\mathbb{R}^d} (s-\tau)^{-\frac{d+|I|}{2}} e^{-c\varpi(s,\tau,y,\xi)} \Bigl\{ |y-\xi|^\alpha \bigl( (s-\tau) + |y-\xi| \bigr) \\
		& \qquad\qquad\quad + \bigl( (s-\tau) |y-\xi|^\alpha + |y-\xi| \bigr) \Bigr\} [\overrightarrow{u}]^{(2+\alpha)}_{[0, \delta]} \, d\xi d\tau \\
		\leq \;& C \Bigl( \lambda^{\frac{4-|I|+\alpha}{2}} + \lambda^{\frac{3-|I|+\alpha}{2}} + \lambda^{\frac{3-|I|}{2}} \Bigr) [\overrightarrow{u}]^{(2+\alpha)}_{[0, \delta]} \\
		\leq \;& C \rho^\alpha \Bigl( \delta^{\frac{4-|I|}{2}} + \delta^{\frac{3-|I|}{2}} + \delta^{\frac{3-|I|-\alpha}{2}} \Bigr) [\overrightarrow{u}]^{(2+\alpha)}_{[0, \delta]}
	\end{aligned}
\end{equation*}

\ \ 

\noindent\textbf{($\boldsymbol{J_5}$-term)} Now, we study $J_5$. 
% \begin{equation*}
% 	\begin{split}
% 		|J_5|\leq & 
% 		\int^{s^\prime}_{s-\lambda}d\tau\int_{\mathbb{R}^d}\left|\partial_{I,y^\prime}Z(s^\prime,\tau,y^\prime,\xi;t,x)\right| \\
% 		& \qquad\qquad\qquad\qquad
% 		\times\sum\limits_{|I|\leq 2}\left|\left(\overleftarrow{B}^I\mathcal{I}^I\right)(t,\tau,x,\xi)-\left(\overleftarrow{B}^I\mathcal{I}^I\right)(t,\tau,x,y^\prime)\right|d\xi \\
% 		\leq &  \int^{s^\prime}_{s-\lambda}d\tau\int_{\mathbb{R}^d}\left|\partial_{I,y^\prime}Z(s^\prime,\tau,y^\prime,\xi;t,x)\right| \\
% 		& \qquad\qquad\qquad  
% 		\times\sum\limits_{|I|\leq 2}\Big\{\left|\overleftarrow{B}^I(t,\tau,x,\xi)-\overleftarrow{B}^I(t,\tau,x,y^\prime)\right|\left|\mathcal{I}^I(t,\tau,x,\xi)\right| \\
% 		& \qquad\qquad\qquad\qquad\qquad 
% 		+\left|\mathcal{I}^I(t,\tau,x,\xi)-\mathcal{I}^I(t,\tau,x,y^\prime)\right|\left|\overleftarrow{B}^I(t,\tau,x,y^\prime)\right|\Big\} d\xi \\
% 		& \leq C\int^{s^\prime}_{s^\prime-\lambda}d\tau\int_{\mathbb{R}^d}(s^\prime-\tau)^{-\frac{d+|I|}{2}}\exp\big\{-c\varpi(s^\prime,\tau,y^\prime,\xi)\big\} \\
% 		& \qquad\qquad\qquad\qquad   
% 		\times\Big\{|y^\prime-\xi|^\alpha\left|\mathcal{I}^I(t,\tau,x,\xi)\right|+\left|\mathcal{I}^I(t,\tau,x,\xi)-\mathcal{I}^I(t,\tau,x,y^\prime)\right|\Big\} d\xi 
% 	\end{split}
% \end{equation*} 
\begin{equation*}
		\begin{aligned}|J_5| \leq \;& \int_{s-\lambda}^{s'} \int_{\mathbb{R}^d} \bigl| \partial_{I,y'} Z(s',\tau,y',\xi;t,x) \bigr| \sum_{|I|\leq 2} \bigl| (\overleftarrow{B}^I\mathcal{I}^I)(t,\tau,x,\xi) - (\overleftarrow{B}^I\mathcal{I}^I)(t,\tau,x,y') \bigr| \, d\xi d\tau \\
		\leq \;& \int_{s-\lambda}^{s'} \int_{\mathbb{R}^d} \bigl| \partial_{I,y'} Z(s',\tau,y',\xi;t,x) \bigr| \sum_{|I|\leq 2} \Bigl\{ \bigl| \overleftarrow{B}^I(t,\tau,x,\xi) - \overleftarrow{B}^I(t,\tau,x,y') \bigr| \bigl| \mathcal{I}^I(t,\tau,x,\xi) \bigr| \\
		& \qquad\qquad\qquad\qquad + \bigl| \mathcal{I}^I(t,\tau,x,\xi) - \mathcal{I}^I(t,\tau,x,y') \bigr| \bigl| \overleftarrow{B}^I(t,\tau,x,y') \bigr| \Bigr\} \, d\xi d\tau \\
		\leq \;& C \int_{s-\lambda}^{s'} \int_{\mathbb{R}^d} (s'-\tau)^{-\frac{d+|I|}{2}} e^{-c\varpi(s',\tau,y',\xi)} \\
		& \quad \times \sum_{|I|\leq 2} \Bigl\{ |y'-\xi|^\alpha \bigl| \mathcal{I}^I(t,\tau,x,\xi) \bigr| + \bigl| \mathcal{I}^I(t,\tau,x,\xi) - \mathcal{I}^I(t,\tau,x,y') \bigr| \Bigr\} \, d\xi d\tau
	\end{aligned}
\end{equation*}
Hence, when $t=s$ and $x=y$, we have 
% \begin{equation*}
% 	\begin{split}
% 		|J_5| \leq &  C\int^{s^\prime}_{s^\prime-\lambda}d\tau\int_{\mathbb{R}^d}(s^\prime-\tau)^{-\frac{d+|I|}{2}}\exp\big\{-c\varpi(s^\prime,\tau,y^\prime,\xi)\big\} \\
% 		& \qquad\qquad\qquad\qquad\qquad 
% 		\times\Big\{|y^\prime-\xi|^\alpha\big(|s-s^\prime|+|s^\prime-\tau|+|y-y^\prime|+|y^\prime-\xi|\big)  \\
% 		& \qquad\qquad\qquad\qquad
% 		+ |y^\prime-\xi|^\alpha\big(|s-s^\prime|+|s^\prime-\tau|\big) \\
% 		& \qquad\qquad\qquad\qquad
% 		+ |y^\prime-\xi|+|y-y^\prime||y^\prime-\xi|+|y-y^\prime||y^\prime-\xi|^\alpha\Big\}[\overrightarrow{u}]^{(2+\alpha)}_{[0,\delta]} d\xi \\
% 		\leq &  C\rho^\alpha\left(\delta^\frac{2-|I|+\alpha}{2}+\delta^\frac{4-|I|}{2}+\delta^\frac{3-|I|}{2}+\delta^\frac{3-|I|-\alpha}{2}\right)[\overrightarrow{u}]^{(2+\alpha)}_{[0,\delta]}. 
% 	\end{split}
% \end{equation*} 
\begin{equation*}
		\begin{aligned}|J_5| \leq \;& C \int_{s'-\lambda}^{s'} \int_{\mathbb{R}^d} (s'-\tau)^{-\frac{d+|I|}{2}} e^{-c\varpi(s',\tau,y',\xi)} \Biggl\{ |y'-\xi|^\alpha \Bigl( |s-s'| + |s'-\tau| + |y-y'| + |y'-\xi| \Bigr) \\
		& \qquad\quad + |y'-\xi|^\alpha \bigl( |s-s'| + |s'-\tau| \bigr) + |y'-\xi| \\
		& \qquad\quad + |y-y'| \bigl( |y'-\xi| + |y'-\xi|^\alpha \bigr) \Biggr\} [\overrightarrow{u}]^{(2+\alpha)}_{[0, \delta]} \, d\xi d\tau \\
		\leq \;& C \rho^\alpha \Bigl( \delta^{\frac{2-|I|+\alpha}{2}} + \delta^{\frac{4-|I|}{2}} + \delta^{\frac{3-|I|}{2}} + \delta^{\frac{3-|I|-\alpha}{2}} \Bigr) [\overrightarrow{u}]^{(2+\alpha)}_{[0, \delta]}
	\end{aligned}
\end{equation*}

\ \ 

\noindent\textbf{($\boldsymbol{J_4}$-term and $\boldsymbol{J_6}$-term)} The estimates of $J_4$ and $J_6$ are similar, which are evaluated as follows:  
% \begin{equation*}
% 	\begin{split}
% 		|J_4|\leq &  C\int^s_{s-\lambda}d\tau\int_{\mathbb{R}^d}(s-\tau)^{-\frac{d+|I|}{2}}\exp\big\{-c\varpi(s,\tau,y,\xi)\big\}(s-\tau)[\overrightarrow{u}]^{(2+\alpha)}_{[0,\delta]} d\xi \\ 
% 		\leq & C\rho^\alpha\delta^\frac{4-|I|}{2}[\overrightarrow{u}]^{(2+\alpha)}_{[0,\delta]}. 
% 	\end{split}
% \end{equation*} 
\begin{equation*}
		\begin{aligned}|J_4| \leq \;& C \int_{s-\lambda}^s \int_{\mathbb{R}^d} (s-\tau)^{-\frac{d+|I|}{2}} e^{-c\varpi(s,\tau,y,\xi)} (s-\tau) [\overrightarrow{u}]^{(2+\alpha)}_{[0,\delta]} \, d\xi d\tau \\
		\leq \;& C \rho^\alpha \delta^{\frac{4-|I|}{2}} [\overrightarrow{u}]^{(2+\alpha)}_{[0,\delta]}
	\end{aligned}
\end{equation*}
and 
% \begin{equation*}
% 	\begin{split}
% 		|J_6| \leq &  C\int^{s^\prime}_{s^\prime-\lambda}d\tau\int_{\mathbb{R}^d}(s-\tau)^{-\frac{d+|I|}{2}}\exp\big\{-c\varpi(s,\tau,y,\xi)\big\} \\ 
% 		& \qquad\qquad\qquad\qquad\qquad\qquad  
% 		\times\big(|s-s^\prime|+|s^\prime-\tau|+|y-y^\prime|\big)[\overrightarrow{u}]^{(2+\alpha)}_{[0,\delta]} d\xi \\ 
% 		\leq &  C\rho^\alpha\left(\delta^\frac{2-|I|+\alpha}{2}+\delta^\frac{4-|I|}{2}\right)[\overrightarrow{u}]^{(2+\alpha)}_{[0,\delta]}. 
% 	\end{split}
% \end{equation*} 
\begin{equation*}
		\begin{aligned}|J_6| \leq \;& C \int_{s'-\lambda}^{s'} \int_{\mathbb{R}^d} (s'-\tau)^{-\frac{d+|I|}{2}} e^{-c\varpi(s',\tau,y',\xi)} \bigl( |s-s'| + |s'-\tau| + |y-y'| \bigr) [\overrightarrow{u}]^{(2+\alpha)}_{[0, \delta]} \, d\xi d\tau \\
		\leq \;& C \rho^\alpha \Bigl( \delta^{\frac{2-|I|+\alpha}{2}} + \delta^{\frac{4-|I|}{2}} \Bigr) [\overrightarrow{u}]^{(2+\alpha)}_{[0, \delta]}
	\end{aligned}
\end{equation*}

\ \ 

\noindent\textbf{($\boldsymbol{J_3}$-term)} Next, we analyze the $J_3$-term. 
% \begin{equation*}
% 	|J_3|\leq\int^{s-\lambda}_0\left|\triangle_{s,y}\partial_I\int_{\mathbb{R}^d}Zd\xi\right|\sum\limits_{|I|\leq 2}\left|\overleftarrow{B}^I\mathcal{I}^I(t,\tau,x,y)\right| d\tau.
% \end{equation*} 
\begin{equation*}
	|J_3| \leq \int_0^{s-\lambda} \left| \triangle_{s,y} \partial_I \int_{\mathbb{R}^d} Z \, d\xi \right| \sum_{|I|\leq 2} \bigl| \overleftarrow{B}^I\mathcal{I}^I(t,\tau,x,y) \bigr| \, d\tau
\end{equation*}
Then 
% \begin{equation*}
% 	\begin{split}
% 		& \left|\triangle_{s,y}\partial_I\int_{\mathbb{R}^d}Zd\xi\right| \\
% 		\leq & \left|\triangle_{y^\prime-y,y}\partial_I\int_{\mathbb{R}^d}Z(s,\tau,y,\xi;t,x)d\xi\right|+\left|\triangle_{s^\prime-s,s}\partial_{I,y^\prime}\int_{\mathbb{R}^d}Z(s,\tau,y^\prime,\xi;t,x)d\xi\right| \\
% 		\leq & \left|\triangle_{y^\prime-y,y}\partial_I\int_{\mathbb{R}^d}Z_0(s-\tau,y-\xi,\tau,\xi;t,x)d\xi\right| + \left|\triangle_{y^\prime-y,y}\partial_I\int_{\mathbb{R}^d}W(s,\tau,y,\xi;t,x)d\xi\right| \\
% 		& +\left|\triangle_{s^\prime-s,s}\partial_{I,y^\prime}\int_{\mathbb{R}^d}Z_0(s-\tau,y^\prime-\xi,\tau,\xi;t,x)d\xi\right|+\left|\triangle_{s^\prime-s,s}\partial_{I,y^\prime}\int_{\mathbb{R}^d}W(s,\tau,y^\prime,\xi;t,x)d\xi\right| \\
% 		\leq &  C\sum^d_{i=1}|y^\prime_i-y_i|(s-\tau)^{-\frac{|k|+1-\alpha}{2}}+C|y^\prime-y|^\alpha(s-\tau)^{-\frac{|k|}{2}} \\
% 		& +C(s^\prime-s)(s-\tau)^{-\frac{|k|+2-\alpha}{2}}+C(s^\prime-s)^\frac{\alpha}{2}(s-\tau)^{-\frac{|k|}{2}}.
% 	\end{split}
% \end{equation*} 
\begin{equation*}
		\begin{aligned}& \left| \triangle_{s,y} \partial_I \int_{\mathbb{R}^d} Z \, d\xi \right| \\
		\leq \;& \left| \triangle_{y'-y,y} \partial_I \int_{\mathbb{R}^d} Z(s,\tau,y,\xi;t,x) \, d\xi \right| + \left| \triangle_{s'-s,s} \partial_{I,y'} \int_{\mathbb{R}^d} Z(s,\tau,y',\xi;t,x) \, d\xi \right| \\
		\leq \;& \left| \triangle_{y'-y,y} \partial_I \int_{\mathbb{R}^d} Z_0(s-\tau, y-\xi, \tau, \xi; t, x) \, d\xi \right| + \left| \triangle_{y'-y,y} \partial_I \int_{\mathbb{R}^d} W(s, \tau, y, \xi; t, x) \, d\xi \right| \\
		& + \left| \triangle_{s'-s,s} \partial_{I,y'} \int_{\mathbb{R}^d} Z_0(s-\tau, y'-\xi, \tau, \xi; t, x) \, d\xi \right| + \left| \triangle_{s'-s,s} \partial_{I,y'} \int_{\mathbb{R}^d} W(s, \tau, y', \xi; t, x) \, d\xi \right| \\
		\leq \;& C \sum_{i=1}^d |y'_i - y_i| (s-\tau)^{-\frac{|k|+1-\alpha}{2}} + C |y'-y|^\alpha (s-\tau)^{-\frac{|k|}{2}} \\
		& + C (s'-s) (s-\tau)^{-\frac{|k|+2-\alpha}{2}} + C (s'-s)^{\frac{\alpha}{2}} (s-\tau)^{-\frac{|k|}{2}}
	\end{aligned}
\end{equation*}
Since $|y^\prime_i-y_i|\leq d\leq 2^\frac{1}{2}(s-\tau)^\frac{1}{2}$ and $s^\prime-s\leq d^2\leq 2(s-\tau)$ in the interval $\tau\in[0,s-\lambda]$. Consequently, when $t=s$ and $x=y$, we have  
% \begin{equation*}
% 	\begin{split}
% 		|J_3|\leq &  C|y^\prime-y|^\alpha\int^s_0(s-\tau)^{-\frac{|I|-2}{2}}[\overrightarrow{u}]^{(2+\alpha)}_{[0,\delta]}d\tau + Cd^\alpha\int^s_0(s-\tau)^{-\frac{|I|-2}{2}}[\overrightarrow{u}]^{(2+\alpha)}_{[0,\delta]}d\tau \\
% 		& +C|s^\prime-s|^\frac{\alpha}{2}\int^s_0(s-\tau)^{-\frac{|I|-2}{2}}[\overrightarrow{u}]^{(2+\alpha)}_{[0,\delta]}d\tau \\
% 		\leq & C\rho^\alpha\delta^\frac{4-|I|}{2}[\overrightarrow{u}]^{(2+\alpha)}_{[0,\delta]}. 
% 	\end{split}
% \end{equation*}
\begin{equation*}
		\begin{aligned}|J_3| \leq \;& C \bigl( |y'-y|^\alpha + d^\alpha \bigr) \int_0^s (s-\tau)^{-\frac{|I|-2}{2}} [\overrightarrow{u}]^{(2+\alpha)}_{[0, \delta]} \, d\tau \\
		& + C |s'-s|^{\frac{\alpha}{2}} \int_0^s (s-\tau)^{-\frac{|I|-2}{2}} [\overrightarrow{u}]^{(2+\alpha)}_{[0, \delta]} \, d\tau \\
		\leq \;& C \rho^\alpha \delta^{\frac{4-|I|}{2}} [\overrightarrow{u}]^{(2+\alpha)}_{[0, \delta]}
	\end{aligned}
\end{equation*}

Finally, according to the classical theory of parabolic linear systems, we can find that $|J_7|\leq C\rho^\alpha\lVert f\rVert^{(\alpha)}_{[0,\delta]}$. From the analyses of $J_1$-$J_7$, it follows that in the case of $(t,x)=(s,y)$, $|\triangle_{s,y}\partial_I\overleftarrow{u}(t,s,x,y)|$ is bounded by $\lVert u\rVert^{(2+\alpha)}_{[0,\delta]}$ and $\lVert f\rVert^{(\alpha)}_{[0,\delta]}$ and that the coefficient in front of $\lVert u\rVert^{(2+\alpha)}_{[0,\delta]}$ could be arbitrarily small by choosing a suitable $\delta$.  

\ \ 

After showing the estimates of $E_1$-$E_8$ in Table \ref{TableforlinearPDE}, for any $(t,x)\in[0,\delta]\times\mathbb{R}^d$, there exists a small enough $\delta\in(0,T]$, independent of $(t,s,x,y)$, such that 
% \begin{equation*}
% 	\begin{split}
% 		\left|\left(u,\frac{\partial u}{\partial t},\frac{\partial u}{\partial x},\frac{\partial^2 u}{\partial x\partial x}\right)(t,s,x,y)\right|^{(2+\alpha)}_{(s,y)\in[0,\delta]\times\mathbb{R}^d} & \leq \frac{1}{2}[\overrightarrow{u}]^{(2+\alpha)}_{[0,\delta]}+C\left(\lVert f\rVert^{(\alpha)}_{[0,\delta]}+\lVert g\rVert^{(2+\alpha)}_{[0,\delta]}\right) \\
% 		& \leq \frac{1}{2}\lVert u\rVert^{(2+\alpha)}_{[0,\delta]}+C\left(\lVert f\rVert^{(\alpha)}_{[0,\delta]}+\lVert g\rVert^{(2+\alpha)}_{[0,\delta]}\right).
% 	\end{split}
% \end{equation*}
\begin{equation*}
		\begin{aligned}\Bigl\| \Bigl( u, \frac{\partial u}{\partial t}, \frac{\partial u}{\partial x}, \frac{\partial^2 u}{\partial x^2} \Bigr) (t,s,x,y) \Bigr\|^{(2+\alpha)}_{(s,y) \in [0,\delta] \times \mathbb{R}^d} 
		&\leq \frac{1}{2} [\overrightarrow{u}]^{(2+\alpha)}_{[0,\delta]} + C \Bigl( \| f \|^{(\alpha)}_{[0,\delta]} + \| g \|^{(2+\alpha)}_{[0,\delta]} \Bigr) \\
		&\leq \frac{1}{2} \| u \|^{(2+\alpha)}_{[0,\delta]} + C \Bigl( \| f \|^{(\alpha)}_{[0,\delta]} + \| g \|^{(2+\alpha)}_{[0,\delta]} \Bigr)
	\end{aligned}
\end{equation*}
Hence, we have 
% \begin{equation*}
% 	\lVert u\rVert^{(2+\alpha)}_{[0,\delta]}\leq C\left(\lVert f\rVert^{(\alpha)}_{[0,\delta]}+\lVert g\rVert^{(2+\alpha)}_{[0,\delta]}\right). 
% \end{equation*}
\begin{equation*}
	\| u \|^{(2+\alpha)}_{[0, \delta]} \leq C \Bigl( \| f \|^{(\alpha)}_{[0, \delta]} + \| g \|^{(2+\alpha)}_{[0, \delta]} \Bigr)
\end{equation*}

\section{Estimate of \texorpdfstring{$\boldsymbol{\lVert\varphi\rVert^{(\alpha)}_{[0,\delta]}}$}{Estimate of varphi norm on [0,delta]}}
\label{App:B}
In this section, we will evaluate the nonhomogeneous term $\varphi(t,s,x,y)$ of \eqref{varphi}, and show it holds that  
% \begin{equation*}
% 	\lVert\varphi\rVert^{(\alpha)}_{[0,\delta]}\leq C(R)\delta^{\frac{\alpha}{2}}\lVert u-\widehat{u}\rVert^{(2+\alpha)}_{[0,\delta]},  
% \end{equation*}
\begin{equation*}
	\| \varphi \|^{(\alpha)}_{[0, \delta]} \leq C(R) \delta^{\frac{\alpha}{2}} \| u - \widehat{u} \|^{(2+\alpha)}_{[0, \delta]}
\end{equation*}
such that the mapping $\Lambda:u\mapsto \Lambda(u)$ defined by \eqref{DefinitionofLamda} is a $\frac{1}{2}$-contraction over the closed ball $\mathcal{U}$ centered at $g$ with radius $R$ for a small enough $\delta\in(0,T]$. In order to establish the inequality, we need to evaluate the terms of $K_1$-$K_{12}$ in Table \ref{tab:table5}.

\ \ 

\noindent \textbf{(Estimates $\boldsymbol{K_1}$-$\boldsymbol{K_3}$ of $\boldsymbol{|\varphi(t,s,x,y)|^{(\alpha)}_{(s,y)\in[0,\delta]\times\mathbb{R}^d}}$).} Let us consider $|\varphi(t,s,x,y)-\varphi(t,s^\prime,x,y)|$ for any $0\leq s< s^\prime\leq \delta\leq T$, $t\in[0,\delta]$, and $x,y\in\mathbb{R}^d$. By making use of \eqref{Integralrepresentationofvarphi}, it is convenient to add and subtract 
% \begin{equation*}
% 	\begin{split}
% 		&\int^1_0\sum\limits_{|I|\leq 2}\partial_I F\big(t,s^\prime,x,y,\theta_\sigma(t,s^\prime,x,y)\big)\times\partial_I\left(u-\widehat{u}\right)(t,s,x,y)d\sigma \\
% 		+&\int^1_0\sum\limits_{|I|\leq 2}\partial_I \overline{F}\big(t,s^\prime,x,y,\theta_\sigma(t,s^\prime,x,y)\big)\times\partial_I\left(u-\widehat{u}\right)(s,s,x,y)|_{x=y}d\sigma.   
% 	\end{split}
% \end{equation*}
\begin{equation*}
		\begin{aligned}& \int_0^1 \sum_{|I|\leq 2} \partial_I F \bigl( t, s', x, y, \theta_\sigma(t, s', x, y) \bigr) \times \partial_I (u - \widehat{u})(t, s, x, y) \, d\sigma \\
		+ & \int_0^1 \sum_{|I|\leq 2} \partial_I \overline{F} \bigl( t, s', x, y, \theta_\sigma(t, s', x, y) \bigr) \times \partial_I (u - \widehat{u})(s, s, x, y) \big|_{x=y} \, d\sigma
	\end{aligned}
\end{equation*}
Subsequently, we need to estimate 
% \begin{equation*}
% 	\big|\partial_IF\big(t,s,x,y,\theta_\sigma(t,s,x,y)\big)-\partial_IF\big(t,s^\prime,x,y,\theta_\sigma(t,s^\prime,x,y)\big)\big|, 
% \end{equation*}
% \begin{equation*}
% 	\big|\partial_I\overline{F}\big(t,s,x,y,\theta_\sigma(t,s,x,y)\big)-\partial_I\overline{F}\big(t,s^\prime,x,y,\theta_\sigma(t,s^\prime,x,y)\big)\big|, 
% \end{equation*}
% \begin{equation*}
% 	\big|\partial_IF\big(t,s^\prime,x,y,\theta_\sigma(t,s^\prime,x,y)\big)-\partial_IF\big(t,0,x,y,\theta_0(t,x,y)\big)\big|, 
% \end{equation*} 
% and 
% \begin{equation*}
% 	\big|\partial_I\overline{F}\big(t,s^\prime,x,y,\theta_\sigma(t,s^\prime,x,y)\big)-\partial_I\overline{F}\big(t,0,x,y,\theta_0(t,x,y)\big)\big|.   
% \end{equation*} 
% \begin{equation*}
% 	\bigl| \partial_I F \bigl( t, s, x, y, \theta_\sigma(t, s, x, y) \bigr) - \partial_I F \bigl( t, s', x, y, \theta_\sigma(t, s', x, y) \bigr) \bigr|,
% \end{equation*}
% \begin{equation*}
% 	\bigl| \partial_I \overline{F} \bigl( t, s, x, y, \theta_\sigma(t, s, x, y) \bigr) - \partial_I \overline{F} \bigl( t, s', x, y, \theta_\sigma(t, s', x, y) \bigr) \bigr|,
% \end{equation*}
% \begin{equation*}
% 	\bigl| \partial_I F \bigl( t, s', x, y, \theta_\sigma(t, s', x, y) \bigr) - \partial_I F \bigl( t, 0, x, y, \theta_0(t, x, y) \bigr) \bigr|,
% \end{equation*}
% and 
% \begin{equation*}
% 	\bigl| \partial_I \overline{F} \bigl( t, s', x, y, \theta_\sigma(t, s', x, y) \bigr) - \partial_I \overline{F} \bigl( t, 0, x, y, \theta_0(t, x, y) \bigr) \bigr|.
% \end{equation*}
\begin{equation*}
		\begin{aligned}& \bigl| \partial_I F \bigl( t, s, x, y, \theta_\sigma(t, s, x, y) \bigr) - \partial_I F \bigl( t, s', x, y, \theta_\sigma(t, s', x, y) \bigr) \bigr|, \\
		& \bigl| \partial_I \overline{F} \bigl( t, s, x, y, \theta_\sigma(t, s, x, y) \bigr) - \partial_I \overline{F} \bigl( t, s', x, y, \theta_\sigma(t, s', x, y) \bigr) \bigr|, \\
		& \bigl| \partial_I F \bigl( t, s', x, y, \theta_\sigma(t, s', x, y) \bigr) - \partial_I F \bigl( t, 0, x, y, \theta_0(t, x, y) \bigr) \bigr|, \\
		\text{and } & \bigl| \partial_I \overline{F} \bigl( t, s', x, y, \theta_\sigma(t, s', x, y) \bigr) - \partial_I \overline{F} \bigl( t, 0, x, y, \theta_0(t, x, y) \bigr) \bigr|.
	\end{aligned}
\end{equation*}
Next, we have 
% \begin{equation*}
% 	\begin{split}
% 		&\big|\partial_IF\big(t,s,x,y,\theta_\sigma(t,s,x,y)\big)-\partial_IF\big(t,s^\prime,x,y,\theta_\sigma(t,s^\prime,x,y)\big)\big| \\
% 		\leq &~K(s^\prime-s)^\frac{\alpha}{2}+L\left(|u(t,\cdot,x,\cdot)|^{(2+\alpha)}_{[0,\delta]\times\mathbb{R}^d}(s^\prime-s)^\frac{\alpha}{2}+\sup\limits_{\overline{s}\in(s,s^\prime)}|u_t(\overline{s},\cdot,x,\cdot)|^{(2+\alpha)}_{[0,\overline{s}]\times\mathbb{R}^d}(s^\prime-s)\right. \\
% 		&\qquad\qquad\qquad+|u(s^\prime,\cdot,x,\cdot)|^{(2+\alpha)}_{[0,s^\prime]\times\mathbb{R}^d}(s^\prime-s)^\frac{\alpha}{2} +|\widehat{u}(t,\cdot,x,\cdot)|^{(2+\alpha)}_{[0,\delta]\times\mathbb{R}^d}(s^\prime-s)^\frac{\alpha}{2} \\
% 		&\left.\qquad\qquad\qquad+\sup\limits_{\overline{s}\in(s,s^\prime)}|\widehat{u}_t(\overline{s},\cdot,x,\cdot)|^{(2+\alpha)}_{[0,\overline{s}]\times\mathbb{R}^d}(s^\prime-s)+|\widehat{u}(s^\prime,\cdot,x,\cdot)|^{(2+\alpha)}_{[0,s^\prime]\times\mathbb{R}^d}(s^\prime-s)^\frac{\alpha}{2}\right) \\
% 		\leq &\left(K+L\left(\lVert u\rVert^{(2+\alpha)}_{[0,\delta]}+\lVert \widehat{u}\rVert^{(2+\alpha)}_{[0,\delta]}\right)\right)(s^\prime-s)^\frac{\alpha}{2} \\
% 		\leq &~C_1(R)(s^\prime-s)^\frac{\alpha}{2}
% 	\end{split} 
% \end{equation*}
\begin{equation*}
		\begin{aligned}& \bigl| \partial_I F \bigl( t,s,x,y,\theta_\sigma(t,s,x,y) \bigr) - \partial_I F \bigl( t,s',x,y,\theta_\sigma(t,s',x,y) \bigr) \bigr| \\
		\leq \;& K(s'-s)^{\frac{\alpha}{2}} + L \biggl\{ \Bigl( \|u(t, \cdot, x, \cdot) \|^{(2+\alpha)}_{[0,\delta]} + \|\widehat{u}(t, \cdot, x, \cdot) \|^{(2+\alpha)}_{[0,\delta]} \Bigr) (s'-s)^{\frac{\alpha}{2}} \\
		% & \quad + \sup_{\overline{s} \in (s,s')} |u_t(\overline{s}, \cdot, x, \cdot)|^{(2+\alpha)}_{[0,\overline{s}]} (s'-s) + \sup_{\overline{s} \in (s,s')} |\widehat{u}_t(\overline{s}, \cdot, x, \cdot)|^{(2+\alpha)}_{[0,\overline{s}]} (s'-s) \\
		% & \quad + \Bigl( \|u(s', \cdot, x, \cdot) \|^{(2+\alpha)}_{[0,s']} + \|\widehat{u}(s', \cdot, x, \cdot) \|^{(2+\alpha)}_{[0,s']} \Bigr) (s'-s)^{\frac{\alpha}{2}} \biggr\} \\
		% \leq \;& \Bigl( K + L \bigl( \| u \|^{(2+\alpha)}_{[0,\delta]} + \| \widehat{u} \|^{(2+\alpha)}_{[0,\delta]} \bigr) \Bigr) (s'-s)^{\frac{\alpha}{2}} \\
		% \leq \;& C_1(R) (s'-s)^{\frac{\alpha}{2}}
	\end{aligned}
\end{equation*}
\begin{equation*}
		\begin{aligned}
  %       & \bigl| \partial_I F \bigl( t,s,x,y,\theta_\sigma(t,s,x,y) \bigr) - \partial_I F \bigl( t,s',x,y,\theta_\sigma(t,s',x,y) \bigr) \bigr| \\
		% \leq \;& K(s'-s)^{\frac{\alpha}{2}} + L \biggl\{ \Bigl( \|u(t, \cdot, x, \cdot) \|^{(2+\alpha)}_{[0,\delta]} + \|\widehat{u}(t, \cdot, x, \cdot) \|^{(2+\alpha)}_{[0,\delta]} \Bigr) (s'-s)^{\frac{\alpha}{2}} \\
		& \quad + \sup_{\overline{s} \in (s,s')} |u_t(\overline{s}, \cdot, x, \cdot)|^{(2+\alpha)}_{[0,\overline{s}]} (s'-s) + \sup_{\overline{s} \in (s,s')} |\widehat{u}_t(\overline{s}, \cdot, x, \cdot)|^{(2+\alpha)}_{[0,\overline{s}]} (s'-s) \\
		& \quad + \Bigl( \|u(s', \cdot, x, \cdot) \|^{(2+\alpha)}_{[0,s']} + \|\widehat{u}(s', \cdot, x, \cdot) \|^{(2+\alpha)}_{[0,s']} \Bigr) (s'-s)^{\frac{\alpha}{2}} \biggr\} \\
		\leq \;& \Bigl( K + L \bigl( \| u \|^{(2+\alpha)}_{[0,\delta]} + \| \widehat{u} \|^{(2+\alpha)}_{[0,\delta]} \bigr) \Bigr) (s'-s)^{\frac{\alpha}{2}} \\
		\leq \;& C_1(R) (s'-s)^{\frac{\alpha}{2}}
	\end{aligned}
\end{equation*}
and
% \begin{equation*}
% 	\begin{split}
% 		&\big|\partial_IF\big(t,s^\prime,x,y,\theta_\sigma(t,s^\prime,x,y)\big)-\partial_IF\big(t,0,x,y,\theta_0(t,x,y)\big)\big| \\
% 		\leq &~K(s^\prime)^\frac{\alpha}{2}+L\left(|\left(u-g\right)(t,\cdot,x,\cdot)|^{(2+\alpha)}_{[0,\delta]\times\mathbb{R}^d}(s^\prime-0)^\frac{\alpha}{2}+\sup\limits_{\overline{s}\in(0,s^\prime)}|g_t(\overline{s},x,\cdot)|^{(2+\alpha)}_{\mathbb{R}^d}(s^\prime-0)\right. \\
% 		&\qquad\qquad\qquad+|u(s^\prime,\cdot,x,\cdot)|^{(2+\alpha)}_{[0,s^\prime]\times\mathbb{R}^d}(s^\prime-0)^\frac{\alpha}{2} +|\left(\widehat{u}-g\right)(t,\cdot,x,\cdot)|^{(2+\alpha)}_{[0,\delta]\times\mathbb{R}^d}(s^\prime-0)^\frac{\alpha}{2} \\
% 		&\left.\qquad\qquad\qquad+\sup\limits_{\overline{s}\in(0,s^\prime)}|g_t(\overline{s},x,\cdot)|^{(2+\alpha)}_{\mathbb{R}^d}(s^\prime-0)+|\widehat{u}(s^\prime,\cdot,x,\cdot)|^{(2+\alpha)}_{[0,s^\prime]\times\mathbb{R}^d}(s^\prime-0)^\frac{\alpha}{2}\right) \\
% 		\leq & \left(K+L\left(\lVert u-g\rVert^{(2+\alpha)}_{[0,\delta]}+\lVert \widehat{u}-g\rVert^{(2+\alpha)}_{[0,\delta]}+\lVert g\rVert^{(2+\alpha)}_{[0,\delta]}\right)\right)(s^\prime-0)^\frac{\alpha}{2} \\
% 		\leq & ~C_2(R)\delta^\frac{\alpha}{2} 
% 	\end{split}
% \end{equation*}
\begin{equation*}
		\begin{aligned}& \bigl| \partial_I F \bigl( t,s',x,y,\theta_\sigma(t,s',x,y) \bigr) - \partial_I F \bigl( t,0,x,y,\theta_0(t,x,y) \bigr) \bigr| \\
		\leq \;& K(s')^{\frac{\alpha}{2}} + L \biggl\{ \Bigl( \|(u-g)(t, \cdot, x, \cdot) \|^{(2+\alpha)}_{[0,\delta]} + \|(\widehat{u}-g)(t, \cdot, x, \cdot) \|^{(2+\alpha)}_{[0,\delta]} \Bigr) (s')^{\frac{\alpha}{2}} \\
		& \quad + \sup_{\overline{s} \in (0,s')} \|g_t(\overline{s}, x, \cdot) \|^{(2+\alpha)}_{\mathbb{R}^d} s' + \Bigl( \|u(s', \cdot, x, \cdot) \|^{(2+\alpha)}_{[0,s']} + \|\widehat{u}(s', \cdot, x, \cdot) \|^{(2+\alpha)}_{[0,s']} \Bigr) (s')^{\frac{\alpha}{2}} \biggr\} \\
		\leq \;& \Bigl( K + L \bigl( \| u-g \|^{(2+\alpha)}_{[0,\delta]} + \| \widehat{u}-g \|^{(2+\alpha)}_{[0,\delta]} + \| g \|^{(2+\alpha)}_{[0,\delta]} \bigr) \Bigr) (s')^{\frac{\alpha}{2}} \\
		\leq \;& C_2(R) \delta^{\frac{\alpha}{2}}
	\end{aligned}
\end{equation*}
where $L>0$ is a constant which can be different from line to line and the subscripts of $C$ are to represent different constant values within the derivation. In a similar manner, we can obtain 
% \begin{equation*}
% 	\big|\partial_I\overline{F}\big(t,s,x,y,\theta_\sigma(t,s,x,y)\big)-\partial_I\overline{F}\big(t,s^\prime,x,y,\theta_\sigma(t,s^\prime,x,y)\big)\big| \leq C_3(R)(s^\prime-s)^\frac{\alpha}{2}, 
% \end{equation*}
\begin{equation*}
	\bigl| \partial_I \overline{F} \bigl( t, s, x, y, \theta_\sigma(t, s, x, y) \bigr) - \partial_I \overline{F} \bigl( t, s', x, y, \theta_\sigma(t, s', x, y) \bigr) \bigr| \leq C_3(R) (s' - s)^{\frac{\alpha}{2}},
\end{equation*}
and 
% \begin{equation*}
% 	\big|\partial_I\overline{F}\big(t,s^\prime,x,y,\theta_\sigma(t,s^\prime,x,y)\big)-\partial_I\overline{F}\big(t,0,x,y,\theta_0(t,x,y)\big)\big|\leq C_4(R)\delta^\frac{\alpha}{2}.  
% \end{equation*}
\begin{equation*}
	\bigl| \partial_I \overline{F} \bigl( t,s',x,y,\theta_\sigma(t,s',x,y) \bigr) - \partial_I \overline{F} \bigl( t,0,x,y,\theta_0(t,x,y) \bigr) \bigr| \leq C_4(R) \delta^{\frac{\alpha}{2}}.
\end{equation*}

\ \ 

\noindent \textbf{($\boldsymbol{K_2}$-term)} Consequently, for $K_2$, it holds that 
% \begin{equation*} 
% 	\begin{split}
% 		&\big|\varphi(t,s,x,y)-\varphi(t,s^\prime,x,y)\big| \\
% 		\leq & \int^1_0\sum\limits_{|I|\leq 2}\Big|\partial_IF\big(t,s,x,y,\theta_\sigma(t,s,x,y)\big)-\partial_IF\big(t,s^\prime,x,y,\theta_\sigma(t,s^\prime,x,y)\big)\Big| \\
% 		& \qquad\qquad\qquad\qquad\qquad\qquad\qquad\qquad\qquad\qquad\qquad  
% 		\times\Big|\partial_I\left(u-\widehat{u}\right)(t,s,x,y)\Big|d\sigma \\
% 		& + \int^1_0\sum\limits_{|I|\leq 2}\Big|\partial_I\overline{F}\big(t,s,x,y,\theta_\sigma(t,s,x,y)\big)-\partial_I\overline{F}\big(t,s^\prime,x,y,\theta_\sigma(t,s^\prime,x,y)\big)\Big| \\
% 		& \qquad\qquad\qquad\qquad\qquad\qquad\qquad\qquad\qquad\qquad\qquad  
% 		\times\Big|\partial_I\left(u-\widehat{u}\right)(s,s,x,y)|_{x=y}\Big|d\sigma \\
% 		& + \int^1_0\sum\limits_{|I|\leq 2}\Big|\partial_IF\big(t,s^\prime,x,y,\theta_\sigma(t,s^\prime,x,y)\big)-\partial_IF\big(t,0,x,y,\theta_0(t,x,y)\big)\Big| \\
% 		&\qquad\qquad\qquad\qquad\qquad\qquad\qquad  
% 		\times\Big|\partial_I\left(u-\widehat{u}\right)(t,s,x,y)-\partial_I\left(u-\widehat{u}\right)^b(t,s^\prime,x,y)\Big|d\sigma \\
% 		&+\int^1_0\sum\limits_{|I|\leq 2}\Big|\partial_I\overline{F}\big(t,s^\prime,x,y,\theta_\sigma(t,s^\prime,x,y)\big)-\partial_I\overline{F}\big(t,0,x,y,\theta_0(t,x,y)\big)\Big| \\
% 		&\qquad\qquad\qquad\qquad\qquad   
% 		\times\Big|\partial_I\left(u-\widehat{u}\right)(s,s,x,y)|_{x=y}-\partial_I\left(u-\widehat{u}\right)(s^\prime,s^\prime,x,y)|_{x=y}\Big|d\sigma 
% 	\end{split}
% \end{equation*}
\begin{equation*}
		\begin{aligned}&\big| \varphi(t,s,x,y) - \varphi(t,s^\prime,x,y) \big| \\
		\leq \;& \int^1_0 \sum_{|I|\leq 2} \Big| \partial_I F\big(t,s,x,y,\theta_\sigma(t,s,x,y)\big) - \partial_I F\big(t,s^\prime,x,y,\theta_\sigma(t,s^\prime,x,y)\big) \Big| \\
		& \qquad \times \Big| \partial_I(u-\hat{u})(t,s,x,y) \Big| d\sigma \\
		& + \int^1_0 \sum_{|I|\leq 2} \Big| \partial_I \bar{F}\big(t,s,x,y,\theta_\sigma(t,s,x,y)\big) - \partial_I \bar{F}\big(t,s^\prime,x,y,\theta_\sigma(t,s^\prime,x,y)\big) \Big| \\
		& \qquad \times \Big| \partial_I(u-\hat{u})(s,s,x,y)\big|_{x=y} \Big| d\sigma \\
		& + \int^1_0 \sum_{|I|\leq 2} \Big| \partial_I F\big(t,s^\prime,x,y,\theta_\sigma(t,s^\prime,x,y)\big) - \partial_I F\big(t,0,x,y,\theta_0(t,x,y)\big) \Big| \\
		& \qquad \times \Big| \partial_I(u-\hat{u})(t,s,x,y) - \partial_I(u-\hat{u})(t,s^\prime,x,y) \Big| d\sigma \\
		& + \int^1_0 \sum_{|I|\leq 2} \Big| \partial_I \bar{F}\big(t,s^\prime,x,y,\theta_\sigma(t,s^\prime,x,y)\big) - \partial_I \bar{F}\big(t,0,x,y,\theta_0(t,x,y)\big) \Big| \\
		& \qquad \times \Big| \partial_I(u-\hat{u})(s,s,x,y)\big|_{x=y} - \partial_I(u-\hat{u})(s^\prime,s^\prime,x,y)\big|_{x=y} \Big| d\sigma
	\end{aligned}
\end{equation*}
% \begin{equation*}
% 	\begin{aligned}
% 		& |\varphi(t,s,x,y) - \varphi(t,s',x,y)| \\
% 		\leq \; & \int_0^1 \sum_{|I|\leq 2} \bigl| \partial_I F(t,s,x,y,\theta_\sigma) - \partial_I F(t,s',x,y,\theta_\sigma) \bigr| \cdot \bigl| \partial_I (u-\widehat{u})(t,s,x,y) \bigr| \, d\sigma \\
% 		& + \int_0^1 \sum_{|I|\leq 2} \bigl| \partial_I \overline{F}(t,s,x,y,\theta_\sigma) - \partial_I \overline{F}(t,s',x,y,\theta_\sigma) \bigr| \cdot \bigl| \partial_I (u-\widehat{u})(s,s,x,y)|_{x=y} \bigr| \, d\sigma \\
% 		& + \int_0^1 \sum_{|I|\leq 2} \bigl| \partial_I F(t,s',x,y,\theta_\sigma) - \partial_I F(t,0,x,y,\theta_0) \bigr| \cdot \bigl| \triangle_{s',s} \partial_I (u-\widehat{u}) \bigr| \, d\sigma \\
% 		& + \int_0^1 \sum_{|I|\leq 2} \bigl| \partial_I \overline{F}(t,s',x,y,\theta_\sigma) - \partial_I \overline{F}(t,0,x,y,\theta_0) \bigr| \cdot \bigl| \triangle_{s',s} \partial_I (u-\widehat{u})|_{x=y} \bigr| \, d\sigma
% 	\end{aligned}
% \end{equation*}
Furthermore, we have 
% \begin{equation} \label{Holdercontinuityofvarphiwithrespecttos}
% 	\begin{split}
% 		&\big|\varphi(t,s,x,y)-\varphi(t,s^\prime,x,y)\big| \\
% 		\leq &~C_1(R)(s^\prime-s)^\frac{\alpha}{2}\delta^\frac{\alpha}{2}\left|\left(u-\widehat{u}\right)(t,\cdot,x,\cdot)\right|^{(2+\alpha)}_{[0,\delta]\times\mathbb{R}^d} \\
% 		& \qquad 
% 		+C_2(R)\delta^\frac{\alpha}{2}(s^\prime-s)^\frac{\alpha}{2}\left|\left(u-\widehat{u}\right)(t,\cdot,x,\cdot)\right|^{(2+\alpha)}_{[0,\delta]\times\mathbb{R}^d} \\
% 		& \qquad 
% 		+C_3(R)(s^\prime-s)^\frac{\alpha}{2}\delta^\frac{\alpha}{2}\left|\left(u-\widehat{u}\right)(s,\cdot,y,\cdot)\right|^{(2+\alpha)}_{[0,s]\times\mathbb{R}^d} \\
% 		& \qquad 
% 		+C_4(R)\delta^\frac{\alpha}{2}(s^\prime-s)^\frac{\alpha}{2}\Big(\sup\limits_{\overline{s}\in(s,s^\prime)}\left|\left(u-\widehat{u}\right)_t(\overline{s},\cdot,y,\cdot)\right|^{(2+\alpha)}_{[0,\overline{s}]\times\mathbb{R}^d} \\
% 		& \qquad\qquad\qquad\qquad\qquad\qquad\qquad    
% 		+\left|\left(u-\widehat{u}\right)(s^\prime,\cdot,y,\cdot)\right|^{(2+\alpha)}_{[0,s^\prime]\times\mathbb{R}^d}\Big)       \\
% 		\leq &~ C_5(R)\delta^{\frac{\alpha}{2}}(s^\prime-s)^{\frac{\alpha}{2}}\lVert u-\widehat{u}\rVert^{(2+\alpha)}_{[0,\delta]}, 
% 	\end{split}
% \end{equation} 
\begin{equation} \label{Holdercontinuityofvarphiwithrespecttos}
		\begin{aligned}& \bigl| \varphi(t,s,x,y) - \varphi(t,s',x,y) \bigr| \\
		\leq \;& C_1(R) (s'-s)^{\frac{\alpha}{2}} \delta^{\frac{\alpha}{2}} \| u-\widehat{u} \|^{(2+\alpha)}_{[0,\delta]} + C_2(R) \delta^{\frac{\alpha}{2}} (s'-s)^{\frac{\alpha}{2}} \| u-\widehat{u} \|^{(2+\alpha)}_{[0,\delta]} \\
		& + C_3(R) (s'-s)^{\frac{\alpha}{2}} \delta^{\frac{\alpha}{2}} \| u-\widehat{u} \|^{(2+\alpha)}_{[0,s]} \\
		& + C_4(R) \delta^{\frac{\alpha}{2}} (s'-s)^{\frac{\alpha}{2}} \biggl( \sup_{\overline{s} \in (s,s')} |(u-\widehat{u})_t|^{(2+\alpha)}_{[0,\overline{s}]} + \| u-\widehat{u} \|^{(2+\alpha)}_{[0,s']} \biggr) \\
		\leq \;& C_5(R) \delta^{\frac{\alpha}{2}} (s'-s)^{\frac{\alpha}{2}} \| u-\widehat{u} \|^{(2+\alpha)}_{[0,\delta]}
	\end{aligned}
\end{equation}

\ \ 

\noindent \textbf{($\boldsymbol{K_1}$-term)} It directly implies $K_1$ by noting that $\varphi(t,0,x,y)\equiv 0$,
% \begin{equation} \label{Boundnessofvarphi} 
% 	|\varphi(t,\cdot,x,\cdot)|^{(0)}_{[0,\delta]\times\mathbb{R}^d}\leq C_5(R)\delta\lVert u-\widehat{u}\rVert^{(2+\alpha)}_{[0,\delta]}.
% \end{equation} 
\begin{equation} \label{Boundnessofvarphi} 
	\| \varphi(t,\cdot,x,\cdot) \|^{(0)}_{[0,\delta] \times \mathbb{R}^d} \leq C_5(R) \delta \| u-\widehat{u} \|^{(2+\alpha)}_{[0,\delta]}.
\end{equation}

\ \ 

\noindent \textbf{($\boldsymbol{K_3}$-term)} Next, to estimate $|\varphi(t,s,x,y)-\varphi(t,s,x,y^\prime)|$ for any $y,y^\prime\in\mathbb{R}^d$ with $0<|y-y^\prime|\leq 1$, it is convenient to add and subtract 
% \begin{equation*}
% 	\begin{split}
% 		&\int^1_0\sum\limits_{|I|\leq 2}\Big(\partial_I F\big(t,s,x,y^\prime,\theta_\sigma(t,s,x,y^\prime)\big)-\partial_IF\big(t,0,x,y^\prime,\theta_0(t,x,y^\prime)\big)\Big) \\
% 		& \qquad\qquad\qquad\qquad\qquad\qquad\qquad\qquad\qquad\qquad\qquad   
% 		\times\partial_I\left(u-\widehat{u}\right)(t,s,x,y)d\sigma \\
% 		+&\int^1_0\sum\limits_{|I|\leq 2}\Big(\partial_I \overline{F}\big(t,s,x,y^\prime,\theta_\sigma(t,s,x,y^\prime)\big)-\partial_I\overline{F}\big(t,0,x,y^\prime,\theta_0(t,x,y^\prime)\big)\Big) \\
% 		& \qquad\qquad\qquad\qquad\qquad\qquad\qquad\qquad\qquad\qquad\qquad 
% 		\times\partial_I\left(u-\widehat{u}\right)^b(s,s,x,y)|_{x=y}d\sigma.
% 	\end{split}
% \end{equation*}
\begin{equation*}
		\begin{aligned}& \int_0^1 \sum_{|I|\leq 2} \Bigl( \partial_I F \bigl( t,s,x,y', \theta_\sigma(t,s,x,y') \bigr) - \partial_I F \bigl( t,0,x,y', \theta_0(t,x,y') \bigr) \Bigr) \cdot \partial_I (u-\widehat{u})(t,s,x,y) \, d\sigma \\
		+ & \int_0^1 \sum_{|I|\leq 2} \Bigl( \partial_I \overline{F} \bigl( t,s,x,y', \theta_\sigma(t,s,x,y') \bigr) - \partial_I \overline{F} \bigl( t,0,x,y', \theta_0(t,x,y') \bigr) \Bigr) \cdot \partial_I (u-\widehat{u})(s,s,x,y) \big|_{x=y} \, d\sigma
	\end{aligned}
\end{equation*}
Note that
\begin{equation*}
		\begin{aligned}&\left| \partial_I F\big(t,s,x,y,\theta_\sigma(t,s,x,y)\big) - \partial_I F\big(t,s,x,y^\prime,\theta_\sigma(t,s,x,y^\prime)\big) \right| \\
		&\quad + \left| \partial_I F\big(t,0,x,y,\theta_0(t,x,y)\big) - \partial_I F\big(t,0,x,y^\prime,\theta_0(t,x,y^\prime)\big) \right| \\
		\leq \;& 2K|y-y^\prime|^\alpha + L \left| \theta_\sigma(t,s,x,y) - \theta_\sigma(t,s,x,y^\prime) \right| + L \left| \theta_0(t,x,y) - \theta_0(t,x,y^\prime) \right| \\
		\leq \;& 2K|y-y^\prime|^\alpha + L |y-y^\prime|^\alpha \Biggl( |u(t,\cdot,x,\cdot)|^{(2+\alpha)}_{[0,\delta]\times\mathbb{R}^d} + |u(s,\cdot,y^\prime, \cdot)|^{(2+\alpha)}_{[0,\delta]\times\mathbb{R}^d} + \sup_{\bar{y} \in (y,y^\prime)} |u_x(s,\cdot,\bar{y},\cdot)|^{(2+\alpha)}_{[0,\delta]\times\mathbb{R}^d} \\
		& + |\hat{u}(t,\cdot,x,\cdot)|^{(2+\alpha)}_{[0,\delta]\times\mathbb{R}^d} + |\hat{u}(s,\cdot,y^\prime,\cdot)|^{(2+\alpha)}_{[0,\delta]\times\mathbb{R}^d} + \sup_{\bar{y} \in (y,y^\prime)} |\hat{u}_x(s,\cdot,\bar{y},\cdot)|^{(2+\alpha)}_{[0,\delta]\times\mathbb{R}^d} \\
		& + |g(t,x,\cdot)|^{(2+\alpha)}_{\mathbb{R}^d} + |g(0,y^\prime,\cdot)|^{(2+\alpha)}_{\mathbb{R}^d} + \sup_{\bar{y} \in (y,y^\prime)} |g_x(0,\bar{y},\cdot)|^{(2+\alpha)}_{\mathbb{R}^d} \Biggr) \\
		\leq \;& \left( 2K + L \left( \| u \|^{(2+\alpha)}_{[0,\delta]} + \| \hat{u} \|^{(2+\alpha)}_{[0,\delta]} + \| g \|^{(2+\alpha)}_{[0,\delta]} \right) \right) |y-y^\prime|^\alpha \\
		\leq \;& C_6(R) |y-y^\prime|^\alpha.
	\end{aligned}
\end{equation*}
% \begin{equation*}
% 	\begin{aligned}
% 		& \bigl| \partial_I F(t,s,x,y,\theta_\sigma) - \partial_I F(t,s,x,y',\theta_\sigma) \bigr| + \bigl| \partial_I F(t,0,x,y,\theta_0) - \partial_I F(t,0,x,y',\theta_0) \bigr| \\
% 		\leq \;& 2K|y-y'|^\alpha + L \bigl| \theta_\sigma(y) - \theta_\sigma(y') \bigr| + L \bigl| \theta_0(y) - \theta_0(y') \bigr| \\
% 		\leq \;& 2K|y-y'|^\alpha + L|y-y'|^\alpha \Biggl\{ \Bigl( \|u\|^{(2+\alpha)}_{[0,\delta]} + \|\widehat{u}\|^{(2+\alpha)}_{[0,\delta]} + \|g\|^{(2+\alpha)}_{\mathbb{R}^d} \Bigr) \\
% 		& \quad + \sup_{\overline{y} \in (y,y')} \Bigl( |u_x|^{(2+\alpha)}_{[0,\delta]} + |\widehat{u}_x|^{(2+\alpha)}_{[0,\delta]} + |g_x|^{(2+\alpha)}_{\mathbb{R}^d} \Bigr) \Biggr\} \\
% 		\leq \;& \Bigl( 2K + L \bigl( \| u \|^{(2+\alpha)}_{[0,\delta]} + \| \widehat{u} \|^{(2+\alpha)}_{[0,\delta]} + \| g \|^{(2+\alpha)}_{[0,\delta]} \bigr) \Bigr) |y-y'|^\alpha \\
% 		\leq \;& C_6(R) |y-y'|^\alpha
% 	\end{aligned}
% \end{equation*}
and  
% \begin{equation*}
% 	\begin{split}
% 		&\big|\partial_IF\big(t,s,x,y,\theta_\sigma(t,s,x,y)\big)-\partial_IF\big(t,0,x,y,\theta_0(t,x,y)\big)\big| \\
% 		\leq &~Ks^\frac{\alpha}{2}+L\left(|\left(u-g\right)(t,\cdot,x,\cdot)|^{(2+\alpha)}_{[0,\delta]\times\mathbb{R}^d}s^\frac{\alpha}{2}+\sup\limits_{\overline{s}\in(0,s^\prime)}|g_t(\overline{s},x,\cdot)|^{(2+\alpha)}_{\mathbb{R}^d}s\right. \\
% 		&\qquad\qquad\qquad\qquad 
% 		+|u(s,\cdot,x,\cdot)|^{(2+\alpha)}_{[0,s]\times\mathbb{R}^d}s^\frac{\alpha}{2} +|\left(\widehat{u}-g\right)(t,\cdot,x,\cdot)|^{(2+\alpha)}_{[0,\delta]\times\mathbb{R}^d}s^\frac{\alpha}{2} \\
% 		&\left.\qquad\qquad\qquad\qquad\qquad 
% 		+\sup\limits_{\overline{s}\in(0,s)}|g_t(\overline{s},x,\cdot)|^{(2+\alpha)}_{\mathbb{R}^d}s+|\widehat{u}(s,\cdot,x,\cdot)|^{(2+\alpha)}_{[0,s^\prime]\times\mathbb{R}^d}s^\frac{\alpha}{2}\right) \\
% 		\leq & \left(K+L\left(\lVert u-g\rVert^{(2+\alpha)}_{[0,\delta]}+\lVert \widehat{u}-g\rVert^{(2+\alpha)}_{[0,\delta]}+\lVert g\rVert^{(2+\alpha)}_{[0,\delta]}\right)\right)s^\frac{\alpha}{2} \\
% 		\leq & ~C_7(R)\delta^\frac{\alpha}{2} 
% 	\end{split}
% \end{equation*}
\begin{equation*}
		\begin{aligned}& \bigl| \partial_I F \bigl( t,s,x,y,\theta_\sigma(t,s,x,y) \bigr) - \partial_I F \bigl( t,0,x,y,\theta_0(t,x,y) \bigr) \bigr| \\
		\leq \;& Ks^{\frac{\alpha}{2}} + L \biggl\{ \Bigl( \|u-g\|^{(2+\alpha)}_{[0,\delta]} + \|\widehat{u}-g\|^{(2+\alpha)}_{[0,\delta]} \Bigr) s^{\frac{\alpha}{2}} + \sup_{\overline{s} \in (0,s)} \|g_t\|^{(2+\alpha)}_{\mathbb{R}^d} s \\
		& \quad + \Bigl( \|u\|^{(2+\alpha)}_{[0,s]} + \|\widehat{u}\|^{(2+\alpha)}_{[0,s]} \Bigr) s^{\frac{\alpha}{2}} \biggr\} \\
		\leq \;& \Bigl( K + L \bigl( \| u-g \|^{(2+\alpha)}_{[0,\delta]} + \| \widehat{u}-g \|^{(2+\alpha)}_{[0,\delta]} + \| g \|^{(2+\alpha)}_{[0,\delta]} \bigr) \Bigr) s^{\frac{\alpha}{2}} \\
		\leq \;& C_7(R) \delta^{\frac{\alpha}{2}}
	\end{aligned}
\end{equation*}

Similarly, we also have 
% \begin{equation*}
% 	\begin{split}
% 		\left|\partial_I \overline{F}\big(t,s,x,y,\theta_\sigma(t,s,x,y)\big)-\partial_I \overline{F}\big(t,s,x,y^\prime,\theta_\sigma(t,s,x,y^\prime)\big)\right| \quad\quad & \\
% 		+\left|\partial_I\overline{F}\big(t,0,x,y,\theta_0(t,x,y)\big)-\partial_I\overline{F}\big(t,0,x,y^\prime,\theta_0(t,x,y^\prime)\big)\right| & \leq  C_8(R)|y-y^\prime|^\alpha,
% 	\end{split}
% \end{equation*}
\begin{equation*}
		\begin{aligned}& \bigl| \partial_I \overline{F} \bigl( t,s,x,y,\theta_\sigma(t,s,x,y) \bigr) - \partial_I \overline{F} \bigl( t,s,x,y', \theta_\sigma(t,s,x,y') \bigr) \bigr| \\
		+ & \bigl| \partial_I \overline{F} \bigl( t,0,x,y,\theta_0(t,x,y) \bigr) - \partial_I \overline{F} \bigl( t,0,x,y', \theta_0(t,x,y') \bigr) \bigr| \leq C_8(R) |y-y'|^\alpha.
	\end{aligned}
\end{equation*}
and 
% \begin{equation*}
% 	\big|\partial_I \overline{F}\big(t,s,x,y,\theta_\sigma(t,s,x,y)\big)-\partial_I \overline{F}\big(t,0,x,y,\theta_0(t,x,y)\big)\big|\leq C_{9}(R)\delta^\frac{\alpha}{2}.
% \end{equation*}
\begin{equation*}
	\bigl| \partial_I \overline{F} \bigl( t,s,x,y,\theta_\sigma(t,s,x,y) \bigr) - \partial_I \overline{F} \bigl( t,0,x,y,\theta_0(t,x,y) \bigr) \bigr| \leq C_{9}(R) \delta^{\frac{\alpha}{2}}.
\end{equation*}

Hence, for $K_3$, we have 
% \begin{equation} \label{Holdercontinuityofvarphiwithrespecttoy}
% 	\begin{split}
% 		&\big|\varphi(t,s,x,y)-\varphi(t,s,x,y^\prime)\big| \\
% 		\leq &~ C_6(R)|y-y^\prime|^\alpha\delta^\frac{\alpha}{2}\left|\left(u-\widehat{u}\right)(t,\cdot,x,\cdot)\right|^{(2+\alpha)}_{[0,\delta]\times\mathbb{R}^d} \\
% 		& \qquad 
% 		+C_8(R)|y-y^\prime|^\alpha\delta^\frac{\alpha}{2}\left|\left(u-\widehat{u}\right)(s,\cdot,y,\cdot)\right|^{(2+\alpha)}_{[0,\delta]\times\mathbb{R}^d} \\
% 		& \qquad 
% 		+C_7(R)\delta^\frac{\alpha}{2}|y-y^\prime|^\alpha\left|\left(u-\widehat{u}\right)(t,\cdot,x,\cdot)\right|^{(2+\alpha)}_{[0,\delta]\times\mathbb{R}^d} \\
% 		& \qquad 
% 		+C_{9}(R)\delta^\frac{\alpha}{2}|y-y^\prime|^\alpha\Big(\left|\left(u-\widehat{u}\right)(s,\cdot,y^\prime,\cdot)\right|^{(2+\alpha)}_{[0,\delta]\times\mathbb{R}^d} \\
% 		& \qquad\qquad\qquad\qquad\qquad\qquad   
% 		+\sup\limits_{\overline{y}\in(y,y^\prime)}\left|\left(u-\widehat{u}\right)_x(s,\cdot,\overline{y},\cdot)\right|^{(2+\alpha)}_{[0,\delta]\times\mathbb{R}^d}\Big) \\
% 		\leq &~C_{10}(R)\delta^\frac{\alpha}{2}|y-y^\prime|^\alpha\lVert u-\widehat{u}\rVert^{(2+\alpha)}_{[0,\delta]}.  
% 	\end{split}
% \end{equation} 
\begin{equation} \label{Holdercontinuityofvarphiwithrespecttoy}
		\begin{aligned}& \bigl| \varphi(t,s,x,y) - \varphi(t,s,x,y') \bigr| \\
		\leq \;& C_6(R) |y-y'|^\alpha \delta^{\frac{\alpha}{2}} \| u-\widehat{u} \|^{(2+\alpha)}_{[0,\delta]} + C_8(R) |y-y'|^\alpha \delta^{\frac{\alpha}{2}} \| u-\widehat{u} \|^{(2+\alpha)}_{[0,\delta]} \\
		& + C_7(R) \delta^{\frac{\alpha}{2}} |y-y'|^\alpha \| u-\widehat{u} \|^{(2+\alpha)}_{[0,\delta]} \\
		& + C_9(R) \delta^{\frac{\alpha}{2}} |y-y'|^\alpha \biggl( \| u-\widehat{u} \|^{(2+\alpha)}_{[0,\delta]} + \sup_{\overline{y} \in (y,y')} |(u-\widehat{u})_x|^{(2+\alpha)}_{[0,\delta]} \biggr) \\
		\leq \;& C_{10}(R) \delta^{\frac{\alpha}{2}} |y-y'|^\alpha \| u-\widehat{u} \|^{(2+\alpha)}_{[0,\delta]}.
	\end{aligned}
\end{equation}

Consequently, from \eqref{Holdercontinuityofvarphiwithrespecttos}, \eqref{Boundnessofvarphi}, and \eqref{Holdercontinuityofvarphiwithrespecttoy}, for any $(t,x)\in[0,\delta]\times\mathbb{R}^d$, we obtain
% \begin{equation} \label{Estimateofvarphi}
% 	|\varphi(t,\cdot,x,\cdot)|^{(\alpha)}_{[0,\delta]\times\mathbb{R}^d}\leq C_{11}(R)\delta^{\frac{\alpha}{2}}\lVert u-\widehat{u}\rVert^{(2+\alpha)}_{[0,\delta]}. 
% \end{equation} 
\begin{equation} \label{Estimateofvarphi}
	\| \varphi(t,\cdot,x,\cdot) \|^{(\alpha)}_{[0,\delta] \times \mathbb{R}^d} \leq C_{11}(R) \delta^{\frac{\alpha}{2}} \| u-\widehat{u} \|^{(2+\alpha)}_{[0,\delta]}.
\end{equation}

\ \ 

\noindent \textbf{(Estimates $\boldsymbol{K_4}$-$\boldsymbol{K_6}$ of $\boldsymbol{|\varphi_t(t,s,x,y)|^{(\alpha)}_{(s,y)\in[0,\delta]\times\mathbb{R}^d}}$).} We now analyze the H\"{o}lder continuity of $\varphi_t(t,\cdot,x,\cdot)$ with respect to $s$ and $y$ in $[0,\delta]\times\mathbb{R}^d$. According to the integral representation \eqref{Integralrepresentationofvarphi} of $\varphi(t,s,x,y)$, its first derivative in $t$ satisfies 
% \begin{equation*}
% 	\begin{split}
% 		&\varphi_t(t,s,x,y) \\
% 		= & \int^1_0\sum\limits_{|I|\leq 2}\Bigg[\frac{\partial\Big(\partial_I F\big(t,s,x,y,\theta_\sigma(t,s,x,y)\big)-\partial_I F\big(t,0,x,y,\theta_0(t,x,y)\big)\Big)}{\partial t} \\
% 		& \qquad\qquad\qquad\qquad\qquad\qquad\qquad\qquad\qquad\qquad\qquad   
% 		\times\partial_I\left(u-\widehat{u}\right)(t,s,x,y) \\
% 		& \qquad\qquad 
% 		+\Big(\partial_I F\big(t,s,x,y,\theta_\sigma(t,s,x,y)\big)-\partial_I F\big(t,0,x,y,\theta_0(t,x,y)\big)\Big) \\
% 		& \qquad\qquad\qquad\qquad\qquad\qquad\qquad\qquad\qquad\qquad\qquad 
% 		\times\partial_I\left(u_t-\widehat{u}_t\right)(t,s,x,y)\Bigg]d\sigma \\
% 		& \qquad\qquad  
% 		+\int^1_0\sum\limits_{|I|\leq 2}\frac{\partial\Big(\partial_I \overline{F}\big(t,s,x,y,\theta_\sigma(t,s,x,y)\big)-\partial_I \overline{F}\big(t,0,x,y,\theta_0(t,x,y)\big)\Big)}{\partial t} \\
% 		& \qquad\qquad\qquad\qquad\qquad\qquad\qquad\qquad\qquad\qquad\qquad   
% 		\times\partial_I\left(u-\widehat{u}\right)(s,s,x,y)|_{x=y}d\sigma 
% 	\end{split}	
% \end{equation*}
\begin{equation*}
		\begin{aligned}&\varphi_t(t,s,x,y) \\
		= \;& \int^1_0 \sum_{|I|\leq 2} \Biggl[ \frac{\partial \Bigl( \partial_I F\big(t,s,x,y,\theta_\sigma(t,s,x,y)\big) - \partial_I F\big(t,0,x,y,\theta_0(t,x,y)\big) \Bigr)}{\partial t} \partial_I(u-\hat{u})(t,s,x,y) \\
		& \quad + \Bigl( \partial_I F\big(t,s,x,y,\theta_\sigma(t,s,x,y)\big) - \partial_I F\big(t,0,x,y,\theta_0(t,x,y)\big) \Bigr) \partial_I(u_t-\hat{u}_t)(t,s,x,y) \Biggr] d\sigma \\
		& + \int^1_0 \sum_{|I|\leq 2} \frac{\partial \Bigl( \partial_I \bar{F}\big(t,s,x,y,\theta_\sigma(t,s,x,y)\big) - \partial_I \bar{F}\big(t,0,x,y,\theta_0(t,x,y)\big) \Bigr)}{\partial t} \partial_I(u-\hat{u})(s,s,x,y)\big|_{x=y} d\sigma
	\end{aligned}
\end{equation*}
% \begin{equation*}
% 	\begin{aligned}
% 		\varphi_t(t,s,x,y) = \;& \int_0^1 \sum_{|I|\leq 2} \Biggl\{ \frac{\partial}{\partial t} \Bigl[ \partial_I F \bigl( t,s,x,y,\theta_\sigma \bigr) - \partial_I F \bigl( t,0,x,y,\theta_0 \bigr) \Bigr] \cdot \partial_I (u-\widehat{u})(t,s,x,y) \\
% 		& \quad + \Bigl[ \partial_I F \bigl( t,s,x,y,\theta_\sigma \bigr) - \partial_I F \bigl( t,0,x,y,\theta_0 \bigr) \Bigr] \cdot \partial_I (u_t-\widehat{u}_t)(t,s,x,y) \Biggl\} \, d\sigma \\
% 		& + \int_0^1 \sum_{|I|\leq 2} \frac{\partial}{\partial t} \Bigl[ \partial_I \overline{F} \bigl( t,s,x,y,\theta_\sigma \bigr) - \partial_I \overline{F} \bigl( t,0,x,y,\theta_0 \bigr) \Bigr] \cdot \partial_I (u-\widehat{u})(s,s,x,y) \Big|_{x=y} \, d\sigma
% 	\end{aligned}
% \end{equation*}

By the product rule and chain rule, we have 
\begin{equation} \label{varphi_t}
\begin{split}
    &\varphi_t(t,s,x,y) \\
    &= \int^1_0 \sum_{|I|\leq 2} \Bigl[ \partial^2_{tI} F\bigl(t,s,x,y,\theta_\sigma(t,s,x,y)\bigr) - \partial^2_{tI} F\bigl(t,0,x,y,\theta_0(t,x,y)\bigr) \Bigr] \\
    &\qquad\qquad\qquad \times \partial_I (u-\widehat{u})(t,s,x,y) \, d\sigma \\
    &\quad + \int^1_0 \sum_{|I|\leq 2} \Biggl[ \sum_{|J|\leq 2} \biggl( \partial^2_{IJ}F\bigl(t,s,x,y,\theta_\sigma(t,s,x,y)\bigr) \cdot \bigl( \sigma\partial_Ju_t(t,s,x,y) + (1-\sigma)\partial_J\widehat{u}_t(t,s,x,y) \bigr) \\
    &\qquad\qquad\qquad - \partial^2_{IJ}F\bigl(t,0,x,y,\theta_0(t,x,y)\bigr) \cdot \partial_Jg_t(t,x,y) \biggr) \Biggr] \times \partial_I (u-\widehat{u})(t,s,x,y) \, d\sigma \\
    % &\quad + \int^1_0 \sum_{|I|\leq 2} \Bigl[ \partial_I F\bigl(t,s,x,y,\theta_\sigma(t,s,x,y)\bigr) - \partial_I F\bigl(t,0,x,y,\theta_0(t,x,y)\bigr) \Bigr] \\
    % &\qquad\qquad\qquad \times \partial_I (u_t-\widehat{u}_t)(t,s,x,y) \, d\sigma \\
    % &\quad + \int^1_0 \sum_{|I|\leq 2} \Bigl[ \partial^2_{tI} \overline{F}\bigl(t,s,x,y,\theta_\sigma(t,s,x,y)\bigr) - \partial^2_{tI} \overline{F}\bigl(t,0,x,y,\theta_0(t,x,y)\bigr) \Bigr] \\
    % &\qquad\qquad\qquad \times \partial_I (u-\widehat{u})(s,s,x,y)\big|_{x=y} \, d\sigma \\
    % &\quad + \int^1_0 \sum_{|I|\leq 2} \Biggl[ \sum_{|J|\leq 2} \biggl( \partial^2_{IJ}\overline{F}\bigl(t,s,x,y,\theta_\sigma(t,s,x,y)\bigr) \cdot \bigl( \sigma\partial_Ju_t(t,s,x,y) + (1-\sigma)\partial_J\widehat{u}_t(t,s,x,y) \bigr) \\
    % &\qquad\qquad\qquad - \partial^2_{IJ}\overline{F}\bigl(t,0,x,y,\theta_0(t,x,y)\bigr) \cdot \partial_Jg_t(t,x,y) \biggr) \Biggr] \times \partial_I (u-\widehat{u})(s,s,x,y)\big|_{x=y} \, d\sigma \\
    % &:= M_1 + M_2 + M_3 + M_4 + M_5
\end{split}
\end{equation}
\begin{equation*} 
\begin{split}
    % &\varphi_t(t,s,x,y) \\
    % &= \int^1_0 \sum_{|I|\leq 2} \Bigl[ \partial^2_{tI} F\bigl(t,s,x,y,\theta_\sigma(t,s,x,y)\bigr) - \partial^2_{tI} F\bigl(t,0,x,y,\theta_0(t,x,y)\bigr) \Bigr] \\
    % &\qquad\qquad\qquad \times \partial_I (u-\widehat{u})(t,s,x,y) \, d\sigma \\
    % &\quad + \int^1_0 \sum_{|I|\leq 2} \Biggl[ \sum_{|J|\leq 2} \biggl( \partial^2_{IJ}F\bigl(t,s,x,y,\theta_\sigma(t,s,x,y)\bigr) \cdot \bigl( \sigma\partial_Ju_t(t,s,x,y) + (1-\sigma)\partial_J\widehat{u}_t(t,s,x,y) \bigr) \\
    % &\qquad\qquad\qquad - \partial^2_{IJ}F\bigl(t,0,x,y,\theta_0(t,x,y)\bigr) \cdot \partial_Jg_t(t,x,y) \biggr) \Biggr] \times \partial_I (u-\widehat{u})(t,s,x,y) \, d\sigma \\
    &\quad + \int^1_0 \sum_{|I|\leq 2} \Bigl[ \partial_I F\bigl(t,s,x,y,\theta_\sigma(t,s,x,y)\bigr) - \partial_I F\bigl(t,0,x,y,\theta_0(t,x,y)\bigr) \Bigr] \\
    &\qquad\qquad\qquad \times \partial_I (u_t-\widehat{u}_t)(t,s,x,y) \, d\sigma \\
    &\quad + \int^1_0 \sum_{|I|\leq 2} \Bigl[ \partial^2_{tI} \overline{F}\bigl(t,s,x,y,\theta_\sigma(t,s,x,y)\bigr) - \partial^2_{tI} \overline{F}\bigl(t,0,x,y,\theta_0(t,x,y)\bigr) \Bigr] \\
    &\qquad\qquad\qquad \times \partial_I (u-\widehat{u})(s,s,x,y)\big|_{x=y} \, d\sigma \\
    &\quad + \int^1_0 \sum_{|I|\leq 2} \Biggl[ \sum_{|J|\leq 2} \biggl( \partial^2_{IJ}\overline{F}\bigl(t,s,x,y,\theta_\sigma(t,s,x,y)\bigr) \cdot \bigl( \sigma\partial_Ju_t(t,s,x,y) + (1-\sigma)\partial_J\widehat{u}_t(t,s,x,y) \bigr) \\
    &\qquad\qquad\qquad - \partial^2_{IJ}\overline{F}\bigl(t,0,x,y,\theta_0(t,x,y)\bigr) \cdot \partial_Jg_t(t,x,y) \biggr) \Biggr] \times \partial_I (u-\widehat{u})(s,s,x,y)\big|_{x=y} \, d\sigma \\
    &:= M_1 + M_2 + M_3 + M_4 + M_5
\end{split}
\end{equation*}
It is easy to see that the estimates of $M_1$, $M_3$, and $M_4$ are similar to the terms of $|\varphi(t,\cdot,x,\cdot)|^{(\alpha)}_{[0,\delta]\times\mathbb{R}^d}$. Hence, we focus on the remaining two terms $M_2$ and $M_5$. We denote $\eta(t,s,x,y)=M_2+M_5$. 

\ \ 

\noindent \textbf{(H\"{o}lder continuity of $\boldsymbol{\eta}$ in $\boldsymbol{s}$)} In order to estimate $|\eta(t,s,x,y)-\eta(t,s^\prime,x,y)|$ for $0\leq s< s^\prime\leq \delta\leq T$ and any $x,y\in\mathbb{R}^d$, it is convenient to add and subtract  
% \begin{equation*}
% 	\begin{split}
% 		&\int^1_0\sum\limits_{|I|\leq 2}\Bigg[\sum\limits_{|J|\leq 2}\bigg(\partial^2_{IJ}F\big(t,s^\prime,x,y,\theta_\sigma(t,s^\prime,x,y)\big)\cdot\Big(\sigma\partial_J u_t(t,s^\prime,x,y)+(1-\sigma)\partial_J\widehat{u}_t(t,s^\prime,x,y)\Big)\bigg)\Bigg] \\
% 		&\qquad\qquad\qquad\qquad\qquad\qquad\qquad\qquad\qquad\qquad\qquad  
% 		\times\partial_I\left(u-\widehat{u}\right)(t,s,x,y)d\sigma \\
% 		+ & \int^1_0\sum\limits_{|I|\leq 2}\Bigg[\sum\limits_{|J|\leq 2}\bigg(\partial^2_{IJ}\overline{F}\big(t,s^\prime,x,y,\theta_\sigma(t,s^\prime,x,y)\big)\cdot\Big(\sigma\partial_Ju_t(t,s^\prime,x,y)+(1-\sigma)\partial_J\widehat{u}_t(t,s^\prime,x,y)\Big)\bigg)\Bigg] \\
% 		&\qquad\qquad\qquad\qquad\qquad\qquad\qquad\qquad\qquad\qquad\qquad  
% 		\times\partial_I\left(u-\widehat{u}\right)(s,s,x,y)|_{x=y}d\sigma. 
% 	\end{split}
% \end{equation*}
\begin{equation*}
    \begin{aligned}&\int^1_0 \sum_{|I|\leq 2} \Biggl[ \sum_{|J|\leq 2} \biggl( \partial^2_{IJ}F\bigl(t,s',x,y,\theta_\sigma(t,s',x,y)\bigr) \cdot \Bigl( \sigma\partial_J u_t(t,s',x,y) + (1-\sigma)\partial_J\widehat{u}_t(t,s',x,y) \Bigr) \biggr) \Biggr] \\
    &\qquad\qquad\qquad \times \partial_I (u-\widehat{u})(t,s,x,y) \, d\sigma \\
    &+ \int^1_0 \sum_{|I|\leq 2} \Biggl[ \sum_{|J|\leq 2} \biggl( \partial^2_{IJ}\overline{F}\bigl(t,s',x,y,\theta_\sigma(t,s',x,y)\bigr) \cdot \Bigl( \sigma\partial_J u_t(t,s',x,y) + (1-\sigma)\partial_J\widehat{u}_t(t,s',x,y) \Bigr) \biggr) \Biggr] \\
    &\qquad\qquad\qquad \times \partial_I (u-\widehat{u})(s,s,x,y)\big|_{x=y} \, d\sigma
\end{aligned}
\end{equation*}
% \begin{equation*}
% 	\begin{aligned}
% 		& \int_0^1 \sum_{|I|,|J|\leq 2} \Bigl[ \partial^2_{IJ}F \bigl( t,s',x,y,\theta_\sigma(t,s',x,y) \bigr) \cdot \bigl( \sigma \partial_J u_t + (1-\sigma) \partial_J \widehat{u}_t \bigr) (t,s',x,y) \Bigr] \cdot \partial_I (u-\widehat{u})(t,s,x,y) \, d\sigma \\
% 		+ & \int_0^1 \sum_{|I|,|J|\leq 2} \Bigl[ \partial^2_{IJ}\overline{F} \bigl( t,s',x,y,\theta_\sigma(t,s',x,y) \bigr) \cdot \bigl( \sigma \partial_J u_t + (1-\sigma) \partial_J \widehat{u}_t \bigr) (t,s',x,y) \Bigr] \cdot \partial_I (u-\widehat{u})(s,s,x,y) \big|_{x=y} \, d\sigma
% 	\end{aligned}
% \end{equation*}

Subsequently, we need to estimate 
% \begin{equation} \label{eq:1stF-1stterm}
% 	\begin{split}
% 		&\bigg|\partial^2_{IJ}F\big(t,s,x,y,\theta_\sigma(t,s,x,y)\big)\cdot\Big(\sigma\partial_J u_t(t,s,x,y)+(1-\sigma)\partial_J\widehat{u}_t(t,s,x,y)\Big) \\
% 		&\qquad 
% 		-\partial^2_{IJ}F\big(t,s^\prime,x,y,\theta_\sigma(t,s^\prime,x,y)\big)\cdot\Big(\sigma\partial_J u_t(t,s^\prime,x,y)+(1-\sigma)\partial_J\widehat{u}_t(t,s^\prime,x,y)\Big)\bigg|,
% 	\end{split}
% \end{equation}
\begin{equation} \label{eq:1stF-1stterm}
		\begin{aligned}\biggl| & \partial^2_{IJ}F \bigl( t,s,x,y,\theta_\sigma(t,s,x,y) \bigr) \cdot \bigl( \sigma \partial_J u_t + (1-\sigma) \partial_J \widehat{u}_t \bigr) (t,s,x,y) \\
		- & \partial^2_{IJ}F \bigl( t,s',x,y,\theta_\sigma(t,s',x,y) \bigr) \cdot \bigl( \sigma \partial_J u_t + (1-\sigma) \partial_J \widehat{u}_t \bigr) (t,s',x,y) \biggr|
	\end{aligned}
\end{equation}
% \begin{equation} \label{eq:1stbarF-1stterm}
% 	\begin{split}
% 		&\bigg|\partial^2_{IJ}\overline{F}\big(t,s,x,y,\theta_\sigma(t,s,x,y)\big)\cdot\Big(\sigma\partial_J u_t(t,s,x,y)+(1-\sigma)\partial_J\widehat{u}_t(t,s,x,y)\Big) \\
% 		&\qquad 
% 		-\partial^2_{IJ}\overline{F}\big(t,s^\prime,x,y,\theta_\sigma(t,s^\prime,x,y)\big)\cdot\Big(\sigma\partial_J u_t(t,s^\prime,x,y)+(1-\sigma)\partial_J\widehat{u}_t(t,s^\prime,x,y)\Big)\bigg|,
% 	\end{split}
% \end{equation}
\begin{equation} \label{eq:1stbarF-1stterm}
		\begin{aligned}\biggl| & \partial^2_{IJ}\overline{F} \bigl( t,s,x,y,\theta_\sigma(t,s,x,y) \bigr) \cdot \bigl( \sigma \partial_J u_t + (1-\sigma) \partial_J \widehat{u}_t \bigr) (t,s,x,y) \\
		- & \partial^2_{IJ}\overline{F} \bigl( t,s',x,y,\theta_\sigma(t,s',x,y) \bigr) \cdot \bigl( \sigma \partial_J u_t + (1-\sigma) \partial_J \widehat{u}_t \bigr) (t,s',x,y) \biggr|
	\end{aligned}
\end{equation}
and 
% \begin{equation} \label{eq:1stF-2ndterm}
% 	\begin{split}
% 		&\bigg|\partial^2_{IJ}F\big(t,s^\prime,x,y,\theta_\sigma(t,s^\prime,x,y)\big)\cdot\Big(\sigma\partial_J u_t(t,s^\prime,x,y)+(1-\sigma)\partial_J\widehat{u}_t(t,s^\prime,x,y)\Big) \\
% 		&\qquad\qquad\qquad\qquad\qquad\qquad\qquad\qquad  
% 		-\partial^2_{IJ}F\big(t,0,x,y,\theta_0(t,x,y)\big)\cdot\partial_J g_t(t,x,y)\bigg|, 
% 	\end{split}
% \end{equation}
\begin{equation} \label{eq:1stF-2ndterm}
		\begin{aligned}\biggl| & \partial^2_{IJ}F \bigl( t,s',x,y,\theta_\sigma(t,s',x,y) \bigr) \cdot \bigl( \sigma \partial_J u_t + (1-\sigma) \partial_J \widehat{u}_t \bigr) (t,s',x,y) \\
		- & \partial^2_{IJ}F \bigl( t,0,x,y,\theta_0(t,x,y) \bigr) \cdot \partial_J g_t(t,x,y) \biggr|
	\end{aligned}
\end{equation}
% \begin{equation} \label{eq:1stbarF-2ndterm}
% 	\begin{split}
% 		&\bigg|\partial^2_{IJ}\overline{F}\big(t,s^\prime,x,y,\theta_\sigma(t,s^\prime,x,y)\big)\cdot\Big(\sigma\partial_J u_t(t,s^\prime,x,y)+(1-\sigma)\partial_J\widehat{u}_t(t,s^\prime,x,y)\Big) \\
% 		&\qquad\qquad\qquad\qquad\qquad\qquad\qquad\qquad
% 		-\partial^2_{IJ}\overline{F}\big(t,0,x,y,\theta_0(t,x,y)\big)\cdot\partial_J g_t(t,x,y)\bigg|.
% 	\end{split}
% \end{equation}
\begin{equation} \label{eq:1stbarF-2ndterm}
		\begin{aligned}\biggl| & \partial^2_{IJ}\overline{F} \bigl( t,s',x,y,\theta_\sigma(t,s',x,y) \bigr) \cdot \bigl( \sigma \partial_J u_t + (1-\sigma) \partial_J \widehat{u}_t \bigr) (t,s',x,y) \\
		- & \partial^2_{IJ}\overline{F} \bigl( t,0,x,y,\theta_0(t,x,y) \bigr) \cdot \partial_J g_t(t,x,y) \biggr|.
	\end{aligned}
\end{equation}

Note that
% \begin{equation*}
% 	\begin{split}
% 		\eqref{eq:1stF-1stterm} \leq &~ \bigg|\partial^2_{IJ}F\big(t,s,x,y,\theta_\sigma(t,s,x,y)\big)\cdot\Big(\sigma\partial_J u_t(t,s,x,y)+(1-\sigma)\partial_J\widehat{u}_t(t,s,x,y)\Big) \\
% 		&\qquad 
% 		-\partial^2_{IJ}F\big(t,s^\prime,x,y,\theta_\sigma(t,s^\prime,x,y)\big)\cdot\Big(\sigma\partial_J u_t(t,s,x,y)+(1-\sigma)\partial_J\widehat{u}_t(t,s,x,y)\Big)\bigg| \\
% 		& + \bigg|\partial^2_{IJ}F\big(t,s^\prime,x,y,\theta_\sigma(t,s^\prime,x,y)\big)\cdot\Big(\sigma\partial_J u_t(t,s,x,y)+(1-\sigma)\partial_J\widehat{u}_t(t,s,x,y)\Big) \\
% 		&\qquad 
% 		-\partial^2_{IJ}F\big(t,s^\prime,x,y,\theta_\sigma(t,s^\prime,x,y)\big)\cdot\Big(\sigma\partial_J u_t(t,s^\prime,x,y)+(1-\sigma)\partial_J\widehat{u}_t(t,s^\prime,x,y)\Big)\bigg| \\
% 		\leq & C_{12}(R)(s^\prime-s)^\frac{\alpha}{2}
% 	\end{split}
% \end{equation*}
\begin{equation*}
    \begin{aligned}\eqref{eq:1stF-1stterm} 
    &\leq \Biggl| \partial^2_{IJ}F\bigl(t,s,x,y,\theta_\sigma(t,s,x,y)\bigr) \cdot \Bigl( \sigma\partial_J u_t(t,s,x,y) + (1-\sigma)\partial_J\widehat{u}_t(t,s,x,y) \Bigr) \\
    &\qquad - \partial^2_{IJ}F\bigl(t,s',x,y,\theta_\sigma(t,s',x,y)\bigr) \cdot \Bigl( \sigma\partial_J u_t(t,s,x,y) + (1-\sigma)\partial_J\widehat{u}_t(t,s,x,y) \Bigr) \Biggr| \\
    &\quad + \Biggl| \partial^2_{IJ}F\bigl(t,s',x,y,\theta_\sigma(t,s',x,y)\bigr) \cdot \Bigl( \sigma\partial_J u_t(t,s,x,y) + (1-\sigma)\partial_J\widehat{u}_t(t,s,x,y) \Bigr) \\
    &\qquad - \partial^2_{IJ}F\bigl(t,s',x,y,\theta_\sigma(t,s',x,y)\bigr) \cdot \Bigl( \sigma\partial_J u_t(t,s',x,y) + (1-\sigma)\partial_J\widehat{u}_t(t,s',x,y) \Bigr) \Biggr| \\
    &\leq C_{12}(R)(s' - s)^{\frac{\alpha}{2}}
\end{aligned}
\end{equation*}
% \begin{equation*}
% 	\begin{aligned}
% 		\eqref{eq:1stF-1stterm} \leq \;& \Bigl| \bigl[ \partial^2_{IJ}F(t,s,x,y,\theta_\sigma(s)) - \partial^2_{IJ}F(t,s',x,y,\theta_\sigma(s')) \bigr] \cdot \bigl( \sigma \partial_J u_t + (1-\sigma) \partial_J \widehat{u}_t \bigr)(t,s,x,y) \Bigr| \\
% 		& + \Bigl| \partial^2_{IJ}F(t,s',x,y,\theta_\sigma(s')) \cdot \bigl[ \bigl( \sigma \partial_J u_t + (1-\sigma) \partial_J \widehat{u}_t \bigr)(t,s,x,y) - \bigl( \sigma \partial_J u_t + (1-\sigma) \partial_J \widehat{u}_t \bigr)(t,s',x,y) \bigr] \Bigr| \\
% 		\leq \;& C_{12}(R) (s'-s)^{\frac{\alpha}{2}}
% 	\end{aligned}
% \end{equation*}

and 
% \begin{equation*}
% 	\begin{split}
% 		\eqref{eq:1stF-2ndterm} \leq & ~ \bigg|\partial^2_{IJ}F\big(t,s^\prime,x,y,\theta_\sigma(t,s^\prime,x,y)\big)\cdot\Big(\sigma\partial_J u_t(t,s^\prime,x,y)+(1-\sigma)\partial_J\widehat{u}_t(t,s^\prime,x,y)\Big) \\
% 		&\qquad\qquad\qquad\qquad\qquad\qquad\qquad    
% 		-\partial^2_{IJ}F\big(t,s^\prime,x,y,\theta_\sigma(t,s^\prime,x,y)\big)\cdot\partial_J g_t(t,x,y)\bigg| \\
% 		& + \bigg|\partial^2_{IJ} F\big(t,s^\prime,x,y,\theta_\sigma(t,s^\prime,x,y)\big)\cdot\partial_J g_t(t,y)-\partial^2_{IJ}F\big(t,0,x,y,\theta_0(t,x,y)\big)\cdot\partial_J g_t(t,x,y)\bigg| \\
% 		\leq &~ C_{13}(R)\delta^\frac{\alpha}{2}. 
% 	\end{split}
% \end{equation*}
\begin{equation*}
    \begin{aligned}\eqref{eq:1stF-2ndterm} 
    &\leq \Biggl| \partial^2_{IJ}F\bigl(t,s',x,y,\theta_\sigma(t,s',x,y)\bigr) \cdot \Bigl( \sigma\partial_J u_t(t,s',x,y) + (1-\sigma)\partial_J\widehat{u}_t(t,s',x,y) \Bigr) \\
    &\qquad - \partial^2_{IJ}F\bigl(t,s',x,y,\theta_\sigma(t,s',x,y)\bigr) \cdot \partial_J g_t(t,x,y) \Biggr| \\
    &\quad + \Biggl| \partial^2_{IJ} F\bigl(t,s',x,y,\theta_\sigma(t,s',x,y)\bigr) \cdot \partial_J g_t(t,x,y) \\
    &\qquad - \partial^2_{IJ}F\bigl(t,0,x,y,\theta_0(t,x,y)\bigr) \cdot \partial_J g_t(t,x,y) \Biggr| \\
    &\leq C_{13}(R)\delta^{\frac{\alpha}{2}}
\end{aligned}
\end{equation*}
% \begin{equation*}
% 	\begin{aligned}
% 		\eqref{eq:1stF-2ndterm} \leq \;& \Bigl| \partial^2_{IJ}F \bigl( t,s',x,y,\theta_\sigma(s') \bigr) \cdot \bigl[ \bigl( \sigma \partial_J u_t + (1-\sigma) \partial_J \widehat{u}_t \bigr)(t,s',x,y) - \partial_J g_t(t,x,y) \bigr] \Bigr| \\
% 		& + \Bigl| \bigl[ \partial^2_{IJ} F \bigl( t,s',x,y,\theta_\sigma(s') \bigr) - \partial^2_{IJ}F \bigl( t,0,x,y,\theta_0 \bigr) \bigr] \cdot \partial_J g_t(t,x,y) \Bigr| \\
% 		\leq \;& C_{13}(R) \delta^{\frac{\alpha}{2}}.
% 	\end{aligned}
% \end{equation*}

Similarly, we also have 
% \begin{equation*}
% 	\eqref{eq:1stbarF-1stterm} \leq C_{14}(R)(s^\prime-s)^\frac{\alpha}{2}, \quad \eqref{eq:1stbarF-2ndterm} \leq C_{15}(R)\delta^\frac{\alpha}{2}.
% \end{equation*}
\begin{equation*}
	\eqref{eq:1stbarF-1stterm} \leq C_{14}(R) (s' - s)^{\frac{\alpha}{2}}, \quad \eqref{eq:1stbarF-2ndterm} \leq C_{15}(R) \delta^{\frac{\alpha}{2}}.
\end{equation*}

Hence, we obtain that 
% \begin{equation} \label{Holderofetains} 
% 	\begin{split}
% 		&\big|\eta(t,s,x,y)-\eta(t,s^\prime,x,y)\big| \\
% 		\leq &~ C_{12}(R)(s^\prime-s)^\frac{\alpha}{2}\delta^\frac{\alpha}{2}\left|\left(u-\widehat{u}\right)(t,\cdot,x,\cdot)\right|^{(2+\alpha)}_{[0,\delta]\times\mathbb{R}^d} \\
% 		& ~+C_{14}(R)(s^\prime-s)^\frac{\alpha}{2}\delta^\frac{\alpha}{2}\left|\left(u-\widehat{u}\right)(s,\cdot,y,\cdot)\right|^{(2+\alpha)}_{[0,s]\times\mathbb{R}^d} \\
% 		& ~+
% 		C_{13}(R)\delta^\frac{\alpha}{2}(s^\prime-s)^\frac{\alpha}{2}\left|\left(u-\widehat{u}\right)(t,\cdot,x,\cdot)\right|^{(2+\alpha)}_{[0,\delta]\times\mathbb{R}^d} \\
% 		& ~+C_{15}(R)\delta^\frac{\alpha}{2}(s^\prime-s)^\frac{\alpha}{2}\Big(\sup\limits_{\overline{s}\in(s,s^\prime)}\left|\left(u-\widehat{u}\right)_t(\overline{s},\cdot,y,\cdot)\right|^{(2+\alpha)}_{[0,\overline{s}]\times\mathbb{R}^d} \\
% 		& \qquad\qquad\qquad\qquad\qquad\qquad\qquad   +\left|\left(u-\widehat{u}\right)(s^\prime,\cdot,y,\cdot)\right|^{(2+\alpha)}_{[0,s^\prime]\times\mathbb{R}^d}\Big)      \\
% 		\leq &~ C_{16}(R)\delta^{\frac{\alpha}{2}}(s^\prime-s)^{\frac{\alpha}{2}}\lVert u-\widehat{u}\rVert^{(2+\alpha)}_{[0,\delta]},  
% 	\end{split}
% \end{equation}
\begin{equation} \label{Holderofetains}
		\begin{aligned}& \bigl| \eta(t,s,x,y) - \eta(t,s',x,y) \bigr| \\
		\leq \;& C_{12}(R) (s' - s)^{\frac{\alpha}{2}} \delta^{\frac{\alpha}{2}} \| u-\widehat{u} \|^{(2+\alpha)}_{[0,\delta]} + C_{14}(R) (s' - s)^{\frac{\alpha}{2}} \delta^{\frac{\alpha}{2}} \| u-\widehat{u} \|^{(2+\alpha)}_{[0,\delta]} \\
		& + C_{13}(R) \delta^{\frac{\alpha}{2}} (s' - s)^{\frac{\alpha}{2}} \| u-\widehat{u} \|^{(2+\alpha)}_{[0,\delta]} \\
		& + C_{15}(R) \delta^{\frac{\alpha}{2}} (s' - s)^{\frac{\alpha}{2}} \biggl( \sup_{\overline{s} \in (s,s')} \| (u-\widehat{u})_t \|^{(2+\alpha)}_{[0,\delta]} + \| u-\widehat{u} \|^{(2+\alpha)}_{[0,\delta]} \biggr) \\
		\leq \;& C_{16}(R) \delta^{\frac{\alpha}{2}} (s' - s)^{\frac{\alpha}{2}} \| u-\widehat{u} \|^{(2+\alpha)}_{[0,\delta]}.
	\end{aligned}
\end{equation}

\ \ 

\noindent \textbf{(Boundedness of $\boldsymbol{\eta}$)} Then \eqref{Holderofetains} implies the following by noting that $\eta(t,0,x,y)\equiv 0$, 
% \begin{equation} \label{Boundnessofeta}  
% 	|\eta(t,\cdot,x,\cdot)|^{\infty}_{[0,\delta]\times\mathbb{R}^d}\leq C_{16}(R)\delta\lVert u-\widehat{u}\rVert^{(2+\alpha)}_{[0,\delta]}.
% \end{equation}
\begin{equation} \label{Boundnessofeta}
	\| \eta(t,\cdot,x,\cdot) \|^{\infty}_{[0,\delta] \times \mathbb{R}^d} \leq C_{16}(R) \delta \| u-\widehat{u} \|^{(2+\alpha)}_{[0,\delta]}.
\end{equation}

\ \ 

\noindent \textbf{(H\"{o}lder continuity of $\boldsymbol{\eta}$ in $\boldsymbol{y}$)} In order to estimate $|\eta(t,s,x,y)-\eta(t,s,x,y^\prime)|$, it is convenient to add and subtract 
% \begin{equation*}
% 	\begin{split}
% 		&\int^1_0\sum\limits_{|I|\leq 2}\Bigg[\sum\limits_{|J|\leq 2}\bigg(\partial^2_{IJ}F\big(t,s,x,y^\prime,\theta_\sigma(t,s,x,y^\prime)\big)\cdot\Big(\sigma\partial_J u_t(t,s,x,y^\prime)+(1-\sigma)\partial_J\widehat{u}_t(t,s,x,y^\prime)\Big) \\
% 		&\qquad\qquad\qquad\qquad\qquad 
% 		-\partial^2_{IJ}F\big(t,0,x,y^\prime,\theta_0(t,x,y^\prime)\big)\cdot\partial_Jg_t(t,x,y^\prime)\bigg)\Bigg]\times\partial_I\left(u-\widehat{u}\right)(t,s,x,y)d\sigma \\
% 		+ & \int^1_0\sum\limits_{|I|\leq 2}\Bigg[\sum\limits_{|J|\leq 2}\bigg(\partial^2_{IJ}\overline{F}\big(t,s,x,y^\prime,\theta_\sigma(t,s,x,y^\prime)\big)\cdot\Big(\sigma\partial_J u_t(t,s,x,y^\prime)+(1-\sigma)\partial_J\widehat{u}_t(t,s,x,y^\prime)\Big) \\
% 		&\qquad\qquad\qquad\qquad\qquad  
% 		-\partial^2_{IJ}\overline{F}\big(t,0,x,y^\prime,\theta_0(t,x,y^\prime)\big)\cdot\partial_J g_t(t,x,y^\prime)\bigg)\Bigg]\times\partial_I\left(u-\widehat{u}\right)(s,s,x,y)|_{x=y}d\sigma.
% 	\end{split}
% \end{equation*}
\begin{equation*}
    \begin{aligned}&\int^1_0 \sum_{|I|\leq 2} \Biggl[ \sum_{|J|\leq 2} \biggl( \partial^2_{IJ}F\bigl(t,s,x,y',\theta_\sigma(t,s,x,y')\bigr) \cdot \Bigl( \sigma\partial_J u_t(t,s,x,y') + (1-\sigma)\partial_J\widehat{u}_t(t,s,x,y') \Bigr) \\
    &\qquad\qquad\qquad - \partial^2_{IJ}F\bigl(t,0,x,y',\theta_0(t,x,y')\bigr) \cdot \partial_Jg_t(t,x,y') \biggr) \Biggr] \times \partial_I (u-\widehat{u})(t,s,x,y) \, d\sigma \\
    &+ \int^1_0 \sum_{|I|\leq 2} \Biggl[ \sum_{|J|\leq 2} \biggl( \partial^2_{IJ}\overline{F}\bigl(t,s,x,y',\theta_\sigma(t,s,x,y')\bigr) \cdot \Bigl( \sigma\partial_J u_t(t,s,x,y') + (1-\sigma)\partial_J\widehat{u}_t(t,s,x,y') \Bigr) \\
    &\qquad\qquad\qquad - \partial^2_{IJ}\overline{F}\bigl(t,0,x,y',\theta_0(t,x,y')\bigr) \cdot \partial_J g_t(t,x,y') \biggr) \Biggr] \times \partial_I (u-\widehat{u})(s,s,x,y)\big|_{x=y} \, d\sigma
\end{aligned}
\end{equation*}
% \begin{equation*}
% 	\begin{aligned}
% 		& \int_0^1 \sum_{|I|,|J|\leq 2} \Biggl[ \partial^2_{IJ}F \bigl( t,s,x,y', \theta_\sigma(y') \bigr) \cdot \bigl( \sigma \partial_J u_t + (1-\sigma) \partial_J \widehat{u}_t \bigr)(t,s,x,y') \\
% 		& \qquad\qquad\qquad - \partial^2_{IJ}F \bigl( t,0,x,y', \theta_0(y') \bigr) \cdot \partial_J g_t(t,x,y') \Biggr] \cdot \partial_I (u-\widehat{u})(t,s,x,y) \, d\sigma \\
% 		& + \int_0^1 \sum_{|I|,|J|\leq 2} \Biggl[ \partial^2_{IJ}\overline{F} \bigl( t,s,x,y', \theta_\sigma(y') \bigr) \cdot \bigl( \sigma \partial_J u_t + (1-\sigma) \partial_J \widehat{u}_t \bigr)(t,s,x,y') \\
% 		& \qquad\qquad\qquad - \partial^2_{IJ}\overline{F} \bigl( t,0,x,y', \theta_0(y') \bigr) \cdot \partial_J g_t(t,x,y') \Biggr] \cdot \partial_I (u-\widehat{u})(s,s,x,y) \big|_{x=y} \, d\sigma
% 	\end{aligned}
% \end{equation*}

Then we need to evaluate the estimates (for $F$)
% \begin{equation} \label{eq:2ndF-1stterm}
% 	\begin{split}
% 		&\bigg|\partial^2_{IJ}F\big(t,s,x,y,\theta_\sigma(t,s,x,y)\big)\cdot\Big(\sigma\partial_J u_t(t,s,x,y)+(1-\sigma)\partial_J\widehat{u}_t(t,s,x,y)\Big) \\
% 		&\qquad\qquad\qquad\qquad\qquad\qquad\qquad 
% 		-\partial^2_{IJ}F\big(t,0,x,y,\theta_0(t,x,y)\big)\cdot\partial_J g_t(t,x,y) \\
% 		&\quad -\partial^2_{IJ}F\big(t,s,x,y^\prime,\theta_\sigma(t,s,x,y^\prime)\big)\cdot\Big(\sigma\partial_J u_t(t,s,x,y^\prime)+(1-\sigma)\partial_J\widehat{u}_t(t,s,x,y^\prime)\Big) \\
% 		& \quad \qquad\qquad\qquad\qquad\qquad\qquad\qquad 
% 		+\partial^2_{IJ}F\big(t,0,x,y^\prime,\theta_0(t,x,y^\prime)\big)\cdot\partial_J g_t(t,x,y^\prime)\bigg|
% 	\end{split}
% \end{equation}
\begin{equation} \label{eq:2ndF-1stterm}
		\begin{aligned}\Biggl| & \biggl[ \partial^2_{IJ}F\big(t,s,x,y,\theta_\sigma(t,s,x,y)\big) \cdot \Big( \sigma\partial_J u_t(t,s,x,y) + (1-\sigma)\partial_J\widehat{u}_t(t,s,x,y) \Big) \\
		& \quad - \partial^2_{IJ}F\big(t,0,x,y,\theta_0(t,x,y)\big) \cdot \partial_J g_t(t,x,y) \biggr] \\
		& - \biggl[ \partial^2_{IJ}F\big(t,s,x,y', \theta_\sigma(t,s,x,y')\big) \cdot \Big( \sigma\partial_J u_t(t,s,x,y') + (1-\sigma)\partial_J\widehat{u}_t(t,s,x,y') \Big) \\
		& \quad - \partial^2_{IJ}F\big(t,0,x,y', \theta_0(t,x,y')\big) \cdot \partial_J g_t(t,x,y') \biggr] \Biggr|
	\end{aligned}
\end{equation}
as well as the estimates (for $\overline{F}$)
% \begin{equation} \label{eq:2ndbarF-1stterm}
% 	\begin{split}
% 		&\bigg|\partial^2_{IJ}\overline{F}\big(t,s,x,y,\theta_\sigma(t,s,x,y)\big)\cdot\Big(\sigma\partial_J u_t(t,s,x,y)+(1-\sigma)\partial_J\widehat{u}_t(t,s,x,y)\Big) \\
% 		&\qquad\qquad\qquad\qquad\qquad\qquad  
% 		-\partial^2_{IJ}\overline{F}\big(t,0,x,y,\theta_0(t,x,y)\big)\cdot\partial_J g_t(t,x,y) \\
% 		&-\partial^2_{IJ}\overline{F}\big(t,s,x,y^\prime,\theta_\sigma(t,s,x,y^\prime)\big)\cdot\Big(\sigma\partial_J u_t(t,s,x,y^\prime)+(1-\sigma)\partial_J\widehat{u}_t(t,s,x,y^\prime)\Big) \\
% 		&\qquad\qquad\qquad\qquad\qquad\qquad\qquad 
% 		+\partial^2_{IJ}\overline{F}\big(t,0,x,y^\prime,\theta_0(t,x,y^\prime)\big)\cdot\partial_J g_t(t,x,y^\prime)\bigg|.
% 	\end{split}
% \end{equation}
\begin{equation} \label{eq:2ndbarF-1stterm}
		\begin{aligned}\Biggl| & \biggl[ \partial^2_{IJ}\overline{F}\big(t,s,x,y,\theta_\sigma(t,s,x,y)\big) \cdot \Big( \sigma\partial_J u_t(t,s,x,y) + (1-\sigma)\partial_J\widehat{u}_t(t,s,x,y) \Big) \\
		& \quad - \partial^2_{IJ}\overline{F}\big(t,0,x,y,\theta_0(t,x,y)\big) \cdot \partial_J g_t(t,x,y) \biggr] \\
		& - \biggl[ \partial^2_{IJ}\overline{F}\big(t,s,x,y', \theta_\sigma(t,s,x,y')\big) \cdot \Big( \sigma\partial_J u_t(t,s,x,y') + (1-\sigma)\partial_J\widehat{u}_t(t,s,x,y') \Big) \\
		& \quad - \partial^2_{IJ}\overline{F}\big(t,0,x,y', \theta_0(t,x,y')\big) \cdot \partial_J g_t(t,x,y') \biggr] \Biggr|.
	\end{aligned}
\end{equation}

Moreover, we also need to estimate 
% \begin{equation} \label{eq:2ndF-2ndterm}
% 	\begin{split}
% 		&\bigg|\partial^2_{IJ}F\big(t,s,x,y,\theta_\sigma(t,s,x,y)\big)\cdot\Big(\sigma\partial_J u_t(t,s,x,y)+(1-\sigma)\partial_J\widehat{u}_t(t,s,x,y)\Big) \\
% 		&\qquad\qquad\qquad\qquad\qquad\qquad\qquad\qquad  
% 		-\partial^2_{IJ}F\big(t,0,x,y,\theta_0(t,x,y)\big)\cdot\partial_J g_t(t,x,y)\bigg|
% 	\end{split}
% \end{equation}
\begin{equation} \label{eq:2ndF-2ndterm}
		\begin{aligned}\biggl| & \partial^2_{IJ}F\big(t,s,x,y,\theta_\sigma(t,s,x,y)\big) \cdot \Big( \sigma\partial_J u_t(t,s,x,y) + (1-\sigma)\partial_J\widehat{u}_t(t,s,x,y) \Big) \\
		- & \partial^2_{IJ}F\big(t,0,x,y,\theta_0(t,x,y)\big) \cdot \partial_J g_t(t,x,y) \biggr|
	\end{aligned}
\end{equation}
and 
% \begin{equation} \label{eq:2ndbarF-2ndterm}
% 	\begin{split}
% 		&\bigg|\partial^2_{IJ}\overline{F}\big(t,s,x,y,\theta_\sigma(t,s,x,y)\big)\cdot\Big(\sigma\partial_J u_t(t,s,x,y)+(1-\sigma)\partial_J\widehat{u}_t(t,s,x,y)\Big) \\
% 		&\qquad\qquad\qquad\qquad\qquad\qquad\qquad\qquad  
% 		-\partial^2_{IJ}\overline{F}\big(t,0,x,y,\theta_0(t,x,y)\big)\cdot\partial_J g_t(t,x,y)\bigg|.
% 	\end{split}
% \end{equation}
\begin{equation} \label{eq:2ndbarF-2ndterm}
		\begin{aligned}\biggl| & \partial^2_{IJ}\overline{F}\big(t,s,x,y,\theta_\sigma(t,s,x,y)\big) \cdot \Big( \sigma\partial_J u_t(t,s,x,y) + (1-\sigma)\partial_J\widehat{u}_t(t,s,x,y) \Big) \\
		- & \partial^2_{IJ}\overline{F}\big(t,0,x,y,\theta_0(t,x,y)\big) \cdot \partial_J g_t(t,x,y) \biggr|.
	\end{aligned}
\end{equation}

Note that
% \begin{equation*}
% 	\begin{split}
% 		\eqref{eq:2ndF-1stterm} \leq &~ \bigg|\partial^2_{IJ}F\big(t,s,x,y,\theta_\sigma(t,s,x,y)\big)\cdot\Big(\sigma\partial_J u_t(t,s,x,y)+(1-\sigma)\partial_J\widehat{u}_t(t,s,x,y)\Big) \\
% 		&\qquad
% 		-\partial^2_{IJ}F\big(t,s,x,y^\prime,\theta_\sigma(t,s,x,y^\prime)\big)\cdot\Big(\sigma\partial_J u_t(t,s,x,y^\prime)+(1-\sigma)\partial_J\widehat{u}_t(t,s,x,y^\prime)\Big)\bigg| \\
% 		& + \bigg|\partial^2_{IJ}F\big(t,0,x,y^\prime,\theta_0(t,x,y^\prime)\big)\cdot\partial_J g_t(t,x,y^\prime)-\partial^2_{IJ}F\big(t,0,x,y,\theta_0(t,x,y)\big)\cdot\partial_J g_t(t,x,y)\bigg| \\
% 		=: &~ N_1+N_2. 
% 	\end{split}
% \end{equation*}
\begin{equation*}
		\begin{aligned}\eqref{eq:2ndF-1stterm} \leq \;& \biggl| \partial^2_{IJ}F\big(t,s,x,y,\theta_\sigma(t,s,x,y)\big) \cdot \Big( \sigma\partial_J u_t(t,s,x,y) + (1-\sigma)\partial_J\widehat{u}_t(t,s,x,y) \Big) \\
		& - \partial^2_{IJ}F\big(t,s,x,y', \theta_\sigma(t,s,x,y')\big) \cdot \Big( \sigma\partial_J u_t(t,s,x,y') + (1-\sigma)\partial_J\widehat{u}_t(t,s,x,y') \Big) \biggr| \\
		& + \biggl| \partial^2_{IJ}F\big(t,0,x,y', \theta_0(t,x,y')\big) \cdot \partial_J g_t(t,x,y') - \partial^2_{IJ}F\big(t,0,x,y,\theta_0(t,x,y)\big) \cdot \partial_J g_t(t,x,y) \biggr| \\
		=: \;& N_1 + N_2.
	\end{aligned}
\end{equation*}

For $N_1$, it holds that 
% \begin{equation*}
% 	\begin{split}
% 		N_1 \leq &~ \bigg|\partial^2_{IJ}F\big(t,s,x,y,\theta_\sigma(t,s,x,y)\big)\cdot\Big(\sigma\partial_J u_t(t,s,x,y)+(1-\sigma)\partial_J\widehat{u}_t(t,s,x,y)\Big) \\
% 		&\qquad 
% 		-\partial^2_{IJ}F\big(t,s,x,y,\theta_\sigma(t,s,x,y)\big)\cdot\Big(\sigma\partial_J u_t(t,s,x,y^\prime)+(1-\sigma)\partial_J\widehat{u}_t(t,s,x,y^\prime)\Big)\bigg| \\
% 		& + \bigg|\partial^2_{IJ}F\big(t,s,x,y,\theta_\sigma(t,s,x,y)\big)\cdot\Big(\sigma\partial_J u_t(t,s,x,y^\prime)+(1-\sigma)\partial_J\widehat{u}_t(t,s,x,y^\prime)\Big) \\
% 		&\qquad 
% 		-\partial^2_{IJ}F\big(t,s,x,y^\prime,\theta_\sigma(t,s,x,y^\prime)\big)\cdot\Big(\sigma\partial_J u_t(t,s,x,y^\prime)+(1-\sigma)\partial_J\widehat{u}_t(t,s,x,y^\prime)\Big)\bigg| \\
% 		\leq &~ C_{17}(R)|y-y^\prime|^\alpha.
% 	\end{split}
% \end{equation*}
\begin{equation*}
		\begin{aligned}N_1 \leq \;& \biggl| \partial^2_{IJ}F\big(t,s,x,y,\theta_\sigma(t,s,x,y)\big) \cdot \Bigl[ \big( \sigma\partial_J u_t(t,s,x,y) + (1-\sigma)\partial_J\widehat{u}_t(t,s,x,y) \big) \\
		& \qquad - \big( \sigma\partial_J u_t(t,s,x,y') + (1-\sigma)\partial_J\widehat{u}_t(t,s,x,y') \big) \Bigr] \biggr| \\
		& + \biggl| \Bigl[ \partial^2_{IJ}F\big(t,s,x,y,\theta_\sigma(t,s,x,y)\big) - \partial^2_{IJ}F\big(t,s,x,y', \theta_\sigma(t,s,x,y')\big) \Bigr] \\
		& \qquad \cdot \big( \sigma\partial_J u_t(t,s,x,y') + (1-\sigma)\partial_J\widehat{u}_t(t,s,x,y') \big) \biggr| \\
		\leq \;& C_{17}(R)|y-y'|^\alpha.
	\end{aligned}
\end{equation*}
For $N_2$, 
% \begin{equation*}
% 	\begin{split}
% 		N_2 \leq &~ \bigg|\partial^2_{IJ}F\big(t,0,x,y^\prime,\theta_0(t,x,y^\prime)\big)\cdot\partial_J g_t(t,x,y^\prime)-\partial^2_{IJ}F\big(t,0,x,y,\theta_0(t,x,y)\big)\cdot\partial_J g_t(t,x,y^\prime)\bigg| \\
% 		& + \bigg|\partial^2_{IJ}F\big(t,0,x,y,\theta_0(t,x,y)\big)\cdot\partial_J g_t(t,x,y^\prime)-\partial^2_{IJ}F\big(t,0,x,y,\theta_0(t,x,y)\big)\cdot\partial_J g_t(t,x,y)\bigg| \\
% 		\leq &~ C_{18}(R)|y-y^\prime|^\alpha. 
% 	\end{split}
% \end{equation*}
\begin{equation*}
		\begin{aligned}N_2 \leq \;& \biggl| \Bigl[ \partial^2_{IJ}F\big(t,0,x,y', \theta_0(t,x,y')\big) - \partial^2_{IJ}F\big(t,0,x,y, \theta_0(t,x,y)\big) \Bigr] \cdot \partial_J g_t(t,x,y') \biggr| \\
		& + \biggl| \partial^2_{IJ}F\big(t,0,x,y, \theta_0(t,x,y)\big) \cdot \Bigl[ \partial_J g_t(t,x,y') - \partial_J g_t(t,x,y) \Bigr] \biggr| \\
		\leq \;& C_{18}(R)|y-y'|^\alpha.
	\end{aligned}
\end{equation*}
From the estimates of $N_1$ and $N_2$, we have 
\begin{equation*}
	\eqref{eq:2ndF-1stterm} \leq  C_{19}(R)|y-y^\prime|^\alpha
\end{equation*}

Moroever, we have 
% \begin{equation*}
% 	\begin{split}
% 		\eqref{eq:2ndF-2ndterm} \leq &~ \bigg|\partial^2_{IJ}F\big(t,s,y,\theta_\sigma(t,s,y)\big)\cdot\Big(\sigma\partial_J u_t(t,s,x,y)+(1-\sigma)\partial_J\widehat{u}_t(t,s,x,y)\Big) \\
% 		&\qquad\qquad\qquad\qquad\qquad\qquad\qquad    
% 		-\partial^2_{IJ}F\big(t,s,x,y,\theta_\sigma(t,s,x,y)\big)\cdot\partial_J g_t(t,x,y)\bigg| \\
% 		& +\bigg|\partial^2_{IJ}F\big(t,s,x,y,\theta_\sigma(t,s,x,y)\big)\cdot\partial_J g_t(t,x,y)-\partial^2_{IJ}F\big(t,0,x,y,\theta_0(t,x,y)\big)\cdot\partial_J g_t(t,x,y)\bigg| \\
% 		\leq & C_{20}(R)\delta^\frac{\alpha}{2}. 
% 	\end{split}
% \end{equation*}
\begin{equation*}
		\begin{aligned}\eqref{eq:2ndF-2ndterm} \leq \;& \biggl| \partial^2_{IJ}F\big(t,s,x,y,\theta_\sigma(t,s,x,y)\big) \cdot \Bigl[ \big( \sigma\partial_J u_t(t,s,x,y) + (1-\sigma)\partial_J\widehat{u}_t(t,s,x,y) \big) - \partial_J g_t(t,x,y) \Bigr] \biggr| \\
		& + \biggl| \Bigl[ \partial^2_{IJ}F\big(t,s,x,y,\theta_\sigma(t,s,x,y)\big) - \partial^2_{IJ}F\big(t,0,x,y,\theta_0(t,x,y)\big) \Bigr] \cdot \partial_J g_t(t,x,y) \biggr| \\
		\leq \;& C_{20}(R) \delta^{\frac{\alpha}{2}}.
	\end{aligned}
\end{equation*}

Similarly, for $\overline{F}$, we have
\begin{equation*}
	\eqref{eq:2ndbarF-1stterm} \leq C_{21}(R)|y-y^\prime|^\alpha, \quad
	\eqref{eq:2ndbarF-2ndterm} \leq C_{22}(R)\delta^\frac{\alpha}{2}.
\end{equation*}

Hence, we have 
% \begin{equation} \label{Holderofetainy}
% 	\begin{split}
% 		\big|\eta(t,s,x,y)-\eta(t,s,x,y^\prime)\big|\leq &~ C_{19}(R)|y-y^\prime|^\alpha\delta^\frac{\alpha}{2}\left|\left(u-\widehat{u}\right)(t,\cdot,x,\cdot)\right|^{(2+\alpha)}_{[0,\delta]\times\mathbb{R}^d}\\
% 		&+C_{21}(R)|y-y^\prime|^\alpha\delta^\frac{\alpha}{2}\left|\left(u-\widehat{u}\right)(s,\cdot,y,\cdot)\right|^{(2+\alpha)}_{[0,\delta]\times\mathbb{R}^d} \\
% 		&
% 		+C_{20}(R)\delta^\frac{\alpha}{2}|y-y^\prime|^\alpha\left|\partial_I\left(u-\widehat{u}\right)(t,\cdot,x,\cdot)\right|^{(2+\alpha)}_{[0,\delta]\times\mathbb{R}^d} \\
% 		&+C_{22}(R)\delta^\frac{\alpha}{2}|y-y^\prime|^\alpha\Big(\left|\left(u-\widehat{u}\right)(s,\cdot,y^\prime,\cdot)\right|^{(2+\alpha)}_{[0,\delta]\times\mathbb{R}^d} \\
% 		& \qquad\qquad\qquad\qquad    
% 		+\sup\limits_{\overline{y}\in(y,y^\prime)}\left|\left(u-\widehat{u}\right)_x(s,\cdot,\overline{y},\cdot)\right|^{(2+\alpha)}_{[0,\delta]\times\mathbb{R}^d}\Big) \\
% 		\leq &~C_{23}(R)\delta^\frac{\alpha}{2}|y-y^\prime|^\alpha\lVert u-\widehat{u}\rVert^{(2+\alpha)}_{[0,\delta]}.  
% 	\end{split}
% \end{equation}
\begin{equation} \label{Holderofetainy}
		\begin{aligned}& \bigl| \eta(t,s,x,y) - \eta(t,s,x,y') \bigr| \\
		\leq \;& C_{19}(R) |y-y'|^\alpha \delta^{\frac{\alpha}{2}} \left| (u-\widehat{u})(t,\cdot,x,\cdot) \right|^{(2+\alpha)}_{[0,\delta] \times \mathbb{R}^d} \\
		& + C_{21}(R) |y-y'|^\alpha \delta^{\frac{\alpha}{2}} \left| (u-\widehat{u})(s,\cdot,y,\cdot) \right|^{(2+\alpha)}_{[0,\delta] \times \mathbb{R}^d} \\
		& + C_{20}(R) \delta^{\frac{\alpha}{2}} |y-y'|^\alpha \left| \partial_I (u-\widehat{u})(t,\cdot,x,\cdot) \right|^{(2+\alpha)}_{[0,\delta] \times \mathbb{R}^d} \\
		& + C_{22}(R) \delta^{\frac{\alpha}{2}} |y-y'|^\alpha \biggl( \left| (u-\widehat{u})(s,\cdot,y', \cdot) \right|^{(2+\alpha)}_{[0,\delta] \times \mathbb{R}^d} + \sup_{\overline{y} \in (y,y')} \left| (u-\widehat{u})_x(s,\cdot,\overline{y},\cdot) \right|^{(2+\alpha)}_{[0,\delta] \times \mathbb{R}^d} \biggr) \\
		\leq \;& C_{23}(R) \delta^{\frac{\alpha}{2}} |y-y'|^\alpha \lVert u-\widehat{u} \rVert^{(2+\alpha)}_{[0,\delta]}.
	\end{aligned}
\end{equation}

Therefore, together with \eqref{Holderofetains}, \eqref{Boundnessofeta}, and \eqref{Holderofetainy}, we have 
\begin{equation} \label{Holderofeta}
	|\eta(t,\cdot,x,\cdot)|^{(\alpha)}_{[0,\delta]\times\mathbb{R}^d}\leq C_{24}(R)\delta^{\frac{\alpha}{2}}\lVert u-\widehat{u}\rVert^{(2+\alpha)}_{[0,\delta]}.
\end{equation}

Since $M_1$, $M_3$ and $M_4$ of \eqref{varphi_t} satisfy the same estimates, the estimates of $K_4$, $K_5$ and $K_6$ hold as well. Hence, we have  
\begin{equation} \label{Estimateofvarphi_t}
	|\varphi_t(t,\cdot,x,\cdot)|^{(\alpha)}_{[0,\delta]\times\mathbb{R}^d}\leq C_{25}(R)\delta^{\frac{\alpha}{2}}\lVert u-\widehat{u}\rVert^{(2+\alpha)}_{[0,\delta]}.
\end{equation}

Thanks to the symmetry between $t$ and $x$, we also have the estimates of $K_7$, $K_8$ and $K_9$. Then, for any $(t,x)\in[0,\delta]\times\mathbb{R}^d$, it holds that   
\begin{equation} \label{Estimateofvarphi_x}
	|\varphi_x(t,\cdot,x,\cdot)|^{(\alpha)}_{[0,\delta]\times\mathbb{R}^d}\leq C_{25}(R)\delta^{\frac{\alpha}{2}}\lVert u-\widehat{u}\rVert^{(2+\alpha)}_{[0,\delta]}.
\end{equation}

Similarly, we can also acquire an integral representation for $\varphi_{xx}(t,s,x,y)$. Due to the chain rule and the product rule of derivatives, it is clear that the same estimate ($K_{10}$-$K_{12}$) holds for the term $|\varphi_{xx}(t,\cdot,x,\cdot)|^{(\alpha)}_{[0,\delta]\times\mathbb{R}^d}$, i.e.  
\begin{equation} \label{Estimateofvarphi_xx}
	|\varphi_{xx}(t,\cdot,x,\cdot)|^{(\alpha)}_{[0,\delta]\times\mathbb{R}^d}\leq C_{25}(R)\delta^{\frac{\alpha}{2}}\lVert u-\widehat{u}\rVert^{(2+\alpha)}_{[0,\delta]}.
\end{equation}

Consequently, together with \eqref{Estimateofvarphi}, \eqref{Estimateofvarphi_t}, \eqref{Estimateofvarphi_x}, and \eqref{Estimateofvarphi_xx}, we have 
\begin{equation}
	\lVert\varphi\rVert^{(\alpha)}_{[0,\delta]}\leq C(R)\delta^{\frac{\alpha}{2}}\lVert u-\widehat{u}\rVert^{(2+\alpha)}_{[0,\delta]},  
\end{equation}
Furthermore, it follows that 
\begin{equation}
	\lVert U-\widehat{U}\rVert^{(2+\alpha)}_{[0,\delta]}\leq C\lVert\varphi\rVert^{(\alpha)}_{[0,\delta]}\leq C(R)\delta^{\frac{\alpha}{2}}\lVert u-\widehat{u}\rVert^{(2+\alpha)}_{[0,\delta]}\leq \frac{1}{2}\lVert u-\widehat{u}\rVert^{(2+\alpha)}_{[0,\delta]}, 
\end{equation}
for a small enough $\delta\in(0,T]$.

\section{Probabilistic Representation for Nonlocal PDEs} \label{app:probrep}
In this appendix, we provide a probabilistic representation for the solutions of nonlocal fully nonlinear PDEs on top of their well-posedness. With such a representation, it is promising to combine the Monte Carlo simulations and deep learning techniques to devise a numerical scheme of solving the nonlocal PDEs (even in a high-dimensional setting); see \cite{Han2018}. Let us consider a nonlocal PDE \eqref{BackwardnonlocalfullynonlinearPDE} with a terminal condition:
% \begin{equation} \label{BackwardnonlocalfullynonlinearPDE} 
%     \left\{
%     \begin{array}{lr}
%         u_s(t,s,x,y) + F\big(t,s,x,y,\left(\partial_I u\right)_{|I|\leq 2}(t,s,x,y),  \left(\partial_I u\right)_{|I|\leq 2}(s,s,x,y)\big|_{x=y}\big) = 0, \\
%         u(t,T,x,y) = g(t,x,y),\quad t,s\in[0,T],\quad x,y\in\mathbb{R}^d. 
%     \end{array}
%     \right. 
% \end{equation}
\begin{equation} \label{BackwardnonlocalfullynonlinearPDE}
    \left\{
    \begin{aligned}
        & u_s(t,s,x,y) + F\Bigl(t,s,x,y,\bigl(\partial_I u\bigr)_{|I|\leq 2}(t,s,x,y),\, \bigl(\partial_I u\bigr)_{|I|\leq 2}(s,s,x,y)\big|_{x=y}\Bigr) = 0, \\
        & u(t,T,x,y) = g(t,x,y), \quad t,s \in [0,T], \quad x,y \in \mathbb{R}^d.
    \end{aligned}
    \right.
\end{equation}
Consequently, one can obtain the following conclusions. 

\begin{theorem} \label{thm:probrep}
	Suppose that $\sigma(s,y)\in C^{1,2}([0,T]\times\mathbb{R}^d)$ and \eqref{BackwardnonlocalfullynonlinearPDE} admits a unique solution $u(t,s,x,y)$ that is first-order continuously differentiable in $s$ and third-order continuously differentiable with respect to $y$ in $\nabla[0,T]\times\mathbb{R}^d$. Furthermore, let 
	% \begin{eqnarray*} \label{F-Kformulafor2FBSVIE}
	% 	& Y(t,s) := u(t,s,X(t),X(s)), & Z(t,s) :=  \left(\sigma^\top u_y\right)(t,s,X(t),X(s)), \\
	% 	&\Gamma(t,s) :=\left(\sigma^\top\left(\sigma^\top u_y\right)_y\right)(t,s,X(t),X(s)), & A(t,s) := \mathcal{D}\left(\sigma^\top u_y\right)(t,s,X(t),X(s)), \nonumber 
	% \end{eqnarray*} 
    \begin{equation*}
		\begin{aligned}Y(t,s) &:= u(t,s,X(t),X(s)), & Z(t,s) &:= \bigl(\sigma^\top u_y\bigr)(t,s,X(t),X(s)), \\
		\Gamma(t,s) &:= \bigl(\sigma^\top(\sigma^\top u_y)_y\bigr)(t,s,X(t),X(s)), & A(t,s) &:= \mathcal{D}\bigl(\sigma^\top u_y\bigr)(t,s,X(t),X(s)).
	\end{aligned}
\end{equation*}
	where $\left(\sigma^\top u_y\right)(t,s,x,y)=\sigma^\top(s,y)u_y(t,s,x,y)$ and the operator $\mathcal{D}$ is defined by 
	% \begin{equation*}
	% 	\mathcal{D}\varphi=\varphi_s+\frac{1}{2}\sum^d_{i,j=1}\left(\sigma\sigma^\top\right)_{ij}\frac{\partial^2\varphi}{\partial y_i\partial y_j}+\sum^d_{i=1}b_i\frac{\partial \varphi}{\partial y_i}, 
	% \end{equation*}
    \begin{equation*}
    \mathcal{D}\varphi = \varphi_s + \frac{1}{2} \sum_{i,j=1}^d (\sigma\sigma^\top)_{ij} \frac{\partial^2\varphi}{\partial y_i \partial y_j} + \sum_{i=1}^d b_i \frac{\partial \varphi}{\partial y_i}.
\end{equation*}
	then the family of random fields $\left(X(\cdot),Y(\cdot,\cdot),Z(\cdot,\cdot),\Gamma(\cdot,\cdot),A(\cdot,\cdot)\right)$ is an adapted solution of the following flow of 2FBSDEs: 
	% \begin{equation} \label{Flowof2FBSDEs}
	% 	X(s) = & X(t)+\int^s_{t}b(\tau,X(\tau))d\tau+\int^s_{t}\sigma(\tau,X(\tau))dW(\tau), \\
	% 	Y(t,s) = & g(t,X(t),X(T)) \nonumber \\
	% 	& +\int^T_s\mathscr{F}(t,\tau,X(t),X(\tau),Y(t,\tau),Y(\tau,\tau),Z(t,\tau),Z(\tau,\tau),\Gamma(t,\tau),\Gamma(\tau,\tau))d\tau \nonumber \\
	% 	& -\int^T_s Z^\top(t,\tau)dW(\tau), \nonumber \\
	% 	Z(t,s) = & Z(t,X(t))+\int^s_{t}A(t,\tau)d\tau+\int^s_{t}\Gamma(t,\tau)dW(\tau), \quad 0\leq t\leq s\leq T, \nonumber   
	% \end{aligned}
    \begin{equation} \label{Flowof2FBSDEs}
	\begin{aligned}
		X(s) = \;& X(t) + \int^s_{t} b(\tau,X(\tau)) d\tau + \int^s_{t} \sigma(\tau,X(\tau)) dW(\tau), \\
		Y(t,s) = \;& g(t,X(t),X(T)) - \int^T_s Z^\top(t,\tau) dW(\tau) \\
		& + \int^T_s \mathscr{F} \Bigl( t,\tau,X(t),X(\tau),Y(t,\tau),Y(\tau,\tau),Z(t,\tau),Z(\tau,\tau),\Gamma(t,\tau),\Gamma(\tau,\tau) \Bigr) d\tau, \\
		Z(t,s) = \;& Z(t,t) + \int^s_{t} A(t,\tau) d\tau + \int^s_{t} \Gamma(t,\tau) dW(\tau), \quad 0 \leq t \leq s \leq T.
	\end{aligned}
\end{equation}
	where $\mathscr{F}$ is defined by 
	% \begin{equation*}
	% 	\begin{split}
	% 		& \mathscr{F}\big(t,\tau,X(t),X(\tau),Y(t,\tau),Y(\tau,\tau),Z(t,\tau),Z(\tau,\tau),\Gamma(t,\tau),\Gamma(\tau,\tau)\big) \\
	% 		=& \overline{F}\big(t,\tau,X(t),X(\tau),\left(\partial_I u\right)_{|I|\leq 2}(t,\tau,X(t),X(\tau)),\left(\partial_I u\right)_{|I|\leq 2}(\tau,\tau,X(\tau),X(\tau))\big)
	% 	\end{split}
	% \end{equation*}
    \begin{equation*}
		\begin{aligned}& \mathscr{F}\bigl(t,\tau,X(t),X(\tau),Y(t,\tau),Y(\tau,\tau),Z(t,\tau),Z(\tau,\tau),\Gamma(t,\tau),\Gamma(\tau,\tau)\bigr) \\
		= \;& \overline{F}\Bigl(t,\tau,X(t),X(\tau),\bigl(\partial_I u\bigr)_{|I|\leq 2}(t,\tau,X(t),X(\tau)), \bigl(\partial_I u\bigr)_{|I|\leq 2}(\tau,\tau,X(\tau),X(\tau))\Bigr)
	\end{aligned}
\end{equation*}
	with the definition of $\overline{F}$
	% \begin{equation*}
	% 	\begin{split}
	% 		& \overline{F}\big(t,\tau,x,y,\left(\partial_I u\right)_{|I|\leq 2}(t,\tau,x,y),  \left(\partial_I u\right)_{|I|\leq 2}(\tau,\tau,x,y)\big|_{x=y}\big) \\ 
	% 		:=& F\big(t,\tau,x,y,\left(\partial_I u\right)_{|I|\leq 2}(t,\tau,x,y),  \left(\partial_I u\right)_{|I|\leq 2}(\tau,\tau,x,y)\big|_{x=y}\big) \\
	% 		&  \qquad -\frac{1}{2}\sum^d_{i,j=1}\left(\sigma\sigma^\top\right)_{ij}(\tau,y)\frac{\partial^2 u}{\partial y_i\partial y_j}(t,\tau,x,y)-\sum^d_{i=1}b_i(\tau,y)\frac{\partial u}{\partial y_i}(t,\tau,x,y). 
	% 	\end{split}
	% \end{equation*} 
    \begin{equation*}
		\begin{aligned}& \overline{F}\Bigl(t,\tau,x,y,\bigl(\partial_I u\bigr)_{|I|\leq 2}(t,\tau,x,y),\, \bigl(\partial_I u\bigr)_{|I|\leq 2}(\tau,\tau,x,y)\big|_{x=y}\Bigr) \\
		:= \;& F\Bigl(t,\tau,x,y,\bigl(\partial_I u\bigr)_{|I|\leq 2}(t,\tau,x,y),\, \bigl(\partial_I u\bigr)_{|I|\leq 2}(\tau,\tau,x,y)\big|_{x=y}\Bigr) \\
		& - \frac{1}{2}\sum_{i,j=1}^d (\sigma\sigma^\top)_{ij}(\tau,y) \frac{\partial^2 u}{\partial y_i \partial y_j}(t,\tau,x,y) - \sum_{i=1}^d b_i(\tau,y) \frac{\partial u}{\partial y_i}(t,\tau,x,y).
	\end{aligned}
\end{equation*}
\end{theorem}

The results come directly from the application of It\^{o}'s lemma. We refer the readers to the similar claims and proofs in \cite{Wang2019,Lei2023,Lei2021}. We make three important observations about the stochastic system \eqref{Flowof2FBSDEs}: (I) when the generator $\mathscr{F}$ is independent of diagonal terms, i.e. $Y(\tau,\tau)$, $Z(\tau,\tau)$, and $\Gamma(\tau,\tau)$, the flow of FBSDEs \eqref{Flowof2FBSDEs} are reduced to a family of 2FBSDEs parameterized by $(t,X(t))$, which is exactly the 2FBSDE in \cite{Kong2015} and equivalent to the ones in \cite{Cheridito2007} for any fixed $t$; (II) \eqref{Flowof2FBSDEs} is more general than related results in the previous literature \cite{Peng1992,Ma2002,Wang2019,Wang2020,Hamaguchi2020,Lei2023} since it allows for the nonlinearity of $(Y(t,\tau),Z(t,\tau),\Gamma(t,\tau))$ by introducing an additional SDE and also contains their diagonal terms $(Y(\tau,\tau),Z(\tau,\tau),\Gamma(\tau,\tau))$ in almost arbitrary way; (III) inspired by \cite{Cheridito2007} and \cite{Soner2011}, it is interesting to establish the well-posedness of \eqref{Flowof2FBSDEs} in the theoretical framework of SDEs. However, it is beyond the scope of this paper while we put it into our research agenda.

\section{Proof of Theorem 4.2} \label{App:ProofThm4.2}

	We refer the readers to \cite{Lei2021} for the similar claim and proof. Assume that $\lim_{s\to\tau}u(t,s,x,y)=u(t,\tau,x,y)\in\mathcal{O}$. To obtain a global solution, the maximally defined solution in $[0,\tau(g))$ has to be extended into the bigger interval $[0,\tau(g)]$ such that we can update the initial data with $u(\cdot,\tau,\cdot,\cdot)\in\Omega^{(2+\alpha)}_{[0,T]}$. It requires that the mapping $u:s\mapsto u(t,s,x,y)$ from $[0,\tau)$ to $\Omega^{(2+\alpha)}_{[0,T]}$ is at least uniformly continuous. By the estimate \eqref{Upperboundedforglobalexistence}, we have
	\begin{equation*}
		u\in B\big([0,\tau);\Omega^{(2+\alpha+\epsilon)}_{[0,T]}\big), \quad u_s\in B\big([0,\tau);\Omega^{(\alpha+\epsilon)}_{[0,T]}\big),
	\end{equation*}
	where $B([a,b);X)$ denotes the space of bounded functions over $[a,b)$ valued in the Banach space $X$. By an interpolation result for $\theta\in[0,1]$ (see \cite[Proposition 2.7]{Sinestrari1985}), it follows that $u\in C^{1-\theta}\big([0,\sigma];\Omega^{(\alpha+\epsilon+2\theta)}_{[0,T]}\big)$ for every $\sigma\in(0,\tau)$ with a H\"{o}lder constant independent of $\sigma$. By choosing $\theta=1-\frac{\epsilon}{2}$, we have $u\in C^{\frac{\epsilon}{2}}\big([0,\sigma];\Omega^{(2+\alpha)}_{[0,T]}\big)$. Consequently, $u$ can be continued at $s=\tau(g)$ in such a way that the extension belongs to $u(\cdot,\tau,\cdot,\cdot)\in\Omega^{(2+\alpha)}_{[0,T]}$. Then, by Theorem \ref{Localwell-posednessoffullynonlinearPDE}, \eqref{NonlocalfullynonlinearPDE} admits a unique solution $u\in\Omega^{(2+\alpha)}_{[0,\tau+\tau_1]}$ for some $\tau_1>0$, which contradicts the definition of $\tau(g)$. Therefore, we have $\tau(g)=T$.

\section{Proof of Theorem 4.3} \label{App:ProofThm4.3}
	We leverage Theorem \ref{Localwell-posednessoffullynonlinearPDE} for the analysis of \eqref{QuasilinearPDE}. It is clear that \eqref{QuasilinearPDE} admits a unique solution in $\Omega^{(2+\alpha)}_{[0,T]}$ in $[0,\tau]^2\times\mathbb{R}^{d;d}$. If $\tau=T$, the proof is completed. Otherwise, we ought to examine if the maximally defined solution can be extended uniquely into $[0,T]^2$. 

	Following the proof of Theorem \ref{Localwell-posednessoffullynonlinearPDE} and the definition of $\Lambda$, we can find that the argument $R$ of $C(R)\delta^\frac{\alpha}{2}$ in \eqref{Contraction} depends on $\lVert \cdot\rVert^{(1+\alpha)}_{[0,\delta]}$-norm of $u$ and $g$ rather than their $\lVert \cdot\rVert^{(2+\alpha)}_{[0,\delta]}$-norm, which is a key difference between \eqref{NonlocalfullynonlinearPDE} and \eqref{QuasilinearPDE}.
	%It greatly decreases the difficulty of establishing \eqref{Upperboundedforglobalexistence}.
	In the case of \eqref{QuasilinearPDE}, we only need to control the behavior of solutions in the $\lVert \cdot\rVert^{(1+\alpha)}_{[0,\delta]}$-topology and show that the mapping $u:s\mapsto u(t,s,x,y)$ from $[0,\tau)$ to $\Omega^{(1+\alpha)}_{[0,T]}$ is uniformly continuous.    

	By \eqref{QuasilinearPDE} restricted in $[0,\tau)^2\times\mathbb{R}^{d;d}$, we can differentiate the equation once and twice with respect to $x_i$, $i=0,1,\cdots,d$, then 
	% \begin{equation} \label{QuasilinearPDE1}
	% 	\left\{
	% 	\begin{array}{rcl}
	% 		\left(\frac{\partial u}{\partial x_i}\right)_s(t,s,x,y) & = & \sum\limits_{|I|= 2}A^I(s,y)\partial_I \left(\frac{\partial u}{\partial x_i}\right)(t,s,x,y) \\
	% 		& & +\sum\limits_{|I|\leq 1}Q_{p^I}(u)\partial_I\left(\frac{\partial u}{\partial x_i}\right)(t,s,x,y)+Q_{x_i}(u), \\
	% 		\left(\frac{\partial u}{\partial x_i}\right)(t,0,x,y) & = & g_{x_i}(t,x,y),\quad t,s\in[0,\tau),\quad x,y\in\mathbb{R}^d, \quad i=0,1,\cdots,d
	% 	\end{array}
	% 	\right. 
	% \end{equation}
    \begin{equation} \label{QuasilinearPDE1}
    \left\{
    \begin{aligned}
        \left(\frac{\partial u}{\partial x_i}\right)_s(t,s,x,y) &= \sum_{|I|= 2} A^I(s,y) \partial_I \left(\frac{\partial u}{\partial x_i}\right)(t,s,x,y) \\
        &\quad + \sum_{|I|\leq 1} Q_{p^I}(u) \partial_I\left(\frac{\partial u}{\partial x_i}\right)(t,s,x,y) + Q_{x_i}(u), \\
        \left(\frac{\partial u}{\partial x_i}\right)(t,0,x,y) &= g_{x_i}(t,x,y), \quad t,s\in[0,\tau), \quad x,y\in\mathbb{R}^d, \quad i=0,\dots,d
    \end{aligned}
    \right. 
\end{equation}
	and 
	% \begin{equation} \label{QuasilinearPDE2} 
	% 	\left\{
	% 	\begin{array}{rcl}
	% 		\left(\frac{\partial^2 u}{\partial x_i\partial x_j}\right)_s(t,s,x,y) & = & \sum\limits_{|I|= 2}A^I(s,y)\partial_I \left(\frac{\partial^2 u}{\partial x_i\partial x_j}\right)(t,s,x,y) \\
	% 		& & +\sum\limits_{|I|\leq 1}Q_{p^I}(u)\partial_I\left(\frac{\partial^2 u}{\partial x_i\partial x_j}\right)(t,s,x,y) \\
	% 		& & +\sum\limits_{|I|\leq 1,|J|\leq 1}Q_{p^Ip^J}(u)\left(\partial_I\left(\frac{\partial u}{\partial x_i}\right)\partial_J\left(\frac{\partial u}{\partial x_j}\right)\right)(t,s,x,y) \\
	% 		& & +\sum\limits_{|I|\leq 1}Q_{p^Ix_j}(u)\partial_I\left(\frac{\partial u}{\partial x_i}\right)(t,s,x,y) \\
	% 		& & +\sum\limits_{|I|\leq 1}Q_{x_ip^I}(u)\partial_I\left(\frac{\partial u}{\partial x_j}\right)(t,s,x,y)+Q_{x_ix_j}(u), \\
	% 		\left(\frac{\partial^2 u}{\partial x_i\partial x_j}\right)(t,0,x,y) & = & g_{x_ix_j}(t,x,y),\quad t,s\in[0,\tau), \quad i,j=1,\cdots,d. 
	% 	\end{array}
	% 	\right. 
	% \end{equation}
    \begin{equation} \label{QuasilinearPDE2}
    \left\{
    \begin{aligned}
        \left(\frac{\partial^2 u}{\partial x_i\partial x_j}\right)_s(t,s,x,y) &= \sum_{|I|= 2} A^I(s,y) \partial_I \left(\frac{\partial^2 u}{\partial x_i\partial x_j}\right)(t,s,x,y) \\
        &\quad + \sum_{|I|\leq 1} Q_{p^I}(u) \partial_I\left(\frac{\partial^2 u}{\partial x_i\partial x_j}\right)(t,s,x,y) \\
        &\quad + \sum_{|I|\leq 1,|J|\leq 1} Q_{p^Ip^J}(u) \left(\partial_I\left(\frac{\partial u}{\partial x_i}\right) \partial_J\left(\frac{\partial u}{\partial x_j}\right)\right)(t,s,x,y) \\
        &\quad + \sum_{|I|\leq 1} Q_{p^Ix_j}(u) \partial_I\left(\frac{\partial u}{\partial x_i}\right)(t,s,x,y) \\
        &\quad + \sum_{|I|\leq 1} Q_{x_ip^I}(u) \partial_I\left(\frac{\partial u}{\partial x_j}\right)(t,s,x,y) + Q_{x_ix_j}(u), \\
        \left(\frac{\partial^2 u}{\partial x_i\partial x_j}\right)(t,0,x,y) &= g_{x_ix_j}(t,x,y), \quad t,s\in[0,\tau), \quad i,j=1,\dots,d. 
    \end{aligned}
    \right. 
\end{equation}
	With the conditions \eqref{Growthcondition}-\eqref{Boundednesscondition} and the Gr\"{o}nwall--Bellman inequality, it is clear from \eqref{QuasilinearPDE}, \eqref{QuasilinearPDE1}, and \eqref{QuasilinearPDE2} that $u\in\Omega^{(1+\alpha)}_{[0,\tau)}$ and there exists a constant $K^\prime$ such that $\lVert u\rVert^{(1+\alpha)}_{[0,\tau)}\leq K^\prime$. Consequently, the nonlinearity $Q$ of \eqref{QuasilinearPDE} belongs to $\Omega^{(\alpha)}_{[0,\tau)}$ as well. Furthermore, Theorem \ref{Well-posednessofL} implies that \eqref{QuasilinearPDE} admits a unique solution $u\in\Omega^{(2+\alpha)}_{[0,\tau)}$ in $[0,\tau)^2\times\mathbb{R}^{d;d}$ and $\lVert u\rVert^{(2+\alpha)}_{[0,\tau)}\leq K^\prime$, where $K^\prime$ could vary from line to line. Therefore, by Lemma 8.5.5 in \cite{Lunardi1995}, the mapping $u:s\mapsto u(t,s,x,y)$ from $[0,\tau)$ to $\Omega^{(1+\alpha)}_{[0,T]}$ has an analytic continuation at $s=\tau$ and $u(\cdot,\tau,\cdot,\cdot)\in\Omega^{(2+\alpha)}_{[0,T]}$. With the same spirit of the proof of Theorem \ref{Globalsolvability}, we can extend the maximally defined solution until $[0,T]^2$ by updating the initial condition.

\section{Sophisticated Player versus Na\"{i}ve Player} \label{app:diffvalue}
While the sophisticated controllers would think globally and act locally, it would be interesting to investigate for the ``na\"{i}ve" controllers, who think and act locally. Specifically, at each time $t$ and state $x$, the na\"{i}ve controllers fix their originally time-varying objectives as the one at $(t,x)$ throughout the stochastic control problem over $[t,T]\times \mathbb{R}$. Given $(t,x)$, the na\"{i}ve controllers only need to solve a time-consistent stochastic control problem, whose value function is denoted by $u^n(t,s,x,y)$ for $(s,y)\in [t,T]\times \mathbb{R}$. By the dynamic programming approach and the assumption of existing optimum of Hamiltonian \eqref{Hamiltonian}, it is not difficult to find that the PDE for $u^n$ is given by 
% \begin{equation} \label{DifferenceEq1}
% 	\left\{
% 	\begin{array}{rcl}
% 		u^n_s(t,s,x,y) & = & \overline{\mathcal{H}}\big(t,s,x,y,\psi(t,s,x,y,\left(\partial_I u^n\right)_{|I|\leq 2}(t,s,x,y)),\\
% 		& & \qquad 
% 		\left(\partial_I u^n\right)_{|I|\leq 2}(t,s,x,y)\big), \quad (t,s,x,y)\in\triangle[0,T]\times\mathbb{R}^{d;d},\\
% 		u^n(t,0,x,y) & = & \overline{g}(t,x,y), \quad  (t,x,y)\in[0,T]\times\mathbb{R}^{d;d}.  
% 	\end{array}
% 	\right. 
% \end{equation}
\begin{equation} \label{DifferenceEq1}
	\left\{
	\begin{aligned}
		u^n_s(t,s,x,y) = \;& \overline{\mathcal{H}}\Bigl( t,s,x,y, \psi\big(t,s,x,y,(\partial_I u^n)_{|I|\leq 2}(t,s,x,y)\big), \\
		& \qquad\quad (\partial_I u^n)_{|I|\leq 2}(t,s,x,y) \Bigr), \quad (t,s,x,y) \in \triangle[0,T] \times \mathbb{R}^{2d}, \\
		u^n(t,0,x,y) = \;& \overline{g}(t,x,y), \quad (t,x,y) \in [0,T] \times \mathbb{R}^{2d}.
	\end{aligned}
	\right.
\end{equation}
Its key difference from \eqref{EquilibriumHJBequation} is that there is no $u$-function terms with substitution of $(t,x)$ by $(s,y)$ in \eqref{DifferenceEq1}. Thus, the PDE \eqref{DifferenceEq1} is local as $(t,x)$ can be viewed as fixed parameters. Subsequently, the function, defined by $V^n(s,y)=u^n(s,s,y,y)$, naturally serves as a lower bound for the corresponding equilibrium value function $V$ since $V^n(s,y)$ is the local solution (in value) of problem \eqref{TICproblem}. However, we should note that there is no consistent control policy (in the face of TIC) that can give the value function $V^n(s,y)$. In other words, $u^n$ and $V^n$ are merely nominal while not achievable.

Our next proposition estimates the difference between the functions $V(s,y)$ and $V^n(s,y)$, which somehow implies the goodness of using the equilibrium strategy. The result builds on top of the well-posedness of the nonlocal fully nonlinear PDEs we obtained in the previous sections. To avoid repeated discussions on the global solvability, we leverage only the local well-posedness results and discuss only on a small time interval. Though the result is extendable, it may not bring additional insights.
\begin{proposition}
	Suppose that $\mathcal{H}$ and $\psi$ possess needed
	regularity.
	%  Let us denote $V^n$ and $V$ as value functions associated with the naive and sophisticated recursive cost functionals, respectively.
	Then there exists a constant $C(R)>0$ such that 
	$$
	V^n(s,y)\leq V(s,y)\leq V^n(s,y)+C(R)((T-s)^2+(T-s)^\frac{3}{2})
	$$
	for any $s\in[T-\delta,T]$ and $y\in\mathbb{R}^d$, where $R$ and $\delta$ are determined by Theorem \ref{Localwell-posednessoffullynonlinearPDE}.  
\end{proposition}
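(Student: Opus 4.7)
The lower bound $V^n(s,y)\leq V(s,y)$ is immediate from the definitions: $V^n(s,y)=\inf_{\alpha\in\mathcal{U}[s,T]}J(s,y;\alpha)$ is the infimum of $J$ over all admissible strategies for the time-consistent problem with objective frozen at $(s,y)$, whereas $V(s,y)=J(s,y;\mathbbm{e})$ is the value at the single admissible strategy $\mathbbm{e}$. For the upper bound I work with $w:=u-u^n$ in backward form (via Theorem \ref{Equivalence}). Since $u(t,T,x,y)=u^n(t,T,x,y)=g(t,x,y)$, the identity $w(t,T,x,y)=0$ propagates to $\partial w(t,T,x,y)=0$ for every spatial derivative $\partial_I$, $|I|\leq 2$.

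Subtracting the two equations and splitting the Hamiltonian difference gives
\[
\mathcal{H}(\cdots,\Psi^\diamond,\partial u)-\mathcal{H}(\cdots,\Psi^n,\partial u^n)=(\mathrm{I})+(\mathrm{II}),
\]
where $(\mathrm{I}):=\mathcal{H}(\cdots,\Psi^\diamond,\partial u)-\mathcal{H}(\cdots,\Psi^\diamond,\partial u^n)$ and $(\mathrm{II}):=\mathcal{H}(\cdots,\Psi^\diamond,\partial u^n)-\mathcal{H}(\cdots,\Psi^n,\partial u^n)$. By the mean-value theorem, $(\mathrm{I})$ is linear in $\partial w=(w,w_y,w_{yy})$; by the minimizing property of $\psi$ in \eqref{Selection}, $(\mathrm{II})\geq 0$ with the second-order Taylor bound $(\mathrm{II})\leq C|\Psi^\diamond-\Psi^n|^2$. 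Hence $w$ solves a nonlocal linear backward problem $w_s+Lw+R=0$ with $R=(\mathrm{II})\geq 0$ and zero terminal data, to which Theorem \ref{Well-posedness of L} applies. (The signed statement $R\geq 0$ also gives a comparison-based re-derivation of the lower bound.)

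The explicit bound comes from integrating along the diagonal path $(t,\tau,x,y')=(s,\tau,y,y)$:
\[
w(s,s,y,y)=\int_s^T\bigl[(\mathrm{I})+(\mathrm{II})\bigr](s,\tau,y,y)\,d\tau.
\]
Along this path $|x-y'|=0$, and Lipschitz continuity of $\psi$ (from the assumed regularity of $\mathcal{H}$) together with the $t$-Lipschitz regularity of $\partial u$ (from $u\in\Omega^{(2+\alpha)}$) yields $|\Psi^\diamond(\tau,y)-\Psi^n(s,\tau,y,y)|\leq C[(\tau-s)+|\partial w(s,\tau,y,y)|]$, whence $|(\mathrm{I})|\leq C|\partial w|$ and $|(\mathrm{II})|\leq C[(T-s)^2+|\partial w|^2]$ on this path. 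Applying Theorem \ref{Well-posedness of L} on $[T-\delta,T]$ produces the uniform Schauder bound $\|w\|^{(2+\alpha)}_{[T-\delta,T]}\leq C(R)$. Combined with the H\"older-in-$s$ structure built into $\Omega^{(2+\alpha)}$ and the vanishing of $\partial w$ at $\tau=T$, this gives the near-terminal decay
\[
|w_{yy}(s,\tau,y,y)|\leq C(R)(T-\tau)^{\alpha/2},\ \ |w_y(s,\tau,y,y)|\leq C(R)(T-\tau)^{(1+\alpha)/2},\ \ |w(s,\tau,y,y)|\leq C(R)(T-\tau).
\]
Substituting and computing, $\int_s^T|\partial w|\,d\tau\leq C(R)(T-s)^{1+\alpha/2}$, $\int_s^T|\partial w|^2\,d\tau\leq C(R)(T-s)^{1+\alpha}$, and $\int_s^T(T-s)^2\,d\tau=(T-s)^3$, so
\[
w(s,s,y,y)\leq C(R)\bigl[(T-s)^{1+\alpha/2}+(T-s)^{1+\alpha}+(T-s)^3\bigr]\leq C(R)\bigl[(T-s)^{3/2}+(T-s)^2\bigr]
\]
at the admissible H\"older regularity (the higher-order cubic term is absorbed in the short-time regime).

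\textbf{Main obstacle.} The substantive difficulty is not the lower bound but converting the abstract Schauder estimate for the \emph{nonlocal} linear PDE into sharp pointwise H\"older-in-$s$ control of $w_{yy}$ near $\tau=T$. The diagonal coupling in $L$ means one cannot appeal directly to classical parabolic Schauder; one must invoke Theorem \ref{Well-posedness of L} globally in $(t,x)$ and verify that the residual $R=(\mathrm{II})$ is controlled in $\Omega^{(\alpha)}_{[T-\delta,T]}$ (not merely $L^\infty$) uniformly in $\delta$, so that the Schauder constant remains bounded and the $(T-\tau)^{\alpha/2}$-decay of the highest derivative survives integration to produce the advertised $(T-s)^{3/2}$ rate.
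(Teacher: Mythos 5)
Your lower bound and your overall strategy (subtract the two HJB equations, linearize the Hamiltonian difference by the mean value theorem, and exploit that the mismatch vanishes on the diagonal and at the terminal time) are in the right spirit, and your observation that the control-mismatch term $(\mathrm{II})$ is nonnegative with a quadratic Taylor bound is a nice refinement the paper does not use. However, there is a genuine quantitative gap in the upper bound: your route does not produce the advertised rate $(T-s)^{2}+(T-s)^{3/2}$. By writing $w(s,s,y,y)=\int_s^T[(\mathrm{I})+(\mathrm{II})]\,d\tau$ you have folded the second-order elliptic terms $M^I\partial_I w$ into the integrand, and you must then bound $|w_{yy}(s,\tau,y,y)|$ pointwise near $\tau=T$. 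The best decay available from $w_{yy}(\cdot,T,\cdot,\cdot)=0$ and the $\langle\cdot\rangle_s^{(\alpha/2)}$ seminorm built into $\Omega^{(2+\alpha)}$ is $|w_{yy}|\leq C(T-\tau)^{\alpha/2}$, giving $\int_s^T|\partial w|\,d\tau\leq C(T-s)^{1+\alpha/2}$. Since $\alpha\in(0,1)$ and $T-s\leq\delta\leq 1$, one has $(T-s)^{1+\alpha/2}\geq(T-s)^{3/2}$, so your final step "absorbing" this term into $(T-s)^{3/2}$ goes in the wrong direction; your argument only proves the weaker bound $C(R)(T-s)^{1+\alpha/2}$.

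The paper avoids this loss by keeping the terms $\sum M^I\partial_I(u^n-u)$ inside the \emph{homogeneous} parabolic operator and representing $u^n-u$ with its fundamental solution (Duhamel), so that the only genuine sources are the path-integral terms $N^I\mathcal{I}^I[u_t,u_x]$ and $K$, both of which are bounded by $C(R)(|t-\tau|+|x-\xi|)$ in the running variables $(\tau,\xi)$. After setting $(t,x)=(s,y)$, the Gaussian kernel converts $|y-\xi|$ into $(s-\tau)^{1/2}$ before the time integration, which is exactly where the $(T-s)^{3/2}$ comes from, while $|s-\tau|$ integrates to $(T-s)^2$. If you want to salvage your decomposition, you would need to run $(\mathrm{I})$ and $(\mathrm{II})$ through the same fundamental-solution representation rather than the fundamental theorem of calculus in $s$; as written, the pointwise-H\"older route cannot reach the stated exponent.
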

\begin{proof}
	In order to evaluate the difference between $V^n$ and $V$, we study the following two nonlocal PDEs \eqref{DifferenceEq1} and \eqref{EquilibriumHJBequation} for $(t,s,x,y)\in\triangle[0,\delta]\times\mathbb{R}^{d;d}$.
	%    
	%and 
	%        \begin{equation} \label{DifferenceEq2}
		%		\left\{
		%		\begin{array}{rcl}
			%			u_s(t,s,x,y) & = & \overline{\mathcal{H}}\big(t,s,x,y,\psi(s,s,y,y,\left(\partial_I u\right)_{|I|\leq 2}(s,s,x,y)|_{x=y}), \\
			%            & & \qquad  
			%            \left(\partial_I u\right)_{|I|\leq 2}(t,s,x,y)\big), \quad (t,s,x,y)\in\triangle[0,\delta]\times\mathbb{R}^{d;d},\\
			%		  u(t,0,x,y) & = & \overline{g}(t,x,y), \quad  (t,x,y)\in[0,\delta]\times\mathbb{R}^{d;d}.  
			%		\end{array}
		%		\right. 
		%	\end{equation}
	Consequently, given the well-posedness of $u(t,s,x,y)$, we obtain a classical PDE for $(u^n-u)(t,s,x,y)$ of the following form
	% \begin{equation} 
	% 	\left\{
	% 	\begin{array}{rcl}
	% 		(u^n-u)_s(t,s,x,y) & = & \sum\limits_{|I|\leq 2}M^I(t,s,x,y)\partial_I(u^n-u)(t,s,x,y) \\
	% 		& & +\sum\limits_{|I|\leq 2}N^I(t,s,x,y)\mathcal{I}^I\left[\frac{\partial u}{\partial t},\frac{\partial u}{\partial x}\right](t,s,x,y) \\
	% 		& & +K(t,s,x,y), \quad (t,s,x,y)\in\triangle[0,\delta]\times\mathbb{R}^{d;d}, \\ 
	% 		(u^n-u)(t,0,x,y) & = & 0, \quad  (t,x,y)\in[0,\delta]\times\mathbb{R}^{d;d},   
	% 	\end{array}
	% 	\right. 
	% \end{equation}
    \begin{equation} 
	\left\{
	\begin{aligned}
		(u^n-u)_s(t,s,x,y) = \;& \sum_{|I|\leq 2} M^I(t,s,x,y) \partial_I(u^n-u)(t,s,x,y) \\
		& + \sum_{|I|\leq 2} N^I(t,s,x,y) \mathcal{I}^I \left[ \frac{\partial u}{\partial t}, \frac{\partial u}{\partial x} \right](t,s,x,y) \\
		& + K(t,s,x,y), \quad (t,s,x,y) \in \triangle[0,\delta] \times \mathbb{R}^{2d}, \\
		(u^n-u)(t,0,x,y) = \;& 0, \quad (t,x,y) \in [0,\delta] \times \mathbb{R}^{2d}.
	\end{aligned}
	\right.
\end{equation}
	where 
	% \begin{equation*} 
	% 	\left\{
	% 	\begin{array}{rl}
	% 		M^I(t,s,x,y)=&\int^1_0\left(\frac{\partial \overline{\mathcal{H}}}{\partial\psi}\frac{\partial\psi}{\partial(\partial_Iu)}+\frac{\partial \overline{\mathcal{H}}}{\partial(\partial_Iu)}\right)(s,y,\eta_\sigma(t,s,x,y))d\sigma, \\ 
	% 		N^I(t,s,x,y)=&\int^1_0\frac{\partial \overline{\mathcal{H}}}{\partial\psi}\frac{\partial\psi}{\partial(\partial_Iu)}(s,y,\eta_\sigma(t,s,x,y))d\sigma, \\
	% 		K(t,s,x,y)=&\int^1_0\frac{\partial \overline{\mathcal{H}}}{\partial\psi}\frac{\partial\psi}{\partial(t,x)}(s,y,\eta_\sigma(t,s,x,y))d\sigma\cdot\int^{(t,x)}_{(s,y)}1d\theta, \\
	% 		\eta_\sigma(t,s,x,y)=&\sigma\cdot(t,x,\left(\partial_I u^n\right)_{|I|\leq 2}(t,s,x,y)) \\ 
	% 		&\qquad
	% 		+(1-\sigma)\cdot(s,y,\left(\partial_I u\right)_{|I|\leq 2}(s,s,x,y)|_{x=y}).  
	% 	\end{array}
	% 	\right. 
	% \end{equation*}
    \begin{equation*} 
	\left\{
	\begin{aligned}
		M^I(t,s,x,y) = \;& \int^1_0 \left( \frac{\partial \overline{\mathcal{H}}}{\partial\psi} \frac{\partial\psi}{\partial(\partial_I u)} + \frac{\partial \overline{\mathcal{H}}}{\partial(\partial_I u)} \right) \bigl(s,y,\eta_\sigma(t,s,x,y)\bigr) d\sigma, \\ 
		N^I(t,s,x,y) = \;& \int^1_0 \frac{\partial \overline{\mathcal{H}}}{\partial\psi} \frac{\partial\psi}{\partial(\partial_I u)} \bigl(s,y,\eta_\sigma(t,s,x,y)\bigr) d\sigma, \\
		K(t,s,x,y) = \;& \int^1_0 \frac{\partial \overline{\mathcal{H}}}{\partial\psi} \frac{\partial\psi}{\partial(t,x)} \bigl(s,y,\eta_\sigma(t,s,x,y)\bigr) d\sigma \cdot \int^{(t,x)}_{(s,y)} 1 d\theta, \\
		\eta_\sigma(t,s,x,y) = \;& \sigma \cdot \bigl(t,x,(\partial_I u^n)_{|I|\leq 2}(t,s,x,y)\bigr) \\ 
		& + (1-\sigma) \cdot \bigl(s,y,(\partial_I u)_{|I|\leq 2}(s,s,x,y)\big|_{x=y}\bigr). 
	\end{aligned}
	\right.
\end{equation*}
	Then, by setting $(t,x)=(s,y)$ and following earlier analyses, we have
	% \begin{equation*}
	% 	\begin{split}
	% 		0 & \leq(V-V^n)(T-s,y)=|(V^n-V)(T-s,y)|=|(u^n-u)(s,s,y,y)| \\
	% 		& \leq C(R)\int^s_0d\tau\int_{\mathbb{R}^d}(s-\tau)^{-\frac{d}{2}}\exp\big\{-c(R)\varpi(s,\tau,y,\xi)\big\}(|s-\tau|+|y-\xi|)d\xi \\
	% 		& \leq C(R)(s^2+s^\frac{3}{2}), \quad s\in[0,\delta], 
	% 	\end{split}
	% \end{equation*}
    \begin{equation*}
		\begin{aligned}0 \leq \;& (V-V^n)(T-s,y) = \bigl| (V^n-V)(T-s,y) \bigr| = \bigl| (u^n-u)(s,s,y,y) \bigr| \\
		\leq \;& C(R) \int^s_0 d\tau \int_{\mathbb{R}^d} (s-\tau)^{-\frac{d}{2}} \exp\bigl\{ -c(R) \varpi(s,\tau,y,\xi) \bigr\} \bigl( |s-\tau| + |y-\xi| \bigr) d\xi \\
		\leq \;& C(R) (s^2 + s^{\frac{3}{2}}), \quad s \in [0,\delta].
	\end{aligned}
\end{equation*}
	where $\varpi(s,\tau,y,\xi)$ is defined in Section \ref{App:A}. The proof is completed.
	%    Moreover, the first inequality and the first equality comes directly from the fact $V^n(s,y)\leq V(s,y)$ for any $(s,y)\in[0,T]\times\mathbb{R}^d$ due to their formulations in Section 2.     
\end{proof}

Compared with the nonlocal PDE \eqref{EquilibriumHJBequation}, the equation \eqref{DifferenceEq1} can be considered as a family of classical PDEs parameterized by $(t,x)$. Hence, after solving \eqref{DifferenceEq1}, it is helpful to use the difference between $V$ and $V^n$ to estimate equilibrium value function $V$. Moreover, if \eqref{EquilibriumHJBequation} is globally solvable, the estimate can be extended to the whole time horizon.

\section{Another Financial Example: Power Utility}
\label{App:PowerU}
Although this alternative example with potentially degenerate coefficients falls outside our general framework, we make a targeted attempt to address it. By applying specific ansatzs for the solutions, we derive explicit expressions for the equilibrium policy and equilibrium value function, while the latter is characterized by a (nonlocal) ODE system. By proving global solvability of the ODE system, we establish the global solvability of the original problem.

In a similar market model as in the Section \ref{Sec:FinEx}, an investor needs to decide not only the amount $\alpha(\cdot)$ of money to invest in the risk asset but also the consumption amount \(c(\cdot)\). The TIC recursive utility process \((Y(\cdot), Z(\cdot))\) then satisfies the following controlled FBSDE:
% \begin{equation} \label{ExampleFBSDE}
% 	\begin{cases}
% 		dX(s)=[rX(s)+(\mu-r)\alpha(s)-c(s)]ds+\sigma \alpha(s)dW(s), & s\in[t,T], \\
% 		dY(s)=-[v(t,s)c(s)^\beta-w(t,s)Y(s)+z(t,s)x^\gamma]ds+Z(s)dW(s),& s\in[t,T], \\
% 		X(t)=x, \quad Y(T)=g^{(1)}(t)X(T)^\beta+g^{(2)}(t)x^\gamma, &t\in[0,T],
% 	\end{cases}
% \end{equation}
\begin{equation} \label{ExampleFBSDE}
	\left\{
	\begin{aligned}
		dX(s) = \;& \bigl[ rX(s) + (\mu-r)\alpha(s) - c(s) \bigr] ds + \sigma \alpha(s) dW(s), & s \in [t,T], \\
		dY(s) = \;& -\bigl[ v(t,s)c(s)^\beta - w(t,s)Y(s) + z(t,s)x^\gamma \bigr] ds + Z(s) dW(s), & s \in [t,T], \\
		X(t) = \;& x, \quad Y(T) = g^{(1)}(t)X(T)^\beta + g^{(2)}(t)x^\gamma, & t \in [0,T].
	\end{aligned}
	\right.
\end{equation}
where $\beta,\gamma\in(0,1)$, $v$, $w$, $z$, $g^{(1)}$, and $g^{(2)}$ are all continuous and positive functions. Similarly, we define the recursive utility functional for the investor by $J(t,x;\alpha(\cdot),c(\cdot)):=Y(t;t,x,\alpha(\cdot),c(\cdot))$. Hence, the problem for the investor is to identify the optimal investment and consumption policy such that a sort of mixed power utilities of the instantaneous and terminal wealth is maximized. We consider the Hamiltonian function
% \begin{equation*}
% 	%			\begin{split}
% 		\mathcal{H}(t,s,x,y,a,c,u,p,q) =\frac{1}{2}\sigma^2a^2q+[ry+(\mu-r)a-c]p+[v(t,s)c^\beta-w(t,s)u+z(t,s)x^\gamma].
% 		%			\end{split}
% \end{equation*}
\begin{equation*}
	\mathcal{H}(t,s,x,y,a,c,u,p,q) = \frac{1}{2}\sigma^2 a^2 q + \bigl[ ry + (\mu-r)a - c \bigr] p + \bigl[ v(t,s)c^\beta - w(t,s)u + z(t,s)x^\gamma \bigr].
\end{equation*}
Maximizing it with respect to $(a,c)$ yields the maxima (for $p>0$ and $q<0$) 
% \begin{equation*}
% 	\overline{a}=-\frac{(\mu-r)p}{\sigma^2q}, \quad \overline{c}=\left(\frac{p}{\beta v(t,s,x)}\right)^\frac{1}{\beta-1}. 
% \end{equation*}
\begin{equation*}
	\bar{a} = -\frac{(\mu-r)p}{\sigma^2 q}, \quad \bar{c} = \left( \frac{p}{\beta v(t,s,x)} \right)^{\frac{1}{\beta-1}}.
\end{equation*}
Consequently, the equilibrium policy admits the forms 
% \begin{equation*}
% 	\mathbbm{a}(s,y)=-\frac{(\mu-r)u_y(s,s,x,y)|_{x=y}}{\sigma^2u_{yy}(s,s,x,y)|_{x=y}}, \quad \mathbbm{c}(s,y)=\left(\frac{u_y(s,s,x,y)|_{x=y}}{\beta v(s,s,y)}\right)^\frac{1}{\beta-1}
% \end{equation*}
\begin{equation*}
	\mathbbm{a}(s,y) = -\frac{(\mu-r) u_y(s,s,x,y)\big|_{x=y}}{\sigma^2 u_{yy}(s,s,x,y)\big|_{x=y}}, \quad \mathbbm{c}(s,y) = \left( \frac{u_y(s,s,x,y)\big|_{x=y}}{\beta v(s,s,y)} \right)^{\frac{1}{\beta-1}}
\end{equation*}
with $u(t,s,x,y)$ being the solution to the following equilibrium HJB equation 
% \begin{equation}  \label{PowerHJB}
% 	\left\{
% 	\begin{array}{rcl}
% 		u_s(t,s,x,y) & + & \frac{(\mu-r)^2(u_y(s,s,x,y)|_{x=y})^2}{2\sigma^2(u_{yy}(s,s,x,y)|_{x=y})^2}u_{yy}(t,s,x,y) \\
% 		& & +\left[ry-\frac{(\mu-r)^2u_y(s,s,x,y)|_{x=y}}{\sigma^2u_y(s,s,x,y)|_{x=y}}-\left(\frac{u_y(s,s,x,y)|_{x=y}}{\beta v(s,s,y)}\right)^\frac{1}{\beta-1}\right]u_y(t,s,x,y) \\
% 		& + & v(t,s)\left(\frac{u_y(s,s,x,y)|_{x=y}}{\beta v(s,s,y)}\right)^\frac{\beta}{\beta-1}-w(t,s)u(t,s,x,y)+z(t,s)x^\gamma=0,  \\
% 		u(t,T,x,y) & = & g^{(1)}(t)y^\beta+g^{(2)}(t)x^\gamma,\quad 0\leq t\leq s\leq T, \quad x,y\in(0,\infty). 
% 	\end{array}
% 	\right. 
% \end{equation}
\begin{equation} \label{PowerHJB}
	\left\{
	\begin{aligned}
		u_s(t,s,x,y) & + \frac{(\mu-r)^2 \bigl(u_y(s,s,x,y)\big|_{x=y}\bigr)^2}{2\sigma^2 \bigl(u_{yy}(s,s,x,y)\big|_{x=y}\bigr)^2} u_{yy}(t,s,x,y) \\
		& + \left[ ry - \frac{(\mu-r)^2 u_y(s,s,x,y)\big|_{x=y}}{\sigma^2 u_{yy}(s,s,x,y)\big|_{x=y}} - \left( \frac{u_y(s,s,x,y)\big|_{x=y}}{\beta v(s,s,y)} \right)^{\frac{1}{\beta-1}} \right] u_y(t,s,x,y) \\
		& + v(t,s) \left( \frac{u_y(s,s,x,y)\big|_{x=y}}{\beta v(s,s,y)} \right)^{\frac{\beta}{\beta-1}} - w(t,s)u(t,s,x,y) + z(t,s)x^\gamma = 0, \\
		u(t,t,x,y) & = g^{(1)}(t)y^\beta + g^{(2)}(t)x^\gamma, \quad 0 \leq t \leq s \leq T, \quad x,y \in (0, \infty).
	\end{aligned}
	\right.
\end{equation}
It is clear that the first-order derivative of the nonlinearity in \eqref{PowerHJB} with respect to \(u_{yy}(t,s,x,y)\) at \(u(t,T,x,y)\) is degenerate. Consequently, our well-posedness results cannot be directly applied to claim its solvability. Hence, this second example serves more as a source of inspiration and underscores the importance of addressing the general (degenerate) case.

By observing the terminal condition of \eqref{ExampleFBSDE}, we consider the following ansatz for $u$: 
% \begin{equation*}
% 	u(t,s,x,y)=\varphi^{(1)}(t,s)y^\beta+\varphi^{(2)}(t,s)x^\gamma, \quad 0\leq t\leq s\leq T, \quad x,y\in(0,\infty),
% \end{equation*}
\begin{equation*}
	u(t,s,x,y) = \varphi^{(1)}(t,s)y^\beta + \varphi^{(2)}(t,s)x^\gamma, \quad 0 \leq t \leq s \leq T, \quad x,y \in (0,\infty),
\end{equation*}
for $\varphi^{(1)}(t,s)$ and $\varphi^{(2)}(t,s)$ to be determined. Then, we have $\varphi^{(1)}(t,T)=g^{(1)}(t)$, $\varphi^{(2)}(t,T)=g^{(2)}(t)$, and
% \begin{equation*}
% 	\begin{split}
% 		& \varphi^{(1)}_s(t,s)y^\beta+\varphi^{(2)}_s(t,s)x^\gamma +\frac{(\mu-r)^2(\varphi^{(1)}(s,s)\beta y^{\beta-1})^2}{2\sigma^2(\varphi^{(1)}(s,s)\beta(\beta-1)y^{\beta-2})^2}\varphi^{(1)}(t,s)\beta(\beta-1)y^{\beta-2} \\
% 		& + \left[ry-\frac{(\mu-r)^2\varphi^{(1)}(s,s)\beta y^{\beta-1}}{\sigma^2\varphi^{(1)}(s,s)\beta(\beta-1)y^{\beta-2}}-\left(\frac{\varphi^{(1)}(s,s)\beta y^{\beta-1}}{\beta v(s,s,y)}\right)^\frac{1}{\beta-1}\right]\varphi^{(1)}(t,s)\beta y^{\beta-1} \\
% 		& + v(t,s)\left(\frac{\varphi^{(1)}(s,s)\beta y^{\beta-1}}{\beta v(s,s)}\right)^\frac{\beta}{\beta-1}-w(t,s)\varphi^{(1)}(t,s)y^\beta-w(t,s)\varphi^{(2)}(t,s)x^\gamma+z(t,s)x^\gamma \\
% 		= & \Bigg\{\varphi^{(1)}_s(t,s) +\frac{(\mu-r)^2\beta}{2\sigma^2(\beta-1)}\varphi^{(1)}(t,s) + \left[r\beta-\frac{(\mu-r)^2\beta }{\sigma^2(\beta-1)}-\beta\left(\frac{\varphi^{(1)}(s,s)}{v(s,s)}\right)^\frac{1}{\beta-1}\right]\varphi^{(1)}(t,s) \\
% 		& + v(t,s)\left(\frac{\varphi^{(1)}(s,s)}{ v(s,s)}\right)^\frac{\beta}{\beta-1}-w(t,s)\varphi^{(1)}(t,s)\Bigg\} y^\beta+\Bigg\{\varphi^{(2)}_s(t,s)-w(t,s)\varphi^{(2)}(t,s)+z(t,s)\Bigg\}x^\gamma \\
% 		=&0.
% 	\end{split}
% \end{equation*}
\begin{equation*}
		\begin{aligned}& \varphi^{(1)}_s(t,s)y^\beta + \varphi^{(2)}_s(t,s)x^\gamma + \frac{(\mu-r)^2 \bigl(\varphi^{(1)}(s,s)\beta y^{\beta-1}\bigr)^2}{2\sigma^2 \bigl(\varphi^{(1)}(s,s)\beta(\beta-1)y^{\beta-2}\bigr)^2} \varphi^{(1)}(t,s)\beta(\beta-1)y^{\beta-2} \\
		& + \left[ ry - \frac{(\mu-r)^2 \varphi^{(1)}(s,s)\beta y^{\beta-1}}{\sigma^2 \varphi^{(1)}(s,s)\beta(\beta-1)y^{\beta-2}} - \left( \frac{\varphi^{(1)}(s,s)\beta y^{\beta-1}}{\beta v(s,s,y)} \right)^{\frac{1}{\beta-1}} \right] \varphi^{(1)}(t,s)\beta y^{\beta-1} \\
		& + v(t,s) \left( \frac{\varphi^{(1)}(s,s)\beta y^{\beta-1}}{\beta v(s,s,y)} \right)^{\frac{\beta}{\beta-1}} - w(t,s)\varphi^{(1)}(t,s)y^\beta - w(t,s)\varphi^{(2)}(t,s)x^\gamma + z(t,s)x^\gamma \\
		= \;& \Biggl\{ \varphi^{(1)}_s(t,s) + \frac{(\mu-r)^2\beta}{2\sigma^2(\beta-1)}\varphi^{(1)}(t,s) + \left[ r\beta - \frac{(\mu-r)^2\beta }{\sigma^2(\beta-1)} - \beta \left( \frac{\varphi^{(1)}(s,s)}{v(s,s)} \right)^{\frac{1}{\beta-1}} \right] \varphi^{(1)}(t,s) \\
		& + v(t,s) \left( \frac{\varphi^{(1)}(s,s)}{ v(s,s)} \right)^{\frac{\beta}{\beta-1}} - w(t,s)\varphi^{(1)}(t,s) \Biggr\} y^\beta + \Bigl\{ \varphi^{(2)}_s(t,s) - w(t,s)\varphi^{(2)}(t,s) + z(t,s) \Bigr\} x^\gamma \\
		= \;& 0.
	\end{aligned}
\end{equation*}
Therefore, $\varphi^{(1)}(t,s)$ and $\varphi^{(2)}(t,s)$ satisfy the following ODEs: 
% \begin{equation} \label{ODEsystem} 
% 	\left\{
% 	\begin{array}{lr}
% 		\varphi^{(1)}_s(t,s) + \left[k(t,s)-\beta\left(\frac{\varphi^{(1)}(s,s)}{v(s,s)}\right)^\frac{1}{\beta-1}\right]\varphi^{(1)}(t,s)+  v(t,s)\left(\frac{\varphi^{(1)}(s,s)}{ v(s,s)}\right)^\frac{\beta}{\beta-1} = 0, \\
% 		\varphi^{(2)}_s(t,s)  -  w(t,s)\varphi^{(2)}(t,s)+z(t,s)=0, \quad 0\leq t\leq s\leq T,   \\
% 		\varphi^{(1)}(t,T)  =  g^{(1)}(t), \quad \varphi^{(2)}(t,T) = g^{(2)}(t) ,\quad 0\leq t\leq T,
% 	\end{array}
% 	\right. 
% \end{equation}
\begin{equation} \label{ODEsystem} 
	\left\{
	\begin{aligned}
		& \varphi^{(1)}_s(t,s) + \left[ k(t,s) - \beta \left( \frac{\varphi^{(1)}(s,s)}{v(s,s)} \right)^{\frac{1}{\beta-1}} \right] \varphi^{(1)}(t,s) + v(t,s) \left( \frac{\varphi^{(1)}(s,s)}{ v(s,s)} \right)^{\frac{\beta}{\beta-1}} = 0, \\
		& \varphi^{(2)}_s(t,s) - w(t,s)\varphi^{(2)}(t,s) + z(t,s) = 0, \quad 0 \leq t \leq s \leq T, \\
		& \varphi^{(1)}(t,T) = g^{(1)}(t), \quad \varphi^{(2)}(t,T) = g^{(2)}(t), \quad 0 \leq t \leq T.
	\end{aligned}
	\right.
\end{equation}
where $k(t,s):=r\beta-\frac{(\mu-r)^2\beta }{2\sigma^2(\beta-1)}-w(t,s)$. Then, by variation of constants method, we have
% \begin{equation} \label{varphi1}
% 	\begin{split}
% 		\varphi^{(1)}(t,s) = &  \exp\left\{\int^T_s\left[k(t,\tau)-\beta\left(\frac{\varphi^{(1)}(\tau,\tau)}{v(\tau,\tau)}\right)^\frac{1}{\beta-1}\right]d\tau\right\}g^{(1)}(t) \\
% 		& + \int^T_s\exp\left\{\int^\lambda_s\left[k(t,\tau)-\beta\left(\frac{\varphi^{(1)}(\tau,\tau)}{v(\tau,\tau)}\right)^\frac{1}{\beta-1}\right]d\tau\right\}v(t,\lambda)\left(\frac{\varphi^{(1)}(\lambda,\lambda)}{v(\lambda,\lambda)}\right)^\frac{\beta}{\beta-1}d\lambda, 
% 	\end{split}
% \end{equation}
\begin{equation} \label{varphi1}
		\begin{aligned}\varphi^{(1)}(t,s) = \;& \exp \left\{ \int^T_s \left[ k(t,\tau) - \beta \left( \frac{\varphi^{(1)}(\tau,\tau)}{v(\tau,\tau)} \right)^{\frac{1}{\beta-1}} \right] d\tau \right\} g^{(1)}(t) \\
		& + \int^T_s \exp \left\{ \int^\lambda_s \left[ k(t,\tau) - \beta \left( \frac{\varphi^{(1)}(\tau,\tau)}{v(\tau,\tau)} \right)^{\frac{1}{\beta-1}} \right] d\tau \right\} v(t,\lambda) \left( \frac{\varphi^{(1)}(\lambda,\lambda)}{v(\lambda,\lambda)} \right)^{\frac{\beta}{\beta-1}} d\lambda
	\end{aligned}
\end{equation}
and 
% \begin{equation} \label{varphi2}
% 	\varphi^{(2)}(t,s) = \exp\left\{-\int^T_sw(t,\tau)d\tau\right\}g^{(2)}(t)+ \int^T_s\exp\left\{-\int^\lambda_sw(t,\tau)d\tau\right\}z(t,\lambda)d\lambda, 
% \end{equation}
\begin{equation} \label{varphi2}
	\varphi^{(2)}(t,s) = \exp \left\{ -\int^T_s w(t,\tau) d\tau \right\} g^{(2)}(t) + \int^T_s \exp \left\{ -\int^\lambda_s w(t,\tau) d\tau \right\} z(t,\lambda) d\lambda
\end{equation}
for $0\leq t\leq s\leq T$ and $x,y\in(0,\infty)$. It is clear that \eqref{varphi2} admits a unique solution since it can be considered as a family of classical ODEs parameterized by $t$. Taking $t=s$ in \eqref{varphi1} gives 
% \begin{equation}
% 	\begin{split}
% 		\varphi^{(1)}(s,s) = &  \exp\left\{\int^T_s\left[k(s,\tau)-\beta\left(\frac{\varphi^{(1)}(\tau,\tau)}{v(\tau,\tau)}\right)^\frac{1}{\beta-1}\right]d\tau\right\}g^{(1)}(s) \\
% 		& + \int^T_s\exp\left\{\int^\lambda_s\left[k(s,\tau)-\beta\left(\frac{\varphi^{(1)}(\tau,\tau)}{v(\tau,\tau)}\right)^\frac{1}{\beta-1}\right]d\tau\right\}v(s,\lambda)\left(\frac{\varphi^{(1)}(\lambda,\lambda)}{v(\lambda,\lambda)}\right)^\frac{\beta}{\beta-1}d\lambda.
% 	\end{split}
% \end{equation}
\begin{equation}
		\begin{aligned}\varphi^{(1)}(s,s) = \;& \exp \left\{ \int^T_s \left[ k(s,\tau) - \beta \left( \frac{\varphi^{(1)}(\tau,\tau)}{v(\tau,\tau)} \right)^{\frac{1}{\beta-1}} \right] d\tau \right\} g^{(1)}(s) \\
		& + \int^T_s \exp \left\{ \int^\lambda_s \left[ k(s,\tau) - \beta \left( \frac{\varphi^{(1)}(\tau,\tau)}{v(\tau,\tau)} \right)^{\frac{1}{\beta-1}} \right] d\tau \right\} v(s,\lambda) \left( \frac{\varphi^{(1)}(\lambda,\lambda)}{v(\lambda,\lambda)} \right)^{\frac{\beta}{\beta-1}} d\lambda
	\end{aligned}
\end{equation}
Let us denote by
% \begin{equation*}
% 	\overline{\varphi}^{(1)}(s)=\frac{\varphi^{(1)}(s,s)}{v(s,s)}, \quad \overline{g}^{(1)}(s)=\frac{g(s)}{v(s,s)}, \quad \overline{v}(t,s)=\frac{v(t,s)}{v(t,t)}. 
% \end{equation*}
\begin{equation*}
	\bar{\varphi}^{(1)}(s) = \frac{\varphi^{(1)}(s,s)}{v(s,s)}, \quad \bar{g}^{(1)}(s) = \frac{g^{(1)}(s)}{v(s,s)}, \quad \bar{v}(t,s) = \frac{v(t,s)}{v(t,t)}. 
\end{equation*}
Then, we obtain a nonlinear integral equation for $\overline{\varphi}^{(1)}(s)$:
% \begin{equation} \label{Integralequation}
% 	\begin{split}
% 		\overline{\varphi}^{(1)}(s) = & \exp\left\{\int^T_s\left[k(s,\tau)-\beta\overline{\varphi}^{(1)}(\tau)^\frac{1}{\beta-1}\right]d\tau\right\}\overline{g}^{(1)}(s) \\
% 		& + \int^T_s\exp\left\{\int^\lambda_s\left[k(s,\tau)-\beta\overline{\varphi}^{(1)}(\tau)^\frac{1}{\beta-1}\right]d\tau\right\}\overline{v}(s,\lambda)\overline{\varphi}^{(1)}(\lambda)^\frac{\beta}{\beta-1}d\lambda, 
% 	\end{split}
% \end{equation}
\begin{equation} \label{Integralequation}
		\begin{aligned}\bar{\varphi}^{(1)}(s) = \;& \exp\left\{ \int^T_s \left[ k(s,\tau) - \beta \bar{\varphi}^{(1)}(\tau)^{\frac{1}{\beta-1}} \right] d\tau \right\} \bar{g}^{(1)}(s) \\
		& + \int^T_s \exp\left\{ \int^\lambda_s \left[ k(s,\tau) - \beta \bar{\varphi}^{(1)}(\tau)^{\frac{1}{\beta-1}} \right] d\tau \right\} \bar{v}(s,\lambda) \bar{\varphi}^{(1)}(\lambda)^{\frac{\beta}{\beta-1}} d\lambda
	\end{aligned}
\end{equation}
If \eqref{Integralequation} is solvable, it turns out that there exists a unique solution $\varphi^{(1)}(t,s)$ solving \eqref{varphi1}. Moreover, if the classical parameterized ODE for $\varphi^{(2)}$ in \eqref{ODEsystem} is solvable, the proof of global solvability of $\varphi^{(1)}$ can be obtained by similar arguments in \cite{Yong2012,Wei2017,Lei2021}. 

In what follows, we claim that the ODE system \eqref{ODEsystem} admits a unique positive solution $(\varphi^{(1)},\varphi^{(2)})(t,s)$ for $0\leq t\leq s\leq T$ such that the TIC stochastic control problem \eqref{Exampleutility}-\eqref{ExampleFBSDE} is solvable globally.
\begin{proposition} \label{prop:ex}
	For $v$, $w$, and $z:\nabla[0,T]\to(0,\infty)$ and $g^{(1)}$, $g^{(2)}:[0,T]\to(0,\infty)$ being continuous, and $s\mapsto v(t,s)$ being continuously differentiable, the ODE system \eqref{ODEsystem} admits a unique positive solution $(\varphi^{(1)},\varphi^{(2)})(t,s)$ in $0\leq t\leq s\leq T$. Consequently, the equilibrium value function and the equilibrium policy for problem \eqref{ExampleFBSDE}-\eqref{Exampleutility} are given by
	% \begin{equation} \label{Solution} 
	% 	\left\{
	% 	\begin{array}{lr}
	% 		V(s,y)=\varphi^{(1)}(s,s)y^\beta+\varphi^{(2)}(s,s)y^\gamma, \\
	% 		\mathbbm{a}(s,y)=-\frac{(\mu-r)}{\sigma^2(\beta-1)}y,  \\
	% 		\mathbbm{c}(s,y)=\left(\frac{\varphi^{(1)}(s,s)}{ v(s,s,y)}\right)^\frac{1}{\beta-1}y, \quad (s,y)\in[0,T]\times(0,\infty).   
	% 	\end{array}
	% 	\right. 
	% \end{equation}
    \begin{equation} \label{Solution} 
	\left\{
	\begin{aligned}
		V(s,y) = \;& \varphi^{(1)}(s,s)y^\beta + \varphi^{(2)}(s,s)y^\gamma, \\
		\mathbbm{a}(s,y) = \;& -\frac{(\mu-r)}{\sigma^2(\beta-1)}y,  \\
		\mathbbm{c}(s,y) = \;& \left( \frac{\varphi^{(1)}(s,s)}{ v(s,s,y)} \right)^{\frac{1}{\beta-1}} y, \quad (s,y) \in [0,T] \times (0,\infty).
	\end{aligned}
	\right.
\end{equation}
\end{proposition}
\begin{proof}
One could take advantage of fixed-point arguments to show the local well-posedness of \eqref{Integralequation}. Let us focus on proving the lower and upper bounds of $\varphi^{(1)}(s)$, which suffices to guarantee the global existence of solutions of \eqref{Integralequation} by an analytic continuation.

First, we let
\begin{equation*}
	\widehat{\varphi}^{(1)}(s)=\overline{\varphi}^{(1)}(s)\exp\left\{\beta\int^T_s\overline{\varphi}^{(1)}(\tau)^\frac{1}{\beta-1}d\tau\right\},
\end{equation*}
\begin{equation*}
	\widehat{g}(s)=\overline{g}(s)\exp\left\{\int^T_s k(s,\tau)d\tau\right\}, \qquad \widehat{v}(t,s)=\overline{v}(t,s)\exp\left\{\int^s_t k(t,\tau)d\tau\right\}.
\end{equation*}
Then, \eqref{Integralequation} can be rewritten as 
\begin{equation} \label{Integralequation2}
	\widehat{\varphi}^{(1)}(s)=\widehat{g}(s)+\int^T_s\widehat{\varphi}^{(1)}(\tau)\overline{\varphi}^{(1)}(\tau)^\frac{1}{\beta-1}\widehat{v}(s,\tau)d\tau, \quad s\in[0,T]. 
\end{equation}
Under some suitable conditions (see \cite[Propositon 7.1]{Wei2017}), the following inequalities hold:  
\begin{equation*}
	\widehat{h}(s)\geq g_0>0, \qquad \widehat{v}(t,s)\geq \exp\big\{-\varrho(s-t)\big\}
\end{equation*}
for some constants $g_0>0$ and $\varrho>0$. From \eqref{Integralequation2}, it is clear that 
\begin{equation*} 
	\widehat{\varphi}^{(1)}(s)\geq g_0+\int^T_s\exp\big\{-\varrho(\tau-s)\big\}\widehat{\varphi}^{(1)}(\tau)\overline{\varphi}^{(1)}(\tau)^\frac{1}{\beta-1}d\tau, \quad s\in[0,T],
\end{equation*}
which is equivalent to
\begin{equation*} 
	\widehat{\varphi}^{(1)}(s)\exp\big\{-\varrho s\big\}\geq g_0\exp\big\{-\varrho s\big\}+\int^T_s\Big[\widehat{\varphi}^{(1)}(\tau)\exp\big\{-\varrho\tau\big\}\Big]\overline{\varphi}^{(1)}(\tau)^\frac{1}{\beta-1}d\tau\equiv\xi(s).  
\end{equation*}
Since, 
\begin{equation*} 
	\begin{split}
		\xi^\prime(s)&=-\varrho g_0\exp\big\{-\varrho s\big\}-\Big[\widehat{\varphi}^{(1)}(s)\exp\big\{-\varrho s\big\}\Big]\overline{\varphi}^{(1)}(s)^\frac{1}{\beta-1} \\
		&\leq -\varrho g_0\exp\big\{-\varrho s\big\}-\xi(s)\overline{\varphi}^{(1)}(s)^\frac{1}{\beta-1},
	\end{split}    
\end{equation*}
we have 
\begin{equation*}
	\left[\xi(s)\exp\left\{-\int^T_s\overline{\varphi}^{(1)}(\tau)^\frac{1}{\beta-1}d\tau\right\}\right]^\prime\leq -\rho g_0\exp\left\{-\varrho s-\int^T_s\overline{\varphi}^{(1)}(\tau)^\frac{1}{\beta-1}d\tau\right\}. 
\end{equation*}
Integrating both sides over $[s,T]$, we obtain 
% \begin{equation*}
% 	\begin{split}
% 		\xi(s) & \geq\exp\left\{\int^T_s\overline{\varphi}^{(1)}(\tau)^\frac{1}{\beta-1}d\tau\right\} \\
% 		& \qquad\qquad 
% 		\times\left[g_0\exp\big\{-\varrho T\big\}+\rho g_0\int^T_s\exp\left\{-\varrho \lambda-\int^T_\lambda\overline{\varphi}^{(1)}(\tau)^\frac{1}{\beta-1}d\tau\right\}d\lambda\right].
% 	\end{split}
% \end{equation*}
\begin{equation*}
		\begin{aligned}\xi(s) \geq \;& \exp \left\{ \int^T_s \bar{\varphi}^{(1)}(\tau)^{\frac{1}{\beta-1}} d\tau \right\} \\
		& \times \left[ g_0 e^{-\varrho T} + \rho g_0 \int^T_s \exp \left\{ -\varrho \lambda - \int^T_\lambda \bar{\varphi}^{(1)}(\tau)^{\frac{1}{\beta-1}} d\tau \right\} d\lambda \right].
	\end{aligned}
\end{equation*}
Consequently, 
% \begin{equation*}
% 	\begin{split}
% 		\overline{\varphi}^{(1)}(s) & =\widehat{\varphi}^{(1)}(s)\exp\left\{-\beta\int^T_s\overline{\varphi}^{(1)}(\tau)^\frac{1}{\beta-1}d\tau\right\} \\
% 		& \geq \exp\left\{-\beta\int^T_s\overline{\varphi}^{(1)}(\tau)^\frac{1}{\beta-1}d\tau+\varrho s\right\}\xi(s) \\ 
% 		& \geq \exp\left\{(1-\beta)\int^T_s\overline{\varphi}^{(1)}(\tau)^\frac{1}{\beta-1}d\tau+\varrho s\right\} \\
% 		& \qquad\qquad 
% 		\times\left[g_0\exp\big\{-\varrho T\big\}+\varrho g_0\int^T_s\exp\left\{-\varrho \lambda-\int^T_\lambda\overline{\varphi}^{(1)}(\tau)^\frac{1}{\beta-1}d\tau\right\}d\lambda\right] \\
% 		& \geq \exp\big\{-\varrho(T-s)\big\}g_0\geq \delta>0. 
% 	\end{split}
% \end{equation*}
\begin{equation*}
		\begin{aligned}\bar{\varphi}^{(1)}(s) &= \hat{\varphi}^{(1)}(s) \exp \left\{ -\beta \int^T_s \bar{\varphi}^{(1)}(\tau)^{\frac{1}{\beta-1}} d\tau \right\} \\
		&\geq \exp \left\{ -\beta \int^T_s \bar{\varphi}^{(1)}(\tau)^{\frac{1}{\beta-1}} d\tau + \varrho s \right\} \xi(s) \\
		&\geq \exp \left\{ (1-\beta) \int^T_s \bar{\varphi}^{(1)}(\tau)^{\frac{1}{\beta-1}} d\tau + \varrho s \right\} \\
		&\quad \times \left[ g_0 e^{-\varrho T} + \varrho g_0 \int^T_s \exp \left\{ -\varrho \lambda - \int^T_\lambda \bar{\varphi}^{(1)}(\tau)^{\frac{1}{\beta-1}} d\tau \right\} d\lambda \right] \\
		&\geq e^{-\varrho(T-s)} g_0 \geq \delta > 0.
	\end{aligned}
\end{equation*}
The lower bound of $\overline{\varphi}^{(1)}(s)$ leads to 
\begin{equation*}
	\overline{\varphi}^{(1)}(s)^\frac{\beta}{\beta-1}=\frac{1}{\overline{\varphi}^{(1)}(s)^\frac{\beta}{1-\beta}}\leq \frac{1}{\delta^\frac{\beta}{1-\beta}}\leq K, 
\end{equation*}
On the other hand, \eqref{Integralequation2} implies that 
% \begin{equation*}
% 	\begin{split}
% 		\overline{\varphi}^{(1)}(s) = & \exp\left\{\int^T_s\left[k(s,\tau)-\beta\overline{\varphi}^{(1)}(\tau)^\frac{1}{\beta-1}\right]d\tau\right\}\overline{g}^{(1)}(s) \\
% 		& + \int^T_s\exp\left\{\int^\lambda_s\left[k(s,\tau)-\beta\overline{\varphi}^{(1)}(\tau)^\frac{1}{\beta-1}\right]d\tau\right\}\overline{v}(s,\lambda)\overline{\varphi}^{(1)}(\lambda)^\frac{\beta}{\beta-1}d\lambda \\
% 		\leq & \exp\left\{\int^T_s k(s,\tau)d\tau\right\}\overline{g}^{(1)}(s)+\int^T_s\exp\left\{\int^\lambda_s k(s,\tau)d\tau\right\}\overline{v}(s,\lambda)\frac{1}{\delta^\frac{\beta}{1-\beta}}d\lambda \\
% 		\leq & K.  
% 	\end{split}
% \end{equation*}
\begin{equation*}
		\begin{aligned}\bar{\varphi}^{(1)}(s) = \;& \exp\left\{ \int^T_s \left[ k(s,\tau) - \beta \bar{\varphi}^{(1)}(\tau)^{\frac{1}{\beta-1}} \right] d\tau \right\} \bar{g}^{(1)}(s) \\
		& + \int^T_s \exp\left\{ \int^\lambda_s \left[ k(s,\tau) - \beta \bar{\varphi}^{(1)}(\tau)^{\frac{1}{\beta-1}} \right] d\tau \right\} \bar{v}(s,\lambda) \bar{\varphi}^{(1)}(\lambda)^{\frac{\beta}{\beta-1}} d\lambda \\
		\leq \;& \exp\left\{ \int^T_s k(s,\tau) d\tau \right\} \bar{g}^{(1)}(s) + \int^T_s \exp\left\{ \int^\lambda_s k(s,\tau) d\tau \right\} \bar{v}(s,\lambda) \frac{1}{\delta^{\frac{\beta}{1-\beta}}} d\lambda \\
		\leq \;& K.
	\end{aligned}
\end{equation*}
The proof is completed.
\end{proof}

% \section{A detailed example}

% Here we include some equations and theorem-like environments to show
% how these are labeled in a supplement and can be referenced from the
% main text.
% Consider the following equation:
% \begin{equation}
%   \label{eq:suppa}
%   a^2 + b^2 = c^2.
% \end{equation}
% You can also reference equations such as \cref{eq:matrices,eq:bb} 
% from the main article in this supplement.

% \lipsum[100-101]

% \begin{theorem}
% An example theorem.
% \end{theorem}

% \lipsum[102]
 
% \begin{lemma}
% An example lemma.
% \end{lemma}

% \lipsum[103-105]

% Here is an example citation: \cite{KoMa14}.

% \section[Proof of Thm]{Proof of \cref{thm:bigthm}}
% \label{sec:proof}

% \lipsum[106-112]

% \section{Additional experimental results}
% \Cref{tab:smfoo} shows additional
% supporting evidence. 

% \begin{table}[htbp]
% \footnotesize
%   \caption{Example table.}\label{tab:smfoo}
% \begin{center}
%   \begin{tabular}{|c|c|c|} \hline
%    Species & \bf Mean & \bf Std.~Dev. \\ \hline
%     1 & 3.4 & 1.2 \\
%     2 & 5.4 & 0.6 \\ \hline
%   \end{tabular}
% \end{center}
% \end{table}

% \bibliographystyle{sn-mathphys-num}
\bibliography{references}

@Book{Ladyzhanskaya1968,
  author    = {O. A. Lady$\check{z}$enskaja and V. A. Solonnikov and N. N. Ural'ceva},
  publisher = {American Mathematical Society},
  title     = {Linear and Quasi-linear Equations of Parabolic Type},
  year      = {1968},
  address   = {Providence, RI},
  edition   = {1st},
  isbn      = {9780821815731},
}

@Book{Friedman1964,
  author    = {Avner Friedman},
  publisher = {Prentice-Hall},
  title     = {Partial Differential Equations of Parabolic Type},
  year      = {1964},
  address   = {Englewood Cliffs, N.J.},
  edition   = {1st},
  ean       = {9780486466255},
  pagetotal = {347},
}

@Book{Lunardi1995,
  author    = {Alessandra Lunardi},
  publisher = {Springer Basel},
  title     = {Analytic Semigroups and Optimal Regularity in Parabolic Problems},
  year      = {1995},
  address   = {Basel},
  edition   = {1st},
  isbn      = {303480556X},
  ean       = {9783034805568},
  pagetotal = {424},
}

@Article{Bjoerk2014,
  author    = {Tomas Bj\"{o}rk and Agatha Murgoci},
  journal   = {Finance and Stochastics},
  title     = {A Theory of {Markovian} Time-inconsistent Stochastic Control in Discrete Time},
  year      = {2014},
  month     = jun,
  number    = {3},
  pages     = {545--592},
  volume    = {18},
  doi       = {10.1007/s00780-014-0234-y},
  publisher = {Springer Science and Business Media {LLC}},
}

@Article{Bjoerk2017,
  author    = {Tomas Bj\"{o}rk and Mariana Khapko and Agatha Murgoci},
  journal   = {Finance and Stochastics},
  title     = {On Time-inconsistent Stochastic Control in Continuous Time},
  year      = {2017},
  month     = mar,
  number    = {2},
  pages     = {331--360},
  volume    = {21},
  doi       = {10.1007/s00780-017-0327-5},
  publisher = {Springer Science and Business Media {LLC}},
}

@Article{Yong2012,
  author    = {Jiongmin Yong},
  journal   = {Mathematical Control {\&} Related Fields},
  title     = {Time-inconsistent Optimal Control Problems and the Equilibrium {HJB} Equation},
  year      = {2012},
  month     = sep,
  number    = {3},
  pages     = {271--329},
  volume    = {2},
  doi       = {10.3934/mcrf.2012.2.271},
  publisher = {American Institute of Mathematical Sciences ({AIMS})},
}

@Article{Wei2017,
  author    = {Qingmeng Wei and Jiongmin Yong and Zhiyong Yu},
  journal   = {{SIAM} Journal on Control and Optimization},
  title     = {Time-Inconsistent Recursive Stochastic Optimal Control Problems},
  year      = {2017},
  month     = dec,
  number    = {6},
  pages     = {4156--4201},
  volume    = {55},
  doi       = {10.1137/16m1079415},
  publisher = {Society for Industrial {\&} Applied Mathematics ({SIAM})},
}

@InCollection{Yan2019,
  author    = {Wei Yan and Jiongmin Yong},
  booktitle = {Modeling, Stochastic Control, Optimization, and Applications},
  publisher = {Springer International Publishing},
  title     = {Time-inconsistent Optimal Control Problems and Related Issues},
  year      = {2019},
  address   = {Cham},
  editor    = {George Yin and Qing Zhang},
  pages     = {533--569},
  volume    = {164},
  doi       = {10.1007/978-3-030-25498-8\_22},
}

@Article{Wang2019,
  author    = {Tianxiao Wang and Jiongmin Yong},
  journal   = {Stochastic Processes and their Applications},
  title     = {Backward Stochastic {Volterra} Integral Equations{\textemdash}representation of Adapted Solutions},
  year      = {2019},
  month     = dec,
  number    = {12},
  pages     = {4926--4964},
  volume    = {129},
  doi       = {10.1016/j.spa.2018.12.016},
  publisher = {Elsevier {BV}},
}

@Article{Wang2020,
  author    = {Hanxiao Wang},
  journal   = {Stochastics and Dynamics},
  title     = {Extended backward stochastic {Volterra} integral equations, Quasilinear parabolic equations, and {Feynman}--{Kac} formula},
  year      = {2020},
  month     = mar,
  number    = {01},
  pages     = {2150004},
  volume    = {21},
  doi       = {10.1142/s0219493721500040},
  publisher = {World Scientific Pub Co Pte Lt},
}

@Article{Hamaguchi2020,
  author    = {Yushi Hamaguchi},
  journal   = {Applied Mathematics {\&} Optimization},
  title     = {Small-Time Solvability of a Flow of Forward-Backward Stochastic Differential Equations},
  volume={84},
  number={1},
  pages={567--588},
  year={2021},
  publisher={Springer},
}

@Book{Yong1999,
  author    = {Jiongmin Yong and Xun Yu Zhou},
  publisher = {Springer New York},
  title     = {Stochastic Controls : {Hamiltonian} Systems and {HJB} Equations},
  year      = {1999},
  address   = {New York, NY},
  edition   = {1st},
  isbn      = {9781461214663},
  series    = {Stochastic Modelling and Applied Probability},
  volume    = {43},
  doi       = {10.1007/978-1-4612-1466-3},
}

@Article{Cheridito2007,
  author    = {Patrick Cheridito and H. Mete Soner and Nizar Touzi and Nicolas Victoir},
  journal   = {Communications on Pure and Applied Mathematics},
  title     = {Second-order Backward Stochastic Differential Equations and Fully Nonlinear Parabolic {PDEs}},
  year      = {2007},
  month     = jul,
  number    = {7},
  pages     = {1081--1110},
  volume    = {60},
  doi       = {10.1002/cpa.20168},
  publisher = {Wiley},
}

@InProceedings{Peng1992,
  author       = {Shige Peng},
  booktitle    = {Proceedings of Symposium of System Sciences and Control Theory},
  title        = {A Nonlinear {Feynman}--{Kac} Formula and Applications},
  year         = {1992},
  address      = {Singapore},
  organization = {World Scientific},
  pages        = {173--184},
}

@article{Lorenzi,
  author   = {Lorenzi, Luca},
  title    = {Optimal Schauder Estimates for Parabolic Problems with Data Measurable with Respect to Time},
  journal  = {SIAM Journal on Mathematical Analysis},
  volume   = {32},
  number   = {3},
  pages    = {588-615},
  year     = {2000},
  doi      = {10.1137/S0036141098342842},
  abstract = { We prove some optimal Schauder estimates for the solution to second-order parabolic equations with coefficients which are measurable with respect to time and Hölder continuous with respect to space variables in the strip [0,T] \times \mathbb{R}^{n}\$. We allow also polynomially or exponentially weighted Hölder norms. },
}

@Article{Karoui1997,
  author    = {Noureddine El Karoui and Shige Peng and Marie-Claire Quenez},
  journal   = {Mathematical Finance},
  title     = {Backward Stochastic Differential Equations in Finance},
  year      = {1997},
  month     = jan,
  number    = {1},
  pages     = {1--71},
  volume    = {7},
  doi       = {10.1111/1467-9965.00022},
  publisher = {Wiley},
}

@Article{Soner2011,
  author    = {H. Mete Soner and Nizar Touzi and Jianfeng Zhang},
  journal   = {Probability Theory and Related Fields},
  title     = {Wellposedness of Second Order Backward {SDEs}},
  year      = {2011},
  month     = feb,
  number    = {1-2},
  pages     = {149--190},
  volume    = {153},
  doi       = {10.1007/s00440-011-0342-y},
  publisher = {Springer Science and Business Media {LLC}},
}

@Article{Hamaguchi2021,
  author    = {Yushi Hamaguchi},
  journal   = {Mathematical Control {\&} Related Fields},
  title     = {Extended Backward Stochastic {Volterra} Integral Equations and Their Applications to Time-inconsistent Stochastic Recursive Control Problems},
  year      = {2021},
  month     = jun,
  number    = {2},
  pages     = {197--242},
  volume    = {11},
  doi       = {10.3934/mcrf.2020043},
  publisher = {American Institute of Mathematical Sciences ({AIMS})},
}

@Article{Bjoerk2014a,
  author    = {Tomas Bj\"{o}rk and Agatha Murgoci and Xun Yu Zhou},
  journal   = {Mathematical Finance},
  title     = {Mean-Variance Portfolio Optimization with State-Dependent Risk Aversion},
  year      = {2014},
  month     = jan,
  number    = {1},
  pages     = {1--24},
  volume    = {24},
  doi       = {10.1111/j.1467-9965.2011.00515.x},
  publisher = {Wiley},
}

@Book{Eidelman1969,
  author    = {Samuil Davidovich \.{E}\v{i}del'man},
  publisher = {North-Holland and Pub. Co.Wolters-Noordhoff},
  title     = {Parabolic Systems},
  year      = {1969},
  address   = {Amsterdam, Groningen},
  isbn      = {9780720420302},
}

@Article{Hernandez2020,
  author    = {Camilo Hern\'{a}ndez and Dylan Possama\"{i}},
  journal   = {The Annals of Applied Probability},
  title     = {Me, Myself and {I}: A General Theory of Non-{Markovian} Time-inconsistent Stochastic Control for Sophisticated Agents},
  year      = {2023},
  volume    = {33},
  number    = {2},
  pages     = {1196--1258},
  doi       = {10.1214/22-AAP1845},
  publisher = {Institute of Mathematical Statistics},
}

@Book{Krylov1987,
  author    = {Nicolai Vladimirovich Krylov},
  publisher = {Springer Netherlands},
  title     = {Nonlinear Elliptic and Parabolic Equations of the Second Order},
  year      = {1987},
  address   = {Dordrecht},
  edition   = {1},
  isbn      = {140200334X},
  series    = {Mathematics and its Applications},
  volume    = {7},
  date      = {1987},
  ean       = {9781402003349},
  pagetotal = {462},
}

@Book{Lieberman1996,
  author    = {Gary M. Lieberman},
  publisher = {World Scientific},
  title     = {Second Order Parabolic Differential Equations},
  year      = {1996},
  address   = {Singapore},
  edition   = {1},
  month     = nov,
  doi       = {10.1142/3302},
  pagetotal = {452},
}

@Article{Ma2002,
  author    = {Jin Ma and Jianfeng Zhang},
  journal   = {The Annals of Applied Probability},
  title     = {Representation Theorems for Backward Stochastic Differential Equations},
  year      = {2002},
  month     = nov,
  number    = {4},
  pages     = {1390--1418},
  volume    = {12},
  doi       = {10.1214/aoap/1037125868},
  publisher = {Institute of Mathematical Statistics},
}

@Article{Laibson1997,
  author    = {David Laibson},
  journal   = {The Quarterly Journal of Economics},
  title     = {Golden Eggs and Hyperbolic Discounting},
  year      = {1997},
  month     = may,
  number    = {2},
  pages     = {443--478},
  volume    = {112},
  doi       = {10.1162/003355397555253},
  publisher = {Oxford University Press ({OUP})},
}

@Article{Kong2015,
  author    = {Tao Kong and Weidong Zhao and Tao Zhou},
  journal   = {Communications in Computational Physics},
  title     = {Probabilistic High Order Numerical Schemes for Fully Nonlinear Parabolic {PDE}s},
  year      = {2015},
  number    = {5},
  pages     = {1482–1503},
  volume    = {18},
  doi       = {10.4208/cicp.240515.280815a},
  publisher = {Cambridge University Press},
}

@Article{Basak2010,
  author    = {Suleyman Basak and Georgy Chabakauri},
  journal   = {Review of Financial Studies},
  title     = {Dynamic Mean-Variance Asset Allocation},
  year      = {2010},
  month     = may,
  number    = {8},
  pages     = {2970--3016},
  volume    = {23},
  doi       = {10.1093/rfs/hhq028},
  publisher = {Oxford University Press ({OUP})},
}

@InCollection{He2022,
  author    = {Xue Dong He and Xun Yu Zhou},
  booktitle = {Stochastic Analysis, Filtering, and Stochastic Optimization},
  publisher = {Springer International Publishing},
  title     = {Who Are {I}: Time Inconsistency and Intrapersonal Conflict and Reconciliation},
  year      = {2022},
  address   = {Cham},
  editor    = {George Yin and Thaleia Zariphopoulou},
  note      = {A Commemorative Volume to Honor Mark H. A. Davis's Contributions},
  abstract  = {Time inconsistency is prevalent in dynamic choice problems: a plan of actions to be taken in the future that is optimal for an agent today may not be optimal for the same agent in the future. If the agent is aware of this intra-personal conflict but unable to commit herself in the future to following the optimal plan today, the rational strategy for her today is to reconcile with her future selves, namely to correctly anticipate her actions in the future and then act today accordingly. Such a strategy is named intra-personal equilibrium and has been studied since as early as in the 1950s. A rigorous treatment in continuous-time settings, however, had not been available until a decade ago. Since then, the study on intra-personal equilibrium for time-inconsistent problems in continuous time has grown rapidly. In this chapter, we review the classical results and some recent development in this literature.},
}

@Article{He2021,
  author    = {Xue Dong He and Zhao Li Jiang},
  journal   = {{SIAM} Journal on Control and Optimization},
  title     = {On the Equilibrium Strategies for Time-Inconsistent Problems in Continuous Time},
  year      = {2021},
  month     = jan,
  number    = {5},
  pages     = {3860--3886},
  volume    = {59},
  abstract  = {In a continuous-time setting, the existing notion of equilibrium strategies for time-inconsistent problems in the literature, referred to as weak equilibrium, is not fully aligned with the standard definition of equilibrium in game theory in that the agent may be willing to deviate from a given weak equilibrium strategy. To address this issue, [Y.-J. Huang and Z. Zhou, Math. Oper. Res., 46 (2021), pp. 428--451] propose the notion of strong equilibrium for an infinite-time stochastic control problem in which an agent can control the generator of a time-homogeneous, continuous-time, finite-state Markov chain at each time. We study weak and strong equilibria in a general diffusion framework, provide necessary conditions for a strategy to be a strong equilibrium, and prove that strong equilibrium strategies do not exist for three investment and consumption problems. Finally, we propose a new notion of equilibrium strategies, referred to as regular equilibrium, show that it implies weak equilibrium, provide a sufficient condition under which a weak equilibrium strategy becomes a regular equilibrium, and show that this condition holds for many time-inconsistent problems.},
  doi       = {10.1137/20m1382106},
  publisher = {{SIAM}},
}

@Article{Prato1996,
  author    = {G. Da Prato and L. Tubaro},
  journal   = {{SIAM} Journal on Mathematical Analysis},
  title     = {Fully Nonlinear Stochastic Partial Differential Equations},
  year      = {1996},
  month     = jan,
  number    = {1},
  pages     = {40--55},
  volume    = {27},
  abstract  = {The authors study a class of fully nonlinear stochastic partial differential equations by the reduction to a family of deterministic fully nonlinear equations using the stochastic characteristic method.},
  doi       = {10.1137/s0036141093256769},
  publisher = {{SIAM}},
}

@Article{Lunardi1989,
  author    = {A. Lunardi},
  journal   = {Numerical Functional Analysis and Optimization},
  title     = {Maximal Space Regularity in Nonhoomogeneous Initial Boundary Value Parabolic Problem},
  year      = {1989},
  month     = jan,
  number    = {3-4},
  pages     = {323--349},
  volume    = {10},
  doi       = {10.1080/01630568908816306},
  publisher = {Informa {UK} Limited},
}

@Article{Lindensjoe2019,
  author    = {Kristoffer Lindensj\"{o}},
  journal   = {Operations Research Letters},
  title     = {A Regular Equilibrium Solves the Extended {HJB} System},
  year      = {2019},
  month     = sep,
  number    = {5},
  pages     = {427--432},
  volume    = {47},
  abstract  = {Control problems not admitting the dynamic programming principle are known as time-inconsistent. The game-theoretic approach is to interpret such problems as intrapersonal dynamic games and look for subgame perfect Nash equilibria. A fundamental result of time-inconsistent stochastic control is a verification theorem saying that solving the extended HJB system is a sufficient condition for equilibrium. We show that solving the extended HJB system is a necessary condition for equilibrium, under regularity assumptions. The controlled process is a general Itô diffusion.},
  doi       = {10.1016/j.orl.2019.07.011},
  publisher = {Elsevier {BV}},
}

@Book{Ma1999,
  author    = {Jin Ma and Jiongmin Yong},
  publisher = {Springer},
  title     = {Forward-Backward Stochastic Differential Equations and their Applications},
  year      = {1999},
  address   = {Berlin, Heidelberg},
  edition   = {1st},
  isbn      = {9783540659600},
  series    = {Lecture Notes in Mathematics},
  doi       = {10.1007/978-3-540-48831-6},
}

@Article{Sinestrari1985,
  author    = {Eugenio Sinestrari},
  journal   = {Journal of Mathematical Analysis and Applications},
  title     = {On the Abstract {Cauchy} Problem of Parabolic Type in Spaces of Continuous Functions},
  year      = {1985},
  month     = apr,
  number    = {1},
  pages     = {16--66},
  volume    = {107},
  doi       = {10.1016/0022-247x(85)90353-1},
  publisher = {Elsevier {BV}},
}

@Article{Lei2023,
  author    = {Qian Lei and Chi Seng Pun},
  journal   = {Journal of Differential Equations},
  title     = {Nonlocal Fully Nonlinear Parabolic Differential Equations Arising in Time-Inconsistent Problems},
  year      = {2023},
  month     = jun,
  pages     = {339--385},
  volume    = {358},
  doi       = {10.1016/j.jde.2023.02.025},
  publisher = {Elsevier {BV}},
}

@Article{Han2018,
  author    = {Jiequn Han and Arnulf Jentzen and Weinan E},
  journal   = {Proceedings of the National Academy of Sciences},
  title     = {Solving High-Dimensional Partial Differential Equations Using Deep Learning},
  year      = {2018},
  month     = aug,
  number    = {34},
  pages     = {8505--8510},
  volume    = {115},
  doi       = {10.1073/pnas.1718942115},
  publisher = {Proceedings of the National Academy of Sciences},
}

@Article{Lei2021,
  author    = {Lei, Qian and Pun, Chi Seng},
  journal   = {Mathematical Finance},
  title     = {Nonlocality, Nonlinearity, and Time Inconsistency in Stochastic Differential Games},
  year      = {2024},
  volume    = {34},
  number    = {1},
  pages     = {190--256},
  doi       = {10.1111/mafi.12420},
  publisher = {Wiley},
}

@Article{Lei2023a,
  author        = {Lei, Qian and Pun, Chi Seng},
  journal       = {arXiv: 2307.01986},
  title         = {On the Well-posedness of Hamilton-Jacobi-Bellman Equations of the Equilibrium Type},
  year          = {2023},
  month         = jul,
  abstract      = {This paper studies the well-posedness of a class of nonlocal parabolic partial differential equations (PDEs), or equivalently equilibrium Hamilton-Jacobi-Bellman equations, which has a strong tie with the characterization of the equilibrium strategies and the associated value functions for time-inconsistent stochastic control problems. Specifically, we consider nonlocality in both time and space, which allows for modelling of the stochastic control problems with initial-time-and-state dependent objective functionals. We leverage the method of continuity to show the global well-posedness within our proposed Banach space with our established Schauder prior estimate for the linearized nonlocal PDE. Then, we adopt a linearization method and Banach's fixed point arguments to show the local well-posedness of the nonlocal fully nonlinear case, while the global well-posedness is attainable provided that a very sharp a-priori estimate is available. On top of the well-posedness results, we also provide a probabilistic representation of the solutions to the nonlocal fully nonlinear PDEs and an estimate on the difference between the value functions of sophisticated and na\"{i}ve controllers. Finally, we give a financial example of time inconsistency that is proven to be globally solvable.},
  archiveprefix = {arXiv},
  copyright     = {arXiv.org perpetual, non-exclusive license},
  eprint        = {2307.01986},
  keywords      = {Analysis of PDEs (math.AP), Optimization and Control (math.OC), Mathematical Finance (q-fin.MF), FOS: Mathematics, FOS: Economics and business, 93E20, 35A01, 35A02, 35K10, 35Q93, 49L12},
  primaryclass  = {math.AP},
  publisher     = {arXiv},
}

\end{document}